\newcolumntype{Y}{>{\centering\arraybackslash}X}
\definecolor{lightgray}{cmyk}{0, 0, 0, 0.05}
\definecolor{lightorange}{cmyk}{0, 0.12, 0.18, 0}
\definecolor{lightblue}{cmyk}{0.15, 0.1, 0, 0}
\definecolor{fullblue}{cmyk}{1, 0.67, 0, 0}
\algrenewcommand\algorithmicrequire{\textbf{Input}}
\algrenewcommand\algorithmicensure{\textbf{Output}}
\newenvironment{breakablealgorithm}
  {
   \begin{center}
     \refstepcounter{algorithm}
     \hrule height.8pt depth0pt \kern2pt
     \renewcommand{\caption}[2][\relax]{
       {\raggedright\textbf{\fname@algorithm~\thealgorithm} ##2\par}%
       \ifx\relax##1\relax 
         \addcontentsline{loa}{algorithm}{\protect\numberline{\thealgorithm}##2}%
       \else 
         \addcontentsline{loa}{algorithm}{\protect\numberline{\thealgorithm}##1}%
       \fi
       \kern2pt\hrule\kern2pt
     }
  }{
     \kern2pt\hrule\relax
   \end{center}
  }
\renewcommand*{\ext@figure}{lot}
\let\c@figure\c@table
\let\ftype@figure\ftype@table
\renewcommand*\listtablename{List of Tables and Figures}
\let\listoftablesandfigures\listoftables
\newcommand\widebar[1]{\mathop{\overline{#1}}}
\theoremstyle{definition}
\newtheorem{defn}{Definition}[section]
\newtheorem{rmk}[defn]{Remark}
\newtheorem{notat}[defn]{Notation}
\theoremstyle{plain}
\newtheorem{lem}[defn]{Lemma}
\newtheorem{lemdefn}[defn]{Lemma-Definition}
\newtheorem{prop}[defn]{Proposition}
\newtheorem{thm}[defn]{Theorem}
\newtheorem{cor}[defn]{Corollary}
\newtheorem*{thm*}{Theorem} 
\newcommand{\Fq}{\mathbb{F}_q} 
\newcommand{\Z}{\mathbb{Z}} 
\newcommand{\N}{\mathbb{N}} 
\renewcommand{\P}{\mathbb{P}} 
\newcommand{\F}{F_{\infty}} %
\newcommand{\Oi}{\mathcal{O}_{\infty}}
\newcommand{\C}{\mathbb{C}_{\infty}}
\newcommand{\uB}{\underline{B}}
\DeclareMathSymbol{\shortminus}{\mathbin}{AMSa}{"39}
\newcommand{\la}{\lambda}
\newcommand{\G}{\Gamma}
\newcommand{\g}{\gamma}
\renewcommand{\epsilon}{\varepsilon}
\newcommand{\B}{\mathcal{B}} 
\newcommand{\V}{\mathcal{V}} 
\newcommand{\W}{\mathcal{W}} 
\newcommand{\A}{\mathcal{A}} 
\renewcommand{\S}{\mathcal{S}} 
\renewcommand{\t}{\tau}
\newcommand{\pr}{\mathrm{pr}}
\renewcommand{\|}{\, | \,} 
\newcommand{\lb}{\left\lbrace}
\newcommand{\rb}{\right\rbrace}
\newcommand{\lbk}{\left\lbrack}
\newcommand{\rbk}{\right\rbrack}
\newcommand{\smatrix}[9]{%
\left(\begin{smallmatrix}
 \vphantom{f}#1 & #2 & #3 \\
 #4 & \phantom{,}\vphantom{f}#5\phantom{,} & #6 \\
 #7 & #8 & \vphantom{f}#9 \\
 \end{smallmatrix}\right)%
}
\DeclareMathOperator{\Sym}{Sym}
\DeclareMathOperator{\Maps}{Maps}
\DeclareMathOperator{\GL}{GL}
\DeclareMathOperator{\PGL}{PGL}
\DeclareMathOperator{\SL}{SL}
\DeclareMathOperator{\St}{St}
\DeclareMathOperator{\Stab}{Stab}
\DeclareMathOperator{\Char}{C_{har}}
\DeclareMathOperator{\sgn}{sgn}
\begin{document}
\allowdisplaybreaks 
\title{$U$-Operators Acting on Harmonic Cocycles for $\GL_3$ and Their Slopes}

\author{Gebhard Böckle, Peter Mathias Gräf, Theresa Kaiser}
\date{\today}

\maketitle

\begin{abstract}
In this article, we describe a computational study of the action of the two natural $U$-operators acting on $\G$-invariant spaces of harmonic cocycles for $\GL_3$ for certain congruence subgroups $\G$, in a positive characteristic setting. The cocycle spaces we consider are conjecturally isomorphic to spaces of Drinfeld cusp forms of rank $3$ and level $\G$ via an analogue of Teitelbaum's residue map. We give explicit descriptions of the spaces of harmonic cocycles as subspaces of the vector space of coefficients, and of the resulting $U$- and Hecke operators acting on these. We then implement these formulas in a computer algebra system. Using the resulting data of slopes (and characteristic polynomials) for the Hecke actions, we observe several patterns and interesting phenomena present in our slope tables. This appears to be the first such study in a $\GL_3$ setting. 
\end{abstract}

\setcounter{tocdepth}{1}
\tableofcontents

\section{Introduction}
\label{Introduction}
Let $F=\Fq(t)$ with $q=p^e$ for a prime $p$. We denote by $A=\Fq[t]\subset F$ the subring of functions regular away from $\infty$ and by $\F=\Fq\left(\left(\tfrac{1}{t}\right)\right)$ the completion of $F$ at $\infty$. Let $\C$ be the completion of an algebraic closure of $\F$.
For general rank $r$ and any congruence subgroup $\G\subset\GL_r(A)$, Basson-Breuer-Pink have constructed in \cite{bas.bre.pin2024} spaces of Drinfeld cusp forms $S_{k,n}(\G)$ of any weight $k\geq r$ and type $n\in\Z$. In analogy with the work of Teitelbaum in \cite{tei1991} in rank $2$, one expects that these spaces of Drinfeld cusp forms are isomorphic as Hecke-modules to certain spaces of harmonic cocycles on the Bruhat-Tits building $\B^r$ for $\mathrm{PGL}_r(\F)$. For $r=2$ this correspondence has proven to be a crucial tool in understanding the behavior of Hecke-operators on Drinfeld cusp forms as well as making them computationally accessible. 

In rank $r=3$, the work \cite{gr2021} of the second author provides an analogue of Teitelbaum's isomorphism, namely a residue map
\begin{align}
\label{dmf-har}
S_{k+3,n}(\Gamma)\rightarrow \Char(\Gamma,V_{k,n})\otimes_F\C,
\end{align}
which is conjectured to be an isomorphism of Hecke-modules
; here $\Char(\Gamma,V_{k,n})$ denotes the space of harmonic cocycles on $\B := \B^3$, that are invariant under $\Gamma$ and with values in a certain algebraic $\GL_3(F)$-representation $V_{k,n}$. So in order to explore $U$-operators on Drinfeld cusp forms it seemed natural to explore instead these operators on the side of harmonic cocycles, where the underlying combinatorial structure is more amenable to computations.

For $r=2$, the work of Bandini-Valentino in \cite{ban.val2018,ban.val2019} provides a simple model to compute the action of the $U_t$-operator in levels $\Gamma_1(t)$ and $\Gamma_0(t)$. They, as well as Hattori in \cite{hat2021}, used this to gather much data for exploring the eigenvalues and slopes of $U_t$ and for studying its diagonalizability. Because of \cite{tei1991}, their computations are known to give the $U_t$-operator on $S_k(\G_1(t))$.

The starting point for the present work was the realization, that perhaps the computation in \cite{ban.val2018} for $r=2$ and $\G_1(t)$ would have a direct generalization to $r=3$ -- or in fact to any $r\ge2$. Namely, from a more abstract view point, the work \cite{ban.val2018} relied on the following simple steps: They observe that a fundamental domain for the action of $\G_1(t)$ on the Bruhat-Tits tree $\B^2$ was the `standard apartment', that the latter contained a unique stable edge $e_0$, that Teitelbaum's work gives (also) an isomorphism $\Char(\G_1(t),V)\to V$ for any $\GL_2(F)$-representation, {\em and} that the $U_t$ operator could be transferred by explicit formulas to an endomorphism of~$V$. There were some clues that this should work for any $r$, and in the present work, we give the full details and some related computations for the case $r=3$. Let us also note that in rank $r$ there will be $r-1$ $U$-operators, and hence we have to deal with two of them, which we call $U_1$ and $U_2$. For $r=3$, our results directly concern $\Char(\G,V_{k,n})$ for certain coefficients $V_{k,n}$ and congruence subgroups $\G_1(t)\subset\G\subset\GL_3(A)$, but only conjecturally the space $S_{k+3,n}(\G)$ from \cite{bas.bre.pin2024}.

Let us summarize in the following theorem some of the main results in relation to the above expectations. This combines Corollary~\ref{cor:A-FD},  Corollary~\ref{cor:stab-a}, Theorem~\ref{thm:isoG1} and Theorem \ref{thm:IsomG0}. We denote by $\A$ the standard apartment of $\B$ from Definition \ref{def:StdApt}, by $\G_1(t)$, $\G_0(t)$ the groups from Definition~\ref{def:Gamma1}, and by $V$ any $\GL_3(F)$-representation over an $F$-vector space.
\begin{thm*}
    The following hold:
    \begin{enumerate}
        \item $\A$ is a fundamental domain for the action of $\G_1(t)$ on $\B$.
        \item The apartment $\A$ contains a unique stable simplex, the chamber $s_0$ from \eqref{eq:s0-def}.
        \item\label{part3}
        The map $\psi: \Char(\Gamma_1(t),V)  \rightarrow V,   c \mapsto c(s_0)
        $ is an isomorphism of $F$-vector spaces.
        \item \label{part4} With $D\subset \GL_3(\Fq)$ the subgroup of diagonal matrices, the map in \ref{part3}. restricts to an isomorphism
        $\Char(\G_0(t),V)\simeq V^D$.
    \end{enumerate}
\end{thm*}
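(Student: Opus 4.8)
The plan is to prove (1) and (2), feed them into (3), and obtain (4) as a refinement of (3). Write $\varpi = 1/t$, so that $\Oi = \Fq[[\varpi]]$, $\F = A + \Oi$ with $A \cap \Oi = \Fq$, and in particular $A \twoheadrightarrow \F/\Oi$; this last surjectivity is the arithmetic input that lets $\G_1(t)$ fit the building. For (1) I would separate surjectivity — every simplex of $\B$ is $\G_1(t)$-equivalent to one of $\A$ — from strictness — distinct simplices of $\A$ lie in distinct $\G_1(t)$-orbits. Surjectivity I would get by adapting the reduction theory of $\GL_3(\Fq[t])$ (à la Soulé) to the congruence subgroup: using the Iwasawa decomposition $\GL_3(\F) = B(\F)\GL_3(\Oi)$ with the upper Borel, $T(\F) = T(\varpi^{\Z})\,T(\Oi)$, and $A \twoheadrightarrow \F/\Oi$, one reduces an arbitrary lattice (or flag) to diagonal form by left multiplication by elements of $\G_1(t)$; equivalently, one combines the known fact that the dominant cone of $\A$ is a fundamental domain for $\GL_3(\Fq[t])$ with the finite-index identification $\G_1(t)\backslash\GL_3(\Fq[t]) \cong U(\Fq)\backslash\GL_3(\Fq)$, the point being that the congruence modulo $t$ removes only a bounded amount of ``room''. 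Strictness I would prove directly: if $\g \in \G_1(t)$ carries a vertex $[\varpi^{\vec a}\Oi^3]$ of $\A$ to a vertex $[\varpi^{\vec b}\Oi^3]$ of $\A$, then after normalizing one has $\g\,\varpi^{\vec a}\Oi^3 = \varpi^{\vec b}\Oi^3$, and expanding $\det \g \in \Fq^\times$ produces a permutation $\pi$ with $b_i = a_{\pi(i)}$ and with the constant term of $\g_{i\pi(i)}$ nonzero for every $i$; but $\g$ is unipotent upper triangular modulo $t$, so a nonzero constant term of $\g_{i\pi(i)}$ forces $\pi(i)\ge i$ for all $i$, hence $\pi = \id$ and $\vec a = \vec b$, and since $\g$ respects incidences the two simplices coincide.

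For (2), given (1), it remains to compute $\Stab_{\G_1(t)}(\s)$ for the simplices $\s$ of $\A$. Writing $\s$ as a partial flag of diagonal lattice classes, $\Stab_{\GL_3(A)}(\s)$ is a semidirect product of a reductive subgroup of $\GL_3(\Fq)$ containing $D$ with a unipotent group whose entries run over spaces $\Fq[t]_{\le m}$, with $m$ growing as $\s$ recedes from the central vertex $[\Oi^3]$; intersecting with $\G_1(t)$ (resp.\ $\G_0(t)$) then cuts these down by the conditions modulo $t$. A finite case check shows that the only simplex of $\A$ with trivial $\G_1(t)$-stabilizer is the chamber $s_0$ of \eqref{eq:s0-def}, characterized as the chamber of $\A$ incident to $[\Oi^3]$ whose $\GL_3(A)$-stabilizer is the Borel $\overline B(\Fq)$ \emph{opposite} to the one defining $\G_1(t)$ and $\G_0(t)$: then $\overline B(\Fq)\cap\G_1(t) = \overline B(\Fq)\cap U(\Fq) = \{1\}$, while every other simplex of $\A$ retains a nontrivial unipotent or diagonal stabilizer and so is not stable, a nontrivial subgroup of $\GL_3(F)$ acting nontrivially on the standard representation (so evaluation there could not be onto). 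In particular every codimension-one face of $\A$ has nontrivial $\G_1(t)$-stabilizer, a fact used in (3). The same computation for $\G_0(t)$ only omits the congruence on the diagonal, giving $\Stab_{\G_0(t)}(s_0) = \overline B(\Fq)\cap B(\Fq) = D$, the input for (4).

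For (3), I would show $\psi$ is injective by propagating outward from $s_0$: a $\G_1(t)$-invariant harmonic cocycle $c$ is reconstructed from $c(s_0)$ by filling in the chambers of $\A$ in order of distance from $s_0$, where at each codimension-one face $w$ of $\A$ crossed in the process the harmonicity relation $\sum_{C \supseteq w} c(C) = 0$ expresses the value on the far chamber in terms of values already fixed — one first uses $\G_1(t)$-equivariance to rewrite the contributions of the $q-1$ chambers containing $w$ but lying outside $\A$, which together with the near apartment chamber form a single $\Stab_{\G_1(t)}(w)$-orbit (this uses the nontriviality of $\Stab_{\G_1(t)}(w)$ from (2)); since, by (1), every chamber of $\B$ is a $\G_1(t)$-translate of one of $\A$, this determines $c$ everywhere. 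For surjectivity one runs this in reverse: put $c(s_0) = v$, propagate over $\A$, and extend $\G_1(t)$-equivariantly; well-definedness reduces, using strictness from (1), to checking $c(\s) \in V^{\Stab_{\G_1(t)}(\s)}$ for each $\s \subset \A$, and harmonicity holds at the codimension-one faces of $\A$ by construction and at all others by equivariance and (1). The only transfer maps that appear are the norm operators $\sum_{h\in H} h$ over the finite unipotent groups $H$ occurring as stabilizers in $\A$; this is where positive characteristic enters, and it is harmless exactly because $\im\big(\sum_{h\in H} h\big) \subseteq V^H$, which is precisely the invariance demanded of the chamber value being assigned (and $\Stab_{\G_1(t)}(C')\subseteq\Stab_{\G_1(t)}(w)$ for a chamber $C'$ with face $w$). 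Finally (4): the only place the argument distinguishes $\G_1(t)$ from $\G_0(t)$ is the constraint at $s_0$, so a $\G_0(t)$-invariant harmonic cocycle is the same datum as a $\G_1(t)$-invariant one whose value at $s_0$ is in addition $D$-fixed, giving $\Char(\G_0(t),V) \simeq V^D$.

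The main obstacle, I expect, is twofold. First, the reduction step in (1): pushing an arbitrary lattice into diagonal form using only $\G_1(t)$, while controlling the diagonal $\varpi$-twist carefully enough that the induction (or the double-coset bookkeeping) actually closes. Second, the combinatorics in (3): choosing the right notion of ``distance from $s_0$'' and the order in which the chambers of the two-dimensional apartment $\A$ get filled in, so that every harmonicity relation is used exactly once and the relations coming from the several faces around a common vertex are seen to be compatible rather than over-determining. Once these are in place, what remains is the routine stabilizer and norm-operator computations sketched above.
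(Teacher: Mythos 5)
Your overall strategy — reduce to the $\GL_3(A)$-fundamental domain, pass to the mod-$t$ quotient, compute stabilizers of simplices in $\A$, and propagate a $\G_1(t)$-equivariant cocycle outward from $s_0$ — is essentially the route the paper takes. Your direct strictness argument in (1) via the permutation with $\sigma(i)\ge i$ is a nice alternative to the paper's double-coset count $U\backslash\GL_3(\Fq)/\widebar{\GL_3(A)_w}$ via the Bruhat decomposition; it is correct once you add the observation that the exponent sums on both sides agree (the determinant is a unit of $\Oi$), which is what forces the permutation to match the exponents exactly rather than merely dominate them. Parts (2) and (4) are fine in outline; for (4) you should say explicitly that $\G_1(t)$ is normal in $\G_0(t)$, since that normality is what lets you reduce $\G_0(t)$-invariance of a $\G_1(t)$-cocycle to a check at $s_0$.

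There is, however, a genuine gap in (3). Your propagation hinges on the claim that, at each edge $w$ of $\A$ shared by a near chamber $r$ and a far chamber $s$, the $q$ chambers adjacent to $w$ other than $s$ form a single $\Stab_{\G_1(t)}(w)$-orbit, and you attribute this to \enquote{the nontriviality of $\Stab_{\G_1(t)}(w)$.} Nontriviality alone does not yield this. What is actually needed is the precise structure the paper proves as Theorem~\ref{thm:index_q}: that $\G_1(t)_w = \G_1(t)_s$ and $[\G_1(t)_w : \G_1(t)_r] = q$, established from the explicit stabilizer formulas of Corollary~\ref{cor:stab-a} by a case analysis on the shape of $w$ combined with Lemma~\ref{lem:distance}. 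A softer route would note that the two orbit sizes are $p$-powers summing to $q+1$ (coprime to $p$), so one orbit is a singleton, and then use monotonicity of stabilizers along galleries from $s_0$ to identify the singleton with the far chamber — but neither the $p$-group fact nor the monotonicity is established in your sketch, and both require proof. Once that structural lemma is in hand, the rest of your argument, including the norm-operator observation $\im\bigl(\sum_{h\in H}h\bigr)\subseteq V^H$ used for well-definedness in the surjectivity step, matches the paper's proof.
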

Theorem~\ref{thm:isoG1} also gives an explicit formula for the converse of~$\psi$, and we obtain results similar to \ref{part4} also for other $\G$; see Subsection~\ref{subsec:CharOnSubgps}.

We then go on to make explicit in Theorem~\ref{thm:AiBiCi} the action of the operators $U_1$ and $U_2$ from \eqref{eq:U_i-def} on any representation $V$. The theorem may appear a bit technical, and it is too long to be stated here. But as an immediate consequence of its formulas, one finds that our Hecke operators can be written as concatenations of the action of simple diagonal matrices and operators that can be described by matrices with entries in $\Fq$.

In Section \ref{sec:magma} we finally turn to the coefficients $V_{k,n}$ that are conjecturally related to the spaces $S_{k,n}(\G)$ of \cite{bas.bre.pin2024}. We give some indications on how our computations are implemented in the computer algebra system \texttt{Magma}. For the coefficients $V_{k,n}$, we observe that, with respect to natural basis, the $U_i$-operators are in fact given as a matrix of the form $\alpha\delta$ with $\alpha$ having only coefficients in $\mathbb{F}_p$ (!), and $\delta$ a diagonal matrix with only powers of $t$ as entries. For rank $2$ such a structural result was observed in \cite{hat2021}. See also the remarks in Subsection~\ref{subsec:entries}.

As the reader certainly has noticed by now, the main focus of this work is in computing $U_i$-operators on certain spaces of harmonic cocycles. Some results and many observations from our computations will be presented in the final Section~\ref{sec:Ti+Ui-ops}, where we give some tables of slopes for the action of $U_1$ and $U_2$ on $\Char(\G_0(t),V_{k,n})$ and certain subspaces. 
To our knowledge, this is the first exploration of slopes in the $\GL_3$-case, over number or function fields, in the spirit of \cite{GouveaMazurSlopes}. Let us mention a few interesting phenomena that we are able to observe in this situation: 

The slopes of the $U_1$-operator exhibit a periodicity akin to a conjecture of Hattori in \cite{hat2021} in rank 2. For the $U_2$-operator, we observe a new phenomenon --  the slopes display a periodicity, but moreover the multiplicities of the finite slopes grow in each repetition by a fixed increment. As a consequence of this, with growing $k$ many slopes of $U_2$ will have much higher multiplicity than the slopes of $U_1$. For $q=2$ only, we also have good evidence that rank $2$ cocycles map to certain rank $3$ cocyles, because we find the $U_t$-eigenvalues for rank $2$ in weight $k$ as $U_1$-eigenvalues for rank $3$ again in weight $k$. 
On a different note, the slopes of what might be a sensible notion of \emph{$\Gamma_0(t)$-newforms} seem to be only given by $2k/3$ and $k/3$ for $U_1$ and $U_2$ respectively. Some of our observations will be explained in more detail in forthcoming work, where we investigate the notions of new- and oldforms in this situation from a theoretical perspective. For quite a number of our observations we have no good heuristics or explanations at the moment.

Let us end with some comments on the structure of this paper. In Section~\ref{sec:building} we first give some background on the building $\B = \B^3$ and present a detailed study of the action of certain congruence subgroups on $\B$. We describe the stabilizers of the simplices of the standard apartment $\A$ and identify it as a fundamental domain for $\G_1(t)$. We also show in Theorem~\ref{thm:index_q} that moving away from the $\G_1(t)$-stable simplex $s_0$ along galleries inside $\A$ increases the $\G_1(t)$-stabilizers in a precise way. In Section~\ref{sec:cocycles} we obtain identifications of $\Char(\G,V)$ with certain subspace of $V$ for several natural groups $\G$ intermediate to $\G_1(t)\subset\GL_3(A)$; see Theorems \ref{thm:isoG1}, \ref{thm:IsomG0}, \ref{thm:IdentifyGL3}, \ref{thm:Isog-P0} and \ref{thm:Isog-P2}. In Section \ref{sec:heckeop} 
we give formulas for the Hecke operators acting on the respective subspaces  of $V$. Section \ref{sec:magma} explains how Theorem \ref{thm:AiBiCi} is used to compute these Hecke operators on simple bases under the above isomorphisms in computer algebra systems such as \texttt{Magma}. Finally Section \ref{sec:Ti+Ui-ops} 
contains computational data on the slopes of Hecke operators computed in \texttt{Magma} and various observations on said data. The Appendix \ref{appendix} gives some further technical results used in Section~\ref{sec:building} and~\ref{sec:heckeop}.

\noindent\textbf{Acknowledgements:} This work received funding by the Deutsche Forschungsgemeinschaft (DFG, German Research Foundation) TRR 326 \textit{Geometry and Arithmetic of Uniformized Structures}, project number 444845124. Moreover, P.G. received funding by the Deutsche Forschungsgemeinschaft (DFG, German Research Foundation) -- project number 546550122. The present work incorporates results from the bachelor and master thesis 
of the third author.

\section{Bruhat-Tits Building}
\label{sec:building}
Let $p$ be a prime and $q$ a power of $p$. We fix the following notations that we will need throughout this article: 
\begin{align*}
    A & := \Fq[t] = \text{the polynomial ring over $\Fq$ in the variable $t$,} \\
	F & := \Fq(t) = \text{the fraction field of $A$,}\\
    |\cdot| & := 
    \text{
    the $\infty$-adic norm on $F$, normalized so that $|t| = q$,} \\
	\F & := \Fq\left(\left(\tfrac{1}{t}\right)\right) = \text{the completion of $F$ with respect to $|\cdot|$. We often write } \pi := \tfrac{1}{t}, \\ 
	\Oi & := \Fq\left\lbrack\left\lbrack\tfrac{1}{t}\right\rbrack\right\rbrack = \text{the corresponding valuation ring,} \\
	\C & := \text{the completion of a fixed algebraic closure of $\F$. It is algebraically closed.} 
\end{align*}

\subsection{Basic definitions}
This subsection introduces the Bruhat-Tits building $\B$ of $\PGL_3(\F)$ and describes the matrix action on it. For the first three definitions and for Subsection \ref{subsec:action-on-simplices}, we follow \cite{m2014} who often relies on \cite{geb1996} . 
We assume that the reader is familiar with the notion of simplicial complexes, if not, see \cite[][Chapter I, Appendix]{Brown} or \cite{m2014}. For a given simplicial complex $\mathcal{C}$, we denote its set of simplices by $\S(\mathcal{C})$, the set of simplices of dimension $i$ by $\S_i(\mathcal{C})$ and the set of vertices by $\V(\mathcal{C}) = \S_0(\mathcal{C})$.

\begin{defn}
Let $r \in \N$.
The \emph{Bruhat-Tits building of $\PGL_r(\F)$} is an $(r-1)$-dimensional simplicial complex $\B^r := (\V, \S)$. Its set of vertices consists of homothety classes of lattices: \[\V := \lb [L] = \lb x L \| x \in \F^{\times}\rb \| L \text{ an } \Oi \text{-lattice in } \F^r \rb; \]
here an $\Oi$-lattice $L$ is a rank $r$ free $\Oi$-submodule in $\F^r$. The simplices are given by  
\[\S := \lb \{ v_0, \dots, v_k \} \, \Big| \, \forall i \in \{0, \dots, k \} \, \exists L_i \in v_i : L_0 \supsetneq L_1 \supsetneq \dots \supsetneq L_k \supsetneq \pi L_0 \rb, \]
where it is obvious that $k$ can be at most $(r-1)$. 
\end{defn}

From now on, we will only consider the case $r = 3$ with the 2-dimensional building $\B = \B^3$. We will refer to simplices in $\S_1(\B)$ as \emph{edges} and to those in $\S_2(\B)$ as \emph{chambers}.

\begin{defn}
Let $B = (b_1, b_2, b_3)$ denote a basis of $\F^3$. 

Then the maximal subcomplex $\A_B$ of $\B$ with set of vertices
\[ \V (\A_B) = \lb [\langle \pi^i b_1, \pi^j b_2, \pi^k b_3 \rangle_{\Oi} ] \| i, j, k \in \Z \rb \]
is called the \emph{apartment} attached to the ordered basis $B$.

The maximal subcomplex $\W_B$ of $\B$ with vertices
\[ \V (\W_B) = \lb [\langle \pi^i b_1, \pi^j b_2, \pi^k b_3 \rangle_{\Oi} ] \| i, j, k \in \Z, i \leq j \leq k \rb \]
is the \emph{sector} attached to $B$.
\end{defn}

\begin{defn} \label{def:StdApt}
Given $i, j, k \in \Z$ and $E = (e_1, e_2, e_3)$ the standard basis of $\F^3$, we define:
\begin{enumerate}[(i)]
\item $[i, j, k] := [\langle \pi^i e_1, \pi^j e_2, \pi^k e_3 \rangle_{\Oi} ]$, viewed as a vertex in $\B$,
\item the \emph{standard apartment} $\A := \A_E$, and
\item the \emph{standard sector} $\W := \W_E$.
\end{enumerate}
\end{defn}
Since the vertices are given as homothety classes, we have $[i, j, k] = [i + l, j + l, k + l]$ for every $l \in  \Z$. Therefore, we will normalize $i$ as 0 and often write
\[ [j, k] := [0, j, k] = [l, j + l, k + l]. \] 
The standard apartment and standard sector then have vertex sets
$$
\V(\A) \ = \ \lb [j, k] \| j, k \in \Z \rb \ \text{ and } \ \V(\W) \ = \ \lb [j, k] \| j, k \in \Z, 0 \leq j \leq k \rb.$$
An illustration of these is given in Figure \ref{fig:apartment-sector}.

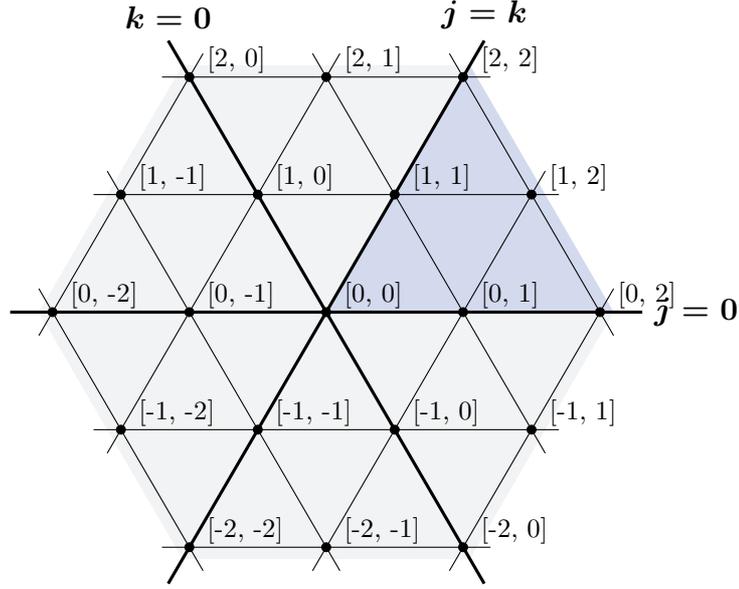
\begin{figure}[h!]
 \centering 
 \hspace*{0.9cm} \begin{tikzpicture} 
\tikzmath{\f = 1.8; \n = 2; \cont = 0.2;} 
\coordinate (Origin)   at (0,0);

\begin{scope}[y=(-60:1), label distance={-2mm}, font={\footnotesize}]
\pgfmathsetmacro{\bg}{(\n+0.5*\cont)*\f}
\path [fill=lightgray] (\bg,-\bg) -- (0,-\bg) -- (- \bg,0) -- (-\bg, \bg) -- (0,\bg) -- (\bg, 0); 
\path [fill=lightblue] (0,0) -- (\bg,-\bg) -- (\bg,0); 

\foreach \k in {-\n,...,\n}{

  \draw [] (\k*\f,- {(\n + min(0,\k))*\f}  - \cont *\f) -- (\k*\f,+ {(\n - max(0,\k))*\f} + \cont *\f);
  
  \draw [] (- {(\n + min(0,\k))*\f}  - \cont *\f, \k*\f) -- (+ {(\n - max(0,\k))*\f} + \cont *\f, \k*\f); 
  
  \draw [rotate=-60] (\k*\f,- {(\n + min(0,\k))*\f}  - \cont *\f) -- (\k*\f,+ {(\n - max(0,\k))*\f} + \cont *\f) ;
	\pgfmathsetmacro{\lower}{int(-\n+max(\k,0))}
	\pgfmathsetmacro{\upper}{int(\n+min(\k,0))}
  \foreach \j in {\lower,...,\upper}{
    \node[draw,circle,inner sep=0.1em,fill, label={above right:\hspace{0.2em} [\j, \k] } ] at (\k*\f,- \j*\f) {}; 
  }
}
\end{scope}
\coordinate (XAxisMin) at (-{2*\n/sqrt(3)}*\f,0);
\coordinate (XAxisMax) at ({2*\n/sqrt(3)*\f},0);
\coordinate (YAxisMin) at (-{\n/sqrt(3)}*\f,-\n*\f);
\coordinate (YAxisMax) at ({\n/sqrt(3)*\f},\n*\f);
\coordinate (ZAxisMin) at ({\n/sqrt(3)*\f},-\n*\f);
\coordinate (ZAxisMax) at (-{\n/sqrt(3)*\f},\n*\f);
\draw [line width=0.1em] (XAxisMin) -- (XAxisMax) node[right] {$\boldsymbol{j=0}$};
\draw [line width=0.1em] (YAxisMin) -- (YAxisMax) node[above] {$\boldsymbol{j=k}$} ;
\draw [line width=0.1em] (ZAxisMin) -- (ZAxisMax) node[above] {$\boldsymbol{k=0}$} ;
\end{tikzpicture}
 \caption{The standard apartment $\A$ with the standard sector $\W$ colored in blue.}
 \label{fig:apartment-sector}
\end{figure}

\begin{defn}\label{def:distance}
Let $r, s$ denote chambers in $\B$.
\begin{enumerate}[(i)]
\item One calls $r$ and $s$ \emph{adjacent} if they share an edge.
\item A sequence $(x_0, x_1, \dots, x_n)$ of adjacent chambers in $\B$ with $r=x_0$ and $s=x_n$ is called a \emph{gallery from $r$ to $s$}. The integer $n$ is called the  \emph{length} of the gallery.
\item The minimal length of a gallery from $r$ to $s$ is called the \emph{distance between $r$ and $s$} and denoted by $d(r, s)$.
\end{enumerate}
\end{defn}
Note that for $r,s\in\S_2(\A)$ a gallery of minimal length from $r$ to $s$ exists within $\A$; see \cite[Corollary~4.34]{abramenko-brown}.

In terms of shapes, cf.~Figure~\ref{fig:apartment-sector}, there are two types of shapes for chambers of $\A$:
\[ \xymatrix@!0@C+2pc@R+1.4pc{ [j,k-1]\ar@{-}[rr]&&[j,k] &&&[j,k]\ar@{-}[dl]\ar@{-}[dr]&\\
&[j-1,k-1]\ar@{-}[ur]\ar@{-}[ul]&&&[j-1,k-1]\ar@{-}[rr]&&[j-1,k]}\]
\begin{defn} 
We define a sign function on chambers $s$ of $\A$ as follows: chambers of the left shape have $\sgn(s)=+1$, those of the right shape have $\sgn(s)=-1$.
\end{defn}

Note also that there are three types of edges, in terms of their angle relative to the horizontal axis, as is apparent from Figure~\ref{fig:apartment-sector}: 
\[\{[j-1,k-1],[j,k]\} ,\{[j-1,k],[j,k]\} ,\{[j,k-1],[j,k]\}.\]

As will be explained in Corollary~\ref{cor:stab-a}, the chamber 
\begin{equation}\label{eq:s0-def}
s_0:=\{[0,0],[-1,-1],[-1,0]\}    
\end{equation}\
is of particular importance to us. We have $\sgn(s_0)=1$.
\begin{lem} \label{lem:distance}
\begin{enumerate}
\item Let $s=\{[j,k],[j,k-1],[j-1,k-1]\}$ be a positive chamber of $\A$.
\begin{enumerate}[(a)]
\item If $k\cdot j\ge0$, i.e., if $k$ and $j$ have the same sign, then $d(s, s_0) = 2 \max \{ |j|, |k| \}$.
\item Else, $d(s, s_0) = 2 |j-k| = 2(|j| + |k|)$.
\end{enumerate}
\item Now let $s=\{[j,k],[j-1,k],[j-1,k-1]\}$ be a negative chamber of $\A$. Then 
\begin{enumerate}[(a)]
\item If $k\cdot j\ge0$, then $d(s, s_0) = \begin{cases}
  2 \max \{ |j|, |k| \} + 1, & \text{ if } j \leq k \\
  2 \max \{ |j|, |k| \} - 1, & \text{ if } j > k
\end{cases}$.
\item Else, $d(s, s_0) = \begin{cases}
  2 |j-k| + 1, & \text{ if } j \leq 0 \\
  2 |j-k| - 1, & \text{ if } j > 0
\end{cases}$.
\end{enumerate}
\end{enumerate}
In particular, $\sgn(s)=(-1)^{d(s,s_0)}$.
\end{lem}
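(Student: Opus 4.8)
The plan is to exploit that $\A$ is the Coxeter complex of the affine Weyl group of type $\tilde{A}_2$, for which gallery distances are computed by counting separating walls. First I would fix the identification of the geometric realisation of $\A$ with the plane $\{x\in\R^3\mid x_1+x_2+x_3=0\}$ under which the vertex $[0,j,k]$ is the image of $(0,j,k)$; the chambers of $\A$ are then the open alcoves cut out by the affine hyperplanes $H_{\{i,j\},n}=\{x_i-x_j=n\}$ for $1\le i<j\le 3$ and $n\in\Z$. These are precisely the walls of $\A$, and they fall into the three parallel families indexed by $\{i,j\}$; a short check shows that $x_i-x_j$ is constant along every edge of one of the three edge types listed before the Lemma, so the three wall families match the three edge types.

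Next I would invoke the standard fact that in a Coxeter complex the gallery distance between two chambers equals the number of walls separating them (see \cite{abramenko-brown}); combined with the remark after Definition~\ref{def:distance} that a minimal gallery between two chambers of $\A$ may be taken inside $\A$, this yields
\[
d(C,C')\;=\;\sum_{1\le i<j\le 3}\bigl|a_{\{i,j\}}(C)-a_{\{i,j\}}(C')\bigr|,
\]
where $a_{\{i,j\}}(C)$ is the integer with $a_{\{i,j\}}(C)<x_i-x_j<a_{\{i,j\}}(C)+1$ on the open alcove $C$. These integers are read off from the three vertices of $C$: exactly two of the three values of $x_i-x_j$ coincide (these being the two vertices on the bounding wall of family $\{i,j\}$), say they equal $m$, and the third equals $m\pm 1$, whence $a_{\{i,j\}}(C)=m$ or $m-1$ accordingly. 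Carrying this out one finds $a_{\{i,j\}}(s_0)=0$ for all $\{i,j\}$, for the positive chamber $s=\{[j,k],[j,k-1],[j-1,k-1]\}$ the triple $(-j,-k,\,j-k)$, and for the negative chamber $s=\{[j,k],[j-1,k],[j-1,k-1]\}$ the triple $(-j,-k,\,j-k-1)$ (ordered by $\{1,2\},\{1,3\},\{2,3\}$).

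Substituting into the displayed formula gives $d(s,s_0)=|j|+|k|+|j-k|$ for positive $s$ and $d(s,s_0)=|j|+|k|+|j-k-1|$ for negative $s$, and the stated formulas then follow by an elementary case analysis: if $jk\ge 0$ then $|j-k|=\bigl||j|-|k|\bigr|$, so $|j|+|k|+|j-k|=2\max\{|j|,|k|\}$, and $|j-k-1|$ equals $|j-k|+1$ or $|j-k|-1$ according as $j\le k$ or $j>k$; if $jk<0$ then $|j-k|=|j|+|k|$, producing the $2|j-k|$ and $2|j-k|\pm 1$ expressions, with the sign in the negative case decided by the sign of $j$. The final identity $\sgn(s)=(-1)^{d(s,s_0)}$ then drops out of these formulas (positive chambers receive even, negative chambers odd distance), or, independently of them, from the observation that every reflection of $\A$ interchanges the two shapes of chambers and hence flips $\sgn$, so $\sgn$ and $(-1)^{d(\cdot,s_0)}$ agree because they agree at $s_0$.

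I expect the main obstacle to be organisational rather than conceptual: getting the orientation of the identification $[0,j,k]\leftrightarrow(0,j,k)$ right so that $s_0$ sits at the origin, computing the floor functions that separate the two alcove shapes without sign slips, and then matching the rewriting of $|j|+|k|+|j-k|$ and $|j|+|k|+|j-k-1|$ to the precise piecewise shape in the statement, which means tracking several sign regimes at once. One could instead avoid the geometric realisation and prove the formulas by induction on $d(s,s_0)$, moving $s$ one chamber closer to $s_0$ inside $\A$ at each step and supplying a matching lower bound by hand; but the wall-counting argument makes that lower bound automatic and is less error-prone.
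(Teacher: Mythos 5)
Your wall-counting argument is correct, complete, and considerably more rigorous than the paper's own proof, which amounts to ``visual inspection of Figure~\ref{fig:apartment-sector}'' together with a merely-asserted minimization formula. The identification of $\A$ with the $\tilde{A}_2$ Coxeter complex, the translation of gallery distance into the sum $\sum_{\{i,j\}}\lvert a_{\{i,j\}}(C)-a_{\{i,j\}}(C')\rvert$ over the three wall families, the alcove triples $(-j,-k,\,j-k)$ and $(-j,-k,\,j-k-1)$ for the positive and negative chamber shapes, and the ensuing case analysis rewriting $|j|+|k|+|j-k|$ and $|j|+|k|+|j-k-1|$ into the stated piecewise expressions (including the parity argument for $\sgn(s)=(-1)^{d(s,s_0)}$) are all correct. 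What the wall-counting buys, beyond rigor, is a built-in lower bound: it proves the distance formula rather than exhibiting a gallery of the right length and appealing to the figure for optimality.

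One point deserves a flag. You assert $a_{\{i,j\}}(s_0)=0$ for all $\{i,j\}$, but for $s_0=\{[0,0],[-1,-1],[-1,0]\}$ as literally written in \eqref{eq:s0-def} — which is the \emph{negative} chamber with $j=k=0$ in your parametrization — your own formula $(-j,-k,\,j-k-1)$ gives the triple $(0,0,-1)$. Using $(0,0,-1)$ would yield $d(s,s_0)=|j|+|k|+|j-k+1|$ for positive $s$ and $d(s,s_0)=|j|+|k|+|j-k|$ for negative $s$, which do not agree with the lemma (already $d(s_0,s_0)$ would come out as $1$, not $0$). The resolution is that \eqref{eq:s0-def} contains a typo: the $\Gamma_1(t)$-stable chamber is the \emph{positive} $j=k=0$ chamber $\{[0,0],[0,-1],[-1,-1]\}$, as one reads off from the stabilizer formulas \eqref{eq:Stab-s+}--\eqref{eq:Stab-s-}, from the claim $\sgn(s_0)=+1$, and from the matrix representative of $s_0$ exhibited in Appendix~\ref{subsec:finding-simplices}; only for that chamber is the triple $(0,0,0)$ and only for it do the lemma's stated formulas hold. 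So your computation implicitly corrects the paper's typo; once that is granted, the proof is a clean and complete alternative to the paper's sketch.
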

\begin{proof}
    The proof can be obtained by visual inspection of Figure~\ref{fig:apartment-sector}. In order to prove 1., one can also show that $d(s,s_0)=\min\{|n|+|n+j+1|+|n+k+1|\mid n\in\Z\}$.
\end{proof}
\begin{defn}
We define a partial ordering on chambers $r,s$ of $\A$: We say \emph{$r$ is closer to the stable simplex $s_0$ than $s$} if $d(r, s_0) < d(s, s_0)$ holds. In this case, we write $r \prec s$.
\end{defn}

\subsection{Simplices as equivalence classes of matrices} \label{subsec:action-on-simplices}
The group $\GL_3(\F)$ acts on $\Oi$-lattices by matrix multiplication: For an $\Oi$-lattice $L = \langle b_1, b_2, b_3 \rangle_{\Oi}$ with basis $B = (b_1, b_2, b_3)$ and $\g \in \GL_3(\F)$, one defines
\[\g L := \g \langle b_1, b_2, b_3 \rangle_{\Oi} := \langle \g b_1, \g b_2, \g b_3 \rangle_{\Oi}\]
as the lattice with basis $\g B$. This is well-defined on homothety classes of lattices and hence induces an operation on the vertices and simplices in $\B$ by
\[
\g \{ [L_0], \dots, [L_k] \} := \{ [\g L_0], \dots, [\g L_k] \}.
\]

This operation is transitive on the sets $\S_0(\B)$, $\S_1(\B)$, and $\S_2(\B)$. Therefore computing the stabilizers of chosen {\em standard simplices} for each dimension, allows one to describe the simplices as equivalence classes of matrices. We introduce the following notation:  

\begin{defn}
\label{matrixR}
\begin{enumerate}[(i)]
\item We define the \emph{standard Iwahori subgroup of $\GL_3(\Oi)$} as
\[
I = \lb M \in \GL_3(\Oi)
\, \Big| \, M \equiv \left(\begin{smallmatrix}
 * & * & * \\
 0 & * & * \\
 0 & 0 & * \\
\end{smallmatrix}\right) \mod \pi, * \in \Fq \rb \subset \GL_3(\Oi).
\]
\item The \emph{first\footnote{We omit the second standard parahoric parahoric subgroup of $\GL_3(\Oi)$ as we do not use it.} standard parahoric subgroup of $\GL_3(\Oi)$} we define as 
\[
P = \lb M \in \GL_3(\Oi)
\, \Big| \, M \equiv \left(\begin{smallmatrix}
 * & * & * \\
 * & * & * \\
 0 & 0 & * \\
\end{smallmatrix}\right) \mod \pi, * \in \Fq \rb \subset \GL_3(\Oi).
\]
\item We denote by $R\in\GL_3(F_\infty)$ the matrix
 \[ R := \smatrix{0}{1}{0}{0}{0}{1}{\pi}{0}{0}.\]
\end{enumerate}
\end{defn}
We observe that $R$ lies in the the normalizer of $I$ in $\GL_3(F_\infty)$.

Now we are able to give the promised description.
\begin{thm}\label{thm:matsimpl}
The following maps are bijective, where in each case by $b_1, b_2$ and $b_3$ we denote the columns of the indeterminate matrix $g \in \GL_3(\F)$:
\begin{align*}
\GL_3(\F)/\GL_3(\Oi)\F^{\times}	& \rightarrow \V(\B), \\
g \GL_3(\Oi)\F^{\times}		 	& \mapsto    [g]_0 :=  [\langle b_1, b_2, b_3 \rangle_{\Oi}], \\[0.5 em]
\GL_3(\F)/P\F^{\times}	& \rightarrow \S_1(\B), \\
g P\F^{\times}			& \mapsto     [g]_1 := \{ [\langle b_1, b_2, b_3 \rangle_{\Oi}], [\langle b_1, b_2, \pi b_3 \rangle_{\Oi}] \}, \\[0.5 em]
\GL_3(\F)/\langle R \rangle I \F^{\times}	& \rightarrow \S_2(\B), 
\\
g \langle R \rangle I \F^{\times} 			& \mapsto  [g]_2 :=   \{ [\langle b_1, b_2, b_3 \rangle_{\Oi}], [\langle b_1, b_2, \pi b_3 \rangle_{\Oi}], [\langle b_1, \pi b_2, \pi b_3 \rangle_{\Oi}] \}.
\end{align*}
\end{thm}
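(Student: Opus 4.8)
The plan is to establish each of the three bijections by the same standard strategy: since $\GL_3(\F)$ acts transitively on $\S_i(\B)$ for $i=0,1,2$ (as noted just before the theorem), the orbit map $g\mapsto g\cdot(\text{standard simplex})$ induces a bijection $\GL_3(\F)/\Stab(\text{standard simplex})\to\S_i(\B)$, so everything reduces to two things: (a) identifying the chosen ``standard simplex'' in each dimension as the image of the identity matrix under the proposed formula, and (b) computing its stabilizer and checking it equals $\GL_3(\Oi)\F^\times$, $P\F^\times$, and $\langle R\rangle I\F^\times$ respectively. Part (a) is immediate in all three cases by plugging $g=\mathrm{id}$ into the formulas: one gets the vertex $[\Oi^3]$, the edge $\{[\Oi^3],[\langle e_1,e_2,\pi e_3\rangle]\}$, and the chamber $\{[\Oi^3],[\langle e_1,e_2,\pi e_3\rangle],[\langle e_1,\pi e_2,\pi e_3\rangle]\}$, which indeed form a flag $\Oi^3\supsetneq\langle e_1,e_2,\pi e_3\rangle\supsetneq\langle e_1,\pi e_2,\pi e_3\rangle\supsetneq\pi\Oi^3$ and hence a genuine chamber.

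For the stabilizer computations I would argue as follows. The vertex case is classical: $g\Oi^3=x\Oi^3$ for some $x\in\F^\times$ iff $x^{-1}g\in\GL_3(\Oi)$, giving $\Stab([\Oi^3])=\GL_3(\Oi)\F^\times$. For the edge, $g$ stabilizes $\{[\Oi^3],[\langle e_1,e_2,\pi e_3\rangle]\}$ iff it fixes each of the two homothety classes (they are distinct, so cannot be swapped): fixing $[\Oi^3]$ forces $g\in\GL_3(\Oi)\F^\times$, and then, writing $g\in\GL_3(\Oi)$ after scaling, the extra condition that $g$ preserves the sublattice $\langle e_1,e_2,\pi e_3\rangle$ up to homothety translates (reducing mod $\pi$) exactly into the block condition defining $P$. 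For the chamber, the three vertices are again pairwise distinct so the stabilizer fixes each; the same kind of analysis as for the edge, now reducing mod $\pi$ the conditions of preserving both proper sublattices in the flag, yields the Iwahori congruence condition, i.e.\ upper-triangular mod $\pi$ over $\Fq$, so the stabilizer is $I\F^\times$ --- except that here one must be slightly more careful: a chamber is an \emph{unordered} set of three vertices sitting on a cyclically ordered flag, and cyclic rotations of the flag $\Oi^3\supsetneq L_1\supsetneq L_2\supsetneq\pi\Oi^3$ give the same chamber. The matrix $R$ is precisely a representative realizing such a cyclic shift (one checks $R\cdot[\mathrm{id}]_2=[\mathrm{id}]_2$ directly from $R=\smatrix{0}{1}{0}{0}{0}{1}{\pi}{0}{0}$, using $R\Oi^3\supset\langle e_1,e_2,\pi e_3\rangle\supset\ldots$), and since $R$ normalizes $I$, the full stabilizer is $\langle R\rangle I\F^\times$.

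Concretely I would phrase the edge/chamber stabilizer computations as a short lemma: for $g\in\GL_3(\Oi)$, one has $g\langle e_1,\dots,e_{j},\pi e_{j+1},\dots,\pi e_3\rangle_{\Oi}=\langle e_1,\dots,e_{j},\pi e_{j+1},\dots,\pi e_3\rangle_{\Oi}$ if and only if $\bar g\bmod\pi$ is block upper-triangular with respect to the partition $\{1,\dots,j\}\cup\{j+1,\dots,3\}$ of coordinates; this is an elementary fact about how a matrix over $\Oi$ acts on standard partial lattices, proved by looking at the image of each basis vector and its $\pi$-divisibility. Intersecting over the two relevant values of $j$ (for the chamber) gives the full flag of parabolic conditions and hence $I$.

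The main obstacle, and the only genuinely non-routine point, is the chamber case: one has to correctly account for the fact that $\Stab([\mathrm{id}]_2)$ is \emph{not} just $I\F^\times$ but is enlarged by the element $R$ coming from the cyclic symmetry of the flag, and to verify cleanly (i) that $R$ does stabilize the standard chamber, (ii) that $R$ normalizes $I$ so that $\langle R\rangle I$ is a group, and (iii) that $\langle R\rangle I$ exhausts the stabilizer --- i.e.\ any matrix stabilizing the chamber, after possibly applying a power of $R$ to rotate the induced flag back to standard position, lands in $I\F^\times$. Steps (i) and (ii) are quick matrix checks (indeed $R^3=\pi\cdot\mathrm{id}$, so $\langle R\rangle$ is generated modulo $\F^\times$ by an order-$3$ element, matching the three cyclic rotations), and for (iii) one uses that the action of $\GL_3(\Oi)$ on the set of chambers containing $[\Oi^3]$ and compatible with the fixed type is transitive, so the induced permutation of the three vertices is realized inside $\langle R\rangle$, reducing to the Iwahori case already handled. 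All the remaining verifications --- well-definedness of the maps, and that ``fixing a set of distinct vertices'' equals ``fixing each vertex'' --- are immediate.
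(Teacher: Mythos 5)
The paper itself gives no proof of this theorem, only a citation to the reference \cite{m2014}, so there is nothing in the paper to compare against; your proposal supplies a genuine proof via the standard orbit-stabilizer route, and its overall structure (identify the standard simplex in each dimension, compute its stabilizer by reduction mod $\pi$, account for the cyclic symmetry via $R$) is correct and reasonable.

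There is, however, one genuine gap. In the edge case you assert that since the two vertices $[\Oi^3]$ and $[\langle e_1,e_2,\pi e_3\rangle_{\Oi}]$ are distinct, a stabilizing element cannot swap them. Distinctness alone is not a valid reason: in the tree of $\PGL_2$ the analogous edge also has two distinct vertices, yet $\left(\begin{smallmatrix}0&1\\ \pi&0\end{smallmatrix}\right)$ swaps them, which is exactly why the $\PGL_2$ edge stabilizer is strictly larger than the Iwahori. The correct argument uses the type map $[L]\mapsto v_\infty(\det g)\bmod 3$ for any $g$ with $g\,\Oi^3=L$: every $\gamma\in\GL_3(\F)$ shifts all types by the constant $v_\infty(\det\gamma)\bmod 3$, and the two edge vertices have types $0$ and $1$, so $\{0+d,1+d\}=\{0,1\}\bmod 3$ forces $d=0$ and hence no swap. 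This same type argument is also what actually shows, in the chamber case, that a stabilizing element can only induce a cyclic permutation of the three vertices (of types $0,1,2$), so that after multiplying by a power of $R$ one reduces to the Iwahori case; your step (iii) alludes to ``the fixed type'' but is phrased in terms of transitivity of $\GL_3(\Oi)$ on chambers, which does not by itself exclude transpositions of vertices. Once the type invariant is stated explicitly, the rest of your argument --- stabilizer of the standard vertex is $\GL_3(\Oi)\F^\times$, reduction mod $\pi$ yields the conditions defining $P$ and $I$, $R$ normalizes $I$ and $R^3=\pi\cdot\mathrm{id}$ --- goes through.
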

\begin{proof}
See \cite[][Definition 1.9 and Satz 1.12]{m2014}.
\end{proof}

\begin{rmk}
The matrix $R$ takes into account that we only consider unordered simplices. Given a matrix $g \in \GL_3(\F)$, the images of $g, Rg$ and $R^2g$ contain exactly the same vertices, just in a different order. Note that $R^3 = \pi \left( \begin{smallmatrix}1&0&0\\0&1&0\\0&0&1\\ \end{smallmatrix} \right)$, so $R^3g = \pi g$ gives the same lattices as $g$ up to homothety, hence the same simplex, again.
\end{rmk}

In the following, we will mostly interpret simplices in $\B$ as equivalence classes of matrices and describe them by matrix representatives. 
For example, for $j, k \in \Z$ one has
\[ [j, k] = \lbk \smatrix{1}{0}{0}{0}{\pi^j}{0}{0}{0}{\pi^k} \rbk_0 \in \V(\A).
\]

\subsection{The action of congruence subgroups}
In this subsection, we introduce certain congruence subgroups of $\GL_3(A)$ and describe for these the stabilizers of the simplices of the standard apartment $\A$. In Corollary~\ref{cor:A-FD}, we show that the standard apartment is a fundamental domain in $B$ for the $\G_1(t)$-action. The final Theorem~\ref{thm:index_q} asserts that to each chamber $s$ of $\A$ there is a gallery $(t_0=s_0,t_1,\ldots,t_r=s)$ from the unique $\G_1(t)$-stable simplex $s_0$ in $\A$, such that in each step $i\to i+1$, the stabilizer size $|\Stab_{\G_1(t)}(s_i)|$ increases by a factor of~$q$. 
\subsubsection{Definitions}
We shall be interested in the action of $\GL_3(A)$ and of certain subgroups thereof.
\begin{defn}
For a monic polynomial $N \in A$ we define the \emph{principal congruence subgroup}
\[ \Gamma(N) := \lb \gamma \in \GL_3(A) \, \middle| \, \gamma \equiv  
\left(\begin{smallmatrix}
 1 & 0 & 0 \\
 0 & 1 & 0 \\
 0 & 0 & 1 \\
\end{smallmatrix}\right)
\mod N
\rb.
\]
A \emph{congruence subgroup} $\Gamma$ is a subgroup of $\GL_3(A)$ with $\Gamma(N) \subset \Gamma$ for some monic $N \in A$.
\end{defn}
To describe those congruence subgroups of $\GL_3(A)$ we are primarily interested in, we first introduce some subgroups of $\GL_3(\Fq)$.
\begin{defn} \label{def:borel} 
By the \emph{standard Borel subgroup} of $\GL_3(\Fq)$ we shall mean the group
\begin{align*}
B & := B(\Fq) := \lb \left(\begin{smallmatrix}
\alpha & a & b \\ 
0 & \beta & c \\ 
0 & 0 & \gamma
\end{smallmatrix}\right) \,\middle|\, a, b, c \in \Fq, \alpha, \beta, \gamma \in \Fq^{\times} \rb; \\
\intertext{its $p$-Sylow subgroup is the group of unipotent upper triangular matrices}
U & := U(\Fq) := \lb \left(\begin{smallmatrix}
1 & a & b \\ 
0 & 1 & c \\ 
0 & 0 & 1
\end{smallmatrix}\right) \,\middle|\,  a, b, c \in \Fq \rb; 
\\
\intertext{the subgroup of diagonal matrices in $\GL_3(\Fq)$ will be}
D & := D(\Fq) := \lb \left(\begin{smallmatrix}
a & 0 & 0 \\ 
0 & b & 0 \\ 
0 & 0 & c
\end{smallmatrix}\right) \,\middle|\, a, b, c \in \Fq^{\times} \rb ;\\
\intertext{we denote two standard parabolic subgroups of $\GL_3(\Fq)$ by}
P_0 & := P_0(\Fq) := \lb (a_{i,j})_{i,j\in\{1,2,3\}}\in\GL_3(\Fq)\,\middle|\, a_{3,1}=a_{3,2}=0\rb,\\
P_2 & := P_2(\Fq) := \lb (a_{i,j})_{i,j\in\{1,2,3\}}\in\GL_3(\Fq)\,\middle|\, a_{2,1}=a_{3,1}=0\rb;\\
\intertext{the subgroup of permutation matrices or the \emph{Weyl group} of $\GL_3(\Fq)$ will be}
W & := \lb \left(\begin{smallmatrix}
       1 & 0 & 0 \\ 
       0 & 1 & 0 \\ 
       0 & 0 & 1
       \end{smallmatrix}\right), \left(\begin{smallmatrix}
       1 & 0 & 0 \\ 
       0 & 0 & 1 \\ 
       0 & 1 & 0
       \end{smallmatrix}\right), \left(\begin{smallmatrix}
       0 & 1 & 0 \\ 
       1 & 0 & 0 \\ 
       0 & 0 & 1
       \end{smallmatrix}\right), \left(\begin{smallmatrix}
       0 & 0 & 1 \\ 
       0 & 1 & 0 \\ 
       1 & 0 & 0
       \end{smallmatrix}\right), \left(\begin{smallmatrix}
       0 & 1 & 0 \\ 
       0 & 0 & 1 \\ 
       1 & 0 & 0
       \end{smallmatrix}\right), \left(\begin{smallmatrix}
       0 & 0 & 1 \\ 
       1 & 0 & 0 \\ 
       0 & 1 & 0
       \end{smallmatrix}\right)  \rb.
\end{align*}
For shorter notation, we often identify elements of $W$ with corresponding cycle representatives in the permutation group $\mathfrak{S}_3$, so that the set $W$ written in the same order is $\{(),(23),(12),(13),(132),(123)\}.$
\end{defn}
We note that the Weyl group $W$ preserves the standard apartment. The following lemma records its action on the vertices of $\A$:
\begin{lem}\label{lem:S3-acts-OnA}
    Let $\sigma\in\mathfrak{S}_3$ and let $v=[n_1,n_2,n_3]\in\V(\A)$ for $(n_1,n_2,n_3)\in\Z^3$. Then the action of $\sigma$ permutes the entries of~$v$, i.e.,
    \[\sigma (v)=[n_{\sigma(1)},n_{\sigma(2)},n_{\sigma(3)}].\]
\end{lem}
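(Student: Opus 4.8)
The plan is to unwind the definitions and reduce the claim to an elementary computation with diagonal matrices. Recall that by Theorem~\ref{thm:matsimpl}, a vertex $v=[n_1,n_2,n_3]\in\V(\A)$ has the matrix representative $\diag(\pi^{n_1},\pi^{n_2},\pi^{n_3})$, i.e. $v=[g]_0$ for $g=\diag(\pi^{n_1},\pi^{n_2},\pi^{n_3})$, and the $\GL_3(\F)$-action on vertices is $\g\cdot[g]_0=[\g g]_0$. So I first fix a permutation matrix $P_\sigma\in W$ and note that, under the identification of $W$ with $\mathfrak{S}_3$ spelled out in Definition~\ref{def:borel}, the convention is that $P_\sigma$ sends the standard basis vector $e_i$ to $e_{\sigma^{-1}(i)}$ (or $e_{\sigma(i)}$ — I would pin down the exact convention by checking one of the listed generators, e.g. that the matrix $\left(\begin{smallmatrix}0&1&0\\1&0&0\\0&0&1\end{smallmatrix}\right)$ corresponds to $(12)$, and then state the computation consistently with whichever convention makes Lemma~\ref{lem:S3-acts-OnA} come out as written).

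The key step is then the matrix identity $P_\sigma \cdot \diag(\pi^{n_1},\pi^{n_2},\pi^{n_3}) = \diag(\pi^{n_{\sigma(1)}},\pi^{n_{\sigma(2)}},\pi^{n_{\sigma(3)}})\cdot P_\sigma$. This is the standard fact that conjugating (or here, one-sided multiplication combined with reordering columns) a diagonal matrix by a permutation matrix permutes the diagonal entries; it is checked entry by entry in one line. Since $P_\sigma\in\GL_3(\Oi)\subset\GL_3(\Oi)\F^\times$, multiplying a representative on the right by $P_\sigma$ does not change the vertex, i.e. $[h P_\sigma]_0=[h]_0$ for any $h$. Combining,
\[
\sigma(v)=[P_\sigma\, \diag(\pi^{n_1},\pi^{n_2},\pi^{n_3})]_0 = [\diag(\pi^{n_{\sigma(1)}},\pi^{n_{\sigma(2)}},\pi^{n_{\sigma(3)}})\, P_\sigma]_0 = [\diag(\pi^{n_{\sigma(1)}},\pi^{n_{\sigma(2)}},\pi^{n_{\sigma(3)}})]_0 = [n_{\sigma(1)},n_{\sigma(2)},n_{\sigma(3)}],
\]
which is exactly the asserted formula. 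It also follows immediately that $\sigma(v)$ again lies in $\V(\A)$, reconfirming the remark that $W$ preserves $\A$; if one wants, this can be read off directly since the vertex set of $\A$ is visibly stable under permuting the exponents.

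The only real subtlety — hardly an obstacle, but the place where a sign/inverse error would creep in — is matching the left-action convention with the identification $W\leftrightarrow\mathfrak{S}_3$ so that the permutation appearing on the diagonal is $\sigma$ and not $\sigma^{-1}$. I would settle this once and for all by evaluating the claim on a single transposition and a single $3$-cycle against the explicit generator list in Definition~\ref{def:borel}, and then the general case is the one-line computation above. Everything else (well-definedness on homothety classes, that $P_\sigma$ has unit determinant hence lies in $\GL_3(\Oi)$) is already recorded in the text.
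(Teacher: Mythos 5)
Your overall plan --- represent the vertex $[n_1,n_2,n_3]$ by the diagonal matrix $g$ with $\pi^{n_i}$ in the $i$-th slot, commute the permutation matrix past $g$, and then absorb it into $\GL_3(\Oi)\F^\times$ on the right --- is exactly the natural one, and the paper states the lemma without proof, so there is no alternative argument to compare it against. The issue is that the key commutation identity you display is wrong under the paper's actual convention, and the hedge you place on it does not hold up. The test you propose, checking the matrix listed for $(12)$, is useless for this purpose because $(12)$ is its own inverse and hence cannot distinguish $\sigma$ from $\sigma^{-1}$; one must test a $3$-cycle. The paper's matrix for $(132)$ is $\left(\begin{smallmatrix}0&1&0\\0&0&1\\1&0&0\end{smallmatrix}\right)$, which sends $e_1\mapsto e_3$, $e_2\mapsto e_1$, $e_3\mapsto e_2$, so the convention is $P_\sigma e_j=e_{\sigma(j)}$, i.e.\ $(P_\sigma)_{ij}=\delta_{i,\sigma(j)}$. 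A one-line entry computation then gives
\[
P_\sigma\left(\begin{smallmatrix}d_1&0&0\\0&d_2&0\\0&0&d_3\end{smallmatrix}\right)P_\sigma^{-1}=\left(\begin{smallmatrix}d_{\sigma^{-1}(1)}&0&0\\0&d_{\sigma^{-1}(2)}&0\\0&0&d_{\sigma^{-1}(3)}\end{smallmatrix}\right),
\]
not $d_{\sigma(i)}$ on the diagonal as you wrote. Running your own argument with the corrected identity yields $\sigma(v)=[n_{\sigma^{-1}(1)},n_{\sigma^{-1}(2)},n_{\sigma^{-1}(3)}]$.

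The reason you cannot simply ``pick whichever convention makes the lemma come out as written'' is that, given the explicit $W\leftrightarrow\mathfrak{S}_3$ table in Definition~\ref{def:borel}, no consistent choice does. The prose of Lemma~\ref{lem:S3-acts-OnA} (``the action of $\sigma$ permutes the entries'') is right, but the displayed formula carries this same $\sigma$/$\sigma^{-1}$ slip, as the paper's own later uses confirm: the text computes $(123)[0,j,j]=[j,0,j]$, and Figure~\ref{fig:permutationen} records $\left(\begin{smallmatrix}0&1&0\\0&0&1\\1&0&0\end{smallmatrix}\right)[j,k]=[k-j,-j]$, both of which match $[n_{\sigma^{-1}(1)},n_{\sigma^{-1}(2)},n_{\sigma^{-1}(3)}]$ and not $[n_{\sigma(1)},n_{\sigma(2)},n_{\sigma(3)}]$. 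So the correction is to replace $\sigma(i)$ by $\sigma^{-1}(i)$ both in your commutation identity and in the lemma's displayed formula; the remaining steps of your argument --- in particular that $P_\sigma\in\GL_3(\Oi)$, so right-multiplying a representative by it leaves the vertex class unchanged --- are fine as stated.
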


Let 
\begin{equation}\label{eq:pr}
\pr: \GL_3(A) \rightarrow \GL_3(\Fq)    
\end{equation}
denote the componentwise reduction mod $t$, and note that its kernel is $\Gamma(t)$ and $\pr$ gives an identification $\GL_3(\Fq)/\Gamma(t)\cong\GL_3(\Fq)$. In the remainder of this article, we will focus on the following congruence subgroups of level $N=t$ of $\GL_3(A)$.
\begin{defn}\label{def:Gamma1} We set
\[
	\G_1(t) := \pr^{-1}(U)=\left\lbrace \g \overset{\operatorname{mod} t}{\equiv} \!\left(\begin{smallmatrix}1&*&*\\0&1&*\\0&0&1\end{smallmatrix}\right) \!\right\rbrace \subset
	\G_0(t) := \pr^{-1}(B)=\left\lbrace \g \overset{\operatorname{mod} t}{\equiv} \!\left(\begin{smallmatrix}*&*&*\\0&*&*\\0&0&*\end{smallmatrix}\right) \!\right\rbrace \subset
	\GL_3(A),
	\]
    where $*$ denotes arbitrary entries in $\Fq$, and moreover we define $\G^P_{0},\G^P_{2}\subset\GL_3(A)$ by 
	\[ \G_0(t)
    \begin{array}{ccc}
	\rotatebox[origin=c]{30}{$\subset$} & \G^P_0 := \pr^{-1}(P_0)=\left\lbrace \g \overset{\operatorname{mod} t}{\equiv} \left(\begin{smallmatrix}*&*&*\\ *&*&*\\0&0&*\end{smallmatrix}\right) \right\rbrace & \rotatebox[origin=c]{-30}{$\subset$}\\[0.5em] 
	\rotatebox[origin=c]{-30}{$\subset$} & \G^P_2 := \pr^{-1}(P_2)=\left\lbrace \g \overset{\operatorname{mod} t}{\equiv} \left(\begin{smallmatrix}*&*&*\\ 0&*&*\\0&*&*\end{smallmatrix}\right) \right\rbrace & \rotatebox[origin=c]{30}{$\subset$}
	\end{array} 
	\GL_3(A).
	\] 
\end{defn}
Via $\GL_3(A)\subset\GL_3(F_\infty)$, congruence subgroups act on $\B$, and it will be useful to introduce a notion of fundamental domain in this context.
\begin{defn}\label{def:FD}
Let $\G \subset \GL_3(A)$ denote a congruence subgroup. A (connected) subset $\mathcal{F}$ of the simplices $\S(\B)$ is called \emph{fundamental domain} for the action of $\G$ if it contains exactly one element of each $\G$-orbit. 
\end{defn}

\begin{rmk}
    A fundamental domain might not necessarily be a subcomplex of $\B$, but all the ones that we encounter in this article are.

    A fundamental domain $\mathcal{F}$ as in Definition~\ref{def:FD} is quite different from classical notions, e.g., for the action of congruence subgroups of $\SL_2(\Z)$ on the upper half plane. It is not required (and would make no sense) that the stabilizers of simplices in $\mathcal{F}$ are trivial.
\end{rmk}

\begin{notat}
For a subgroup $\G \subset \GL_3(A)$ and a simplex $s \in \S(\B)$ we write
\[ \G_s := \Stab_{\G}(s). \]
\end{notat}

To concretely describe stabilizers of simplices in $\B$, under the action of congruence subgroups $\G\subset\GL_3(A)$, the following notation will be useful. 
\begin{notat}
\label{schreibweise}
We define the following symbols as a shorthand for certain sets, where $k \in \Z$ is arbitrary.
\begin{align*}
0 & : \text{ The singleton set containing zero, i.\,e. }0\in  \Fq. \\
1 & : \text{ The singleton set containing one, i.\,e. }1\in  \Fq. \\
* & : \text{ The set } \Fq. \\
\{ k \} & : \text{ The set of polynomials of degree $\leq k$ in } \Fq[t]. \\
t \{ k \} & : \text{ The set of polynomials of degree $\leq k+1$ in } \Fq[t] \text{ which are divisible by } t.
\end{align*}
Clearly, the symbols $\{0\}$ and $*$ have the same meaning. In the case $k < 0$, our symbols satisfy $\{k\}= t \{ k \}=0$. Note that taking the intersection $\{k_1\} \cap \{k_2\}$ clearly is equal to $\{ \min \{k_1, k_2\}\}$.

Now for $i, j \in \{1, 2, 3\}$ choose a symbol $\epsilon_{ij}$ from the list above. Then by the matrix $(\epsilon_{ij})_{i, j \in \{1, 2, 3\}}$ we mean a set of matrices, namely 
\begin{align*}
(\epsilon_{ij})_{i, j \in \{1, 2, 3\}} :=& \lb (a_{ij})_{i, j \in \{1, 2, 3\}} \in \GL_3(A) \,\middle|\, a_{ij} \in\epsilon_{ij}\, \forall i, j \in \{1, 2, 3\}\rb.
\end{align*}
\end{notat}

\subsubsection{The action of $\GL_3(A)$}
\begin{prop} \label{prop:w-fundamental}
The standard sector $\W$ is a fundamental domain for the action of $\GL_3(A)$ on the simplices of $\B$.
\end{prop}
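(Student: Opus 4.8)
The plan is to establish that every vertex of $\B$ lies in the $\GL_3(A)$-orbit of a vertex of $\W$, and that distinct vertices of $\W$ are in distinct orbits; once this is done for vertices, the statement for higher-dimensional simplices follows from the combinatorial structure of $\B$ (a simplex is determined by its vertices, and adjacency/incidence is preserved by the group action). For the first part I would use the matrix description from Theorem~\ref{thm:matsimpl}: a vertex is a class $g\,\GL_3(\Oi)\F^\times$ with $g\in\GL_3(\F)$. Since $A$ is a principal ideal domain with fraction field $F$, and $F$ is dense in $\F$, one can first replace $g$ by a representative with entries in $F$ (this uses that $\GL_3(F)$ is dense in $\GL_3(\F)$, or more carefully the fact that the $\GL_3(A)$-action only sees the $F$-structure of the lattice). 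Then apply the elementary divisor theorem / Smith normal form over the PID $A=\Fq[t]$ to the lattice $L$ relative to $\Oi^3$: after multiplying on the left by a suitable element of $\GL_3(A)$, the lattice becomes $\langle \pi^{i}e_1,\pi^{j}e_2,\pi^{k}e_3\rangle_{\Oi}$ for integers $i\le j\le k$, and after normalizing $i=0$ (homothety) this is exactly a vertex $[j,k]$ of $\W$ with $0\le j\le k$. So every $\GL_3(A)$-orbit meets $\V(\W)$.

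Next I would show the meeting point is unique, i.e. the $\GL_3(A)$-orbits of the vertices $[j,k]$, $0\le j\le k$, are pairwise distinct. Suppose $\g[0,j,k]=[0,j',k']$ for $\g\in\GL_3(A)$ with $0\le j\le k$ and $0\le j'\le k'$. Translating through Theorem~\ref{thm:matsimpl}, this means $\g\,\mathrm{diag}(1,\pi^{j},\pi^{k})\,\Oi^3 = x\,\mathrm{diag}(1,\pi^{j'},\pi^{k'})\,\Oi^3$ for some $x\in\F^\times$; comparing the induced filtration on $\Oi^3$ (or equivalently comparing determinants, which forces $j+k\equiv j'+k' \bmod 3$ after accounting for the power of $\pi$ in $x$, and comparing the Smith normal forms of the two lattices with respect to each other) pins down $(j,k)=(j',k')$. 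The cleanest route is: the $\GL_3(A)$-invariant of a lattice $L\subset F^3$ is its pair of relative elementary divisors with respect to $\Oi^3$ up to the simultaneous shift, and these are precisely recovered by the ordered triple $(0,j,k)$; since Smith normal form is unique, $j$ and $k$ are well-defined invariants of the orbit. One should also note $\W$ is connected (it is a subcomplex, visibly connected in Figure~\ref{fig:apartment-sector}) to match Definition~\ref{def:FD}.

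Finally, for edges and chambers: using the $\GL_3(A)$-equivariant bijections of Theorem~\ref{thm:matsimpl}, an edge or chamber of $\B$ is a chain of lattices, and the same Smith-normal-form argument applied to the chain (or: transport the bottom vertex of the chain into $\W$, then observe the remaining vertices of the chain are forced to lie in $\W$ as well, by the shape of the chains $L_0\supsetneq L_1\supsetneq\pi L_0$ and the inequalities $0\le j\le k$) shows every simplex is $\GL_3(A)$-equivalent to a simplex of $\W$, and the vertex-uniqueness forces this representative to be unique. The main obstacle I anticipate is the bookkeeping in the uniqueness step — carefully matching the ``up to homothety'' ambiguity (the shift $l$ in $[i,j,k]=[i+l,j+l,k+l]$) against the $\F^\times$-scaling in Theorem~\ref{thm:matsimpl}, and making sure the elementary-divisor invariants are extracted in a way that is genuinely $\GL_3(A)$-invariant and not just $\GL_3(\Oi)$-invariant; the existence half is essentially just Smith normal form over $\Fq[t]$ and should be routine.
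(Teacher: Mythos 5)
Your overall plan (vertices first, then extend to higher simplices) is the right shape, but the two central steps as you have them do not work, and the gap is not just bookkeeping.

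\textbf{The uniqueness argument is wrong.} You claim the pair of relative elementary divisors of $L$ with respect to $\Oi^3$, up to simultaneous shift, is a $\GL_3(A)$-invariant of the lattice class. It is not: it is the Cartan invariant, and it is a $\GL_3(\Oi)$-invariant (and an $\F^\times$-invariant), because $\GL_3(\Oi)$ preserves the reference lattice $\Oi^3$, but $\GL_3(A)$ does not. Already in rank $2$, take $\gamma=\left(\begin{smallmatrix}1&0\\t&1\end{smallmatrix}\right)\in\GL_2(A)$ and $L=\mathrm{diag}(1,\pi)\,\Oi^2$, so $L$ has Cartan invariant $(0,1)$; then $\gamma L=\left(\begin{smallmatrix}1&0\\\pi^{-1}&\pi\end{smallmatrix}\right)\Oi^2$ has Cartan invariant $(-1,2)\sim(0,3)$, since $\pi^{-1}$ has the minimal valuation among the entries and the determinant has valuation $1$. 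So the elementary divisors with respect to $\Oi^3$ change under $\GL_3(A)$, and your proposed invariant does not separate orbits. (The fact that $[0,1]$ and $[0,3]$ nonetheless lie in different $\GL_2(A)$-orbits is true, but it requires a different argument.)

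\textbf{The existence argument conflates two different normal forms.} What is needed is a \emph{one-sided} reduction: given $g\in\GL_3(\F)$, find $\gamma\in\GL_3(A)$ (left) and $u\in\GL_3(\Oi)$ (right) with $\gamma g u$ diagonal with increasing $\pi$-powers. Smith normal form over the PID $A$ produces $\gamma g\delta$ diagonal with \emph{both} $\gamma,\delta\in\GL_3(A)$ — but $\delta\notin\GL_3(\Oi)$ in general, so it does not fix the reference lattice $\Oi^3$; the elementary divisor theorem over $\Oi$ produces $u_1 g u_2$ diagonal with both $u_1,u_2\in\GL_3(\Oi)$ — which is exactly the Cartan decomposition and replaces neither side by $\GL_3(A)$. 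Neither of these is the statement you need. The genuine content here is precisely the classification of rank-$3$ vector bundles on $\P^1_{\Fq}$ (Grothendieck: every such bundle splits as $\mathcal{O}(a_1)\oplus\mathcal{O}(a_2)\oplus\mathcal{O}(a_3)$, uniquely), which identifies $\GL_3(A)\backslash\GL_3(\F)/\GL_3(\Oi)\F^\times$ with weakly increasing triples up to shift and gives both existence and uniqueness at once. Equivalently one can run an explicit reduction-theory argument based on the decomposition $\F=A\oplus\pi\Oi$, but this is a real argument, not routine Smith normal form. This is exactly what the cited references prove.

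\textbf{The passage to edges and chambers is also not automatic.} If you move the bottom vertex $v_0$ of a chain into $\W$, the remaining vertices land in the star of $v_0$, which is much larger than $\St(v_0)\cap\W$; you then need to act by the $\GL_3(A)$-stabilizer of $v_0$ (described in Proposition~\ref{prop:stab_a}) to push the rest of the chain into $\W$, and to check that this can be done uniquely. That is additional work, not a formal consequence of the vertex case.

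In short, the claim is correct (and is cited by the paper to \cite{gr2021} and \cite{geb1996}), but both halves of your proposed argument rest on a false identification of Smith normal form / elementary divisors over $A$ or over $\Oi$ with the one-sided $\GL_3(A)$–$\GL_3(\Oi)$ reduction that is actually required, and the latter is a nontrivial theorem.
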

\begin{proof}
See \cite[][Theorem 7.8]{gr2021} or \cite[][Satz 3.18]{geb1996}.
\end{proof}

\begin{prop}
\label{prop:stab_a}
The stabilizer of the vertex $[j, k]$ in the standard apartment $\A$ is
\[
\GL_3(A)_{[j, k]} = 
\begin{pmatrix}
 * & \{j\} & \{k\} \\
 \{-j\} & * & \{k-j\} \\
 \{-k\} & \{j-k\} & * \\
\end{pmatrix}.
\]
The stabilizer of a higher dimensional simplex can be determined as the intersection of the stabilizers of its vertices.
\end{prop}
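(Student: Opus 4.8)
The plan is to compute the stabilizer directly from the lattice description of the vertex, using the matrix representative $[j,k] = \lbk \smatrix{1}{0}{0}{0}{\pi^j}{0}{0}{0}{\pi^k} \rbk_0$ coming from Theorem~\ref{thm:matsimpl}. Write $g_{j,k} := \smatrix{1}{0}{0}{0}{\pi^j}{0}{0}{0}{\pi^k}$. A matrix $\g \in \GL_3(A)$ stabilizes $[j,k]$ as a vertex of $\B$ if and only if $\g g_{j,k}\GL_3(\Oi)\F^\times = g_{j,k}\GL_3(\Oi)\F^\times$, i.e.\ if and only if $g_{j,k}^{-1}\g g_{j,k} \in \GL_3(\Oi)\F^\times$. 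Since $\g$ has entries in $A \subset \F$, and since $A^\times = \Fq^\times$ consists of units in $\Oi$ already, one checks that scaling by $\F^\times$ cannot help: $g_{j,k}^{-1}\g g_{j,k}$ has entries in $F$, so it lies in $\GL_3(\Oi)\F^\times$ iff it lies in $\GL_3(\Oi)$. (I would spell this normalization step out carefully, as it is the one place where the homothety quotient needs a small argument; comparing $\infty$-adic valuations of the diagonal entries forces the $\F^\times$-factor to be a unit.) Hence
\[
\GL_3(A)_{[j,k]} = \{\g \in \GL_3(A) \mid g_{j,k}^{-1}\g g_{j,k} \in \GL_3(\Oi)\}.
\]

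Next I would carry out the conjugation entrywise. If $\g = (a_{il})$, then $g_{j,k}^{-1}\g g_{j,k}$ has $(i,l)$-entry $\pi^{m_l - m_i} a_{il}$, where $(m_1,m_2,m_3) = (0,j,k)$. This matrix has entries in $F$, and it lies in $\GL_3(\Oi)$ iff all its entries lie in $\Oi$ (the determinant is then automatically a unit, since $\det(g_{j,k}^{-1}\g g_{j,k}) = \det\g \in \Fq^\times \subset \Oi^\times$). So the condition becomes: $|\pi^{m_l-m_i} a_{il}| \le 1$ for all $i,l$, i.e.\ $\deg_t(a_{il}) \le m_i - m_l$ for all $i,l$ (recalling $|\pi| = q^{-1}$ and $|t^d| = q^d$). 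Reading off the six off-diagonal constraints and the three trivial diagonal ones gives exactly the matrix of degree bounds
\[
\begin{pmatrix} * & \{j\} & \{k\} \\ \{-j\} & * & \{k-j\} \\ \{-k\} & \{j-k\} & * \end{pmatrix},
\]
in the shorthand of Notation~\ref{schreibweise} (here $a_{il} \in A$, and the bound $\deg_t a_{il} \le c$ is precisely the symbol $\{c\}$, with $\{c\} = 0$ when $c < 0$). One must also confirm that every such matrix is invertible over $A$, not merely that its entries satisfy the bounds — but invertibility is part of the defining condition $\g \in \GL_3(A)$, so this is automatic and no extra argument is needed.

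The last sentence of the proposition, that the stabilizer of a higher-dimensional simplex is the intersection of the stabilizers of its vertices, is immediate: a simplicial automorphism fixes a simplex (as an unordered set of vertices, all of which lie in the same apartment $\A$ and are permuted among themselves) — but in fact for vertices of $\A$ the congruence-subgroup elements we consider fix each vertex individually once they fix the simplex, since distinct vertices $[j,k]$ of $\A$ have distinct homothety classes and an element of $\GL_3(A) \subset \GL_3(\F)$ permuting them would have to preserve, e.g., the induced labelling/type, which over $\A$ pins down each vertex. I would phrase this briefly rather than belabor it. The main obstacle, such as it is, is bookkeeping: getting the direction of the degree inequalities right (it is easy to transpose $\{j\}$ and $\{-j\}$), and being careful that passing to the quotient by $\F^\times$ genuinely adds nothing here. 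Both are routine once the valuation computation is set up, so I expect the proof to be short.
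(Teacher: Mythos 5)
Your plan is the standard direct computation, and it is essentially sound; the paper itself only cites \cite{gr2021} and \cite{geb1996} here, and the argument in those sources is the same conjugation-plus-valuation count that you sketch. Two points deserve attention.

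First, the sign error you warned about is in fact present in your write-up: from $(g_{j,k}^{-1}\gamma g_{j,k})_{il}=\pi^{m_l-m_i}a_{il}$ and $|\pi|=q^{-1}$, the condition $|\pi^{m_l-m_i}a_{il}|\le 1$ reads $\deg_t(a_{il})\le m_l-m_i$, not $m_i-m_l$ as you wrote. The matrix you then display is the correct one (so this is the inequality you must have actually used), but as written the intermediate step contradicts it — e.g.\ for $(i,l)=(1,2)$ you need $\deg a_{12}\le m_2-m_1=j$ to match the entry $\{j\}$. You should also phrase the normalization step via the determinant rather than ``diagonal entries'': since $\det(g_{j,k}^{-1}\gamma g_{j,k})=\det\gamma\in\Fq^\times\subset\Oi^\times$, any factorization $Nc$ with $N\in\GL_3(\Oi)$, $c\in\F^\times$ forces $c^3\in\Oi^\times$ and hence $c\in\Oi^\times$; comparing diagonal entries alone is not robust, as they need not be nonzero.

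Second, the ``intersection of vertex stabilizers'' claim does need the type argument you gesture at, and it is worth stating it cleanly rather than restricting it to $\A$: every $\gamma\in\GL_3(A)$ has $\det\gamma\in\Fq^\times$, hence $v_\infty(\det\gamma)=0$, hence $\gamma$ preserves the $\Z/3\Z$-valued type function on vertices of $\B$; the vertices of any simplex have pairwise distinct types; therefore $\gamma$ stabilizing a simplex fixes each vertex individually. This works for arbitrary simplices of $\B$, not only those in $\A$. With these two corrections the proof is complete.
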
 
\begin{proof}
See \cite[][Lemma 7.11]{gr2021} and \cite[][Proposition 7.4]{gr2021} or \cite[][Satz 2.15]{geb1996} and \cite[][Korollar 3.5]{geb1996}.
\end{proof}

We obtain the following immediate corollary whose proof is left as an exercise.
\begin{cor}\label{cor:mod-t-stabilizers}
Let $\pr$ be the map from \eqref{eq:pr}. 
For the vertices $[j, k]$ in $\V(\W)$ we have
\[ 
\pr(\GL_3(A)_{[j, k]}) = 
\begin{cases}
\GL_3(\Fq),~ & \text{if } 0 = j = k, \\
P_0, & \text{if } 0 = j < k, \\
P_2,~ & \text{if } 0 < j = k,  \\
B,~ & \text{if } 0 < j < k. 
\end{cases}
\]
For higher dimensional simplices $w \in \S(\W)$, the group $\pr(\GL_3(A)_w)$ is the intersection $\bigcap_{v\in w} \pr(\GL_3(A)_v)$ over the vertices $v$ of~$w$. 
\end{cor}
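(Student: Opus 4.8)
The plan is to deduce everything directly from Proposition~\ref{prop:stab_a}, combined with the elementary behaviour of componentwise reduction mod $t$ on the "symbol sets" of Notation~\ref{schreibweise}: one has $\pr(*)=\pr(\{k\})=\Fq$ whenever $k\ge 0$ (reducing $a_0+a_1t+\cdots+a_kt^k$ gives $a_0$, which ranges over all of $\Fq$), while $\pr(\{k\})=\{0\}$ whenever $k<0$, since then $\{k\}$ consists of the zero polynomial only. A vertex $[j,k]\in\V(\W)$ satisfies $0\le j\le k$, so among the six exponents $j,\ k,\ -j,\ k-j,\ -k,\ j-k$ appearing in the stabilizer matrix of Proposition~\ref{prop:stab_a}, all are $\ge 0$ except possibly $-j$ (negative iff $j>0$), $-k$ (negative iff $k>0$) and $j-k$ (negative iff $j<k$). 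Sorting out which of these exponents are negative according to whether $0=j=k$, $0=j<k$, $0<j=k$, or $0<j<k$, one reads off that the "mod-$t$ pattern" of nonzero entries of $\GL_3(A)_{[j,k]}$ is the shape of $\GL_3(\Fq)$, $P_0$, $P_2$, $B$ respectively.

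First I would establish the inclusion $\pr(\GL_3(A)_{[j,k]})\subseteq(\text{claimed group})$, which is immediate: applying $\pr$ entrywise to the shape set of Proposition~\ref{prop:stab_a} lands inside the indicated shape set in $\GL_3(\Fq)$ by the computation above. (In the case $0=j=k$ this is trivial anyway, since then $\GL_3(A)_{[0,0]}=\GL_3(\Fq)$ and $\pr$ is literally the identity on it.) For the reverse inclusion I would use a \emph{constant-lift} argument: given $\bar M$ in the claimed group, lift each of its entries to the same constant polynomial in $\Fq\subset A$. The resulting matrix $M$ has constant entries, hence lies in $\GL_3(\Fq)\subseteq\GL_3(A)$ (it is invertible because $\det M=\det\bar M\ne 0$); its zero entries sit in the relevant sets $\{k\}$ with $k<0$, and each of its other entries, being a constant, lies in $*$ and in every $\{k\}$ with $k\ge 0$. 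Therefore $M$ lies in the shape set of Proposition~\ref{prop:stab_a}, i.e.\ $M\in\GL_3(A)_{[j,k]}$, and $\pr(M)=\bar M$.

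For a higher-dimensional simplex $w\in\S(\W)$, Proposition~\ref{prop:stab_a} gives $\GL_3(A)_w=\bigcap_{v\in w}\GL_3(A)_v$, and the inclusion $\pr(\GL_3(A)_w)\subseteq\bigcap_{v\in w}\pr(\GL_3(A)_v)$ is formal. For the converse I would run the same constant-lift argument: an element $\bar M$ of $\bigcap_{v\in w}\pr(\GL_3(A)_v)$ lifts to a constant matrix $M\in\GL_3(\Fq)$, and one checks $M\in\bigcap_{v\in w}\GL_3(A)_v=\GL_3(A)_w$ by intersecting the symbol sets entrywise, using $\{k_1\}\cap\{k_2\}=\{\min\{k_1,k_2\}\}$ as recorded in Notation~\ref{schreibweise}; here it is important that a zero entry of $\bar M$ is forced (so $0$ lies in the corresponding intersected symbol set) and a nonzero constant entry still lies in any nonnegative $\{k\}$.

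The one genuinely non-formal point — and the step to be careful about — is that $\pr$ does \emph{not} in general commute with intersections of subgroups; it does so here only because all the groups in play are "matrix-shape" sets and because lifting by constant matrices provides an honest section of $\pr$ (onto $\GL_3(\Fq)\subseteq\GL_3(A)$). Once this is noted, the corollary is a short bookkeeping exercise in the signs of $j,\ k,\ -j,\ k-j,\ -k,\ j-k$.
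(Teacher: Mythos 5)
Your proposal is correct and follows exactly the route the paper intends: it reads off the mod-$t$ shape of $\GL_3(A)_{[j,k]}$ from Proposition~\ref{prop:stab_a} via the sign pattern of the exponents $j,k,-j,k-j,-k,j-k$, and your constant-lift section of $\pr$ cleanly supplies both the surjectivity onto the claimed shape groups and the point the paper only remarks on, namely that $\pr$ commutes with the intersections $\bigcap_{v\in w}$ because of the shape of the stabilizers. This is precisely the exercise the paper leaves to the reader, carried out correctly.
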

Note that because of the shape of the $\GL_3(A)_w$, forming the intersection $\bigcap_{v\in w}$ indeed commutes with $\pr$.

\subsubsection{The action of $\G_1(t)$}
In this subsection, we will show that the standard apartment $\A$ is a fundamental domain for the action of $\G_1(t)$, and make explicit the $\G_1(t)$-stabilizers of its simplices. To do this we shall make explicit the $\G_1(t)$-orbits of $\GL_3(A)w$ for any $w\in\W$. Let us give some details.

For $w\in\S(\W)$, the orbit-stabilizer theorem gives us a bijection
\[
    \GL_3(A) w  \cong \GL_3(A) / \GL_3(A)_w, 
    \g w  \mapsto \g \GL_3(A)_w.
\]
Now $\G_1(t)$ operates from the left on both sides, and taking $\G_1(t)$-orbits gives
\begin{align*}
\G_1(t) \backslash \GL_3(A) w & \cong \G_1(t) \backslash \GL_3(A) / \GL_3(A)_w, \\
\G_1(t) \g w & \mapsto \G_1(t) \g \GL_3(A)_w.    
\end{align*}
To understand the right hand side we shall rely on the following elementary lemma from group theory whose simple proof we leave to the reader.
\begin{lem}
    Let $G$ be a group, let $N$ be a normal subgroup and write $\pi:G\to G/N=:\bar G$ for the natural factor map. Let $H,K\subset G$ be subgroups and assume $N\subset K$. Then $\pi$ yields an identification of double cosets
    \[ \pi:K\backslash G/H \stackrel{\simeq}\longrightarrow \pi(K)\backslash \bar G/\pi(H).\]
\end{lem}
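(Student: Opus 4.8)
The statement to prove is the elementary group-theory lemma: given $G$, a normal subgroup $N$, the quotient map $\pi\colon G\to G/N=\bar G$, and subgroups $H,K\subset G$ with $N\subset K$, the map $\pi$ induces a bijection $K\backslash G/H \stackrel{\simeq}{\to} \pi(K)\backslash\bar G/\pi(H)$.

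\textbf{Proof plan.} The plan is to observe that $\pi$ induces a well-defined surjection on double cosets simply because $\pi$ is a surjective homomorphism: if $g'=kgh$ with $k\in K$, $h\in H$, then $\pi(g')=\pi(k)\pi(g)\pi(h)$ lies in $\pi(K)\pi(g)\pi(H)$, so the assignment $KgH\mapsto \pi(K)\pi(g)\pi(H)$ is well-defined, and it is surjective since $\pi$ is. The only real content is injectivity, and this is exactly where the hypothesis $N\subset K$ is used. So first I would set up the well-definedness and surjectivity in one or two sentences (this works for any subgroups $H,K$ and needs no hypothesis on $N$), then concentrate on injectivity.

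\textbf{Injectivity.} Suppose $g_1,g_2\in G$ satisfy $\pi(K)\pi(g_1)\pi(H)=\pi(K)\pi(g_2)\pi(H)$; I want $Kg_1H=Kg_2H$. Unwinding the equality of double cosets in $\bar G$, there exist $k\in K$ and $h\in H$ with $\pi(k)\pi(g_1)\pi(h)=\pi(g_2)$, i.e. $\pi(kg_1h)=\pi(g_2)$, i.e. $kg_1h\,g_2^{-1}\in\ker\pi=N$. Write $kg_1h=ng_2$ for some $n\in N$. Now use $N\subset K$: then $ng_2$ with $n\in N\subset K$ shows $kg_1h\in Kg_2$, hence $g_1h\in k^{-1}Kg_2=Kg_2$, hence $g_1\in Kg_2h^{-1}\subset Kg_2H$, so $Kg_1H\subset Kg_2H$; by symmetry (the roles of $g_1,g_2$ are interchangeable) $Kg_1H=Kg_2H$. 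That completes injectivity and hence the bijection.

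\textbf{Main obstacle.} Frankly there is no serious obstacle here — the lemma is routine, and the excerpt itself flags it as one "whose simple proof we leave to the reader." The one point that deserves care, and the only place the normality of $N$ and the inclusion $N\subset K$ genuinely enter, is the injectivity step: one must absorb the kernel element $n\in N$ into the left subgroup $K$, which is precisely what $N\subset K$ permits; normality of $N$ is what makes $G/N$ a group (and $\pi(K),\pi(H)$ subgroups of it) in the first place. I would present the argument in the forward direction as above rather than by constructing an explicit inverse, since the "well-defined, surjective, injective" structure is cleanest. Below is the proof I would write.

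\begin{proof}
Since $\pi$ is a surjective group homomorphism, $\pi(K)$ and $\pi(H)$ are subgroups of $\bar G$, and for $g\in G$ the set $\pi(KgH)=\pi(K)\pi(g)\pi(H)$ is a $(\pi(K),\pi(H))$-double coset in $\bar G$ depending only on the double coset $KgH$. Thus $KgH\mapsto \pi(K)\pi(g)\pi(H)$ defines a map $K\backslash G/H\to\pi(K)\backslash\bar G/\pi(H)$, which is surjective because $\pi$ is.

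For injectivity, let $g_1,g_2\in G$ with $\pi(K)\pi(g_1)\pi(H)=\pi(K)\pi(g_2)\pi(H)$. Then there are $k\in K$, $h\in H$ with $\pi(kg_1h)=\pi(g_2)$, so $kg_1h\,g_2^{-1}\in\ker\pi=N$, say $kg_1h=ng_2$ with $n\in N$. As $N\subset K$, we get $kg_1h\in Kg_2$, hence $g_1\in K g_2 h^{-1}\subset Kg_2H$ and therefore $Kg_1H\subset Kg_2H$. By symmetry $Kg_1H= Kg_2H$. Hence the map is injective, and so bijective.
\end{proof}
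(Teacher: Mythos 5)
Your proof is correct. The paper explicitly leaves this lemma's proof to the reader, so there is no in-paper argument to compare against; your "well-defined, surjective, injective" structure with $N\subset K$ used to absorb the kernel element in the injectivity step is exactly the natural argument one would expect.
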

We apply the lemma with $G=\GL_3(A)$, $N=\Gamma(t)$, $K=\Gamma_1(t)$, $H=\GL_3(A)_w$ and $\pi=\pr:G\to\GL_3(\Fq)$. This gives

\begin{prop}\label{prop:G1-Orbits-InW}
For every $w \in \S(\W)$ we have a bijective map
\label{reduktion} 
\begin{align*}
\pr: \G_1(t) \backslash \GL_3(A) / \GL_3(A)_w  & \rightarrow 
U \backslash \GL_3(\Fq) / \widebar{\GL_3(A)_w}, \\
\G_1(t) \, \alpha \, \GL_3(A)_w & \mapsto U\, \widebar{\alpha}  \,\widebar{\GL_3(A)_w}.
\end{align*}
Here an overbar means that we apply the reduction mod $t$ map $\pr$. 
\end{prop}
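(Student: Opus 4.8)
The plan is to deduce the proposition directly from the elementary group-theoretic lemma stated just above, applied to $G=\GL_3(A)$, the normal subgroup $N=\Gamma(t)$ (which is normal precisely because it is the kernel of the reduction homomorphism $\pr$), the subgroups $K=\Gamma_1(t)$ and $H=\GL_3(A)_w$, and the quotient map $\pi=\pr\colon\GL_3(A)\to\GL_3(A)/\Gamma(t)\cong\GL_3(\Fq)$. The only hypothesis of the lemma to check is $N\subset K$, i.e.\ $\Gamma(t)\subset\Gamma_1(t)$; this is immediate since $\Gamma_1(t)=\pr^{-1}(U)$ and the identity matrix lies in $U$, so $\Gamma(t)=\pr^{-1}(\{\mathrm{id}\})\subset\pr^{-1}(U)=\Gamma_1(t)$. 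Note that nothing is required of the relation between $\Gamma(t)$ and $H=\GL_3(A)_w$, which is fortunate since $\Gamma(t)$ is not contained in a stabilizer in general.

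With the hypothesis in place, the lemma yields a bijection $K\backslash G/H\xrightarrow{\ \sim\ }\pi(K)\backslash\bar G/\pi(H)$ given by $KgH\mapsto\pi(K)\pi(g)\pi(H)$. It remains only to identify the three images under $\pi=\pr$: by definition of $\Gamma_1(t)$ one has $\pr(\Gamma_1(t))=U$, while $\pr(\GL_3(A)_w)=\overline{\GL_3(A)_w}$ is by definition the set denoted by the overbar. Substituting these identifications rewrites the abstract bijection as the asserted map $\Gamma_1(t)\,\alpha\,\GL_3(A)_w\mapsto U\,\overline{\alpha}\,\overline{\GL_3(A)_w}$, which proves the proposition.

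There is essentially no obstacle, as the entire content is packaged into the preceding lemma. If one instead wished to argue from scratch, well-definedness and surjectivity of the displayed map are formal consequences of $\pr$ being a surjective homomorphism; the one point requiring care would be injectivity. There, given $\overline{\alpha_2}\in U\,\overline{\alpha_1}\,\overline{\GL_3(A)_w}$ one writes $\alpha_2=\gamma\,\alpha_1\,h\,n$ with $\gamma\in\Gamma_1(t)$, $h\in\GL_3(A)_w$ and $n\in\Gamma(t)$, and then uses the normality of $\Gamma(t)$ together with the inclusion $\Gamma(t)\subset\Gamma_1(t)$ to absorb the error term $n$: move it past $h$ and then past $\alpha_1$ (each step replacing $n$ by a conjugate, which stays in $\Gamma(t)$) and finally into $\Gamma_1(t)$, so that $\alpha_2\in\Gamma_1(t)\,\alpha_1\,\GL_3(A)_w$. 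This is exactly the mechanism already encoded in the proof of the abstract lemma.
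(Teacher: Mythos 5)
Your proposal is correct and follows exactly the same route as the paper: apply the preceding group-theoretic lemma with $G=\GL_3(A)$, $N=\Gamma(t)$, $K=\Gamma_1(t)$, $H=\GL_3(A)_w$, and $\pi=\pr$. Your additional verification of $\Gamma(t)\subset\Gamma_1(t)$ and the sketch of a from-scratch argument are sound but not in the paper, which simply cites the lemma.
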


We can now rely on Corollary~\ref{cor:mod-t-stabilizers} to compute the right hand sides in Proposition~\ref{prop:G1-Orbits-InW} for all $w\in\S(\W)$.
We distinguish three cases.
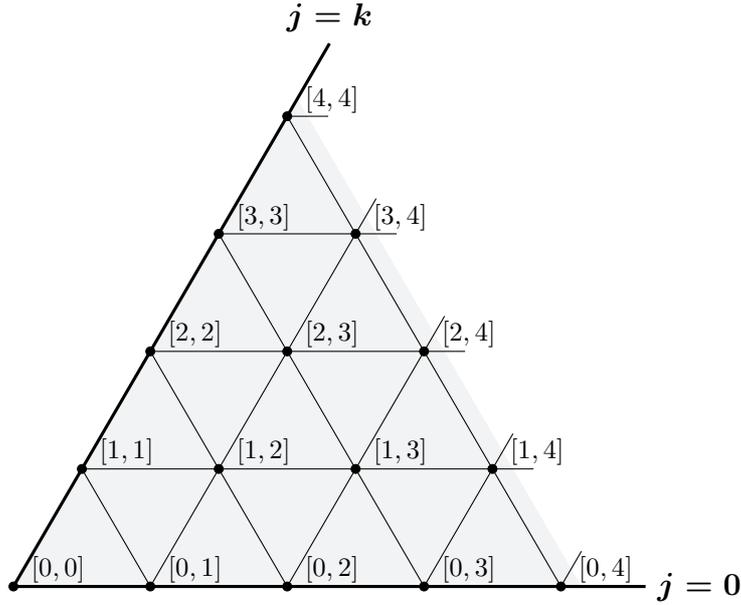
\begin{figure}[ht]
 \centering 
 \begin{tikzpicture} 
\tikzmath{\f = 1.8; \n = 4; \cont = 0.3;} 
\coordinate (Origin)   at (0,0);

\begin{scope}[y=(-60:1), label distance={-2mm}, font={\footnotesize}]
\pgfmathsetmacro{\bg}{(\n+0.5*\cont)*\f}

\path [fill=lightgray] (0,0) -- (\bg,-\bg) -- (\bg,0); 

\foreach \k in {0,...,\n}{

  \draw [] (\k*\f, -\k*\f) -- (\k*\f, 0);
  
  \draw [] (\k*\f, -\k*\f) -- (+ {(\n - max(0,-\k))*\f} + \cont *\f, -\k*\f); 
  
  \draw [rotate=-60] (\k*\f,- {(\n + min(0,\k))*\f}  - \cont *\f) -- (\k*\f, -\k*\f) ;
  \foreach \j in
    {0,...,\k}{
    \node[draw,circle,inner sep=0.1em,fill, label={above right:\hspace{0.2em} $[{\j}, {\k}]$ } ] at (\k*\f,- \j*\f) {};
  }
}
\end{scope}
\coordinate (XAxisMin) at (0,0);
\coordinate (XAxisMax) at ({2*\n/sqrt(3)*\f},0);
\coordinate (YAxisMin) at (0,0);
\coordinate (YAxisMax) at ({\n/sqrt(3)*\f},\n*\f);
\draw [line width=0.1em] (XAxisMin) -- (XAxisMax) node[right] {$\boldsymbol{j=0}$};
\draw [line width=0.1em] (YAxisMin) -- (YAxisMax) node[above] {$\boldsymbol{j=k}$} ;
\end{tikzpicture}
 \caption{Illustration of the standard sector $\W$}
 \label{fig:sector}
\end{figure}

\subparagraph{The origin}
Only for the vertex $w = [0, 0] \in \S_0(\W)$ we do get $\widebar{\GL_3(A)_w} = \GL_3(\Fq)$. Then the double quotient 
\[
U \backslash \GL_3(\Fq) / \widebar{\GL_3(A)_{[0, 0]}} = 
U\backslash \GL_3(\Fq) / \GL_3(\Fq)
\]
is a singleton set, and we choose $[0,0]\in\A$ as its single representative.

\subparagraph{The sides of $\W$}
For all vertices $w = [0, k] \in \S_0(\W)$ with $0 < k$ and for all edges of the form $w = \{ [0, k-1], [0, k] \} \in \S_1(\W)$ with $0 < k$ (i.\,e. on the ray $j=0$ in Figure \ref{fig:sector}) we get  
\[
\widebar{\GL_3(A)_w} = P_0=\left(\begin{smallmatrix}*&*&*\\ *&*&*\\0&0&*\end{smallmatrix}\right)
\] 
Similarly, for all vertices $w = [j, j] \in \S_0(\W)$ with $0 < j$ and for all edges of the form $w = \{ [j-1, j-1], [j, j] \} \in \S_1(\W)$ with $0 < j$ (i.\,e. on the ray $j=k$ in Figure \ref{fig:sector}) we get 
\[
\widebar{\GL_3(A)_w} = P_2
\] 
In both cases, the double cosets  
\[
U\backslash \GL_3(\Fq)/P_0 
\text{ and }
U\backslash \GL_3(\Fq)/P_2 
\]
have the same set of representatives (see Proposition \ref{prop:double-quot-fund} for details), namely 
\[(),(123),(132)\in W.\] 

These form a subgroup of $\mathfrak{S}_3$ of order $3$. Using Lemma~\ref{lem:S3-acts-OnA}, we can make explicit the representatives in $\A$ obtained via these representatives for vertices; the simple extension to edges is left to the reader:
\[()[j,j]\hspace{-.21pt}=\hspace{-.21pt}[j,j],  (123)[j,j]\hspace{-.21pt}=\hspace{-.21pt}(123)[0,j,j]\hspace{-.21pt}=\hspace{-.21pt}[j,0,j]\hspace{-.21pt}=\hspace{-.21pt}[-j,0],  (132)[j,j]\hspace{-.21pt}=\hspace{-.21pt}[j,j,0]\hspace{-.21pt}=\hspace{-.21pt}[0,-j], \]
i.e., in Figure~\ref{fig:apartment-sector} or Figure \ref{fig:permutationen} we rotate by $2\pi/3$, and the same rotation pattern holds for vertices of the form $[0,j]$. 

\subparagraph{The interior}
The remaining simplices are all vertices $w = [j, k] \in \S_0(\W)$ with $0 < j < k$, all edges $w \in \S_1(\W)$ containing at least one of those vertices, and all chambers $w \in \S_2(\W)$. For all these, one has 
\[
\widebar{\GL_3(A)_w} = B.
\]
For this case, we rely on a classic result from linear algebra, that we specialize to $\GL_3(\Fq)$.
\begin{prop}[Bruhat decomposition, cf.~{\cite[][Chapter 27]{bum2013}}] \label{prop:bruhat-decomp}
\!The group $\GL_3(\Fq)$ is the disjoint union
\[ \GL_3(\Fq) = \bigsqcup_{\sigma \in \mathfrak{S}_3} U \sigma B,
\]
where as usual we identify $\mathfrak{S}_3$ with the Weyl group~$W$.
\end{prop}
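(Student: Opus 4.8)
This is the Bruhat decomposition, reformulated with the unipotent radical $U$ in place of the full Borel $B$ on the left-hand factor; the honest proof is simply to quote the classical statement $\GL_3(\Fq)=\bigsqcup_{\sigma\in\mathfrak{S}_3}B\sigma B$ from \cite[Chapter~27]{bum2013} and observe that passing from $B\sigma B$ to $U\sigma B$ costs nothing. Concretely, the plan is to prove $B\sigma B=U\sigma B$ for every $\sigma\in\mathfrak{S}_3$ and then substitute.

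For that identity I would write $B=DU$, where $D\subset\GL_3(\Fq)$ is the torus of diagonal matrices: every upper-triangular invertible matrix factors as a diagonal matrix times a unipotent upper-triangular one. Thus $B\sigma B=DU\sigma B$, and it suffices to see that $D$ already acts trivially on the left on the set $U\sigma B$. For $d\in D$ one has $dUd^{-1}=U$ (the torus normalizes $U$) and $d\sigma=\sigma(\sigma^{-1}d\sigma)$ with $\sigma^{-1}d\sigma\in D\subset B$ (conjugating a diagonal matrix by a permutation matrix merely permutes its entries), so
\[ dU\sigma B=(dUd^{-1})(d\sigma)B=U\sigma(\sigma^{-1}d\sigma)B=U\sigma B. \]
Hence $B\sigma B=DU\sigma B=U\sigma B$. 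Feeding this into $\GL_3(\Fq)=\bigsqcup_{\sigma}B\sigma B$ yields $\GL_3(\Fq)=\bigcup_{\sigma}U\sigma B$, and the union stays disjoint because each $U\sigma B$ sits inside the corresponding cell $B\sigma B$ and the cells are pairwise disjoint.

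Along this route there is essentially no obstacle: once the classical theorem is invoked it is a two-line manipulation, so the only real decision is whether to cite Bruhat or reprove it. If a self-contained argument for $\GL_3$ were wanted, I would instead argue by Gaussian elimination, using that left multiplication by $U$ adds multiples of lower rows to higher rows while right multiplication by $B$ rescales columns and adds earlier columns to later ones. A pivoting procedure — in each column pick the lowest nonzero entry as pivot, clear the entries above it with $U$, normalize it by a column scaling, and clear the entries to its right with $B$ — reduces any $g\in\GL_3(\Fq)$ to a permutation matrix, which gives existence. The subtler point in that approach is \emph{uniqueness} of $\sigma$; the clean way is to note that the ranks of the lower-left submatrices of $g$ (rows $i,\dots,3$ against columns $1,\dots,j$) are unchanged under $U\cdot(-)$ and under $(-)\cdot B$, and that for a permutation matrix these ranks determine the permutation. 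Since $\mathfrak{S}_3$ has only six elements, either version could also be checked directly by hand.
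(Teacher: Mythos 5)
The paper gives no proof here at all: the proposition is stated with a bare citation to Bump, Chapter 27, and the subsequent Proposition \ref{prop:bruhat_decomp} (the $B^T$ variant) is the only one accompanied by an argument. Your primary route --- quote $\GL_3(\Fq)=\bigsqcup_\sigma B\sigma B$ from the reference and then show $B\sigma B=U\sigma B$ using $B=DU$, $dUd^{-1}=U$, and $\sigma^{-1}d\sigma\in D\subset B$ --- is correct and is precisely the reduction the paper leaves implicit in its citation, so in substance you are doing the same thing but making the $U$-versus-$B$ step explicit. The secondary self-contained Gaussian-elimination sketch is also sound, including the uniqueness argument via ranks of lower-left submatrices, which are indeed invariant under left multiplication by $U$ and right multiplication by $B$ and determine the permutation. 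No gaps; your write-up is simply more explicit than the paper, which for this proposition defers entirely to the reference.
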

We deduce that in the present case we have
\[U \backslash \GL_3(\Fq) / \widebar{\GL_3(A)_w}=
U \backslash \GL_3(\Fq) / B=\mathfrak{S}_3.\]
\begin{figure}[H]
 \centering
 \hspace*{0.9cm}\begin{tikzpicture} 
\tikzmath{\f = 1.1; \n = 4; \cont = 0.6;} 
\coordinate (Origin)   at (0,0);

\begin{scope}[y=(-60:1), label distance={-2mm}, font={\footnotesize}]
\pgfmathsetmacro{\bg}{(\n+0.5*\cont)*\f}
\path [fill=lightgray] (\bg,-\bg) -- (0,-\bg) -- (- \bg,0) -- (-\bg, \bg) -- (0,\bg) -- (\bg, 0); 
\end{scope}
\coordinate (XAxisMin) at (-{2*\n/sqrt(3)}*\f,0);
\coordinate (XAxisMax) at ({2*\n/sqrt(3)*\f},0);
\coordinate (YAxisMin) at (-{\n/sqrt(3)}*\f,-\n*\f);
\coordinate (YAxisMax) at ({\n/sqrt(3)*\f},\n*\f);
\coordinate (ZAxisMin) at ({\n/sqrt(3)*\f},-\n*\f);
\coordinate (ZAxisMax) at (-{\n/sqrt(3)*\f},\n*\f);
\draw [line width=0.1em] (XAxisMin) -- (XAxisMax) node[right] {$\boldsymbol{j=0}$};
\draw [line width=0.1em] (YAxisMin) -- (YAxisMax) node[above] {$\boldsymbol{j=k}$} ;
\draw [line width=0.1em] (ZAxisMin) -- (ZAxisMax) node[above] {$\boldsymbol{k=0}$} ;

\begin{scope}[y=(-60:1), label distance={-0mm}, font={\footnotesize}]
\foreach \k in {-\n,...,\n}{
  \draw [] (\k*\f,- {(\n + min(0,\k))*\f}  - \cont *\f) -- (\k*\f,+ {(\n - max(0,\k))*\f} + \cont *\f); 
  \draw [] (- {(\n + min(0,\k))*\f}  - \cont *\f, \k*\f) -- (+ {(\n - max(0,\k))*\f} + \cont *\f, \k*\f); 
  \draw [rotate=-60] (\k*\f,- {(\n + min(0,\k))*\f}  - \cont *\f) -- (\k*\f,+ {(\n - max(0,\k))*\f} + \cont *\f) ;
}

  \path [draw=fullblue, line width=1pt, fill=lightblue] (3*\f,-3*\f) -- (3*\f,-2*\f) -- (2*\f,-2*\f) -- cycle ;
  \path [draw=fullblue, line width=1pt, fill=lightblue] (3*\f,-3*\f) -- (2*\f,-3*\f) -- (2*\f,-2*\f) -- cycle ; 

  \path [draw=fullblue, line width=1pt, fill=lightblue] (-2*\f,0*\f) -- (-3*\f,0*\f) -- (-2*\f,-1*\f) -- cycle ; 
  \path [draw=fullblue, line width=1pt, fill=lightblue] (-2*\f,0*\f) -- (-3*\f,0*\f) -- (-3*\f,1*\f) -- cycle ; 
  
  \path [draw=fullblue, line width=1pt, fill=lightblue] (0*\f,3*\f) -- (0*\f,2*\f) -- (1*\f,2*\f) -- cycle ; 
  \path [draw=fullblue, line width=1pt, fill=lightblue] (0*\f,3*\f) -- (0*\f,2*\f) -- (-1*\f,3*\f) -- cycle ; 

\foreach \k in {-\n,...,\n}{
	\pgfmathsetmacro{\lower}{int(-\n+max(\k,0))}
	\pgfmathsetmacro{\upper}{int(\n+min(\k,0))}
  \foreach \j in
    {\lower,...,\upper}{
    \node[draw,circle,inner sep=0.07em,fill ] at (\k*\f,- \j*\f) {};
  }
}

    \node[draw,circle,inner sep=0.07em,fill=fullblue, label={[fill=lightblue] below: $[j, k]$}] at (3*\f,-2*\f) {}; 
    \node[draw,circle,inner sep=0.07em,fill=fullblue, label={[fill=lightblue, align=right] above: $\left( \begin{smallmatrix} 1&0&0\\0&0&1\\0&1&0 \end{smallmatrix}\right) [j, k]$\\$= [k, j]$}] at (2*\f,-3*\f) {}; 

    \node[draw,circle,inner sep=0.07em,fill=fullblue, label={[fill=lightblue, align=right] above: $\left( \begin{smallmatrix} 0&1&0\\0&0&1\\1&0&0 \end{smallmatrix}\right) [j, k]$\\$= [k-j, -j]$}] at (-2*\f,-1*\f) {}; 
    \node[draw,circle,inner sep=0.07em,fill=fullblue, label={[fill=lightblue, align=right] below: $\left( \begin{smallmatrix} 0&0&1\\0&1&0\\1&0&0 \end{smallmatrix}\right) [j, k]$\\$= [j-k, -k]$}] at (-3*\f,1*\f) {}; 

    \node[draw,circle,inner sep=0.07em,fill=fullblue, label={[fill=lightblue, align=right] above: $\left( \begin{smallmatrix} 0&1&0\\1&0&0\\0&0&1 \end{smallmatrix}\right) [j, k]$\\$= [-j,k-j]$}] at (1*\f,2*\f) {}; 
    \node[draw,circle,inner sep=0.07em,fill=fullblue, label={[fill=lightblue, align=right] below: $\left( \begin{smallmatrix} 0&0&1\\1&0&0\\0&1&0 \end{smallmatrix}\right) [j, k]$\\$= [-k,j-k]$}] at (-1*\f,3*\f) {};
\end{scope}
\end{tikzpicture}
 \caption{Orbit of an interior chamber with vertex $[j, k]$ under the action of $W = \mathfrak{S}_3$}
 \label{fig:permutationen}
\end{figure}
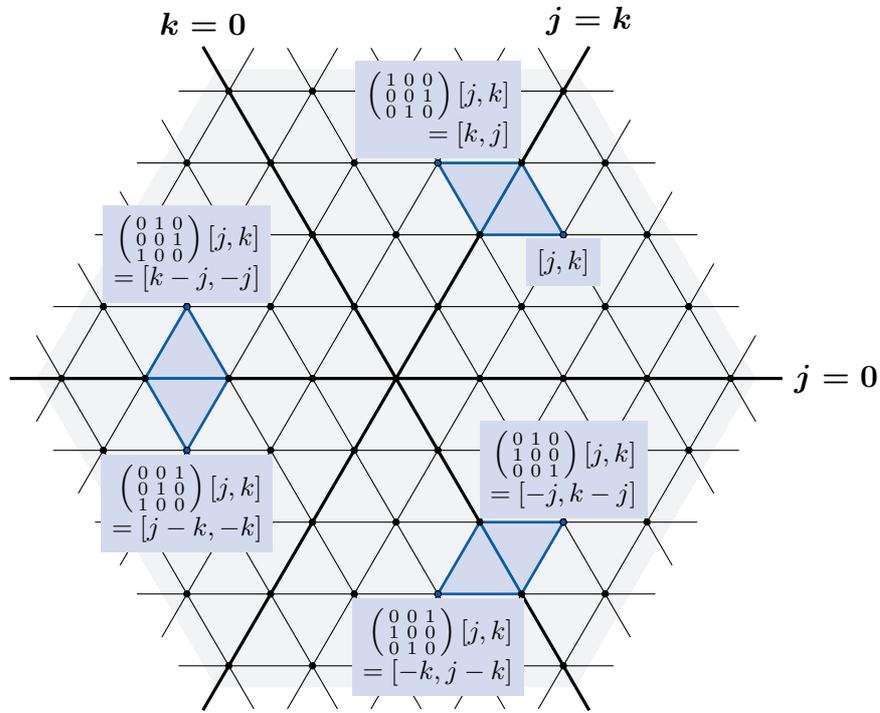
Hence the $\mathfrak{S}_3$-orbit in $\A$ of $[j,k]$ is a set of representatives for the $\G_1(t)$-orbit of $[j,k]$ in $\B$, and Lemma~\ref{lem:S3-acts-OnA} gives this $\mathfrak{S}_3$-orbit concretely as
\begin{eqnarray*}
\lefteqn{\{[0,j,k],[j,0,k],[j,k,0],[0,k,j],[k,0,j],[k,j,0]\}}\\
&=\{[j,k],[-j,k-j],[k-j,-j],[k,j],[-k,j-k],[j-k,-k]\}.
\end{eqnarray*}

A similar computation gives the result for edges and chambers. An illustration is given in Figure  \ref{fig:permutationen}.

Altogether we have proved the following result:
\begin{cor}\label{cor:A-FD}
The standard apartment $\A$ is a fundamental domain for the action of $\G_1(t)$ on $\B$.
\end{cor}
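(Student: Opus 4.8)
The plan is to assemble the three orbit computations just completed (``the origin'', ``the sides of $\W$'', ``the interior''). By Definition~\ref{def:FD} I must check that $\S(\A)$ is connected --- which is clear, $\A$ being an apartment --- and that it meets each $\G_1(t)$-orbit of simplices in exactly one element. For existence, given $x\in\S(\B)$, Proposition~\ref{prop:w-fundamental} provides a unique $w\in\S(\W)\subseteq\S(\A)$ with $x\in\GL_3(A)w$. Transporting along the orbit--stabilizer bijection $\GL_3(A)w\cong\GL_3(A)/\GL_3(A)_w$ and then passing to $\G_1(t)$-orbits, Proposition~\ref{prop:G1-Orbits-InW} together with the case analysis (which rests on Corollary~\ref{cor:mod-t-stabilizers}, on the Bruhat decomposition (Proposition~\ref{prop:bruhat-decomp}), and on Proposition~\ref{prop:double-quot-fund}) shows that the double coset $\G_1(t)\,g\,\GL_3(A)_w$ is represented by some $\sigma\in\mathfrak S_3$ lying in an explicit set $\Sigma_w$ ($\{()\}$ at the origin, $\{(),(123),(132)\}$ on each boundary ray of $\W$, and all of $\mathfrak S_3$ in the interior). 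Hence $x$ is $\G_1(t)$-equivalent to $\sigma w$, which lies in $\S(\A)$ by Lemma~\ref{lem:S3-acts-OnA}.

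For uniqueness I would first record the geometric fact $\GL_3(A)w\cap\S(\A)=\mathfrak S_3\cdot w$ for each $w\in\S(\W)$: the inclusion ``$\supseteq$'' is Lemma~\ref{lem:S3-acts-OnA}, and ``$\subseteq$'' follows since $\A$ is covered by the six translates $\sigma\W$, $\sigma\in\mathfrak S_3$ (every simplex of $\A$ is obtained from one of $\W$ by permuting coordinates), combined with the uniqueness in Proposition~\ref{prop:w-fundamental}. Now suppose $s_1,s_2\in\S(\A)$ lie in a common $\G_1(t)$-orbit. As they are in particular $\GL_3(A)$-equivalent, they share the same $\W$-representative $w$, so $s_i=\sigma_i w$ with $\sigma_i\in\mathfrak S_3$. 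By Proposition~\ref{prop:G1-Orbits-InW} the $\G_1(t)$-orbit of $\sigma_i w$ within $\GL_3(A)w$ is detected by the double coset $U\,\widebar{\sigma_i}\,\widebar{\GL_3(A)_w}\in U\backslash\GL_3(\Fq)/\widebar{\GL_3(A)_w}$; it then remains to show that $\sigma w$ and $\sigma'w$ lie in the same $\G_1(t)$-orbit exactly when $\sigma w=\sigma'w$ as simplices, which forces $s_1=s_2$ and finishes the proof.

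I expect that final equivalence, on the two boundary rays of $\W$, to be the one step needing genuine attention: there $\Stab_{\mathfrak S_3}(w)$ has order $2$, so $\sigma\mapsto\sigma w$ is a $2$-to-$1$ map $\mathfrak S_3\to\mathfrak S_3 w$, and one must match its fibres with those of the $2$-to-$1$ map $\sigma\mapsto U\bar\sigma\,\widebar{\GL_3(A)_w}$ onto the $3$-element double quotient. One inclusion is formal --- $\sigma w=\sigma'w$ implies $\bar\sigma^{-1}\bar\sigma'\in\pr(\GL_3(A)_w)$, hence the same double coset --- and the reverse then follows by counting; in the interior the Bruhat decomposition makes $\sigma\mapsto U\bar\sigma B$ a bijection on $\mathfrak S_3$, so the statement is immediate, and at the origin it is trivial. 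Beyond this, the corollary is pure bookkeeping over the double-coset computations already in place, so I would not expect to need any new idea.
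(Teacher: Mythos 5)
Your proposal is correct and follows the same route as the paper: it assembles the three double-coset computations (origin, boundary rays, interior) via Proposition~\ref{prop:G1-Orbits-InW} and the identification of $U\backslash\GL_3(\Fq)/\widebar{\GL_3(A)_w}$ with subsets of $\mathfrak S_3$. You spell out the injectivity half (that distinct $\mathfrak S_3$-translates of $w\in\S(\W)$ lie in distinct $\G_1(t)$-orbits, handled by the fiber-counting argument on the boundary rays) more explicitly than the paper, which asserts it via the concrete list of orbit representatives, but the underlying argument is the same.
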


For later use, we also record the following variant of the Bruhat decomposition:
\begin{prop} \label{prop:bruhat_decomp}
We can write $\GL_3(\Fq)$ as a disjoint union
\[\GL_3(\Fq) = \bigsqcup_{w \in W} U w B^T, \]
where $B^T = \lb M^T | M \in B \rb$ is the set of lower triangular matrices in $\GL_3(\Fq)$. 
\end{prop}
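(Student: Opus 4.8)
The plan is to deduce this from the standard Bruhat decomposition (Proposition~\ref{prop:bruhat-decomp}) by right‑translating with the longest Weyl element. Put $w_0 := \left(\begin{smallmatrix}0&0&1\\0&1&0\\1&0&0\end{smallmatrix}\right)\in W$; it is an involution, so $w_0^{-1}=w_0$. The one small computation needed is that conjugation by $w_0$ interchanges the two Borels: since $w_0$ is the permutation matrix of the transposition $(1\,3)$, conjugating reverses the roles of the indices $1$ and $3$, and a direct check on matrix entries (it suffices to track the torus and the root elements $I+e_{12}$, $I+e_{13}$, $I+e_{23}$) gives $w_0 B w_0 = B^T$.

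Granting this, rewrite each block as a set product: for $w\in W$,
\[
U w B^T \;=\; U w\,(w_0 B w_0)\;=\;\bigl(U (w w_0) B\bigr)\, w_0 .
\]
Taking the union over $w\in W$, and using that $w\mapsto w w_0$ is a bijection of $W$ together with Proposition~\ref{prop:bruhat-decomp}, we obtain
\[
\bigcup_{w\in W} U w B^T \;=\; \Bigl(\,\bigcup_{w'\in W} U w' B\,\Bigr)\, w_0 \;=\; \GL_3(\Fq)\, w_0 \;=\; \GL_3(\Fq),
\]
which gives the covering. For disjointness one argues the same way: if $U w_1 B^T$ and $U w_2 B^T$ share an element, then so do $\bigl(U (w_1 w_0) B\bigr)w_0$ and $\bigl(U (w_2 w_0) B\bigr)w_0$, and right‑multiplying by $w_0^{-1}=w_0$ shows that $U (w_1 w_0) B$ and $U (w_2 w_0) B$ meet; the disjointness in Proposition~\ref{prop:bruhat-decomp} then forces $w_1 w_0 = w_2 w_0$, i.e.\ $w_1=w_2$.

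There is essentially no obstacle here: once one observes $w_0 B w_0 = B^T$, the whole statement is a formal consequence of the standard Bruhat decomposition, using only that right translation by the invertible matrix $w_0$ carries a disjoint union onto a disjoint union. (One could alternatively give a hands‑on proof by Gaussian elimination, reducing an arbitrary $g\in\GL_3(\Fq)$ to the form $U w$ via column operations with lower triangular matrices, but this merely re‑proves Proposition~\ref{prop:bruhat-decomp} and is less economical.)
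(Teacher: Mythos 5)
Your proof is correct and follows essentially the same route as the paper's: both start from $B^T=w_0 B w_0$ with $w_0=\tau=(13)$ the long Weyl element, right-translate the standard Bruhat decomposition by $w_0$, and re-index the union via the bijection $w\mapsto w w_0$ of $W$. The paper is a bit terser (it absorbs the disjointness into the disjoint-union notation) but there is no substantive difference.
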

\begin{proof}
Let $\tau=(13)\in W$ (be the long word of $W$), and observe that we have $B^T=\tau B\tau$. Now, using $\GL_3(\Fq)=\GL_3(\Fq)\tau$ and $\mathfrak{S}_3\tau=\mathfrak{S}_3$, from Proposition \ref{prop:bruhat-decomp} we deduce
\begin{align*}
\GL_3(\Fq)& = 
\bigsqcup_{\sigma \in \mathfrak{S}_3} U \sigma B\tau = 
\bigsqcup_{\sigma \in \mathfrak{S}_3} U \sigma \tau B\tau = \bigsqcup_{\sigma \in \mathfrak{S}_3} U \sigma B^T.
\end{align*}
\end{proof}

\subsubsection{$\Gamma_1(t)$-stabilizers}
We make more observations on the relation of simplices in the standard apartment that will become useful in the following Section \ref{sec:cocycles}. For that purpose, we examine the stabilizers $\G_1(t)$ of all the simplices that occur in the fundamental domain $\A$ in more detail. We will call a simplex \emph{$\G_1(t)$-stable} if its stabilizer is trivial.
\begin{cor} \label{cor:stab-a}
We again use Notation \ref{schreibweise} to describe the $\G_1(t)$-stabilizers of simplices in the standard apartment. For a vertex $[j,k]$ in $\A$ we have
\[ \G_1(t)_{[j, k]} = \begin{pmatrix}
 1 & \{j\} & \{k\} \\
 t \{-j - 1\} & 1 & \{k-j\} \\
 t \{-k -1 \} & t \{j-k - 1\} & 1 \\
\end{pmatrix}. \]
    The stabilizer of a higher-dimensional simplex is the intersection of the stabilizers of its vertices.    The only stable simplex in $\S(\A)$ is the chamber $s_0$ from \eqref{eq:s0-def}.
\end{cor}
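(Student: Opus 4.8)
The plan is to deduce Corollary~\ref{cor:stab-a} directly from Proposition~\ref{prop:stab_a} and Proposition~\ref{prop:stab_a}'s predecessor: since $\G_1(t)=\pr^{-1}(U)$, we have $\G_1(t)_{[j,k]}=\GL_3(A)_{[j,k]}\cap\G_1(t)$, so I intersect the explicit matrix set from Proposition~\ref{prop:stab_a} with the congruence condition $\gamma\equiv\left(\begin{smallmatrix}1&*&*\\0&1&*\\0&0&1\end{smallmatrix}\right)\bmod t$. Concretely, in position $(1,1),(2,2),(3,3)$ the entry lies in $*=\Fq$ but must reduce to $1$ mod $t$; a constant in $\Fq$ reducing to $1$ mod $t$ is exactly $1$, so these become the symbol $1$. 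In the strictly-upper-triangular positions $(1,2),(1,3),(2,3)$ the entries $\{j\},\{k\},\{k-j\}$ are already polynomials of bounded degree with no congruence constraint mod $t$ beyond being arbitrary, so they are unchanged. In the strictly-lower-triangular positions $(2,1),(3,1),(3,2)$ the entries $\{-j\},\{-k\},\{j-k\}$ must in addition reduce to $0$ mod $t$, i.e.\ be divisible by $t$; a polynomial of degree $\le m$ divisible by $t$ is exactly an element of $t\{m-1\}$ in Notation~\ref{schreibweise}, giving $t\{-j-1\}$, $t\{-k-1\}$, $t\{j-k-1\}$ respectively. This yields the displayed stabilizer; the statement about higher-dimensional simplices is then immediate from the corresponding clause of Proposition~\ref{prop:stab_a} (together with the remark that intersecting over the vertices of a simplex commutes with the reduction, already used for Corollary~\ref{cor:mod-t-stabilizers}).

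For the final assertion that $s_0$ is the unique stable simplex, I would argue as follows. First, $\G_1(t)_{s_0}$ is trivial: by \eqref{eq:s0-def}, $s_0=\{[0,0],[-1,-1],[-1,0]\}$, and intersecting the three stabilizers $\G_1(t)_{[0,0]}$, $\G_1(t)_{[-1,-1]}$, $\G_1(t)_{[-1,0]}$ read off from the displayed formula forces, in each off-diagonal slot, a symbol $\{m\}$ or $t\{m\}$ with $m<0$, which by Notation~\ref{schreibweise} is the zero set; hence only the identity matrix survives. Second, no other simplex of $\A$ is stable. Since a simplex is stable iff one (hence, being contained in it, by monotonicity any smaller) of its vertices is, and since every simplex of $\A$ contains a vertex, it suffices to show that the only vertex $[j,k]$ of $\A$ with trivial $\G_1(t)$-stabilizer is $[0,0]$ and $[-1,-1]$ and $[-1,0]$... more carefully: a chamber is stable iff at least one of its vertices is stable (as the stabilizer of a face contains that of the chamber), so I should identify \emph{all} stable vertices and then check which stable vertices can only sit inside $s_0$. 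Actually the cleanest route: a vertex $[j,k]$ has trivial stabilizer iff \emph{all six} off-diagonal entry-sets in the displayed matrix are $\{0\}$, i.e.\ iff $j\le 0$, $k\le 0$, $k-j\le 0$, $-j-1<0$, $-k-1<0$, $j-k-1<0$, equivalently $-1\le j\le 0$, $-1\le k\le 0$, and $j-1< k< j+1$ hence $k\in\{j-1,j,j+1\}\cap\{-1,0\}$. Enumerating, the stable vertices are exactly $[0,0],[-1,0],[-1,-1]$ — the three vertices of $s_0$.

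It remains to conclude that $s_0$ is the only stable simplex. A stable simplex of $\A$ has all its vertices stable (the stabilizer of a face contains the stabilizer of any simplex having it as a face — wait, inclusion goes the other way: if $\sigma\subset\tau$ then $\G_{\tau}\subset\G_{\sigma}$, so $\tau$ stable implies $\sigma$ stable for its faces $\sigma$, but not conversely). So a stable simplex has all vertices among $\{[0,0],[-1,0],[-1,-1]\}$; the only simplices of $\A$ with vertex set contained in that three-element set are its faces together with any simplex spanned by a subset that actually forms a simplex of $\B$. Checking against the two chamber shapes in $\A$ around the origin: $\{[0,0],[-1,0],[-1,-1]\}$ is precisely the positive chamber $s_0$, its edges and vertices. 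Conversely all of these are stable since their stabilizers contain $\G_1(t)_{s_0}=1$ — no wait, being a face they \emph{contain} $\G_1(t)_{s_0}$, so one must re-check; but each such face's stabilizer was already computed to be trivial above (vertices) or is an intersection of trivial groups (edges), so indeed every face of $s_0$ is stable. Thus the stable simplices are exactly $s_0$ and its faces. If the corollary's "only stable simplex" is read as "only stable simplex that is maximal" or the intended reading is that $s_0$ is the unique stable chamber, this is what the argument gives; the subtlety that proper faces of a stable chamber are also stable is the one point I would state carefully. \textbf{The main obstacle} is thus not computational but a matter of precise phrasing: clarifying that "stable simplex" in the statement means stable \emph{chamber} (equivalently, the unique maximal stable simplex), since sub-faces of $s_0$ are automatically stable too, and verifying via the explicit formula that $s_0$ is the only chamber all of whose vertices are stable.
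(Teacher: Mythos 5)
The first half of your argument—deriving the displayed formula for $\G_1(t)_{[j,k]}$ by intersecting the $\GL_3(A)$-stabilizer from Proposition~\ref{prop:stab_a} with $\G_1(t)$, entry by entry—is correct and is exactly what the paper does.

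The uniqueness argument, however, has a genuine gap, and it runs through your entire strategy of working vertex by vertex. First, the computation of which vertices are stable is off: the symbol $\{m\}$ collapses to the singleton $\{0\}$ exactly when $m<0$ (\emph{strict} inequality; recall $\{0\}=*=\Fq$ is $q$ elements, not one). So trivialising all six off-diagonal slots of $\G_1(t)_{[j,k]}$ forces $j<0$, $k<0$, $k-j<0$, $-j-1<0$, $-k-1<0$, $j-k-1<0$; the first and fourth already give $j<0$ and $j\ge 0$, a contradiction. Hence \emph{no} vertex of $\A$ has trivial $\G_1(t)$-stabilizer — in particular, $\G_1(t)_{[0,0]}=U$ has order $q^3$. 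Your list $\{[0,0],[-1,0],[-1,-1]\}$ of "stable vertices" is therefore empty in reality.

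Second, the face/stabilizer implication you rely on runs the wrong way. From $\sigma\subset\tau$ one gets $\G_{\tau}\subset\G_{\sigma}$, so $\sigma$ stable implies $\tau$ stable, \emph{not} the converse: a chamber can have trivial stabilizer even though every one of its faces has a large one, because the trivial intersection $\G_{\tau}=\bigcap_{v\in\tau}\G_v$ does not force any single $\G_v$ to be trivial. That is precisely the situation for $s_0$, whose vertices all have non-trivial $\G_1(t)$-stabilizers. Consequently your closing conclusion that "the stable simplices are exactly $s_0$ and its faces" is false — the faces of $s_0$ are not stable, and $s_0$ really is the unique stable simplex of any dimension, which is what the corollary asserts.

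The paper's proof avoids this trap: it does not try to locate stable vertices at all. Instead it inspects the explicit $\G_1(t)$-stabilizers of \emph{chambers} given in formulas~\eqref{eq:Stab-s+} and \eqref{eq:Stab-s-} (together with Theorem~\ref{thm:index_q}, which shows the stabilizer strictly grows as one moves away from $s_0$), to see that only simplices contained in $s_0$ could possibly be stable, and then checks directly that among those only the chamber $s_0$ itself has trivial stabilizer. This is the argument you should reproduce; the vertex-based route you attempted cannot work.
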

\begin{proof}
The formula for the stabilizers is immediate from Proposition \ref{prop:stab_a} by taking the intersection with $\G_1(t)$. The proof that $s_0$ is the only stable simplex follows by inspection of the chamber stabilizers displayed below in formulas \eqref{eq:Stab-s-} and \eqref{eq:Stab-s+}. From this it follows that only simplices contained in $s_0$ could be stable. But by direct computation, one sees that this only holds for $s_0$ itself.
\end{proof}

Now we can formulate the main result of this subsection.

\begin{thm}
\label{thm:index_q}
For each edge $e \in \S_1(\A)$ in the standard apartment, the following applies: $e$ is a face of exactly two chambers $r, s \in \S_2(\A)$ in the standard apartment. One of these is closer to the stable chamber than the other, such as $r \prec s$. Then the following applies 
\[\G_1(t)_r \subset \G_1(t)_s = \G_1(t)_e \text{ and } |\G_1(t)_s / \G_1(t)_r| = q.\]
If we choose representatives $\gamma_1, \dots, \gamma_q$ of $\G_1(t)_s / \G_1(t)_r$, these operate transitively on the $q$ different chambers except $s$, which are adjacent to $e$.
In particular, all chambers other than $s$ that are adjacent to $e$ are equivalent to $r$ under the action of $\G_1(t)$.
\end{thm}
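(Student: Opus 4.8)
The plan is to work entirely inside the standard apartment $\A$ and to exploit the distance computations of Lemma~\ref{lem:distance} together with the explicit stabilizer formulas of Corollary~\ref{cor:stab-a}. First I would fix an edge $e\in\S_1(\A)$ and identify the two chambers $r,s\in\S_2(\A)$ of which $e$ is a face; by Lemma~\ref{lem:distance} (final line) the two chambers have distances to $s_0$ differing in parity, and more precisely a case analysis along the three edge types shows $d(\cdot,s_0)$ differs by exactly $1$, so one chamber is strictly closer: relabel so $r\prec s$. Since $e\subset s$ and $e\subset r$, the inclusion $\G_1(t)_s=\G_1(t)_e\supset\G_1(t)_r$ and the identity $\G_1(t)_e=\G_1(t)_s$ (the edge stabilizer equals the stabilizer of the farther chamber) should drop out of a direct computation intersecting the vertex stabilizers from Corollary~\ref{cor:stab-a}: the edge $e$ has two vertices, $s$ has those two plus a third vertex $v_s$, and $r$ has those two plus a third vertex $v_r$, and one checks that adding $v_s$ imposes no new congruence condition while adding $v_r$ imposes exactly one (a single $\Fq$-worth of entries becomes constrained), which simultaneously gives $\G_1(t)_e=\G_1(t)_s$ and $|\G_1(t)_s/\G_1(t)_r|=q$.

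Concretely I would organize the computation by the edge type (the three shapes listed after Definition~\ref{def:distance}) and, within each type, by the sign/region data governing which of the two adjacent chambers in $\A$ is closer to $s_0$; there are finitely many cases and each reduces to intersecting the displayed matrix-pattern for $\G_1(t)_{[j,k]}$ over the relevant vertices and reading off where a symbol $\{m\}$ or $t\{m\}$ drops its dimension by one as one passes from $s$ to $r$. The upshot in every case is that exactly one matrix entry's allowed set shrinks by a factor $q$ (e.g. $\{m\}\rightsquigarrow\{m-1\}$ or $t\{m\}\rightsquigarrow t\{m-1\}$), giving the index $q$; I would present one representative case in detail and indicate that the others are analogous.

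For the transitivity statement: the $q$ chambers adjacent to $e$ other than $s$ are permuted by $\G_1(t)_e=\G_1(t)_s$, since any element stabilizing $e$ permutes the set of chambers containing $e$ and fixes $s$ (it lies in $\G_1(t)_s$). The kernel of this action on that $q$-element set contains $\G_1(t)_r$ (which stabilizes $r$, hence by the same index computation applied at each of the other chambers, or by a symmetry argument, fixes all of them); comparing cardinalities, $|\G_1(t)_s/\G_1(t)_r|=q$ forces the action of a transversal $\gamma_1,\dots,\gamma_q$ to be transitive on the $q$ chambers, and in particular each is $\G_1(t)$-equivalent to $r$. I expect the main obstacle to be bookkeeping: verifying, uniformly across edge types and regions, that the passage $s\to r$ changes precisely one entry-condition — this requires care with the normalization $[j,k]=[0,j,k]$ and with which third vertex is "new" for $r$ versus $s$. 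A clean way to sidestep some of the casework is to first reduce, using that $\A$ is a fundamental domain for $\G_1(t)$ (Corollary~\ref{cor:A-FD}) and that $W\subset\G_0(t)$ acts on $\A$ by the permutation action of Lemma~\ref{lem:S3-acts-OnA}, to edges lying in the standard sector $\W$ near $s_0$, though one must be slightly careful since $W\not\subset\G_1(t)$ and conjugation by $W$ moves $\G_1(t)$-stabilizers among the $\G_0(t)$-stabilizers; alternatively one simply grinds the cases, which is routine given Corollary~\ref{cor:stab-a}.
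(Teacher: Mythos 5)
Your plan for the stabilizer computation — fixing an edge $e$, computing $\G_1(t)_e$, $\G_1(t)_r$, $\G_1(t)_s$ by intersecting the vertex stabilizers of Corollary~\ref{cor:stab-a}, and reading off that passing from $e$ to the farther chamber imposes no new constraint while passing to the nearer chamber shrinks exactly one entry-set by a factor $q$ — is exactly the paper's argument: the paper works out the representative edge $\{[j-1,k-1],[j,k]\}$, displays the three stabilizers with the discrepant entries in bold, and splits into $j>k$ and $j\le k$ using Lemma~\ref{lem:distance} to decide which chamber is $s$. Your observation that the parity clause of Lemma~\ref{lem:distance} already forces the two distances to differ is a nice shortcut.

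The transitivity part, however, has a real gap. You claim $\G_1(t)_r$ lies in the kernel of the $\G_1(t)_s$-action on the $q$ other chambers adjacent to $e$, justified by ``the same index computation applied at each of the other chambers, or by a symmetry argument.'' Those other chambers are $\gamma_i r$ with $\gamma_i\in\G_1(t)_s$; they do not lie in $\A$, and their $\G_1(t)$-stabilizers are the conjugates $\gamma_i\G_1(t)_r\gamma_i^{-1}$, not $\G_1(t)_r$ itself. So $\G_1(t)_r$ fixes $\gamma_i r$ only if $\gamma_i$ normalizes $\G_1(t)_r$, i.e.\ only if $\G_1(t)_r\trianglelefteq\G_1(t)_s$ — true for $q=p$ (index-$p$ subgroup of a $p$-group) but not automatic for $q=p^e$ with $e>1$. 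And even granting the kernel claim, a group of order $q$ acting on a $q$-element set need not act transitively (it could act trivially), so ``comparing cardinalities forces transitivity'' does not follow. The clean argument is plain orbit–stabilizer: $\Stab_{\G_1(t)_s}(r)=\G_1(t)_s\cap\Stab_{\GL_3(\F)}(r)\cap\G_1(t)=\G_1(t)_r$, so the $\G_1(t)_s$-orbit of $r$ has exactly $q$ elements; each lies among the chambers adjacent to $e$, and none equals $s$ (if $\gamma_i r=s$ then $r,s\in\S_2(\A)$ would be $\G_1(t)$-equivalent, contradicting Corollary~\ref{cor:A-FD}); since there are exactly $q+1$ chambers adjacent to $e$, the orbit is precisely the $q$ chambers other than $s$. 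You should replace your kernel argument with this.
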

\begin{proof}
The proof is similar to the proof of \cite[][Proposition 7.14]{gr2021}. 
As was explained earlier, there are three types of edges, in terms of their angle relative to the horizontal axis: 
\[\{[j-1,k-1],[j,k]\} ,\{[j-1,k],[j,k]\} ,\{[j,k-1],[j,k]\}.\]
We explain the proof for an edge $e = \{[j-1,k-1],[j,k]\}$. The other cases work similarly and are left as an exercise to the reader.

\begin{figure}[h!]
 \centering 
 \begin{tikzpicture} 
\tikzmath{\f = 0.9;} 
\begin{scope}[y=(-60:1), label distance={-2mm}, font={\footnotesize}]
    \draw [fill=lightgray] (\f,-\f) -- (\f, \f) -- (-\f, \f) -- (\f, -\f) -- (-\f, -\f) -- (-\f, \f) ; 
    \node [fill=lightgray,rounded corners=2pt,inner sep=1pt] at (0,0) {$e$};
    \node[draw,circle,inner sep=0.1em,fill, label={above right: $[j, k]$ } ] at (\f,- \f) {};
    \node[draw,circle,inner sep=0.1em,fill, label={below left: $[j-1, k-1]$ } ] at (-\f, \f) {};
    \node[draw,circle,inner sep=0.1em,fill, label={above left: $[j, k-1]$ } ] at (-\f,- \f) {};
    \node[draw,circle,inner sep=0.1em,fill, label={below right: $[j-1, k]$ } ] at (\f, \f) {};
    \node at (-\f/3.0 ,-\f/3.0) {$s_+$};
    \node at (\f/3.0, \f/3.0) {$s_-$};
\end{scope}
\end{tikzpicture}
 \caption{An edge $e = \{[j-1,k-1],[j,k]\}$ with its two adjacent simplices in $\A$}
 \label{fig:two-simplices}
\end{figure}
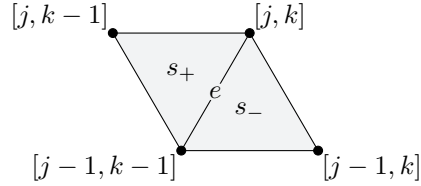
In our case, the adjacent chambers in $\A$ are $s_+ = \{[j,k],[j,k-1],[j-1,k-1]\}$ and $s_- = \{[j,k],[j-1,k],[j-1,k-1]\}$, see also Figure \ref{fig:two-simplices}. We use Corollary \ref{cor:stab-a} to compute  
\begin{align}
\nonumber\G_1(t)_e & = \begin{pmatrix} 
 1 & \{j-1\} & \{k-1\} \\
 t \{-j - 1\} & 1 & \boldsymbol{\{k-j\}} \\
 t \{-k -1 \} & \boldsymbol{t \{j-k - 1\}} & 1 \\
\end{pmatrix}, \\[0.5em]
\label{eq:Stab-s+}\G_1(t)_{s_+} & = \begin{pmatrix}
 1 & \{j-1\} & \{k-1\} \\
 t \{-j - 1\} & 1 & \boldsymbol{\{k-j-1\}} \\
 t \{-k -1 \} & t \{j-k - 1\} & 1 \\
\end{pmatrix}, \\[0.5em]
\label{eq:Stab-s-}\G_1(t)_{s_-} & = \begin{pmatrix}
 1 & \{j-1\} & \{k-1\} \\
 t \{-j - 1\} & 1 & \{k-j\} \\
 t \{-k -1 \} & \boldsymbol{t \{j-k - 2\}} & 1 \\
\end{pmatrix}.
\end{align} 
The entries where the stabilizers of the chambers $s_+$ or $s_-$ might differ from the stabilizer of the edge $e$ are typeset in bold. We distinguish two cases.\\[.3em]
    \textbf{Case $j > k$:}
    In this case, using Notation \ref{schreibweise}, $\{k-j\} = \{k-j-1\} = 0$, so that 
    \[\G_1(t)_e = \G_1(t)_{s_+} \supset \G_1(t)_{s_-}.\] 
    Also, $|t\{j-k-1\}| = q \cdot |t\{j-k-2\}|$, because $j-k-1 \ge 0$,
    so  $|\G_1(t)_{s_+} / \G_1(t)_{s_-}| = q$. Moreover from Lemma \ref{lem:distance} we deduce $s_- \prec s_+$.\\[.3em]
    \textbf{Case $j \leq k$:} In this case, using Notation \ref{schreibweise}, $t\{j-k-1\} = t\{j-k-2\} = 0$, and so 
    \[\G_1(t)_e = \G_1(t)_{s_-} \supset \G_1(t)_{s_+}.\] 
    Also, $|\{k-j\}| = q \cdot |\{k-j-1\}|$, because $k-j\ge0$,
    so  that$|\G_1(t)_{s_-} / \G_1(t)_{s_+}| = q$, and moreover Lemma \ref{lem:distance} yields $s_+ \prec s_-$.

This concludes the proof for edges of the sample shape $e = \{[j-1,k-1],[j,k]\}$.
\end{proof}

\section{Harmonic Cocycles}
\label{sec:cocycles}
In this section, we introduce harmonic cocycles as special functions on the chambers of the Bruhat-Tits building. We introduce a $\GL_3(F)$-action on these functions and give simple descriptions of the spaces of harmonic cocycles that are invariant under this action for various congruence subgroups of $\GL_3(A) \subset \GL_3(F)$. This relies on the results of the previous Section \ref{sec:building}, especially on Theorem \ref{thm:index_q}.
\subsection{Some definitions}
Let $V$ denote an $F$-vector space. Then by $\Maps(\S_2(\B),V)$ we denote the set of maps from the chambers of $\B$ to $V$. It is in a natural way an $F$-vector space.
\begin{defn}
A \emph{harmonic cocycle with values in $V$} is a map $c: \S_2(\B) \rightarrow V$ satisfying the following harmonicity condition: For every edge $e \in \S_1(\B)$ the sum of the values at the chambers having $e$ as a facet is zero:
\[ \sum_{s \supset e} c(s) = 0.\] 
Since these conditions are given by the vanishing of functionals on $\Maps(\S_2(\B),V)$ the harmonic cocyles form an $F$-sub vector space $\Char(V)$ of $\Maps(\S_2(\B),V)$.
\end{defn}

\begin{defn}\label{def:action}
Suppose in a addition that $V$ carries an $F$-linear action by $\GL_3(F)$. Then this induces a $\GL_3(F)$-representation on $\Maps(\S_2(\B),V)$ by defining the action of $\g \in \GL_3(F)$ on $c \in \Maps(\S_2(\B),V)$ by
\[ (c | \g) (s) := \g^{-1} c(\g s)\hbox{ for all }s \in \S_2(\B); \]
here, $\g s$ is the action of $\GL_3(\F)$ on $\S_2(\B)$ and the outer $\g^{-1} c(\g s)$ is the action of $\GL_3(F)$ on $c(\g s) \in V$. 
\end{defn}
\begin{lem}\label{lem:harmonicity-under-G}
The action of $\GL_3(F)$ on $\Maps(\S_2(\B),V)$ preserves the sub vector space $\Char(V)$.
\end{lem}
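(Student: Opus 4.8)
The plan is to show that if $c \in \Char(V)$ and $\g \in \GL_3(F)$, then $c|\g$ again satisfies the harmonicity condition at every edge. So fix an edge $e \in \S_1(\B)$ and compute the sum $\sum_{s \supset e} (c|\g)(s)$ over the chambers $s$ having $e$ as a facet. By definition of the action in Definition~\ref{def:action}, this sum equals $\sum_{s \supset e} \g^{-1} c(\g s)$. Since $\g^{-1}$ acts $F$-linearly on $V$, it pulls out of the sum, giving $\g^{-1}\left(\sum_{s \supset e} c(\g s)\right)$.

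The key observation is that the action of $\GL_3(\F)$ on the building is simplicial, hence incidence-preserving: a chamber $s$ contains the edge $e$ as a facet if and only if $\g s$ contains $\g e$ as a facet. Moreover $s \mapsto \g s$ is a bijection on $\S_2(\B)$ (with inverse $s \mapsto \g^{-1} s$), so it restricts to a bijection from $\{s \in \S_2(\B) : s \supset e\}$ onto $\{s' \in \S_2(\B) : s' \supset \g e\}$. Re-indexing the sum via $s' = \g s$ therefore yields $\sum_{s \supset e} c(\g s) = \sum_{s' \supset \g e} c(s')$, and the latter sum vanishes because $c$ is harmonic and $\g e$ is again an edge of $\B$. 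Hence $\sum_{s \supset e}(c|\g)(s) = \g^{-1}(0) = 0$, so $c|\g \in \Char(V)$.

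There is essentially no obstacle here: the only thing to be slightly careful about is that the set of chambers containing a fixed edge really is mapped bijectively onto the set of chambers containing its image, which is just the statement that the $\GL_3(\F)$-action is by simplicial automorphisms (it sends simplices to simplices of the same dimension and preserves the face relation, as is immediate from the lattice description of the action in Subsection~\ref{subsec:action-on-simplices}). One might also remark that $\Char(V)$ being a subspace (already noted) together with this lemma makes $\Char(V)$ a $\GL_3(F)$-subrepresentation of $\Maps(\S_2(\B),V)$, which is the point of the lemma.
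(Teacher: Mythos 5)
Your proof is correct and follows exactly the paper's argument: pull $\g^{-1}$ out by linearity, re-index chambers containing $e$ by chambers containing $\g e$ using that the action is simplicial, and invoke harmonicity of $c$ at $\g e$. The only difference is that you spell out the justification for the re-indexing bijection, which the paper leaves implicit.
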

\begin{proof}
    Let $c \in \Char(V)$ and $e \in \S_1(\B)$ any edge. Then for any $\g \in \GL_3(F)$,
\[
\sum_{s \supset e} (c | \g)(s) = \sum_{s \supset e} \g^{-1} c (\g s) = \g^{-1} \sum_{s' \supset \g e} c (s') = 0,
\]
thanks to the harmonicity of $c$ at the edge $\g e$. 
\end{proof}
\begin{defn}
Let $\G \subset \GL_3(F)$ denote a subgroup. A harmonic cocycle $c \in \Char(V)$ is called \emph{$\G$-invariant} if it is invariant under $\G$ for the action defined above. 

We write $\Char(\G, V) := (\Char(V))^{\Gamma}\subset \Char(V)$ for the $F$-sub vector space of $\G$-equivariant harmonic cocycles. 
\end{defn}
\begin{rmk}
    For $c\in\Char(V)$ being $\G$-equivariant is equivalent to 
\[c(\gamma s) = \gamma c(s)\quad\forall \g \in \G\hbox{ and }s \in \S_2(\B). \]
   For this reason we shall interchangeably use the notions $\G$-equivariance and $\G$-invariance for $c\in\Char(\Gamma,V)$.
\end{rmk}

For later use we record the following result on conjugate groups:
\begin{lem}\label{lem:conjcycle}
Let $c \in \Char(V)$ be a harmonic cocycle, $\G \subset \GL_3(F)$ a subgroup and $\delta \in \GL_3(F)$ any matrix. Then 
\[c \in \Char(\G, V) \Leftrightarrow c | \delta \in \Char(\delta^{-1} \G \delta, V).\]
\end{lem}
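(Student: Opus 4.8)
The plan is to unwind the definitions on both sides and exhibit a direct bijection between the $\G$-invariant cocycles and the $\delta^{-1}\G\delta$-invariant cocycles, using the fact (already available from Definition~\ref{def:action} and Lemma~\ref{lem:harmonicity-under-G}) that $c\mapsto c|\delta$ is an $F$-linear automorphism of $\Char(V)$ with inverse $c\mapsto c|\delta^{-1}$. Since both sides of the claimed equivalence are statements about the same cocycle (viewed after applying the automorphism $|\delta$), it suffices to prove one implication; the other follows by replacing $\delta$ with $\delta^{-1}$ and $\G$ with $\delta^{-1}\G\delta$, noting $(\delta^{-1})^{-1}(\delta^{-1}\G\delta)\delta^{-1}=\G$ and $(c|\delta)|\delta^{-1}=c$.

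First I would recall, via the remark after the definition of $\Char(\G,V)$, that $c\in\Char(\G,V)$ is equivalent to $c|\g=c$ for all $\g\in\G$, and likewise $c|\delta\in\Char(\delta^{-1}\G\delta,V)$ is equivalent to $(c|\delta)|\g'=c|\delta$ for all $\g'\in\delta^{-1}\G\delta$. Then the key computational step is the cocycle identity $(c|\g_1)|\g_2=c|(\g_1\g_2)$, which is immediate from the formula $(c|\g)(s)=\g^{-1}c(\g s)$: indeed $((c|\g_1)|\g_2)(s)=\g_2^{-1}(c|\g_1)(\g_2 s)=\g_2^{-1}\g_1^{-1}c(\g_1\g_2 s)=(c|\g_1\g_2)(s)$. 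With this in hand, for $\g\in\G$ write $\g'=\delta^{-1}\g\delta\in\delta^{-1}\G\delta$; then $(c|\delta)|\g'=c|(\delta\g')=c|(\g\delta)=(c|\g)|\delta$, so if $c|\g=c$ then $(c|\delta)|\g'=c|\delta$, which gives the forward implication.

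The main obstacle — though it is a mild one — is bookkeeping: being careful that $c|\delta$ still lies in $\Char(V)$ (this is exactly Lemma~\ref{lem:harmonicity-under-G}), that the correspondence $\g\leftrightarrow\g'=\delta^{-1}\g\delta$ is a bijection $\G\to\delta^{-1}\G\delta$, and that the associativity identity for the $|$-action is applied in the right order. No deep input is needed beyond the cocycle identity and the group-theoretic triviality that conjugation is an isomorphism. I would therefore present the proof as: (i) reduce to a single implication by symmetry; (ii) record $(c|\g_1)|\g_2=c|\g_1\g_2$; (iii) run the one-line computation above; (iv) conclude.
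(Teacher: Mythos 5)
Your proposal is correct and follows the same route as the paper: prove the forward implication by the one-line computation $(c|\delta)|(\delta^{-1}\g\delta)=c|(\g\delta)=(c|\g)|\delta=c|\delta$ using the cocycle identity and $\G$-invariance, then obtain the reverse implication by substituting $\delta\mapsto\delta^{-1}$ and $\G\mapsto\delta\G\delta^{-1}$. The paper is slightly terser in the middle step but the argument is the same.
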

\begin{proof}
Let $c \in \Char(\G, V)$ and $\epsilon$ an element in $\delta^{-1} \G \delta$, i.\,e. $\epsilon = \delta^{-1} \g \delta$ for a $\g \in \G$. Then 
\[ (c | \delta)|\epsilon = (c | \delta)|\delta^{-1} \g \delta = c | \g \delta = c | \delta, \]
using the $\G$-invariance of $c$, which gives the forward direction of the implication. The other direction follows from the previous one by replacing $\G$ by $\delta\G\delta^{-1}$ and $\delta$ by $\delta^{-1}$.
\end{proof}

\subsection{Harmonic cocycles for some congruence subgroups}\label{subsec:CharOnSubgps}
For the rest of this section, let $V$ denote an $F$-vector space equipped with a $\GL_3(F)$-action. For the congruence subgroups 
\[
\G(t) \subset \G_1(t) \subset
\G_0(t) \begin{array}{ccc}
\rotatebox[origin=c]{30}{$\subset$} & \G^P_0 & \rotatebox[origin=c]{-30}{$\subset$}\\ 
\rotatebox[origin=c]{-30}{$\subset$} & \G^P_2 & \rotatebox[origin=c]{30}{$\subset$}
\end{array} 
\GL_3(A).
\]
we shall give precise descriptions of the corresponding spaces of equivariant cocycles in terms of $V$. They will be deduced from the isomorphism $\Char(\Gamma_1(t),V)\to V$ given in Theorem~\ref{thm:isoG1}, and a result from \cite{gr2021}.

\subsubsection{Harmonic cocycles for the group $\G_1(t)$}
The following result was first proven in the bachelor thesis of the third author. It is based on Theorem \ref{thm:index_q} on the $\G_1(t)$-stabilizers of chambers in the standard apartment.
\begin{thm}
\label{thm:isoG1}
The map
\begin{align*}
\psi: \Char(\Gamma_1(t),V) & \rightarrow V, \\
c & \mapsto c(s_0)
\end{align*}
is an isomorphism of $F$-vector spaces. Moreover one has the following formula for $c \in \Char(\Gamma_1(t),V)$: If $s\in\S_2(\B)$ is arbitrary, with unique representative $r\in\S_2(\A)$ in its $\G_1(t)$-orbit, and if $\delta\in\G_1(t)$ is any element such that $s=\delta r$, then
\begin{equation}\label{eq:FormulaFor-cw}
    c(s) = \delta c(r) = \delta \sgn(r) \sum_{\gamma \in \Gamma_1(t)_{r}} \gamma c(s_0).
\end{equation} 
\end{thm}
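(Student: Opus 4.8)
The plan is to establish the formula \eqref{eq:FormulaFor-cw} first for the case $s\in\S_2(\A)$, and then deduce both the general formula and the bijectivity of $\psi$ from it. Let me organize the argument around the combinatorial input of Theorem~\ref{thm:index_q}.

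\emph{Step 1: Well-definedness of $\psi$ and the general formula given injectivity.} Since $\A$ is a fundamental domain for $\G_1(t)$ (Corollary~\ref{cor:A-FD}), every $s\in\S_2(\B)$ has a unique representative $r\in\S_2(\A)$ in its $\G_1(t)$-orbit, so there exists $\delta\in\G_1(t)$ with $s=\delta r$. By $\G_1(t)$-equivariance of $c$ we have $c(s)=\delta c(r)$; this is independent of the choice of $\delta$ because any two choices differ by an element of $\G_1(t)_r$, which fixes $c(r)$ by equivariance. Thus the content of \eqref{eq:FormulaFor-cw} is really the identity $c(r)=\sgn(r)\sum_{\gamma\in\G_1(t)_r}\gamma c(s_0)$ for $r\in\S_2(\A)$, and once this is proven we get both that $\psi$ is injective (the values $c(r)$, hence all $c(s)$, are determined by $c(s_0)$) and the displayed formula.

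\emph{Step 2: The key induction along galleries in $\A$.} I would prove $c(r)=\sgn(r)\sum_{\gamma\in\G_1(t)_r}\gamma c(s_0)$ by induction on $d(r,s_0)$, using the ordering $\prec$ from Definition~\ref{def:distance} and walking inward toward $s_0$. The base case $r=s_0$ is immediate since $\G_1(t)_{s_0}$ is trivial and $\sgn(s_0)=1$. For the inductive step, take $r$ with $d(r,s_0)>0$; choose a minimal gallery inside $\A$ from $s_0$ to $r$ (which exists by the remark after Definition~\ref{def:distance}), and let $e$ be the last edge, with the other chamber of $\A$ containing $e$ being the chamber $r'\prec r$. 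By Theorem~\ref{thm:index_q}, $\G_1(t)_e=\G_1(t)_r$, we have $\G_1(t)_{r'}\subset\G_1(t)_r$ of index $q$, and if $\gamma_1,\dots,\gamma_q$ are coset representatives of $\G_1(t)_{r'}\backslash\G_1(t)_r$ then $\gamma_1 r',\dots,\gamma_q r'$ are exactly the $q$ chambers of $\B$ adjacent to $e$ other than $r$. Harmonicity of $c$ at $e$ then gives
\[
c(r)=-\sum_{i=1}^{q} c(\gamma_i r')=-\sum_{i=1}^q \gamma_i c(r')=-\Big(\sum_{\gamma\in\G_1(t)_r}\gamma\Big)\,\Big(\sum_{\eta\in\G_1(t)_{r'}}\eta\Big)^{-1}\!\!\!\!\cdot\ \ c(r'),
\]
but it is cleaner to substitute the inductive hypothesis $c(r')=\sgn(r')\sum_{\eta\in\G_1(t)_{r'}}\eta c(s_0)$ directly: since $\{\gamma_i\eta : 1\le i\le q,\ \eta\in\G_1(t)_{r'}\}$ runs over $\G_1(t)_r$ exactly once, we get $c(r)=-\sgn(r')\sum_{\gamma\in\G_1(t)_r}\gamma c(s_0)$, and since $\sgn(r)=-\sgn(r')$ (adjacent chambers have opposite signs, consistent with $\sgn(s)=(-1)^{d(s,s_0)}$ in Lemma~\ref{lem:distance}) this is exactly $\sgn(r)\sum_{\gamma\in\G_1(t)_r}\gamma c(s_0)$, completing the induction.

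\emph{Step 3: Surjectivity of $\psi$.} For surjectivity, given $v\in V$ I would \emph{define} a function $c$ on $\S_2(\B)$ by the formula \eqref{eq:FormulaFor-cw} with $c(s_0)$ replaced by $v$, i.e. $c(s):=\delta\,\sgn(r)\sum_{\gamma\in\G_1(t)_r}\gamma v$ for $s=\delta r$, $r\in\S_2(\A)$, $\delta\in\G_1(t)$. One checks this is well-defined (independence of $\delta$ as in Step~1, using that $\G_1(t)_r$ fixes $\sgn(r)\sum_{\gamma\in\G_1(t)_r}\gamma v$ since left multiplication by $\G_1(t)_r$ permutes $\G_1(t)_r$), that it is $\G_1(t)$-equivariant by construction, and that it is harmonic: for an edge $e$ of $\B$, translating by $\G_1(t)$ one reduces to the case $e\in\S_1(\A)$, and then the harmonicity sum at $e$ is precisely the computation of Step~2 read in reverse, giving $c(r)+\sum_{i=1}^q c(\gamma_i r')=0$. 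Hence $c\in\Char(\G_1(t),V)$ with $\psi(c)=c(s_0)=\sgn(s_0)\sum_{\gamma\in\G_1(t)_{s_0}}\gamma v=v$.

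\emph{Main obstacle.} The genuine content, and the only place something could go wrong, is the bookkeeping in Step~2: one must be careful that the coset decomposition $\G_1(t)_r=\bigsqcup_i \gamma_i\,\G_1(t)_{r'}$ combined with the inductive hypothesis reassembles into a sum over $\G_1(t)_r$ with no repetition and no omission, and that the sign flips precisely once per gallery step so that the final sign is $\sgn(r)$. Both facts are exactly the assertions packaged in Theorem~\ref{thm:index_q} and Lemma~\ref{lem:distance}, so the obstacle is really only one of careful indexing rather than of substance. A minor additional point is checking in Step~3 that reducing an arbitrary edge of $\B$ to one in $\A$ is legitimate — this again follows because $\A$ is a fundamental domain and $\G_1(t)$ acts on the set of edges, so every edge is $\G_1(t)$-equivalent to one in $\A$ and harmonicity is $\G_1(t)$-invariant by Lemma~\ref{lem:harmonicity-under-G}.
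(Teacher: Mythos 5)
Your proof is correct and follows essentially the same route as the paper's: induction on $d(r,s_0)$ along galleries in $\A$, fueled by Theorem~\ref{thm:index_q} (the index-$q$ coset decomposition and the description of chambers around an edge) and harmonicity, plus the sign-flip from Lemma~\ref{lem:distance}. The only (cosmetic) difference is organizational: you prove the explicit formula $c(r)=\sgn(r)\sum_{\gamma\in\G_1(t)_r}\gamma c(s_0)$ directly by induction and read off injectivity as a corollary, whereas the paper's injectivity step shows $c(s)=c'(s)$ by the same induction and derives the formula only as part of the surjectivity construction; also note the small slip $\G_1(t)_{r'}\backslash\G_1(t)_r$ should read $\G_1(t)_r/\G_1(t)_{r'}$ (left cosets), which is what you actually use, and the inline expression with $\bigl(\sum_\eta \eta\bigr)^{-1}$ is, as you yourself flag, not meaningful and is correctly replaced by the direct reassembly argument.
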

\begin{rmk}
    Given any value $w \in V$, the above theorem and in particular formula~\eqref{eq:FormulaFor-cw} allows us to describe the unique harmonic cocycle $c_w\in \Char(\G_1(t), V)$ with $c_w(s_0) = w$.
\end{rmk}
\begin{proof}
The definition of the vector space structure on $\Char(\G_1(t), V)$ immediately shows that the mapping $\psi$ is linear. What we need to show is its bijectivity. We check the injectivity directly. To show surjectivity, we specify a right inverse. 

\textbf{Injectivity}:
Let $c, c' \in C^{\G_1(t)}_{har}(V)$ be given with $c(s_0) = c'(s_0)$. We want to show that $c$ and $c'$ coincide. Because $\A$ is a fundamental domain for the action of $\G_1(t)$ it suffices to show that $c$ and $c'$ agree on $\S_2(\A)$. We prove this by induction over the distance $d(s, s_0)$ for any $s\in\S_2(\A)$. Note that the base case $d(s,s_0)=0$ is trivial since we assume $c(s_0)=c'(s_0)$.

For the induction step, assume the statement is shown for all chambers $r \in \S_2(\A)$ with distance $n:=d(r, s_0) \in \N_0$, and let $s \in \S_2(\A)$ be a chamber with $d(s, s_0) = n+1$. Let $(s = x_0, x_1, \dots, x_{n+1} = s_0)$ be a gallery of length $n+1$ in $\A$, so that $r := x_1$ must have distance $d(r, s_0) = n$ to $s_0$. Then by induction hypothesis we have $c(r) = c'(r)$. Let now $e\in\S_1(\A)$ be the common edge between $r$ and $s$.

We apply Theorem \ref{thm:index_q}. Let $\gamma_1, \dots, \gamma_q$ be representatives of $\G_1(t)_s / \G_1(t)_r$. By Theorem \ref{thm:index_q} the $q+1$ chambers adjacent to $e$ are
\begin{equation}\label{eq:Edges-adj-to-e}
    \lb \gamma_1 r, \dots, \gamma_q r \rb \cup \lb s \rb 
\end{equation}
Now the harmonicity and $\G_1(t)$-invariance of $c,c'$ give the induction step:
\[c(s) = - \sum_{i=1}^q c(\gamma_i r) = - \sum_{i=1}^q \gamma_i c(r) \stackrel{\mathrm{ind.\ hyp.}}= - \sum_{i=1}^q \gamma_i c'(r) = - \sum_{i=1}^q c'(\gamma_i r) = c'(s).\]

\textbf{Surjectivity}:
Our argument is similar to the proof of \cite[][Theorem 8.5]{gr2021}. Let $v \in V$ be arbitrary. Define
\[ c_v:\S_2(\A) \rightarrow V, s \mapsto c_v(s) = \sgn(s) \sum_{\gamma \in \G_1(t)_s} \gamma v.\]
Extend $c_v$ to a function on $\S_2(\B)$ by assigning to an arbitrary simplex $s \in \S_2(\B)$ with $\delta \in \G_1(t)$ and $r \in \S_2(\A)$ so that $s = \delta r$ the value
\[ c_v(s) = \delta c_v(r) = \delta \sgn(r) \sum_{\gamma \in \G_1(t)_{r}} \gamma v.\]
The representative $r$ of $s$ in the $\G_1(t)$-orbit of $s$ is unique, but the $\delta$ is not. So we need to verify that the extension is well-defined. Suppose for this that $s=\delta r=\delta' r$ for $\delta,\delta'\in\G_1(t)$. Then $\delta^{-1}\delta'\in\G_1(t)_r$, so that $\delta^{-1}\delta'\G_1(t)_r=\G_1(t)_r$, which in turn gives
\[ \delta \sum_{\gamma \in \G_1(t)_{r}} \gamma v = \delta \sum_{\gamma \in \G_1(t)_{r}} (\delta^{-1}\delta')\gamma v = \delta (\delta^{-1}\delta')\sum_{\gamma \in \G_1(t)_{r}} \gamma v =\delta' \sum_{\gamma \in \G_1(t)_{r}} \gamma v.\] 
This shows that $c_v$ is well-defined independently of any choices.

We claim that $c_v$ is harmonic and $\G_1(t)$-invariant. Since $c_v(s_0)=v$ this will show that $v\to c_v$ is a right inverse to $\psi$, and hence that $\psi$ is surjective. Moreover the formula defining $c_v$ proves the remaining assertions of the theorem. 

The $\G_1(t)$-invariance of $c_v$ follows from its construction and the well-definedness we just proved: If for $s\in\S_2(\B)$ we have $r\in\S_2(\A)$ and $\delta\in\G_1(t)$ with $s=\delta r$, and if $\g$ is any element of $\G_1(t)$, then 
\[ c_v(\g s)=c_v(\g\delta r)\stackrel{\mathrm{Def.\ of\ }c_v}= \g\delta c_v(r) \stackrel{\mathrm{Def.\ of\ }c_v}=\g (c_v(\delta r))=\g (c_v(s).\]

It remains to show the harmonicity of $c_v$ on all edges in $\S_1(\B)$. However, by the $\G_1(t)$-equivariance and by Corollary~\ref{cor:A-FD}, it is sufficient to check the harmonicity condition for all edges in the fundamental domain $\A$. So let $e \in \S_1(\A)$ be any edge in the standard apartment. Let $r,s$ be the two chambers of $\A$ adjacent to $e$, and label them, according to Theorem~\ref{thm:index_q}, so that $r$ is closer to $s_0$ then $s$. Let the $\gamma_i$ be as above formula~\eqref{eq:Edges-adj-to-e}, so that again the expression in~\eqref{eq:Edges-adj-to-e} describes the chambers adjacent to $e$. Note that for $\g_i\in \G(t)_s\setminus \G(t)_r$ one has $\g_ir\notin \S_2(\A)$. We find
\begin{align*}
\sum_{s'\supset e}c_v(s')&= c_v(s)+\sum_{i=1,\ldots,q}c_v(\g_i r)\ = c_v(s)+\sum_{i=1,\ldots,q}\g_i c_v(r)\\
&= \sgn(s)\sum_{\g\in\G_1(t)_s}\g v+\sum_{i=1,\ldots,q}\sgn(r)\g_i \sum_{\g\in\G_1(t)_{r}}\g v\\
&= -\sgn(r)\sum_{\g\in\G_1(t)_s}\g v+\sgn( r)\sum_{i=1,\ldots,q} \sum_{\g\in\G_1(t)_{r}}\g_i\g v\\
&= -\sgn(r)\sum_{\g\in\G_1(t)_s}\g v+\sgn( r) \sum_{\g\in\G_1(t)_{s}}\g v \ =0.\\
\end{align*}
This proves harmonicity at the edge $e$.
\end{proof}
\subsubsection{Harmonic cocycles for the group $\G(t)$}
\begin{defn}
We denote the \emph{star} around the vertex $v = [0,0]$ by $\St_0 = \{ s \in \S(\B) \| v \in s \}$. The set $\St_0$ contains exactly $(q+1)(q^2 + q + 1)$ chambers, $2(q^2 + q + 1)$ edges and one vertex; as does the star of any vertex, see \cite[][Lemma 1.33]{m2014}. Note that $\St_0$ is \emph{not} a subcomplex of $\B$.
\end{defn}

\begin{defn}
We define the $F$-vector space
\[\Char(\St_0, V) := \lb c: \S_2(\St_0) \rightarrow V \,\middle|\, \forall e \in \S_1(\St_0) \sum_{s \in \S_2(\St_0), s \supset e} c(s) = 0 \rb\]
of harmonic cocycles on $\St_0$ with values in $V$. The vector space structure is induced by the one on $V$.
\end{defn}
Let $g_1, \dots , g_{n_q}$ be a system of representatives of $\GL_3(\Fq)/B$ for $n_q = \#(\GL_3(\Fq)/B) =(q+1)(q^2 + q + 1)$. The $g_i$ preserve the vertex $[0,0]$ and map $\W$ to a sector with apex~$[0,0]$.
\begin{prop}
A fundamental domain for the action of $\G(t)$ on the simplices of $\B$ is given by $\mathcal{F} = \bigcup_{i=1}^{n_q} g_i \W$. 
\end{prop}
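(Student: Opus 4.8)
The plan is to combine three facts established earlier in the paper: (1) the standard sector $\W$ is a fundamental domain for the action of $\GL_3(A)$ on $\S(\B)$ (Proposition~\ref{prop:w-fundamental}); (2) the standard apartment $\A$ is a fundamental domain for $\G_1(t)$ on $\B$ (Corollary~\ref{cor:A-FD}); and (3) the reduction map $\pr$ identifies $\G_1(t)\backslash\GL_3(A)/\GL_3(A)_w$ with $U\backslash\GL_3(\Fq)/\widebar{\GL_3(A)_w}$ (Proposition~\ref{prop:G1-Orbits-InW}), together with the analogous statement one level down for $\G(t)$, whose kernel under $\pr$ is trivial. The key observation is that $\G(t)=\ker(\pr)$, so $\G(t)\backslash\GL_3(A)/\GL_3(A)_w\cong \GL_3(\Fq)/\widebar{\GL_3(A)_w}$ as $\GL_3(\Fq)$-sets, and forming $U$-orbits recovers Proposition~\ref{prop:G1-Orbits-InW}.

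First I would reduce to a counting/orbit statement over each $\GL_3(A)$-orbit. Since $\W$ meets every $\GL_3(A)$-orbit exactly once, every simplex of $\B$ is $\GL_3(A)$-equivalent to a unique $w\in\S(\W)$, and the $\G(t)$-orbits inside $\GL_3(A)w$ correspond bijectively to the double cosets $\G(t)\backslash\GL_3(A)/\GL_3(A)_w$, i.e.\ (since $\G(t)=\ker\pr$) to $\GL_3(\Fq)/\widebar{\GL_3(A)_w}$. So it suffices to show that $\mathcal F=\bigcup_{i=1}^{n_q}g_i\W$ contains exactly one representative of each such double coset for every $w$. Next I would unwind the $g_i$: they are chosen as representatives of $\GL_3(\Fq)/B$, and the proof of Corollary~\ref{cor:mod-t-stabilizers} shows $\widebar{\GL_3(A)_w}$ is always one of $\GL_3(\Fq)$, $P_0$, $P_2$, or $B$ depending on where $w$ lies in $\W$; each of these contains $B$, so $\GL_3(\Fq)/\widebar{\GL_3(A)_w}$ is a quotient of $\GL_3(\Fq)/B$, and the natural surjection $\{g_1,\dots,g_{n_q}\}\to\GL_3(\Fq)/\widebar{\GL_3(A)_w}$ shows the $g_i\W$ jointly cover every $\G(t)$-orbit. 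For uniqueness (disjointness), I would argue that if $g_iw'$ and $g_jw'$ are $\G(t)$-equivalent for $w'\in\W$ a face translated appropriately, then $\pr(g_i)$ and $\pr(g_j)$ lie in the same coset of $\widebar{\GL_3(A)_{w}}$; here one must be careful, because a priori different $g_i$ might land in the same coset modulo a stabilizer larger than $B$, so the statement as phrased requires that the $g_i$ do not merely surject but that the union is actually a fundamental domain — which forces us to check that $\mathcal F$ contains each orbit exactly once simplex-by-simplex, using that $g_i\W\cap g_j\W$ can only meet along the "big'' strata where $\widebar{\GL_3(A)_w}$ is larger than $B$, and there the extra identifications are exactly compensated.

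Concretely, the cleanest route is: show $\mathcal F$ is a connected subcomplex (each $g_i\W$ is a sector with apex $[0,0]$, and they all share that vertex), show it surjects onto $\G(t)\backslash\S(\B)$ (by the covering argument above), and then show injectivity by a dimension/incidence bookkeeping: the chambers of $\mathcal F$ are the $n_q$ translates of each chamber of $\W$, and since $\widebar{\GL_3(A)_w}=B$ for interior chambers $w$, the map $\{g_i\}\to\GL_3(\Fq)/B$ is a bijection, so interior chambers of distinct $g_i\W$ are genuinely $\G(t)$-inequivalent; the lower-dimensional simplices on the boundary rays and the apex are then handled by the $P_0$, $P_2$, $\GL_3(\Fq)$ cases, where one checks that the identifications among the $g_i\W$ along these boundary strata exactly match the collapse of $\GL_3(\Fq)/B$ onto $\GL_3(\Fq)/P_0$, etc. — so no simplex is repeated and none is missed.

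The main obstacle I anticipate is the injectivity/disjointness bookkeeping on the boundary of $\mathcal F$: the sectors $g_i\W$ are glued along their walls, and one must verify that the gluing is precisely such that each $\G(t)$-orbit of edges and vertices on the rays $j=0$, $j=k$ (and at the apex) is hit once and only once. This is where the structure of $U\backslash\GL_3(\Fq)/P_0$ and $U\backslash\GL_3(\Fq)/P_2$ (and the fact that $\{g_i\}$ are coset reps for $\GL_3(\Fq)/B\supset$ these quotients) has to be used carefully; everything else is a formal consequence of Propositions~\ref{prop:w-fundamental} and~\ref{prop:G1-Orbits-InW} applied with $\G(t)$ in place of $\G_1(t)$ and the trivial group in place of $U$.
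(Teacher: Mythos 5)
The paper itself does not prove this statement --- it cites Theorem~7.22 of \cite{gr2021} --- so there is no in-paper argument to compare against. Your outline is essentially correct, and the skeleton (reduce to counting $\G(t)$-orbits inside each $\GL_3(A)$-orbit via Proposition~\ref{prop:w-fundamental}; identify $\G(t)\backslash\GL_3(A)/\GL_3(A)_w$ with $\GL_3(\Fq)/\widebar{\GL_3(A)_w}$ using that $\G(t)=\ker\pr$ is normal; observe $B\subset\widebar{\GL_3(A)_w}$ so that the $g_i$ surject onto this quotient; connectedness via the common apex $[0,0]$) is the right one.

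The one step you flag as ``one checks that the identifications along the boundary strata exactly match the collapse'' is where the content of the injectivity lies, and it can and should be made precise rather than left as bookkeeping. The fact that makes everything click is that for every $w\in\S(\W)$ one has the equality
\[
  \GL_3(\Fq)\cap\GL_3(A)_w \;=\; \widebar{\GL_3(A)_w}
\]
inside $\GL_3(\Fq)$: by Proposition~\ref{prop:stab_a} (together with the remark after Corollary~\ref{cor:mod-t-stabilizers}) the entry-wise degree constraints $\{n_{ij}\}$ in $\GL_3(A)_w$ for $w\in\S(\W)$ each contain a nonzero constant if and only if they contain all of $\Fq$, so intersecting with $\GL_3(\Fq)$ and reducing mod $t$ produce the same subgroup. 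Granting this, for $g_i,g_j\in\GL_3(\Fq)$ one has $g_iw=g_jw$ if and only if $g_j^{-1}g_i\in\GL_3(\Fq)\cap\GL_3(A)_w=\widebar{\GL_3(A)_w}$, which is to say the assignment $g_i\mapsto g_iw$ literally realizes the surjection $\GL_3(\Fq)/B\to\GL_3(\Fq)/\widebar{\GL_3(A)_w}$ as a map of sets of simplices, with fibers collapsed exactly when they should be. Injectivity of $\mathcal F\to\G(t)\backslash\S(\B)$ then follows at once: if $g_iw$ and $g_jw$ lie in the same $\G(t)$-orbit (they must involve the same $w$ by uniqueness of $\W$-representatives), they give the same class in $\G(t)\backslash\GL_3(A)/\GL_3(A)_w\cong\GL_3(\Fq)/\widebar{\GL_3(A)_w}$, hence $g_iw=g_jw$ as simplices. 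Writing out this equality of groups is the only thing missing from your proposal; the rest is sound.
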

\begin{proof}
See \cite[][Theorem 7.22]{gr2021}.
\end{proof}
\begin{prop}
The $\G(t)$-stable simplices in $\mathcal{F}$ are precisely the simplices of $\St_0$. The set of $\G(t)$-stable chambers of $\mathcal{F}$ is $\{ g_1 s_*, \dots g_{n_q} s_*\}$ where $s_*:=\{ [0,0], [0,1], [1,1] \}$ denotes the standard chamber of~$\A$.
\end{prop}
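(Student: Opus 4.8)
The plan is to reduce everything to a statement inside the sector $\W$ and then transport it through the $g_i$. Since $\W$ is a fundamental domain for $\GL_3(A)$ (Proposition~\ref{prop:w-fundamental}), the set $\mathcal F$ is the union of the $n_q$ sectors $g_i\W$; as each $g_i$ lies in $\GL_3(\Fq)\subset\GL_3(A)$ and $\Gamma(t)=\ker(\pr)$ is normal in $\GL_3(A)$, for any simplex $w\in\S(\W)$ one has $\Stab_{\Gamma(t)}(g_iw)=g_i\,\Stab_{\Gamma(t)}(w)\,g_i^{-1}$. Hence $g_iw$ is $\Gamma(t)$-stable if and only if $w$ is, and since each $g_i$ fixes $[0,0]$, it suffices to identify the $\Gamma(t)$-stable simplices of $\W$.

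The core claim is: for $w\in\S(\W)$, the simplex $w$ is $\Gamma(t)$-stable if and only if $[0,0]\in w$. For the \emph{if} direction, Proposition~\ref{prop:stab_a} gives $\GL_3(A)_{[0,0]}=\GL_3(\Fq)$ (all of its entries lie in $\{0\}=\Fq$), and $\pr$ is injective on $\GL_3(\Fq)$, so $\Stab_{\Gamma(t)}([0,0])=\GL_3(\Fq)\cap\ker(\pr)=\{1\}$; a fortiori $\Stab_{\Gamma(t)}(w)=\{1\}$ whenever $[0,0]\in w$. For the \emph{only if} direction I argue contrapositively: if $[0,0]\notin w$ then, since $\V(\W)=\{[j,k]\mid 0\le j\le k\}$ and $[0,0]$ is the only vertex with $k=0$, every vertex of $w$ is some $[j,k]$ with $k\ge1$; by Proposition~\ref{prop:stab_a} the matrix $u:=\left(\begin{smallmatrix}1&0&t\\0&1&0\\0&0&1\end{smallmatrix}\right)$ lies in $\GL_3(A)_{[j,k]}$ for every $k\ge1$ (its only nonzero off-diagonal entry $t$ lies in the set $\{k\}$), hence $u\in\Stab_{\Gamma(t)}(w)$; as $u$ reduces to the identity mod $t$ but $u\neq 1$, the simplex $w$ is not $\Gamma(t)$-stable.

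Combining the two steps, the $\Gamma(t)$-stable simplices of $\mathcal F$ are precisely the $g_iw$ with $w\in\S(\W)$ and $[0,0]\in w$; since $g_i[0,0]=[0,0]$, all of these contain $[0,0]$, hence lie in $\St_0$. A short combinatorial check of the two chamber shapes available in $\W$ shows that $s_*=\{[0,0],[0,1],[1,1]\}$ is the unique chamber of $\W$ containing $[0,0]$ (no positive chamber $\{[j,k],[j,k-1],[j-1,k-1]\}$ of $\W$ contains $[0,0]$, while the negative chamber $\{[j,k],[j-1,k],[j-1,k-1]\}$ is forced to have $j=k=1$). This already gives the asserted list $\{g_1s_*,\dots,g_{n_q}s_*\}$ of $\Gamma(t)$-stable chambers of $\mathcal F$.

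What remains is the reverse inclusion $\St_0\subseteq\mathcal F$, and this is the step I expect to be the main obstacle, since a priori $\mathcal F$ could select orbit representatives outside $\St_0$. Here I would compute $\GL_3(A)_{s_*}$ by intersecting the stabilizers of $[0,0],[0,1],[1,1]$ via Proposition~\ref{prop:stab_a}, obtaining $\GL_3(A)_{s_*}=B$ (equivalently $\Stab_{\GL_3(\Fq)}(s_*)=B$, which is also visible from Corollary~\ref{cor:mod-t-stabilizers}, as $\pr(\GL_3(A)_{s_*})=P_0\cap P_2=B$). Then $g_is_*=g_js_*$ forces $g_iB=g_jB$, so the $g_is_*$ are $n_q=\#\S_2(\St_0)$ pairwise distinct chambers of $\St_0$ and therefore exhaust $\S_2(\St_0)$; in particular $\S_2(\St_0)\subseteq\mathcal F$. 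Every edge of $\St_0$ is then a face through $[0,0]$ of some $g_is_*$, hence equals $g_i$ applied to one of the two edges of $s_*$ incident to $[0,0]$, and so lies in $g_i\W\subseteq\mathcal F$; the single vertex $[0,0]$ of $\St_0$ lies in $\mathcal F$ trivially. Thus $\St_0\subseteq\mathcal F$, the $\Gamma(t)$-stable simplices of $\mathcal F$ are exactly those of $\St_0$, and the chamber list is as claimed. Apart from this last point, the argument is routine bookkeeping with the stabilizer formulas of Section~\ref{sec:building}.
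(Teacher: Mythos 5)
Your proof is correct and complete. The paper itself gives no in-text argument for this proposition -- it defers to \cite[Proposition~8.7]{gr2021} -- so there is no internal proof to compare against; what you have written is a self-contained derivation from the stabilizer formula of Proposition~\ref{prop:stab_a}. The two pieces that require care are handled properly: the witness $u=\left(\begin{smallmatrix}1&0&t\\0&1&0\\0&0&1\end{smallmatrix}\right)\in\Gamma(t)$ fixes every $[j,k]\in\V(\W)$ with $k\ge 1$ (since $t\in\{k\}$), which cleanly shows that simplices of $\W$ missing $[0,0]$ are not $\Gamma(t)$-stable, and the counting argument using $\GL_3(A)_{s_*}\cap\GL_3(\Fq)=B$ to show the $g_is_*$ are pairwise distinct and hence exhaust the $n_q$ chambers of $\St_0$, together with the observation that both edges of $s_*$ through $[0,0]$ lie in $\W$, correctly closes the inclusion $\St_0\subseteq\mathcal F$ that you rightly flag as the nontrivial step.
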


\begin{proof}
See \cite[][Proposition 8.7]{gr2021}.
\end{proof}

The following result concerns harmonic cocycles for congruence subgroups $\Gamma\subset\GL_3(A)$ that contain the normal subgroup $\G(t)$ of $\GL_3(A)$. Note that via the homomorphism theorem such $\Gamma$ correspond bijectively to subgroups $G\subset\GL_3(\Fq)$ via $\Gamma\mapsto G=\pr(\Gamma)$. 
\begin{prop} \label{prop:gamma_invariants}
Let  $G \subset \GL_3(\Fq)$ be a subgroup and $\G := \pr^{-1}(G)$.
\begin{enumerate}[(i)]
\item $\G$ acts on $\Char(\G(t),V)$ via the formula in Definition \ref{def:action}, and one has
\[ \Char(\G, V)  = \Char(\G(t), V)^{\G}. \]
\item The action in (i) factors via the quotient $G$ and one has 
\[ \Char(\G(t), V)^{\G}= \Char(\G(t), V)^{G}. \]
\item $\GL_3(\Fq)$ acts on $\Char(\St_0, V)$ by the formula from Definition \ref{def:action}.
\end{enumerate}
\end{prop}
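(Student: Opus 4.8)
\textbf{Proof strategy for Proposition~\ref{prop:gamma_invariants}.}
The plan is to treat the three assertions in order, as each builds on the previous one, and to reduce everything to bookkeeping with the definitions of the $\GL_3(F)$-action from Definition~\ref{def:action} together with the fact that $\G(t)$ is normal in $\GL_3(A)$.

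For part (i), the first thing I would check is that the subspace $\Char(\G(t),V)\subset\Char(V)$ is stable under the action of $\G=\pr^{-1}(G)$. Since $\G(t)$ is normal in $\GL_3(A)\supset\G$, for $c\in\Char(\G(t),V)$ and $\g\in\G$ and $\eta\in\G(t)$ one has $(c|\g)|\eta=(c|\g)|(\g^{-1}\eta'\g)=c|(\eta'\g)=(c|\eta')|\g=c|\g$ for $\eta'=\g\eta\g^{-1}\in\G(t)$; combined with Lemma~\ref{lem:harmonicity-under-G} this shows $c|\g\in\Char(\G(t),V)$, so the action is well-defined. The identity $\Char(\G,V)=\Char(\G(t),V)^{\G}$ is then almost immediate: a harmonic cocycle is $\G$-invariant if and only if it is invariant under all of $\G$; since $\G(t)\subset\G$, such a cocycle in particular lies in $\Char(\G(t),V)$, and conversely an element of $\Char(\G(t),V)$ that is moreover fixed by all of $\G$ is by definition in $\Char(\G,V)$. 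One just needs to observe that invariance under a generating set suffices, but here we really ask for invariance under the whole group, so there is nothing subtle.

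For part (ii), the key point is that $\G(t)$ acts trivially on $\Char(\G(t),V)$: indeed, if $c\in\Char(\G(t),V)$ and $\eta\in\G(t)$, then $c|\eta=c$ by the very definition of $\Char(\G(t),V)$. Hence the $\G$-action on $\Char(\G(t),V)$ factors through $\G/\G(t)$, and since $\pr$ identifies $\G/\G(t)$ with $G=\pr(\G)\subset\GL_3(\Fq)$ (using $\ker\pr=\G(t)$), the fixed spaces $\Char(\G(t),V)^{\G}$ and $\Char(\G(t),V)^{G}$ coincide. Here I should be a little careful to spell out what ``$G$ acts'' means — one picks, for $g\in G$, any lift $\g\in\GL_3(A)$ with $\pr(\g)=g$ and sets $c|g:=c|\g$; the previous sentence shows this is independent of the lift, so the action of $G$ is genuinely defined.

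For part (iii), I would argue that restriction of a cocycle to the star $\St_0$ is compatible with the $\GL_3(\Fq)$-action in the following sense: every $\g\in\GL_3(\Fq)$, viewed inside $\GL_3(A)\subset\GL_3(F_\infty)$, fixes the vertex $[0,0]=[\langle e_1,e_2,e_3\rangle_{\Oi}]$ because $\g$ reduces to an invertible matrix mod $\pi$, and hence $\g$ permutes the chambers and edges of $\St_0$. Therefore, for $c\in\Char(\St_0,V)$ and $\g\in\GL_3(\Fq)$, the formula $(c|\g)(s):=\g^{-1}c(\g s)$ for $s\in\S_2(\St_0)$ makes sense (the argument $\g s$ is again in $\S_2(\St_0)$), and the harmonicity condition is preserved by exactly the computation in the proof of Lemma~\ref{lem:harmonicity-under-G}, now carried out over edges $e\in\S_1(\St_0)$ and using that $\g e\in\S_1(\St_0)$ as well. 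The main obstacle, such as it is, is the verification that $\GL_3(\Fq)$ really stabilizes $\St_0$ as a set of simplices; this is where one uses Theorem~\ref{thm:matsimpl} to see that $\g[\langle b_1,b_2,b_3\rangle_{\Oi}]$ for columns $b_i$ of $\g\in\GL_3(\Fq)\subset\GL_3(\Oi)$ is again $[0,0]$, and then that adjacency of simplices is preserved by the $\GL_3(F_\infty)$-action, so the neighbours of $[0,0]$ go to neighbours of $[0,0]$. Everything else is a direct unwinding of definitions.
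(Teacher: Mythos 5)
Your proposal is correct and follows essentially the same route as the paper: for (i) and (ii) you invoke the normality of $\G(t)$ in $\GL_3(A)$ and the fact that $\G(t)$ acts trivially on $\Char(\G(t),V)$, and for (iii) you observe that $\GL_3(\Fq)\subset\GL_3(A)$ stabilizes the vertex $[0,0]$ and hence permutes the simplices of $\St_0$, so the action formula restricts. Your explicit check in (i) that $c|\g$ lies in $\Char(\G(t),V)$ simply spells out what the paper dismisses as ``a standard fact from representation theory,'' and your justification in (iii) via reduction mod $\pi$ is a small variant of the paper's appeal to Proposition~\ref{prop:stab_a}, but the arguments are the same in substance.
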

\begin{proof}
\begin{enumerate}[(i)]
\item Noting that $\G(t)\subset\G$ is a normal subgroup, part (i) is a standard fact from representation theory, since $\Char(\G',V):=\Char(V)^{\G'}$ for any subgroup $\G'$ of $\GL_3(F)$. 
\item Since by definition $\G(t)$ acts trivially on $\Char(\G(t),V)$, the first part is clear, and the second is an immediate consequence.
\item By Proposition \ref{prop:stab_a}, the group $\GL_3(\Fq)$ is the $\GL_3(A)$-sta\-bilizer of the vertex $[0,0]$. Hence its action preserves the simplices of~$\St_0$, and (iii) follows.
\end{enumerate}
\end{proof}

\begin{thm} \label{thm:isogt}
Under the actions from Proposition~\ref{prop:gamma_invariants}, the map
\begin{align*}
\phi: \Char(\G(t),V) & \rightarrow \Char(\St_0, V), \\
c & \mapsto c|_{\St_0}
\end{align*}
is an isomorphism of $F[\GL_3(\Fq)]$-modules. One has $\dim_F \Char(\G(t),V)=q^3 \dim_F V$. \end{thm}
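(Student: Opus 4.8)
The plan is to construct the inverse of $\phi$ by hand, using the combinatorial structure of the fundamental domain $\mathcal{F}=\bigcup_{i=1}^{n_q}g_i\W$ and the fact that, among the simplices of $\mathcal{F}$, exactly those lying in $\St_0$ are $\G(t)$-stable. First I would observe that $\phi$ is visibly $F$-linear and, because each $g_i$ fixes $[0,0]$, visibly $\GL_3(\Fq)$-equivariant for the actions set up in Proposition~\ref{prop:gamma_invariants}; so the only real content is bijectivity. For injectivity, suppose $c\in\Char(\G(t),V)$ restricts to $0$ on $\St_0$. Since $\mathcal{F}$ is a fundamental domain, it suffices to show $c$ vanishes on every chamber of $\mathcal{F}$, and one argues by induction on the distance from a chamber of $\mathcal{F}$ to the set of stable chambers $\{g_is_*\}$, exactly as in the proof of Theorem~\ref{thm:isoG1}: given a chamber $s\in\S_2(\mathcal{F})$ not in $\St_0$, pick a minimal gallery in $\mathcal{F}$ toward $\St_0$, let $r$ be the next chamber (closer, hence known to vanish), let $e$ be the shared edge, and use harmonicity of $c$ at $e$ together with $\G(t)$-equivariance to rewrite $c(s)=-\sum c(\g_i r)=-\sum \g_i c(r)=0$. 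The only point needing care here is the statement that moving away from $\St_0$ inside $\mathcal{F}$ passes through the same ``stabilizer grows by a factor $q$'' phenomenon as in Theorem~\ref{thm:index_q}; this is where I would either cite the analogue from \cite{gr2021} (the cited Theorems~7.22, 8.7 there set up precisely this) or note that each $g_i\W$ is a $\GL_3(\Fq)$-translate of $\W$ and that $\G(t)$ is normal, so the local picture at each edge of $\mathcal{F}$ is a translate of the one analyzed before.

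For surjectivity I would exhibit a right inverse explicitly, mimicking the surjectivity argument of Theorem~\ref{thm:isoG1}. Given $\tilde c\in\Char(\St_0,V)$, first extend it to $\S_2(\mathcal{F})$ by the same ``symmetrize over the stabilizer'' recipe: for $s\in\S_2(\mathcal{F})$ with (unique) stable representative chamber reachable along a gallery, set the value so that harmonicity forces propagation outward — concretely, one defines $c(s)$ on $\St_0$ to agree with $\tilde c$, and on chambers of $g_i\W$ outside $\St_0$ by the recursion dictated by harmonicity along galleries in $g_i\W$, checking consistency exactly as the well-definedness check in Theorem~\ref{thm:isoG1}. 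Then extend from $\mathcal{F}$ to all of $\S_2(\B)$ by $\G(t)$-equivariance, using that $\mathcal{F}$ is a fundamental domain and that the relevant stabilizers are trivial off $\St_0$ (so the extension is forced and well-defined). Finally one checks harmonicity of the extended $c$ at every edge: by $\G(t)$-equivariance it suffices to check edges of $\mathcal{F}$, these split into edges internal to some $g_i\W$ (handled by the recursion exactly as in Theorem~\ref{thm:index_q}'s bookkeeping) and edges of $\St_0$ (handled because $\tilde c\in\Char(\St_0,V)$ was assumed harmonic there, together with the fact that the chambers around such an edge but outside $\mathcal{F}$ are $\G(t)$-translates of chambers in $\mathcal{F}$ with trivial stabilizer, so their contributions reassemble correctly).

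For the dimension count, I would use the isomorphism $\phi$ to reduce to computing $\dim_F\Char(\St_0,V)$. Since $\St_0$ has $(q+1)(q^2+q+1)$ chambers and $2(q^2+q+1)$ edges, a naive count gives $\dim_F\Char(\St_0,V)\ge \bigl((q+1)(q^2+q+1)-2(q^2+q+1)\bigr)\dim_F V=(q-1)(q^2+q+1)\dim_F V$, but this lower bound is not the answer and the harmonicity functionals on $\St_0$ are not independent; the clean route is instead to invoke the known local structure of the building: the link of the vertex $[0,0]$ is the incidence geometry of $\P^2(\Fq)$ (points and lines), $\Char(\St_0,V)$ is the space of $V$-valued harmonic cochains on this finite geometry, and its dimension over $F$ is $q^3$ times $\dim_F V$. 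This last fact is the arithmetic heart of the statement and I expect it to be the main obstacle: one must show the rank of the harmonicity system on $\St_0$ equals $(q+1)(q^2+q+1)-q^3 = q^2+q+1$, i.e. the harmonicity constraints at the $2(q^2+q+1)$ edges of $\St_0$ have exactly $q^2+q+1$ linearly dependent relations among them. I would prove this by a direct linear-algebra computation on the $\P^2(\Fq)$ incidence structure — organizing chambers of $\St_0$ as flags (point, line) and noting that summing the edge-relations over all edges through a fixed point, and separately over all edges lying on a fixed line, produces exactly the $2(q^2+q+1)$ relations with precisely $q^2+q+1$ independent syzygies — or, more economically, by citing the corresponding computation in \cite{m2014} or \cite{gr2021}, where $\dim\Char(\St_0,V)$ is identified; the reduction to $V=F$ is immediate since the whole system is defined over $\Z$ and is $\otimes_F V$-linear.
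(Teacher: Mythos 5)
Your overall structure (prove injectivity and surjectivity directly, then compute $\dim_F\Char(\St_0,V)$) is a legitimate strategy, but it is genuinely different from the one the paper takes, and one step of your dimension count contains an arithmetic error.

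The paper does \emph{not} argue injectivity by induction along galleries. It cites \cite[Theorem~8.5]{gr2021} for two things: that every harmonic cochain on $\St_0$ extends to a $\G(t)$-equivariant cocycle on $\B$ (giving surjectivity), and that $\dim_F\Char(\St_0,V)=q^3\dim_F V$. What replaces your injectivity induction is a short representation-theoretic argument: set $X=\Char(\G(t),V)$, note that $U=\G_1(t)/\G(t)$ is a $p$-group of order $q^3$ acting on $X$ with $X^U\cong V$ (by Theorem~\ref{thm:isoG1} and Proposition~\ref{prop:gamma_invariants}), embed $X^U$ in $F[U]^{\dim V}$ via the one-dimensional socle $F[U]^U$, extend to an $F[U]$-map $H:X\to F[U]^{\dim V}$ using that $F[U]^{\dim V}$ is injective as an $F[U]$-module, and conclude $H$ is injective because a nonzero $\ker H$ would have nonzero $U$-fixed points (a $p$-group acting on a nonzero $\mathbb F_p$-space has nonzero fixed points) contradicting the injectivity of $H$ on $X^U$. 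This yields $\dim_F X\le q^3\dim_F V$ directly, without ever needing a $\G(t)$-analogue of Theorem~\ref{thm:index_q} on the fundamental domain $\mathcal F=\bigcup_i g_i\W$. Your injectivity induction would work but requires you to establish precisely that analogue, which the paper carefully avoids doing; this is the substantive technical saving of the paper's route.

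The concrete error: you assert that the rank of the harmonicity system on $\St_0$ equals $(q+1)(q^2+q+1)-q^3=q^2+q+1$, with $q^2+q+1$ syzygies among the $2(q^2+q+1)$ edge constraints. But $(q+1)(q^2+q+1)-q^3=2q^2+2q+1$, not $q^2+q+1$, so the rank is $2q^2+2q+1$ and there is exactly \emph{one} linear dependence among the constraints (in the incidence picture it is the identity ``sum of the point-relations $=$ sum of the line-relations'', both being the total sum of values over all flags). Your proposed fallback of citing the computation from \cite{m2014} or \cite{gr2021} is of course fine — and is in fact exactly what the paper does — but the explicit syzygy count as written is off.

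Minor point on surjectivity: your plan of propagating $\tilde c$ outward along galleries in $\mathcal F$ is morally what \cite[Theorem~8.5]{gr2021} does, so citing it (as the paper does) is the economical choice; rebuilding it would again force you through the $\G(t)$-stabilizer bookkeeping that Theorem~\ref{thm:index_q} only establishes for $\G_1(t)$.
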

\begin{proof}
The $F$-linearity is clear. The $\GL_3(\Fq)$-equivariance also follows immediately, since both actions are induced by the one in Definition \ref{def:action}.

In \cite[][Theorem 8.5]{gr2021} it was proved that any element in $\Char(\St_0, V)$ can be extended to a $\G(t)$-equivariant cocycle on all of $\B$, so that $\phi$ is surjective. The dimension formula for $\Char(\St_0, V)$ also follows from the proof of \cite[][Theorem 8.5]{gr2021}. We shall prove that $\dim_F \Char(\G(t),V) \leq q^3 \dim_F V = \dim_F \Char(\St_0, V)$, which together with the above concludes the proof.

We abbreviate $X := \Char(\G(t),V)$ and let $d := \dim_F V$. Recall that $U = U(\Fq) = \G_1(t) / \G(t)$ is a finite group of order $q^3$, and in particular a $p$-group. According to Proposition \ref{prop:gamma_invariants} and Theorem \ref{thm:isoG1}, $X$ is an $F[U]$-module and $X^U = \Char(\G_1(t),V) \cong V$, so that $\dim_F X^U = d$.

Now observe that $F[U]^U$ is the $F$-span of $\sum_{g \in U} g \in F[U]$, which is well-known and straightforward to be checked; this holds for any group. In particular $\dim_F F[U]=1$. We choose an $F$-linear isomorphism $X^U\to (F[U]^U)^{d}$ and extend it, using the inclusion $F[U]^U\to F[U]$, to an $F[U]$-module homomorphism
\[ h: X^U \rightarrow F[U]^d\]
The free $F[U]$-module $F[U]^d$ is projective over $F[U]$ and hence, by \cite[][Theorem 62.3]{cur.rei1966}, also injective. Therefore $h$ extends to an $F[U]$-homomorphism
\[ H: X \rightarrow F[U]^d.\]
We claim that $H$ is injective, which implies that $\dim_FX \le q^3\dim_F V$. Then we must have $\dim_FX =q^3\dim_F V$, and this completes the proof.

For the claim, assume on the contrary that $\ker(H) \neq 0$. Then \cite[][Proposition 26]{ser2012} yields $\ker(H)^U \neq 0$. But $\ker(H)^U \subset X^U$, and $h$ is injective; we reach a contradiction.
\end{proof}

\begin{cor}\label{cor:isogt}
Let the notation be as in Proposition~\ref{prop:gamma_invariants}. Then $\phi$ from Theorem~\ref{thm:isogt} induces an isomorphism 
\[ \Char(\G,V) \rightarrow \Char(\St_0, V)^G.\]
\end{cor}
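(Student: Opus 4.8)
The plan is to combine Theorem~\ref{thm:isogt}, which identifies $\Char(\G(t),V)$ with $\Char(\St_0,V)$ as $F[\GL_3(\Fq)]$-modules, with part~(i) and~(ii) of Proposition~\ref{prop:gamma_invariants}, which identify $\Char(\G,V)$ with the $G$-invariants $\Char(\G(t),V)^G$. First I would recall that $\G=\pr^{-1}(G)$ contains $\G(t)$ as a normal subgroup, and that Proposition~\ref{prop:gamma_invariants}(i)--(ii) gives
\[
\Char(\G,V)=\Char(\G(t),V)^{\G}=\Char(\G(t),V)^{G}.
\]
On the other side, $\GL_3(\Fq)$ acts on $\Char(\St_0,V)$ by Proposition~\ref{prop:gamma_invariants}(iii), so it makes sense to form $\Char(\St_0,V)^G$.

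Next I would observe that $\phi$ from Theorem~\ref{thm:isogt} is an isomorphism of $F[\GL_3(\Fq)]$-modules, in particular it is $G$-equivariant for the subgroup $G\subset\GL_3(\Fq)$. Since any isomorphism of $G$-modules restricts to an isomorphism on $G$-fixed points (the inverse is automatically $G$-equivariant and maps fixed points to fixed points), $\phi$ restricts to an $F$-linear isomorphism
\[
\phi|_{\Char(\G(t),V)^G}\colon \Char(\G(t),V)^G \stackrel{\simeq}{\longrightarrow} \Char(\St_0,V)^G.
\]
Composing with the identification $\Char(\G,V)=\Char(\G(t),V)^G$ from the previous paragraph yields the desired isomorphism $\Char(\G,V)\to\Char(\St_0,V)^G$, which is precisely the map induced by $\phi$ by restriction.

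There is really no serious obstacle here: the statement is a formal consequence of the fact that taking $G$-invariants is a functor on $F[G]$-modules that sends isomorphisms to isomorphisms, together with the two already-established identifications of $\Char(\G,V)$ and of $\Char(\G(t),V)$. The only point that warrants a sentence of care is making sure the actions match up on the nose --- i.e.\ that the $G$-action used in Proposition~\ref{prop:gamma_invariants} on $\Char(\G(t),V)$ and the $G$-action obtained by restricting the $\GL_3(\Fq)$-action on $\Char(\St_0,V)$ correspond under $\phi$ --- but this is immediate since, as noted in the proof of Theorem~\ref{thm:isogt}, both actions are induced by the single action of Definition~\ref{def:action} and $\phi$ is a restriction map, hence visibly intertwines them. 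So the proof is a two-line deduction once these references are assembled.
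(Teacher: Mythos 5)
Your proof is correct and matches the intended argument: the paper states this as a corollary with no separate proof precisely because it follows formally by combining Proposition~\ref{prop:gamma_invariants}(i)--(ii) (giving $\Char(\G,V)=\Char(\G(t),V)^G$) with the $F[\GL_3(\Fq)]$-equivariance of $\phi$ from Theorem~\ref{thm:isogt} and then passing to $G$-invariants. Your additional remark that the two $G$-actions agree because both are restrictions of the action from Definition~\ref{def:action} is exactly the right sanity check, and nothing more is needed.
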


We shall also need the following variant of Theorem~\ref{thm:isogt}.
\begin{prop}\label{prop:isogt}
    Consider $U\cdot s_0$ as a subset of the chambers of $\St_0$. Then restriction defines a $U$-equivariant isomorphism
    \[ \beta:\Char(\St_0, V)\to \Maps(U\cdot s_0,V).\]
\end{prop}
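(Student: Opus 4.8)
The plan is to show that $\beta$ is well-defined, $U$-equivariant, injective, and surjective. Well-definedness is immediate: restriction of a map on $\S_2(\St_0)$ to the subset $U\cdot s_0\subset\S_2(\St_0)$ is a map to $V$, and $U$-equivariance is inherited from the fact that both actions come from the formula in Definition~\ref{def:action}, using that $U$ normalizes $\G(t)$ and that $U$ permutes $U\cdot s_0$ among itself. So the content is bijectivity.

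For injectivity, suppose $c\in\Char(\St_0,V)$ vanishes on every chamber in $U\cdot s_0$. I would argue that harmonicity then forces $c$ to vanish on all of $\S_2(\St_0)$. The key geometric input is Theorem~\ref{thm:index_q} together with Corollary~\ref{cor:A-FD}: inside the fundamental domain $\A$ every chamber $s\ne s_0$ lies on an edge $e$ whose unique closer-to-$s_0$ neighbour $r$ satisfies $r\prec s$, and the other $q$ chambers adjacent to $e$ form a single $\G_1(t)_s$-orbit, hence (since $\G_1(t)_s=\G_1(t)_e\subset U\cdot\G(t)$ modulo $\G(t)$, i.e.\ the relevant representatives $\gamma_i$ can be taken in $U$) they are of the form $\gamma_i r$ with $\gamma_i\in U$. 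Thus harmonicity at $e$ reads $c(s)=-\sum_i\gamma_i c(r)$, expressing the value at the farther chamber in terms of the value at the closer one. Running an induction on $d(s,s_0)$ starting from the base case $d=0$ where $c(s_0)=0$ by hypothesis — and noting that the whole star $\St_0$ is the $U$-translate (in fact $\GL_3(\Fq)$-translate, but $U$ suffices here since $\St_0\cap\A$ together with the $U$-action generates $\St_0$; more precisely the chambers of $\St_0$ all lie in $U\cdot(\S_2(\A)\cap\St_0)$) — gives $c\equiv 0$ on $\St_0$. One has to check carefully that the induction stays within $\St_0$: a minimal gallery in $\A$ from $s$ to $s_0$ stays in $\A$, and if $s\in\St_0$ then the whole gallery consists of chambers containing $[0,0]$, so this is fine.

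For surjectivity, I would combine Theorem~\ref{thm:isogt} and Theorem~\ref{thm:isoG1}. By Theorem~\ref{thm:isogt}, $\Char(\St_0,V)\cong\Char(\G(t),V)$ as $F[\GL_3(\Fq)]$-modules, and by Theorem~\ref{thm:isoG1} (applied via $\Char(\G_1(t),V)=\Char(\G(t),V)^U$, using Proposition~\ref{prop:gamma_invariants}) the $U$-fixed part is $\cong V$, of dimension $d:=\dim_F V$. Since $U\cdot s_0$ has exactly $|U|=q^3$ elements (the stabiliser $\G_1(t)_{s_0}$ is trivial by Corollary~\ref{cor:stab-a}, so $U=\G_1(t)/\G(t)$ acts freely on $s_0$), $\Maps(U\cdot s_0,V)$ is a free $F[U]$-module of rank $d$, of $F$-dimension $q^3 d$. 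But $\dim_F\Char(\St_0,V)=q^3 d$ as well by Theorem~\ref{thm:isogt}. Since $\beta$ is injective between two spaces of equal finite dimension, it is an isomorphism. Alternatively, and more in the spirit of an explicit construction, surjectivity follows because given any $f\in\Maps(U\cdot s_0,V)$ one can use the same inductive harmonicity recursion as above to build a cocycle on $\St_0$ restricting to $f$, but the dimension-count shortcut is cleaner.

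The main obstacle I anticipate is the bookkeeping in the injectivity step: one must verify that the $q$ chambers adjacent to an edge $e\subset\A$ that are \emph{not} in $\A$ are actually $U$-translates (rather than merely $\G_1(t)$-translates) of the closer chamber $r$, and that the induction along minimal galleries never leaves $\St_0$. Both are consequences of the explicit stabiliser computations in Corollary~\ref{cor:stab-a} and Theorem~\ref{thm:index_q} — the representatives $\gamma_1,\dots,\gamma_q$ of $\G_1(t)_s/\G_1(t)_r$ can be chosen among the unipotent matrices reducing into $U$, since those stabilisers differ in a single entry-slot of the shape $\{m\}$ or $t\{m\}$ whose reduction mod $t$ lands in $U$ — but it needs to be spelled out. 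Given how cleanly Theorem~\ref{thm:isogt} already packages the dimension count, I would in fact lean on the dimension argument for surjectivity and only do the induction for injectivity, keeping it short by citing the analogous argument in the proof of Theorem~\ref{thm:isoG1}.
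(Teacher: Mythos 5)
Your dimension count for surjectivity matches the paper exactly (both sides have $F$-dimension $q^3\dim_FV$, with the source dimension coming from the proof of Theorem~\ref{thm:isogt} and the target from freeness of the $U$-action on $s_0$). The divergence, and the gap, is in the injectivity argument, which the paper handles quite differently.

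The geometric induction you sketch has a logical flaw. You write that harmonicity at $e$ reads $c(s)=-\sum_i\gamma_ic(r)$; but what harmonicity actually gives is $c(s)=-\sum_ic(\gamma_ir)$, and since $c\in\Char(\St_0,V)$ carries no equivariance hypothesis whatsoever, $c(\gamma_ir)\ne\gamma_ic(r)$ in general. The chambers $\gamma_ir$ with $\gamma_i\ne1$ lie outside $\A$, so your stated induction hypothesis (vanishing on $\A\cap\St_0$ up to distance $n$) does not let you evaluate those terms. The same issue sinks the final step: knowing $c\equiv0$ on $\A\cap\St_0$ and that $\St_0=U\cdot(\A\cap\St_0)$ gives $c\equiv0$ on $\St_0$ \emph{only if} $c$ is $U$-equivariant, which it is not. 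To salvage the geometric route you must strengthen the induction hypothesis to ``$c$ vanishes on the full $U$-orbit $U\cdot s$ for every $s\in\A\cap\St_0$ of (hexagon) distance $\le n$,'' with base case exactly the given hypothesis $c|_{U\cdot s_0}=0$; then for $u\in U$, harmonicity at $ue$ gives $c(us)=-\sum_ic(u\gamma_ir)$, and since $\G_1(t)_s\subset\G_1(t)_{[0,0]}=U$ for $s\in\St_0$ (a cleaner justification than the entry-slot inspection you propose), each $u\gamma_i\in U$ and the IH applies. You should also not lean on Lemma~\ref{lem:distance} for the ``stays inside $\St_0$'' claim — just use the evident hexagon gallery — since that lemma's distance formula is dubious at the antipodal chamber. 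In contrast, the paper's proof of injectivity bypasses all this bookkeeping: it takes $U$-invariants of $0\to\ker\beta\to\Char(\St_0,V)\to\Maps(U\cdot s_0,V)$, identifies $\beta^U$ with the isomorphism of Theorem~\ref{thm:isoG1} via Corollary~\ref{cor:isogt}, concludes $(\ker\beta)^U=0$, and invokes the $p$-group fixed-point lemma (Serre, Prop.~26) to get $\ker\beta=0$. I'd recommend that route; it uses exactly the machinery that is already in place from the proof of Theorem~\ref{thm:isogt} and avoids the orbit tracking entirely.
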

\begin{proof}
    As shown in the proof of Theorem~\ref{thm:isogt}, we have $\dim_F\Char(\St_0, V)=\dim_FV\cdot\#U$. Since $U$ acts faithfully on $s_0$, we also have $\dim_F \Maps(U\cdot s_0,V)=\dim_FV\cdot\#U$. Therefore it suffices to show that $\beta$ is injective. Taking $U$-invariants gives the left exact sequence
    \[ 0\to (\ker \beta)^U\to \Char(\St_0, V)^U\stackrel{\beta^U}\to \Maps(U\cdot s_0,V)^U.\]
    Now $\beta^U$ has target $\Maps(\{s_0\},V)$, by Corollary~\ref{cor:isogt} the domain can be identified with $\Char(\G_1(t),V)$, and the map $\beta^U$ itself with the restriction isomorphism from Theorem~\ref{thm:isoG1}. It follows that $(\ker \beta)^U=0$. Now as in the proof of Theorem~\ref{thm:isogt}, we deduce $\ker\beta=0$, and this concludes the proof.
\end{proof}
We obtain the following consequence:
\begin{cor}\label{cor:Isom-ResUi}
    Suppose that $U$ is the direct product of subgroups $H_1$ and $H_2$, and let $\G_1=\pr^{-1}(H_1)$. Then restriction on chambers defines a $H_2$-equivariant isomorphism
    \[ \Char(\G_1,V) \rightarrow \Maps(H_2\cdot s_0,V),\]
    and in particular, any $c\in \Char(\G_1,V)$ is uniquely determined by its values on $H_2 \cdot s_0$.
\end{cor}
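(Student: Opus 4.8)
The plan is to deduce Corollary~\ref{cor:Isom-ResUi} directly from Proposition~\ref{prop:isogt} and Corollary~\ref{cor:isogt}, using the group-theoretic hypothesis $U = H_1 \times H_2$. First I would recall the composite isomorphism from Corollary~\ref{cor:isogt}, $\Char(\G_1,V)\cong\Char(\St_0,V)^{H_1}$, where $H_1 = \pr(\G_1) = \G_1/\G(t)$; here one must check that $\G_1$ indeed contains $\G(t)$ so that Proposition~\ref{prop:gamma_invariants} and Corollary~\ref{cor:isogt} apply, which is automatic since $H_1\subset U$ and $\G_1 = \pr^{-1}(H_1)\supset\pr^{-1}(U)\supset\ldots$ well, more simply $\G_1 = \pr^{-1}(H_1)$ contains $\ker\pr = \G(t)$ by construction. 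Then I would compose with the $U$-equivariant restriction isomorphism $\beta\colon\Char(\St_0,V)\to\Maps(U\cdot s_0,V)$ from Proposition~\ref{prop:isogt}, taking $H_1$-invariants on both sides: since $\beta$ is $U$-equivariant it is in particular $H_1$-equivariant, so it restricts to an isomorphism $\Char(\St_0,V)^{H_1}\xrightarrow{\sim}\Maps(U\cdot s_0,V)^{H_1}$, and this restricted map remains $H_2$-equivariant because $H_1$ and $H_2$ commute inside $U$ (so the $H_2$-action preserves the $H_1$-fixed subspaces on both sides).

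The second ingredient is to identify the target $\Maps(U\cdot s_0,V)^{H_1}$ with $\Maps(H_2\cdot s_0,V)$. Here I would use that $U$ acts faithfully (indeed simply transitively) on the orbit $U\cdot s_0$ — this faithfulness was already used in the proof of Proposition~\ref{prop:isogt} — so that as an $H_1\times H_2$-set, $U\cdot s_0$ is isomorphic to $U$ itself with its left translation action. Under this identification $\Maps(U\cdot s_0,V)\cong\Maps(U,V)$, and the $H_1$-fixed points are exactly the functions on $U$ that are constant on right cosets $H_1 u$ — equivalently, functions on the coset space $H_1\backslash U$. Since $U = H_1\times H_2$, the natural map $H_2\hookrightarrow U\twoheadrightarrow H_1\backslash U$ is a bijection of $H_2$-sets, so $\Maps(U,V)^{H_1}\cong\Maps(H_2,V)\cong\Maps(H_2\cdot s_0,V)$, the last step again by faithfulness of the $H_2$-action on $s_0$. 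Chaining the isomorphisms $\Char(\G_1,V)\cong\Char(\St_0,V)^{H_1}\cong\Maps(U\cdot s_0,V)^{H_1}\cong\Maps(H_2\cdot s_0,V)$ and checking that each is $H_2$-equivariant gives the result; the final clause about $c$ being determined by its values on $H_2\cdot s_0$ is then just the injectivity of this composite.

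The main obstacle, such as it is, is purely bookkeeping: one has to be careful about left versus right actions and about which group acts on which side when passing between $\Char(\St_0,V)$, the abstract group algebra picture, and $\Maps$-spaces, so that the identification $\Maps(U\cdot s_0,V)^{H_1}\cong\Maps(H_2\cdot s_0,V)$ is genuinely $H_2$-equivariant rather than equivariant only up to a twist. Concretely, the cleanest route is to observe that the restriction map $\Char(\St_0,V)\to\Maps(H_2\cdot s_0,V)$ is the composition of $\beta$ with the restriction $\Maps(U\cdot s_0,V)\to\Maps(H_2\cdot s_0,V)$, and to show directly that this composition, when restricted to $\Char(\G_1,V)=\Char(\St_0,V)^{H_1}$, is injective (via a $U$-invariants argument as in Proposition~\ref{prop:isogt}: if $c$ lies in the kernel and is $H_1$-invariant, then $c|_{U\cdot s_0}$ vanishes, since every element of $U\cdot s_0$ is $H_1 h_2 s_0$ for some $h_2\in H_2$ and $c(h_1 h_2 s_0)=h_1 c(h_2 s_0)=0$) and surjective by a dimension count, $\dim_F\Char(\G_1,V)=\dim_F V\cdot\#H_1\cdot\#H_2/\#H_1=\dim_F V\cdot\#H_2=\dim_F\Maps(H_2\cdot s_0,V)$, using $\dim_F\Char(\G_1(t),V)=\#U\cdot\dim_F V$ from Theorem~\ref{thm:isogt} together with the fact that taking $H_1$-invariants of the free-ish $U$-module structure divides the dimension by $\#H_1$ — or more safely, using $\dim_F\Char(\G_1,V)=\dim_F\Char(\St_0,V)^{H_1}$ and the explicit structure of $\Char(\St_0,V)\cong\Maps(U,V)$ as an $H_1$-module. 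Either way the argument is short; I expect the writeup to be about half a page.
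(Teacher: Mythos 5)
Your proposal matches the paper's implicit intent: the corollary is stated without proof as an immediate consequence of Proposition~\ref{prop:isogt} and Corollary~\ref{cor:isogt}, and your chain $\Char(\G_1,V)\cong\Char(\St_0,V)^{H_1}\cong\Maps(U\cdot s_0,V)^{H_1}\cong\Maps(H_2\cdot s_0,V)$ is exactly the natural way to spell that out. One imprecision in your first route, though: under the action $(c\,|\,h)(s)=h^{-1}c(hs)$ the $H_1$-fixed points of $\Maps(U\cdot s_0,V)$ are the $H_1$-\emph{equivariant} maps ($c(hs)=hc(s)$ for $h\in H_1$), \emph{not} the functions literally constant on cosets $H_1u$ (``functions on $H_1\backslash U$'') --- the two descriptions coincide only when $H_1$ acts trivially on $V$, which it certainly does not here. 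This does not break the argument, since restriction to $H_2\cdot s_0$ is still a bijection onto $\Maps(H_2\cdot s_0,V)$ with inverse $f\mapsto\bigl(h_1h_2s_0\mapsto h_1f(h_2s_0)\bigr)$, and it is visibly $H_2$-equivariant; moreover your alternate route (injectivity via the equivariance identity $c(h_1h_2s_0)=h_1c(h_2s_0)$ plus a dimension count) is stated correctly and sidesteps the issue entirely. So the proof is sound once that one sentence is corrected to ``$H_1$-equivariant'' rather than ``constant on cosets.''
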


\subsubsection{Harmonic cocycles for the group $\G_0(t)$}
We want to find the image of $\Char(\Gamma_0(t),V)$ under the isomorphism of Theorem \ref{thm:isoG1}.

Our reasoning relies on the following Lemma.
\begin{lem} \label{lem:G0normal}
The map 
\begin{align*}
\G_0(t) \rightarrow D, \left(\begin{smallmatrix}
a & b & c \\ 
d & e & f \\ 
g & h & i
\end{smallmatrix}\right) \mapsto \left(\begin{smallmatrix}
\bar{a} & 0 & 0 \\ 
0 & \bar{e} & 0 \\ 
0 & 0 & \bar{i}
\end{smallmatrix}\right) ,
\end{align*}
where $\bar{a}, \bar{e}, \bar{i}$ are reduced modulo $t$, is a surjective group homomorphism with kernel $\G_1(t)$. 

Hence, $\G_1(t)$ is normal in $\G_0(t)$ and every $\g \in \G_0(t)$ can be written as $\g = \upsilon \delta$ with $\upsilon \in \G_1(t), \delta \in D$.
\end{lem}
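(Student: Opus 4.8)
The plan is to realize the map as a composition of two standard surjections and then read off the kernel and a splitting. First I would invoke Definition~\ref{def:Gamma1}, which presents $\G_0(t)$ as $\pr^{-1}(B)$; since $\pr\colon\GL_3(A)\to\GL_3(\Fq)$ is surjective (for instance because $\Fq\subset A$ as constants maps isomorphically onto $\GL_3(\Fq)$), restriction gives a surjective group homomorphism $\pr\colon\G_0(t)\to B$. Second, I would use that sending an invertible upper triangular matrix to its diagonal part is the quotient homomorphism $B\to B/U\cong D$; the only computation involved is the elementary fact that for upper triangular $M,N$ one has $(MN)_{ii}=M_{ii}N_{ii}$, all cross terms vanishing by triangularity. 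Composing, the map in the statement is $\G_0(t)\xrightarrow{\pr}B\to D$, hence a surjective group homomorphism.

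Next I would identify the kernel: an element $\g\in\G_0(t)$ dies in $D$ exactly when $\pr(\g)\in B$ has trivial diagonal, i.e.\ $\pr(\g)\in U$, i.e.\ $\g\in\pr^{-1}(U)=\G_1(t)$. Thus the kernel is $\G_1(t)$, which is therefore normal in $\G_0(t)$, with quotient isomorphic to $D$.

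Finally, for the decomposition $\g=\upsilon\delta$ I would exhibit a splitting. View $D=D(\Fq)\subset\GL_3(\Fq)\subset\GL_3(A)$ as the constant diagonal matrices with entries in $\Fq^{\times}$; each such matrix reduces mod $t$ to itself, which lies in $B$, so $D\subset\G_0(t)$, and the composite $D\hookrightarrow\G_0(t)\to D$ is the identity. Hence, given $\g\in\G_0(t)$, let $\delta\in D$ be its image under the homomorphism; then $\g\delta^{-1}$ maps to $1\in D$, so $\upsilon:=\g\delta^{-1}\in\G_1(t)$ and $\g=\upsilon\delta$. In fact this shows $\G_0(t)=\G_1(t)\rtimes D$.

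I do not anticipate a genuine obstacle here: the argument is purely formal once the reduction-mod-$t$ homomorphism and the semidirect product decomposition $B=U\rtimes D$ of the Borel are in place. The sole point requiring a line of verification is the multiplicativity of diagonal-extraction on $B$, noted above, and this is immediate from triangularity.
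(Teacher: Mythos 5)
Your proof is correct and, since the paper dismisses this lemma with ``Straightforward calculation,'' your write-up simply makes that calculation explicit in a clean way: factoring the map through $\pr\colon\G_0(t)\to B$ and the quotient $B\to B/U\cong D$, reading off kernel and surjectivity, and exhibiting the splitting $D\hookrightarrow\G_0(t)$ to get the decomposition $\g=\upsilon\delta$ (in fact a semidirect product). Nothing is missing and no step would fail.
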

\begin{proof}
Straightforward calculation.
\end{proof}

\begin{thm}\label{thm:IsomG0}
$\Char(\Gamma_0(t),V) \cong V^{D}.$
\end{thm}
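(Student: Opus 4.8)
The plan is to combine the isomorphism $\psi\colon \Char(\Gamma_1(t),V)\xrightarrow{\sim}V$ from Theorem~\ref{thm:isoG1} with the fact from Lemma~\ref{lem:G0normal} that $\Gamma_1(t)$ is normal in $\Gamma_0(t)$ with quotient $D$. Since $\Gamma_1(t)\subset\Gamma_0(t)$, every $\Gamma_0(t)$-invariant cocycle is in particular $\Gamma_1(t)$-invariant, so $\Char(\Gamma_0(t),V)=\Char(\Gamma_1(t),V)^{\Gamma_0(t)}$, where $\Gamma_0(t)$ acts on $\Char(\Gamma_1(t),V)$ via Definition~\ref{def:action} (this is well-defined because $\Gamma_1(t)$ is normal, so the action preserves the $\Gamma_1(t)$-fixed subspace — cf.\ the argument in Proposition~\ref{prop:gamma_invariants}(i)). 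Because $\Gamma_1(t)$ acts trivially on $\Char(\Gamma_1(t),V)$, this action factors through $\Gamma_0(t)/\Gamma_1(t)\cong D$, and hence $\Char(\Gamma_0(t),V)=\Char(\Gamma_1(t),V)^{D}$.

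Next I would transport this $D$-action across the isomorphism $\psi$. The key point is to identify the induced action of an element $\delta\in D$ (lifted to a diagonal matrix in $\Gamma_0(t)$, with $\Fq^\times$-entries on the diagonal, which I will again call $\delta$) on $V$ under $\psi$. For $c\in\Char(\Gamma_1(t),V)$ we have $\psi(c)=c(s_0)$, and $\psi(c\,|\,\delta)=(c\,|\,\delta)(s_0)=\delta^{-1}c(\delta s_0)$. Here the crucial observation is that $\delta$ fixes the chamber $s_0=\{[0,0],[-1,-1],[-1,0]\}$: indeed $\delta$ is a diagonal matrix with entries in $\Fq^\times\subset\Oi^\times$, so it stabilizes each of the lattices $\langle \pi^i e_1,\pi^j e_2,\pi^k e_3\rangle_{\Oi}$ appearing in $s_0$, hence $\delta s_0=s_0$. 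Therefore $\psi(c\,|\,\delta)=\delta^{-1}c(s_0)=\delta^{-1}\psi(c)$, i.e.\ $\psi$ intertwines the $\Gamma_0(t)$-action on $\Char(\Gamma_1(t),V)$ with the action $v\mapsto \delta^{-1}v$ of $D$ on $V$. One should note that this action is well-defined on $V$ independently of the choice of diagonal lift of $\delta$, since two such lifts differ by an element of $\Gamma_1(t)\cap D=\{1\}$ (the only diagonal matrix congruent to the identity mod $t$ with $\Fq$-entries is the identity), and in any case $V$ carries a genuine $\GL_3(F)$-action restricting to one of $D\subset\GL_3(\Fq)\subset\GL_3(F)$.

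Finally, since $D$ is a group, the map $v\mapsto \delta^{-1}v$ ranges over exactly the same set of transformations as $v\mapsto \delta v$ as $\delta$ runs over $D$, so the fixed points agree: $\{v\in V : \delta^{-1}v=v\ \forall\delta\in D\}=V^D$. Combining the three displayed identifications yields $\Char(\Gamma_0(t),V)=\Char(\Gamma_1(t),V)^D\xrightarrow{\ \psi\ }V^D$, an isomorphism of $F$-vector spaces, as claimed.

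I do not anticipate a genuine obstacle here; the argument is essentially bookkeeping. The one place requiring a moment of care is verifying that the $\Gamma_0(t)$-action genuinely descends to a $D$-action on $V$ in a way compatible with the $\GL_3(F)$-representation structure on $V$ — i.e.\ checking that the abstract quotient action, pulled through $\psi$, agrees on the nose with the restriction of the given $\GL_3(F)$-action to the diagonal torus $D(\Fq)$ — which is exactly what the computation $\psi(c\,|\,\delta)=\delta^{-1}\psi(c)$ confirms, using that $\delta$ fixes $s_0$.
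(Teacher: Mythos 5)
Your proof is correct and rests on the same ingredients as the paper's: the isomorphism $\psi$ of Theorem~\ref{thm:isoG1}, the normality $\Gamma_1(t)\trianglelefteq\Gamma_0(t)$ with quotient $D$ from Lemma~\ref{lem:G0normal}, and the fact that $D$ stabilizes $s_0$. The only difference is organizational: you package the argument as ``take $D$-invariants of both sides of the $D$-equivariant isomorphism $\psi$,'' whereas the paper verifies the two inclusions directly (its reverse direction, using Lemma~\ref{lem:conjcycle} and the decomposition $\gamma=\upsilon\delta$, is exactly the unwound version of your equivariance check $\psi(c\,|\,\delta)=\delta^{-1}\psi(c)$), so the two proofs are essentially the same.
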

\begin{proof}
Let $c \in \Char(\Gamma_0(t),V)$. Then, since $\G_1(t) \subset \G_0(t)$, c is $\G_1(t)$-equivariant as well and corresponds to an element $w = c(s_0) \in V$.
Then for any matrix $\delta \in D = \Stab_{\G_0(t)}(s_0)$ (follows from Proposition \ref{prop:stab_a}), we have 
\[ \delta w = \delta c(s_0) = c(\delta s_0) = c(s_0) = w, \]
so $w$ lies in $V^{D(\Fq)}$.

For the reverse inclusion, let $w \in V^{D}$ arbitrary. We denote by $c_w$ the $\G_1(t)$-equivariant cocycle with $c_w(s_0) = w$. We need to show that $c_w$ is already $\G_0(t)$-equivariant, i.\,e. for all $\g \in \G_0(T)$ we want $c_w | \g = c_w$. 

Now note that according to Lemma \ref{lem:conjcycle}, $c_w | \g$ is an element of $\Char(\g^{-1} \G_1(t) \g, V)$. But $\g$ lies in $\G_0(t)$ and $\G_1(t)$ is normal in $\G_0(t)$, so $\g^{-1} \G_1(t) \g = \G_1(t)$ and thus, $c_w | \g$ is $\G_1(t)$-equivariant. Hence, it suffices to compare $c_w | \g$ and $c_w$ on the stable simplex $s_0$ to see if they are equal. We write $\g = \upsilon \delta$ with $\upsilon \in \G_1(t), \delta \in D$ and get 
\begin{align*}
(c_w | \g) (s_0) & = \g^{-1} c_w(\g s_0) = \delta^{-1} \upsilon^{-1} c_w(\upsilon \delta s_0) \\ 
    & = \delta^{-1} \upsilon^{-1} \upsilon c_w( \delta s_0) = \delta^{-1} c_w(s_0) = \delta^{-1} w = w = c_w(s_0),
\end{align*}
which completes the proof.
\end{proof}

\subsubsection{Harmonic cocycles for the group $\GL_3(A)$}
The approach is similar to the one for $\G_1(t)$, but we also need the results on $\G(t)$-equivariant cocycles and the resulting conditions are slightly more complicated.

\begin{lem} \label{lem:gl3-Bt}
Let $c \in \Char(\GL_3(A), V)$. Then $w = c(s_0) \in V^{B^T}$.
\end{lem}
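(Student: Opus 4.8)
The plan is to mimic the structure of the proofs of Lemma~\ref{lem:gl3-Bt}'s analogues for $\G_0(t)$, namely to exploit that $c$ is in particular $\G_1(t)$-equivariant, so that by Theorem~\ref{thm:isoG1} the cocycle $c$ corresponds to the value $w = c(s_0) \in V$, and then to extract the claimed invariance from the stabilizer of $s_0$ inside the larger group $\GL_3(A)$. The key point is that since $\GL_3(A) \supset \G_1(t)$, the cocycle $c$ is $\GL_3(A)_{s_0}$-invariant, so $\delta w = \delta c(s_0) = c(\delta s_0) = c(s_0) = w$ for every $\delta$ in the $\GL_3(A)$-stabilizer of the chamber $s_0$.

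So the first step is to compute $\Stab_{\GL_3(A)}(s_0)$. By Proposition~\ref{prop:stab_a}, the stabilizer of a chamber is the intersection of the stabilizers of its three vertices. Writing $s_0 = \{[0,0],[-1,-1],[-1,0]\}$ as in~\eqref{eq:s0-def}, I would intersect the three vertex stabilizers $\GL_3(A)_{[0,0]}$, $\GL_3(A)_{[-1,-1]}$ and $\GL_3(A)_{[-1,0]}$ given by Proposition~\ref{prop:stab_a}. Using Notation~\ref{schreibweise}, the constraints $\{j\}, \{k\}$ etc. for $(j,k) \in \{(0,0),(-1,-1),(-1,0)\}$ should cut the off-diagonal entries down: the upper-triangular entries will land in $\{-1\} = t\{0\} \cap \dots$, hence be divisible by $t$ (in fact degree constraints force them into $t\Fq$ or $t\{\cdot\}$), while the lower-triangular entries lie in $\{0\} = \Fq$ and the diagonal in $\Fq^\times$. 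The upshot should be that $\pr(\Stab_{\GL_3(A)}(s_0)) = B^T$, i.e.\ reduction mod $t$ surjects the stabilizer onto the lower triangular Borel $B^T$; this mirrors the fact that $\pr(\GL_3(A)_{w})$ for $w \in \S(\W)$ near the origin is a parabolic, transported here via the $W$-action to the chamber $s_0 \in \A$ which sits at the image of the standard chamber $s_*$ under a Weyl element.

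The second step is to conclude: for any $b \in B^T$, lift it to $\delta \in \Stab_{\GL_3(A)}(s_0)$ with $\pr(\delta) = b$; since $\G_1(t) \subset \GL_3(A)$, the $\GL_3(A)$-equivariance of $c$ gives $\delta c(s_0) = c(\delta s_0) = c(s_0)$, hence $\delta w = w$. To upgrade this to $b w = w = B^T$-invariance of $w$, I would either note that $\G_1(t)$ acts trivially on $w = c(s_0)$ (as $\G_1(t)_{s_0} = 1$ and $c$ is $\G_1(t)$-equivariant, $\gamma w = c(\gamma s_0)$, which does not immediately simplify)—so more carefully: write $\delta = \upsilon \delta'$ is not available without knowing $V$ has the relevant $\Fq$-structure, so instead I should directly observe that the $\GL_3(F)$-action on $V$ restricted to $\Stab_{\GL_3(A)}(s_0)$ fixes $w$, and that this stabilizer contains elements reducing to every element of $B^T$; if the $\GL_3(F)$-action on $V$ factors appropriately (which it does in the relevant cases $V_{k,n}$, and more to the point the statement $w \in V^{B^T}$ presumably means invariance under $B^T \subset \GL_3(\Fq) \subset \GL_3(F)$ via the standard inclusion), I match the reduction: any lift of $b \in B^T$ differs from the constant-coefficient matrix $b \in \GL_3(\Fq) \subset \GL_3(A)$ by an element of $\G(t)$, and $\G(t)$... hmm.

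The main obstacle, and the part I would be most careful about, is exactly this last matching: relating invariance under the (large, $t$-adically complicated) group $\Stab_{\GL_3(A)}(s_0)$ to invariance under the finite group $B^T \subset \GL_3(\Fq)$ acting on $V$ via $\GL_3(\Fq) \hookrightarrow \GL_3(F)$. The clean route is presumably: $B^T$ itself (as constant matrices in $\GL_3(\Fq) \subset \GL_3(A)$) is contained in $\Stab_{\GL_3(A)}(s_0)$—this needs checking directly from the matrix description of $s_0$, since constant diagonal and lower-triangular matrices should stabilize the flag of lattices defining $s_0 = \{[0,0],[-1,-1],[-1,0]\}$. If so, then $b \in B^T \subset \GL_3(A)$ stabilizes $s_0$, whence $b w = b c(s_0) = c(b s_0) = c(s_0) = w$ directly, and we are done. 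I would therefore structure the proof as: (1) recall $w = c(s_0)$ and $\GL_3(A)$-equivariance of $c$; (2) show $B^T$, viewed as constant matrices in $\GL_3(A)$, lies in $\Stab_{\GL_3(A)}(s_0)$, e.g.\ by exhibiting lattice representatives of the three vertices of $s_0$ that are visibly preserved, or by invoking Proposition~\ref{prop:stab_a} together with Notation~\ref{schreibweise} and checking constant lower-triangular matrices satisfy all the entry constraints; (3) conclude $b w = w$ for all $b \in B^T$.
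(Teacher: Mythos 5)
Your proposal is correct and follows essentially the same argument as the paper: write $w = c(s_0)$, identify $\Stab_{\GL_3(A)}(s_0)$ with $B^T$ via Proposition~\ref{prop:stab_a}, and apply $\GL_3(A)$-equivariance to get $\delta w = c(\delta s_0) = c(s_0) = w$ for $\delta \in B^T$. The worry in your middle paragraph about lifting through $\pr$ is unnecessary: the degree bounds from Proposition~\ref{prop:stab_a} already force the off-diagonal entries of $\Stab_{\GL_3(A)}(s_0)$ into $\Fq$ or $\{0\}$, so the stabilizer literally \emph{equals} $B^T$ viewed as constant matrices inside $\GL_3(A)$, and your step (2) need not fall back to only checking the containment $B^T \subset \Stab_{\GL_3(A)}(s_0)$ (although that is all the proof uses).
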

\begin{proof}
Let $c \in \Char(\GL_3(A), V)$. Then, since $\G_1(t) \subset \GL_3(A)$, c is $\G_1(t)$-equivariant as well and corresponds to an element $w = c(s_0) \in V$.
Then for any matrix $\delta \in B^T = \Stab_{\GL_3(A)}(s_0)$, as follows from Proposition \ref{prop:stab_a}, we have 
\[ \delta w = \delta c(s_0) = c(\delta s_0) = c(s_0) = w, \]
so $w$ lies in $V^{B^T}$.
\end{proof}

\begin{lem}\label{lem:s0St0}
Let $c \in \Char(\G_1(t), V)$ and assume that for all $\gamma \in \GL_3(\Fq)$, that we have $(c | \g)(s_0) = c(s_0)$. Then we get $c | \g = c$ on $\St_0$ for all $\gamma \in \GL_3(\Fq)$.
\end{lem}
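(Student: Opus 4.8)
The plan is to cut the claimed equality $c|\gamma = c$ on $\St_0$ down to a statement about the finitely many chambers in $U\cdot s_0$, using the restriction isomorphism of Proposition~\ref{prop:isogt}, and then to check it there by unwinding the definition of the $\GL_3(F)$-action.

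First I would record that $c$ is harmonic on all of $\B$, and so is $c|\gamma$ for every $\gamma\in\GL_3(\Fq)$ by Lemma~\ref{lem:harmonicity-under-G}. Since any chamber of $\B$ containing an edge through the vertex $[0,0]$ itself lies in $\St_0$, the restrictions $c|_{\St_0}$ and $(c|\gamma)|_{\St_0}$ satisfy the harmonicity conditions defining $\Char(\St_0,V)$, so both lie in $\Char(\St_0,V)$. By Proposition~\ref{prop:isogt} the restriction map $\Char(\St_0,V)\to\Maps(U\cdot s_0,V)$ is an isomorphism, in particular injective, so it suffices to show $(c|\gamma)(u s_0)=c(u s_0)$ for all $u\in U$ and all $\gamma\in\GL_3(\Fq)$. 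For the right-hand side, $u\in U=U(\Fq)$, viewed as a constant matrix in $\GL_3(A)$, reduces mod $t$ to itself and hence lies in $\G_1(t)=\pr^{-1}(U)$; so $\G_1(t)$-equivariance of $c$ gives $c(u s_0)=u\,c(s_0)$. For the left-hand side, $(c|\gamma)(u s_0)=\gamma^{-1}c((\gamma u)s_0)$, and since $\gamma u$ is again an element of $\GL_3(\Fq)$, the hypothesis applied to $\gamma u$ — which says precisely $c((\gamma u)s_0)=(\gamma u)\,c(s_0)$ — yields $(c|\gamma)(u s_0)=\gamma^{-1}(\gamma u)\,c(s_0)=u\,c(s_0)$. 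The two expressions agree, and injectivity of the restriction map then gives $c|\gamma = c$ on $\St_0$.

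The one step that needs genuine attention is the verification that $(c|\gamma)|_{\St_0}$ really is an element of $\Char(\St_0,V)$, i.e.\ that the harmonicity sums over $\St_0$ coincide with those over $\B$; this is where the combinatorial observation that $\St_0$ contains every coface in $\B$ of each of its edges is used, and it is what makes Proposition~\ref{prop:isogt} applicable here. Everything else is a formal manipulation of the action formula $(c|\gamma)(s)=\gamma^{-1}c(\gamma s)$ together with the closure of $\GL_3(\Fq)$ under multiplication, so I do not expect any serious obstacle.
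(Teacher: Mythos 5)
Your proof is correct, but it takes a more roundabout route than the paper's. The paper simply computes directly on all of $\St_0$: writing any chamber of $\St_0$ as $\delta s_0$ with $\delta\in\GL_3(\Fq)$ (using the transitivity $\GL_3(\Fq)\cdot s_0 = \S_2(\St_0)$), the hypothesis applied to $\delta$ and to $\gamma\delta$ gives $\gamma\,c(\delta s_0)=\gamma\delta\,c(s_0)=c(\gamma\delta s_0)$, and that is the whole proof. Your argument does exactly the same computation but only for $\delta=u\in U$, and then pays for that restriction by invoking the injectivity of the restriction map $\Char(\St_0,V)\to\Maps(U\cdot s_0,V)$ from Proposition~\ref{prop:isogt}. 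Since the full computation over $\GL_3(\Fq)$ is no harder than the one over $U$, the reduction step buys no savings here; and Proposition~\ref{prop:isogt} is itself nontrivial machinery (resting on Theorem~\ref{thm:isogt} and hence on the $p$-group/injective-module argument). The one real advantage of your approach is conceptual uniformity: it mirrors exactly the proof the paper gives for the neighbouring Lemma~\ref{lem:s0E0}, where the group $P_0$ does \emph{not} act transitively on $\S_2(\St_0)$ and the reduction to $U\cdot s_0$ via Proposition~\ref{prop:isogt} is genuinely needed. Your preliminary observation that restriction of a harmonic cocycle on $\B$ to $\St_0$ lands in $\Char(\St_0,V)$ — because every coface in $\B$ of an edge through $[0,0]$ again contains $[0,0]$ — is correct and is indeed the point that legitimizes applying Proposition~\ref{prop:isogt}.
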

\begin{proof}
Let $\gamma, \delta \in \GL_3(\Fq)$. Then under the assumption above
\[ \g c(\delta s_0) = \g \delta c(s_0) = c(\g \delta s_0). \]
But $\GL_3(\Fq) s_0 = \S_2(\St_0)$. Hence, the line above means that for all $\g \in \GL_3(\Fq), s \in \S_2(\St_0)$,
\[ \g c(s) = c(\g s). \]
\end{proof}
\begin{lem}\label{lem:s0E0}
Let $c \in \Char(\G_1(t), V)$ and assume that for all $\gamma \in P_0$, that we have $(c | \g)(s_0) = c(s_0)$. Then we get $c | \g = c$ for all $\gamma \in P_0$.
\end{lem}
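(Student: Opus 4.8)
The plan is to mimic the proof of Lemma~\ref{lem:s0St0}, but carefully track which simplices of the star $\St_0$ are actually reachable by applying elements of $P_0$ to $s_0$, and then to leverage the fact that a $\G_1(t)$-equivariant cocycle is determined by its values on such simplices together with harmonicity. First I would recall that $\G_1(t)$ is normal in $\G^P_0=\pr^{-1}(P_0)$ (indeed $\G_1(t)$ is normal in all of $\GL_3(A)$, being the preimage of the normal subgroup $U\trianglelefteq\GL_3(\Fq)$ under $\pr$, since $U$ is normalized by $P_0$ — or at least one checks $P_0$ normalizes $U$ directly), so that for $\g\in P_0$ (lifted to $\G^P_0$) the cocycle $c|\g$ again lies in $\Char(\G_1(t),V)$ by Lemma~\ref{lem:conjcycle}. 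Hence by the injectivity part of Theorem~\ref{thm:isoG1} (or rather its refinement), to show $c|\g=c$ it is enough to show that $c|\g$ and $c$ agree on enough chambers to pin down a $\G_1(t)$-equivariant cocycle.

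The key observation I would make is that $P_0\cdot s_0$, as a subset of the chambers of $\St_0$, is precisely the set of chambers of the subcomplex $\St_0\cap\G^P_0\W$, or more concretely: since $P_0=\pr(\GL_3(A)_{[0,k]})$ for $k>0$ fixes the edge $e_0:=\{[0,0],[0,1]\}$ of $\A$ (the edge on the ray $j=0$) — wait, I should instead argue via the decomposition $\St_0$ into $P_0$-orbits. The cleanest route: by Proposition~\ref{prop:isogt} (or Corollary~\ref{cor:Isom-ResUi}), a cocycle in $\Char(\G_1(t),V)$ is determined by its values on $U\cdot s_0$; and since $U\subset P_0$, the hypothesis $(c|\g)(s_0)=c(s_0)$ for all $\g\in P_0$ gives in particular, by the computation $\g c(\delta s_0)=\g\delta c(s_0)=c(\g\delta s_0)$ valid for $\g\in P_0$ and any $\delta\in P_0$ (using $\G_1(t)$-equivariance of $c$ to write $c(\delta s_0)=\delta c(s_0)$, which holds since $\delta$, lifted appropriately, lies in $\G^P_0$ and $c$ is $\G_1(t)$-equivariant but NOT yet $P_0$-equivariant — so this step needs the hypothesis applied with the element $\delta$ as well). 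So in fact for all $\g,\delta\in P_0$ we get $\g c(\delta s_0)=c(\g\delta s_0)$, i.e. $c|\g$ and $c$ agree on every chamber of $P_0\cdot s_0$. In particular they agree on $U\cdot s_0\subset P_0\cdot s_0$.

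I would then conclude as follows: both $c$ and $c|\g$ lie in $\Char(\G_1(t),V)$, and they agree on $U\cdot s_0$; but by Corollary~\ref{cor:Isom-ResUi} — taking the direct product decomposition of $U$ appropriately, or more simply by the chain Theorem~\ref{thm:isogt}/Proposition~\ref{prop:isogt} showing $\Char(\G_1(t),V)\cong\Maps(U\cdot s_0,V)$ via restriction — a $\G_1(t)$-equivariant cocycle is determined by its restriction to $U\cdot s_0$. Hence $c|\g=c$ on all of $\S_2(\B)$, in particular on $\St_0$, which is the claim. The main obstacle I anticipate is the bookkeeping in the second paragraph: one must be careful that the identity $c(\delta s_0)=\delta c(s_0)$ is legitimate for $\delta\in P_0$ — it requires lifting $\delta\in P_0\subset\GL_3(\Fq)$ to an element of $\G^P_0\subset\GL_3(A)$ stabilizing $s_0$, and then using only the $\G_1(t)$-equivariance of $c$; since $\pr(\GL_3(A)_{s_0})\cap P_0$ must be shown to surject onto $P_0$ (equivalently, that the $\GL_3(A)$-stabilizer of $s_0$, namely $B^T$ from Lemma~\ref{lem:gl3-Bt}, reduces mod $t$ to something containing representatives of all of $P_0$ — which it does not literally, so instead one uses that $\g$ itself, given in $P_0$, is lifted once and for all, and the orbit $P_0\cdot s_0$ is computed inside $\S_2(\St_0)$ using the $\GL_3(\Fq)$-action of Proposition~\ref{prop:gamma_invariants}(iii), not inside $\G_1(t)$-orbits). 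Clarifying that the relevant action on $\St_0$ is the $\GL_3(\Fq)$-action, under which $c\mapsto c|\g$ makes sense for $\g\in\GL_3(\Fq)$ directly via Proposition~\ref{prop:gamma_invariants}, removes the lifting issue entirely, and then the argument is the verbatim analogue of Lemma~\ref{lem:s0St0} restricted to the sub-$P_0$-set $P_0\cdot s_0$ of $\S_2(\St_0)$, combined with the injectivity of restriction to $U\cdot s_0$.
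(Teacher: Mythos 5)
Your overall route is the same as the paper's, but as written it contains two false assertions that leave a genuine gap. First, $\G_1(t)$ is \emph{not} normal in $\G^P_0$, let alone in $\GL_3(A)$: the group $U$ of unipotent upper triangular matrices is a $p$-Sylow subgroup of $\GL_3(\Fq)$ and hence not normal there, and it is not normal in $P_0$ either. For instance, $\left(\begin{smallmatrix}0&1&0\\1&0&0\\0&0&1\end{smallmatrix}\right)\in P_0$ conjugates $\left(\begin{smallmatrix}1&1&0\\0&1&0\\0&0&1\end{smallmatrix}\right)\in U$ to the lower-triangular matrix $\left(\begin{smallmatrix}1&0&0\\1&1&0\\0&0&1\end{smallmatrix}\right)\notin U$. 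So for a general $\g\in P_0$ you cannot deduce $c|\g\in\Char(\G_1(t),V)$ from Lemma~\ref{lem:conjcycle}; you only get $c|\g\in\Char(\g^{-1}\G_1(t)\g,V)$. (Indeed, if $\G_1(t)$ \emph{were} normal in $\G^P_0$, the lemma would be immediate from Theorem~\ref{thm:isoG1} and the hypothesis at $s_0$ alone, and the rest of your argument would be superfluous; that this shortcut fails is exactly why a longer argument is needed.) Second, the isomorphism you invoke, ``$\Char(\G_1(t),V)\cong\Maps(U\cdot s_0,V)$ via restriction,'' is false by a dimension count: $\dim_F\Char(\G_1(t),V)=\dim_F V$ by Theorem~\ref{thm:isoG1}, whereas $\dim_F\Maps(U\cdot s_0,V)=q^3\dim_F V$.

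Both errors are repaired by replacing $\G_1(t)$ by $\G(t)$ at exactly those two points, and that is what the paper does. The normal subgroup is $\G(t)=\ker\pr\trianglelefteq\GL_3(A)$, so both $c$ and $c|\g$ lie in $\Char(\G(t),V)$, and the composition of Theorem~\ref{thm:isogt} with Proposition~\ref{prop:isogt} gives the injective restriction map $\Char(\G(t),V)\to\Maps(U\cdot s_0,V)$. Your middle paragraph correctly carries out the Lemma~\ref{lem:s0St0}-style computation restricted to $P_0$, showing that $c$ and $c|\g$ agree on $P_0\cdot s_0\supset U\cdot s_0$; combined with the injectivity above, this yields $c|\g=c$. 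Once you swap in $\G(t)$, your argument coincides with the paper's.
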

\begin{proof}
As in the previous proof, we deduce that $\g c(s)=c(\g s)$ for all $\g\in P_0$ and $s\in P_0 s_0$. This is equivalent to $c | \g=c$ as elements in $\Maps(P_0 \cdot s_0,V)$ and since $U\subset P_0$ also as elements in $\Maps(U\cdot s_0, V)$. Fix $\gamma\in P_0$. Since $\G(t)\subset\GL_3(A)$ is normal, we can regard both $c$ and $c|\gamma$ as elements of $\Char(\G(t),V)$. Now Theorem \ref{thm:isogt} and Proposition \ref{prop:isogt} imply that $c|\g =c$.
\end{proof}

\begin{thm} \label{thm:IdentifyGL3}The map $c\mapsto c(s_0)$ defines an isomorphism
\[\Char(\GL_3(A),V) \cong \lb v \in V^{B^T} \,\middle|\,  \forall \rho \in W: \, \rho v = \sgn(\rho s_0) \sum_{\g \in \G_1(t)_{(\rho s_0)}} \g v \rb. \]
\end{thm}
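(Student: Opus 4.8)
The plan is to combine Theorem~\ref{thm:isoG1} with the reduction-mod-$t$ structure of $\GL_3(A)$, using the normality of $\G(t)$ inside $\GL_3(A)$ and the Bruhat decomposition $\GL_3(\Fq)=\bigsqcup_{\rho\in W}U\rho B^T$ from Proposition~\ref{prop:bruhat_decomp}. Since $\G_1(t)\subset\GL_3(A)$, any $c\in\Char(\GL_3(A),V)$ is in particular $\G_1(t)$-equivariant, so by Theorem~\ref{thm:isoG1} it is determined by $w:=c(s_0)\in V$; I want to characterize exactly which $w$ arise. One necessary condition, $w\in V^{B^T}$, is Lemma~\ref{lem:gl3-Bt}, since $B^T=\Stab_{\GL_3(A)}(s_0)$. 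The remaining conditions should come from imposing $\GL_3(A)$-equivariance, which, given $\G_1(t)$-equivariance and $B^T$-stability, reduces to equivariance under a set of coset representatives for $\G_1(t)\backslash\GL_3(A)/B^T$; by the argument of Proposition~\ref{prop:G1-Orbits-InW} (the mod-$t$ double-coset lemma), these are represented by $U\backslash\GL_3(\Fq)/B^T$, which by Proposition~\ref{prop:bruhat_decomp} is exactly the Weyl group $W$.

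First I would prove the forward direction: given $c\in\Char(\GL_3(A),V)$ with $w=c(s_0)$, Lemma~\ref{lem:gl3-Bt} gives $w\in V^{B^T}$, and for each $\rho\in W$, the equivariance $c(\rho s_0)=\rho c(s_0)=\rho w$ together with the explicit formula~\eqref{eq:FormulaFor-cw} of Theorem~\ref{thm:isoG1} applied to $s=\rho s_0$ (whose unique $\G_1(t)$-representative in $\A$ is $\rho s_0$ itself, since $\rho\in W$ preserves $\A$ and acts on $\S_2(\A)$) yields $\rho w=c(\rho s_0)=\sgn(\rho s_0)\sum_{\g\in\G_1(t)_{(\rho s_0)}}\g w$. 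That is precisely the stated relation.

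For the converse, let $w\in V^{B^T}$ satisfy the $W$-relations, and let $c_w\in\Char(\G_1(t),V)$ be the cocycle from Theorem~\ref{thm:isoG1} with $c_w(s_0)=w$; I must show $c_w$ is $\GL_3(A)$-equivariant, i.e.\ $c_w\,|\,\g=c_w$ for all $\g\in\GL_3(A)$. By the decomposition $\GL_3(A)=\G_1(t)\cdot(\text{lifts of }\GL_3(\Fq))$ and the fact that $c_w$ is already $\G_1(t)$-invariant, it suffices to check $c_w\,|\,\g=c_w$ for $\g$ a lift of each element of $\GL_3(\Fq)$; and by Bruhat, $\GL_3(\Fq)=\bigsqcup_{\rho\in W}U\rho B^T$, so writing such a $\g$ as a product of an element of $\G_1(t)$, a lift of $\rho\in W$, and an element (lift) of $B^T$, and using $\G_1(t)$-invariance of $c_w$ on the left and $B^T$-stability of $w=c_w(s_0)$ on the right (via Lemma~\ref{lem:conjcycle}, since $B^T$ and $W$ normalize $\G_1(t)$ mod $\G(t)$ — more carefully, one reduces as in Lemmas~\ref{lem:s0St0} and~\ref{lem:s0E0} to checking equality on $\St_0$ and then invokes Theorem~\ref{thm:isogt} and Proposition~\ref{prop:isogt}), the problem collapses to: for each $\rho\in W$, the cocycle $c_w\,|\,\rho$ (which lies in $\Char(\rho^{-1}\G_1(t)\rho,V)$ by Lemma~\ref{lem:conjcycle}, and since $\rho$ normalizes $\G_1(t)$ modulo $\G(t)$ — actually $\rho\in W$ conjugates $\G_1(t)$ into a conjugate which, combined with the $\G(t)$-level analysis, we may treat via $\St_0$) agrees with $c_w$, and this is equivalent to $(c_w\,|\,\rho)(s_0)=c_w(s_0)$, i.e.\ $\rho^{-1}c_w(\rho s_0)=w$, i.e.\ $c_w(\rho s_0)=\rho w$. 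But $c_w(\rho s_0)$ is computed by formula~\eqref{eq:FormulaFor-cw} as $\sgn(\rho s_0)\sum_{\g\in\G_1(t)_{(\rho s_0)}}\g w$, which by hypothesis equals $\rho w$. Hence $c_w$ is $\GL_3(A)$-equivariant.

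The main obstacle is the reduction step in the converse: carefully justifying that $\GL_3(A)$-equivariance of $c_w$ follows from checking the finitely many relations on $W\cdot s_0$. The subtlety is that $W$ and $B^T$ do not literally normalize $\G_1(t)$ in $\GL_3(A)$ (only $\G(t)$ is normal), so one cannot directly apply Lemma~\ref{lem:conjcycle} with $\delta^{-1}\G_1(t)\delta=\G_1(t)$. The clean route is to pass to the $\G(t)$-level: regard $c_w$ as an element of $\Char(\G(t),V)$, use Theorem~\ref{thm:isogt} and Proposition~\ref{prop:isogt} so that equivariance under any $\g\in\GL_3(A)$ can be tested on $\St_0$ (as in the proofs of Lemmas~\ref{lem:s0St0} and~\ref{lem:s0E0}), and then the Bruhat decomposition of $\GL_3(\Fq)$ reduces testing on $\St_0$ to testing the single relation $c_w(\rho s_0)=\rho w$ for each $\rho\in W$, with $U$- and $B^T$-parts absorbed by $\G_1(t)$-invariance and by $w\in V^{B^T}$ respectively. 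I would structure the write-up so this reduction is isolated as the key lemma and the rest is bookkeeping via the already-established~\eqref{eq:FormulaFor-cw}.
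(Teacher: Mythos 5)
Your proposal is correct and follows essentially the same route as the paper's proof: reduce $\GL_3(A)$-equivariance of $c_w$ to $\GL_3(\Fq)$-invariance tested on $\St_0$, then to the single chamber $s_0$ via Lemma~\ref{lem:s0St0}, and finally use the Bruhat decomposition $\GL_3(\Fq)=\bigsqcup_{\rho\in W}U\rho B^T$ together with formula~\eqref{eq:FormulaFor-cw} to peel off the $U$- and $B^T$-parts and land on the stated $W$-indexed relations. You have correctly flagged the one subtlety (that $W$ and $B^T$ do not normalize $\G_1(t)$ in $\GL_3(A)$) and resolved it exactly as the paper does, by passing to the $\G(t)$-level via Proposition~\ref{prop:gamma_invariants}, Theorem~\ref{thm:isogt}, and Proposition~\ref{prop:isogt}.
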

Note that from Corollary \ref{cor:stab-a} it follows that $\G_1(t)_{(\rho s_0)}\subset \GL_3(\Fq)$.

\begin{proof}
Let $w \in V^{B^T}$ and $c_w \in \Char(\G_1(t), V)$ be the corresponding cocycle with $c_w(s_0) = w$. We want to describe equivalent conditions to $c_w$ being $\GL_3(A)$-invariant. According to Proposition \ref{prop:gamma_invariants}, this is equivalent to $c_w$ being $\GL_3(\Fq)$-invariant on $\St_0$ and according to Lemma \ref{lem:s0St0}, it even suffices to check the invariance at the stable simplex $s_0$.

For this purpose, let $\g \in \GL_3(\Fq)$. We write $\g = \alpha \rho \beta$ with $\alpha \in U, \rho \in W$ and $\beta \in B^T$ according to Proposition \ref{prop:bruhat_decomp}. Then
\begin{eqnarray*}
\g c_w(s_0) = c_w(\g s_0) & \Leftrightarrow&
\alpha \rho \beta w = c_w(\alpha \rho \beta s_0)  \ \Leftrightarrow \ \alpha \rho w = \alpha c_w(\rho s_0)\\
&\Leftrightarrow & \rho w = c_w(\rho s_0) \ \Leftrightarrow \ \rho w = \sgn(\rho s_0) \sum_{\g \in \G_1(t)_{(\rho s_0)}} \g c_w(s_0) \\
&\Leftrightarrow & \rho w = \sgn(\rho s_0) \sum_{\g \in \G_1(t)_{(\rho s_0)}} \g w.
\end{eqnarray*}
The first line holds for all $\g \in \GL_3(\Fq)$ if and only if the last line holds for all $\rho \in W$.
\end{proof}
\begin{rmk}
In the above description, we get conditions for all six permutation matrices $\rho \in W$. One of them is the identity matrix, and this condition is trivially satisfied. Hence, five conditions actually remain.
\end{rmk}

\subsubsection{Harmonic cocycles for the group $\G^P_0$}
Let $H_0 := (\G^P_0)_{s_0} = \left(\begin{smallmatrix}
    *&0&0\\ *&*&0\\0&0&*
\end{smallmatrix}\right) \subset \GL_3(\Fq)$ denote the $\G^P_0$-stabilizer of the stable simplex $s_0$. By $\sigma := \left(\begin{smallmatrix}
    0&1&0\\1&0&0\\0&0&1
\end{smallmatrix}\right)$ we denote the only non-trivial permutation matrix that is contained in $\G^P_0$. Then we get the following description:
\begin{thm}\label{thm:Isog-P0} The map $c\mapsto c(s_0)$ defines an isomorphism
    \[\Char(\G^P_0,V) \cong \lb v \in V^{H_0} \,\middle|\,  \sigma v = - \sum_{a \in \Fq} \left(\begin{smallmatrix}
        1&a&0\\0&1&0\\0&0&1
    \end{smallmatrix}\right) v \rb. \]
\end{thm}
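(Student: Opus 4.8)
The plan is to follow exactly the template established in the proof of Theorem~\ref{thm:IdentifyGL3}, replacing the full group $\GL_3(\Fq)$ by the subgroup $P_0 = \pr(\G^P_0)$. The starting observation is that $\G^P_0$ contains the normal subgroup $\G(t)$ of $\GL_3(A)$, so by Proposition~\ref{prop:gamma_invariants} the action of $\G^P_0$ on $\Char(\G(t),V)$ factors through $P_0 = \pr(\G^P_0)\subset\GL_3(\Fq)$, and $\Char(\G^P_0,V)=\Char(\G(t),V)^{P_0}$. Since $\G_1(t)\subset\G^P_0$, any $c\in\Char(\G^P_0,V)$ is in particular $\G_1(t)$-equivariant, so by Theorem~\ref{thm:isoG1} it corresponds to $w=c(s_0)\in V$, and we must characterize which $w$ arise. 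As in Lemma~\ref{lem:s0E0}, $\G^P_0$-invariance of $c_w$ is equivalent to the pointwise conditions $(c_w|\g)(s_0)=c_w(s_0)$ for all $\g\in P_0$: the forward direction is trivial, and the converse uses Lemma~\ref{lem:s0E0} together with the descent through $\Char(\G(t),V)$ provided by Theorem~\ref{thm:isogt} and Proposition~\ref{prop:isogt} (note $U\subset P_0$, so the hypothesis of Lemma~\ref{lem:s0E0} with group $P_0$ is exactly what we need).

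Next I would run a Bruhat-type decomposition of $P_0$ adapted to the relevant stabilizer. From $H_0=(\G^P_0)_{s_0}=\pr(\Stab_{\G^P_0}(s_0))$ (computed via Proposition~\ref{prop:stab_a} as the lower-triangular-in-the-first-block group $\left(\begin{smallmatrix}*&0&0\\ *&*&0\\0&0&*\end{smallmatrix}\right)$), one has a decomposition of the form $P_0=\bigsqcup_{\rho}U\rho H_0$ where $\rho$ ranges over a set of double coset representatives; since $P_0/H_0$ has size $q+1$ and $\sigma=\left(\begin{smallmatrix}0&1&0\\1&0&0\\0&0&1\end{smallmatrix}\right)$ is the unique nontrivial permutation in $P_0$, the representatives are $\{\mathrm{id},\sigma\}$ — this is essentially the rank-$2$ Bruhat decomposition $\GL_2(\Fq)=B\sqcup U\sigma B^T$ embedded in the upper-left block. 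Writing $\g=\alpha\rho\beta$ with $\alpha\in U$, $\rho\in\{\mathrm{id},\sigma\}$, $\beta\in H_0$, and using $w\in V^{H_0}$ and $U$-equivariance of $c_w$, the condition $(c_w|\g)(s_0)=c_w(s_0)$ collapses (exactly as in the displayed chain in the proof of Theorem~\ref{thm:IdentifyGL3}) to: $w\in V^{H_0}$ together with $\rho w=c_w(\rho s_0)$ for $\rho\in\{\mathrm{id},\sigma\}$. The $\rho=\mathrm{id}$ case is vacuous. For $\rho=\sigma$, formula~\eqref{eq:FormulaFor-cw} of Theorem~\ref{thm:isoG1} gives $c_w(\sigma s_0)=\sgn(\sigma s_0)\sum_{\g\in\G_1(t)_{\sigma s_0}}\g w$, so I would finally identify $\sigma s_0$, compute its $\G_1(t)$-stabilizer via Corollary~\ref{cor:stab-a}, and check $\sgn(\sigma s_0)=-1$ with $\G_1(t)_{\sigma s_0}=\left\{\left(\begin{smallmatrix}1&a&0\\0&1&0\\0&0&1\end{smallmatrix}\right):a\in\Fq\right\}$, yielding precisely $\sigma w=-\sum_{a\in\Fq}\left(\begin{smallmatrix}1&a&0\\0&1&0\\0&0&1\end{smallmatrix}\right)v$.

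The main obstacle I anticipate is purely computational rather than conceptual: correctly pinning down $\sigma s_0$ as a chamber of $\A$ (using Lemma~\ref{lem:S3-acts-OnA} to track how the permutation $\sigma=(12)$ acts on the vertices $[0,0],[-1,-1],[-1,0]$ of $s_0$), then reading off its sign and its $\G_1(t)$-stabilizer from Corollary~\ref{cor:stab-a} so that the right-hand side of~\eqref{eq:FormulaFor-cw} matches the claimed sum over $\left(\begin{smallmatrix}1&a&0\\0&1&0\\0&0&1\end{smallmatrix}\right)$. A secondary point requiring care is the Bruhat decomposition $P_0=U\,H_0\sqcup U\sigma H_0$: one should either cite Proposition~\ref{prop:bruhat_decomp} restricted to the Levi/parabolic $P_0$, or verify directly that every element of $P_0$ is a product $\alpha\rho\beta$ of the stated shape, which is elementary since the obstruction lives entirely in the upper-left $\GL_2$-block. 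Once these two bookkeeping steps are done, the equivalence-of-conditions argument is a verbatim transcription of the corresponding part of the proof of Theorem~\ref{thm:IdentifyGL3}.
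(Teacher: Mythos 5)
Your proposal follows the paper's proof exactly: the paper itself simply says to imitate the proof of Theorem~\ref{thm:IdentifyGL3}, using the analogue of Lemma~\ref{lem:gl3-Bt} for $\G^P_0$ to get $c(s_0)\in V^{H_0}$ and Lemma~\ref{lem:s0E0} in place of Lemma~\ref{lem:s0St0}, which is precisely your plan down to the Bruhat decomposition $P_0=U\,H_0\sqcup U\sigma H_0$ and the final evaluation of $\sgn(\sigma s_0)$ and $\G_1(t)_{(\sigma s_0)}$. One small slip: $|P_0/H_0|=(q+1)q^2$ rather than $q+1$, but your parenthetical appeal to the $\GL_2$ Bruhat decomposition in the upper-left block is the correct reason that $\{\mathrm{id},\sigma\}$ represents $U\backslash P_0/H_0$, so the argument stands.
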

\begin{proof}
The obvious analog of Lemma~\ref{lem:gl3-Bt} for $\G_0^P$ implies $c(s_0)\in V^{H_0}$. The bijectivity of the map is now proved in the same way as Theorem~\ref{thm:IdentifyGL3}, however using Lemma~\ref{lem:s0E0} instead of Lemma~\ref{lem:s0St0}.
\end{proof}

\subsubsection{Harmonic cocycles for the group $\G^P_2$}
Let $H_2 := (\G^P_2)_{s_0} = \left(\begin{smallmatrix}
    *&0&0\\ 0&*&0\\0&*&*
\end{smallmatrix}\right) \subset \GL_3(\Fq)$ denote the $\G^P_2$-stabilizer of the stable simplex $s_0$. By $\tau := \left(\begin{smallmatrix}
    1&0&0\\0&0&1\\0&1&0
\end{smallmatrix}\right)$ we denote the only non-trivial permutation matrix that is contained in $\G^P_2$. Then we get the following description:
\begin{thm}\label{thm:Isog-P2}
    \[\Char(\G^P_2,V) \cong \lb v \in V^{H_2} \,\middle|\,  \tau v = - \sum_{a \in \Fq} \left(\begin{smallmatrix}
        1&0&0\\0&1&a\\0&0&1
    \end{smallmatrix}\right) v \rb. \]
\end{thm}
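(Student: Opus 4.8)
The plan is to follow the proofs of Theorems~\ref{thm:IdentifyGL3} and~\ref{thm:Isog-P0} almost verbatim, with $P_2$, $H_2$ and $\tau$ taking the roles of $\GL_3(\Fq)$ (resp.\ $P_0$), $B^T$ (resp.\ $H_0$) and the relevant Weyl element. First I would record the obvious analogue of Lemma~\ref{lem:gl3-Bt}: any $c\in\Char(\G^P_2,V)$ is in particular $\G_1(t)$-equivariant, so $w:=c(s_0)$ is defined, and for $\delta\in H_2=(\G^P_2)_{s_0}$ one gets $\delta w=\delta c(s_0)=c(\delta s_0)=c(s_0)=w$, whence $w\in V^{H_2}$. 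Thus $c\mapsto c(s_0)$ lands in the asserted subspace, and it is injective by Theorem~\ref{thm:isoG1}. For surjectivity, given $w\in V^{H_2}$ let $c_w\in\Char(\G_1(t),V)$ be the unique cocycle with $c_w(s_0)=w$; one must show that $c_w$ is $\G^P_2$-invariant precisely when $\tau w=-\sum_{a\in\Fq}\left(\begin{smallmatrix}1&0&0\\0&1&a\\0&0&1\end{smallmatrix}\right)w$.

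Since $\G(t)\subset\G^P_2$ is normal and $c_w\in\Char(\G_1(t),V)\subset\Char(\G(t),V)$, Proposition~\ref{prop:gamma_invariants} shows that $c_w$ is $\G^P_2$-invariant iff $c_w|\gamma=c_w$ for all $\gamma\in P_2=\pr(\G^P_2)$. Next I would transfer Lemma~\ref{lem:s0E0} to $P_2$: its proof uses only that $U\subset P_0$, and since equally $U\subset P_2$, the same argument (invoking Theorem~\ref{thm:isogt} and Proposition~\ref{prop:isogt}, i.e.\ that an element of $\Char(\G(t),V)$ is determined by its restriction to $U\cdot s_0$) gives that $c_w$ is $\G^P_2$-invariant iff $(c_w|\gamma)(s_0)=c_w(s_0)$ for every $\gamma\in P_2$.

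The remaining input is a relative Bruhat decomposition of $P_2$. Writing $P_2=U_{P_2}\rtimes L$ with unipotent radical $U_{P_2}$ and Levi $L\cong\GL_1\times\GL_2$ (acting on the last two coordinates) and applying the $\GL_2$-Bruhat decomposition to $L$ with respect to its Borel and opposite Borel $H_2=B^T\cap P_2$, one obtains $P_2=U H_2\sqcup U\tau H_2$ with $\tau=\left(\begin{smallmatrix}1&0&0\\0&0&1\\0&1&0\end{smallmatrix}\right)$ (concretely, $\gamma\in P_2$ lies in $UH_2$ or $U\tau H_2$ according to whether its $(3,3)$-entry is nonzero or zero). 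Writing $\gamma=\alpha\rho\beta$ with $\alpha\in U$, $\rho\in\{1,\tau\}$, $\beta\in H_2$, the condition $(c_w|\gamma)(s_0)=c_w(s_0)$ unwinds exactly as in Theorem~\ref{thm:IdentifyGL3}: $\beta\in H_2=\Stab_{\G^P_2}(s_0)$ together with $w\in V^{H_2}$ removes $\beta$, and the $U$-equivariance of $c_w$ removes $\alpha$, leaving $\rho w=c_w(\rho s_0)$. This is vacuous for $\rho=1$; for $\rho=\tau$, formula~\eqref{eq:FormulaFor-cw} (with $s=r=\tau s_0\in\S_2(\A)$ and $\delta=\id$) gives $c_w(\tau s_0)=\sgn(\tau s_0)\sum_{\gamma\in\G_1(t)_{\tau s_0}}\gamma w$.

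Finally I would compute $\tau s_0$, its sign, and its $\G_1(t)$-stabilizer. By Lemma~\ref{lem:S3-acts-OnA}, $\tau$ fixes the two vertices $[0,0]$ and $[-1,-1]$, so $\tau s_0$ is the second chamber of $\A$ having the edge $\{[0,0],[-1,-1]\}$ as a face; hence $\tau s_0$ is adjacent to $s_0$ and $\sgn(\tau s_0)=(-1)^{d(\tau s_0,s_0)}=-1$ by Lemma~\ref{lem:distance}, while intersecting the three vertex stabilizers from Corollary~\ref{cor:stab-a} yields $\G_1(t)_{\tau s_0}=\bigl\{\left(\begin{smallmatrix}1&0&0\\0&1&a\\0&0&1\end{smallmatrix}\right):a\in\Fq\bigr\}$. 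Substituting, the $\rho=\tau$ condition becomes exactly $\tau w=-\sum_{a\in\Fq}\left(\begin{smallmatrix}1&0&0\\0&1&a\\0&0&1\end{smallmatrix}\right)w$, finishing the proof. I expect no genuine obstacle: the only non-formal points are the observation $U\subset P_2$ (so that Lemma~\ref{lem:s0E0} transfers) and the two short explicit computations, namely the decomposition $P_2=UH_2\sqcup U\tau H_2$ and the stabilizer and sign of $\tau s_0$.
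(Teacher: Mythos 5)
Your proof is correct and is exactly the argument the paper has in mind when it dismisses the $\G^P_2$-case as ``analogous to the $\G^P_0$-case'': the $P_2$-analogue of Lemma~\ref{lem:s0E0} (which goes through since $U\subset P_2$), the decomposition $P_2=UH_2\sqcup U\tau H_2$, and the sign and $\G_1(t)$-stabilizer of $\tau s_0$ are precisely the required ingredients, and your unwinding of the condition at $\rho=\tau$ via formula~\eqref{eq:FormulaFor-cw} matches the proof of Theorem~\ref{thm:IdentifyGL3}. One remark: the displayed definition $s_0=\{[0,0],[-1,-1],[-1,0]\}$ in \eqref{eq:s0-def} appears to be a typo for $\{[0,0],[0,-1],[-1,-1]\}$ --- only the latter has trivial $\G_1(t)$-stabilizer by Corollary~\ref{cor:stab-a}, and the matrix representative of $s_0$ in Appendix~\ref{subsec:finding-simplices} also gives the latter --- and your computed $\G_1(t)_{\tau s_0}$ and $\sgn(\tau s_0)$ are correct for the intended (corrected) $s_0$, which is what yields the stated formula.
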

\begin{proof}
The argument is analogous to the $\G^P_0$-case. We omit details.
\end{proof}

\section{Hecke Operators}
\label{sec:heckeop}
In this section we introduce Hecke-Operators on spaces of $\Gamma$-invariant harmonic cocycles for congruence subgroups $\Gamma$. A special case of them are the so-called $T_i$- and $U_i$-operators that we define here. We use the results from the previous Section \ref{sec:cocycles} to gain explicit formulas for them, that will allow us to calculate transformation matrices and investigate them computationally in the following Section \ref{sec:magma}.
\subsection{Double coset operators}\label{sec:doublecosetop}
Throughout this subsection, we let $\G, \G' \subset \GL_3(A)$ be congruence subgroups and $\delta \in \GL_3(F)$ any element. Note that if $\G'\subset\G$ then the set $\G'\backslash\G$ of left cosets is finite, because it can be identified with the left coset of two subgroups of the finite group $\GL_3(A/N)$ for a suitable non-zero ideal $N$ of~$A$.

To following basic result can easily be obtained by adapting \cite[][Lemma 5.1.2]{dia.shu2005} or \cite[][Proof of Proposition 3.1]{shi1994} to the present situation.
\begin{lem} \label{lem:reps}
The intersection $\delta^{-1} \G' \delta \cap \G$ is a congruence subgroup, the set $\G'\backslash\G'\delta\Gamma$ is finite and one has a bijection of right $\G$-sets
\begin{align*}
(\delta^{-1} \G' \delta \cap \G) \backslash \G & \rightarrow \G' \backslash \G' \delta \G,  \\* 
(\delta^{-1} \G' \delta \cap \G)\, \epsilon & \mapsto \G' \, \delta \, \epsilon.
\end{align*}
\end{lem}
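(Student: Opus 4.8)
The plan is to mimic the classical modular-curve argument (as in Diamond--Shurman or Shimura) in this function-field setting, where the only subtlety is to make sure the finiteness statements actually hold. First I would check that $\delta^{-1}\G'\delta\cap\G$ is a congruence subgroup: since $\G'$ is a congruence subgroup there is a monic $N'$ with $\G(N')\subset\G'$; then $\delta^{-1}\G(N'N'')\delta\subset\delta^{-1}\G'\delta$ for a suitable $N''$ clearing denominators of $\delta$ and $\delta^{-1}$ (pick $N''$ so that $N''\delta$ and $N''\delta^{-1}$ have entries in $A$), and intersecting with $\G$ still contains a principal congruence subgroup $\G(M)$ for a common multiple $M$; hence the intersection is a congruence subgroup of $\GL_3(A)$.

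Next I would address finiteness of $\G'\backslash\G'\delta\G$. The map $\G'\backslash\G'\delta\G\to(\delta^{-1}\G'\delta\cap\G)\backslash\G$ described in the statement is well-defined and injective by a direct coset computation: $\G'\delta\epsilon=\G'\delta\epsilon'$ iff $\delta\epsilon'\epsilon^{-1}\delta^{-1}\in\G'$ iff $\epsilon'\epsilon^{-1}\in\delta^{-1}\G'\delta$, and since $\epsilon'\epsilon^{-1}\in\G$ this is exactly $\epsilon'\epsilon^{-1}\in\delta^{-1}\G'\delta\cap\G$. Surjectivity onto $(\delta^{-1}\G'\delta\cap\G)\backslash\G$ is immediate from the definition of the map. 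So it suffices to see that $(\delta^{-1}\G'\delta\cap\G)\backslash\G$ is finite. Both $\delta^{-1}\G'\delta\cap\G$ and $\G$ are congruence subgroups of $\GL_3(A)$; choosing a nonzero $N\in A$ with $\G(N)$ contained in the intersection, the coset space is a quotient of $\GL_3(A/N)\cong\G/\G(N)$-type data — more precisely it is in bijection with $\pr_N(\delta^{-1}\G'\delta\cap\G)\backslash\pr_N(\G)$ inside the finite group $\GL_3(A/N)$, where $\pr_N$ is reduction mod $N$. Hence it is finite.

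Finally I would verify that the displayed map is a map of right $\G$-sets: $\G$ acts on the left-hand side by right multiplication $(\delta^{-1}\G'\delta\cap\G)\epsilon\cdot\eta=(\delta^{-1}\G'\delta\cap\G)\epsilon\eta$ and on the right-hand side by $\G'\delta\epsilon\cdot\eta=\G'\delta\epsilon\eta$, and the bijection visibly intertwines these, so nothing further is needed. Assembling the three points — the intersection is a congruence subgroup, the stated map is a well-defined $\G$-equivariant bijection, and the target (hence the source) is finite — gives the lemma.

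The main obstacle, such as it is, is bookkeeping rather than conceptual: one must be careful that "clearing denominators'' of $\delta\in\GL_3(F)$ works as expected, i.e.\ that for some nonzero $f\in A$ both $f\delta$ and $f\delta^{-1}$ lie in $M_3(A)$, so that conjugation by $\delta$ sends a sufficiently deep principal congruence subgroup into $\GL_3(A)$ and in fact into $\G'$. This is where the cited adaptations of \cite[Lemma 5.1.2]{dia.shu2005} / \cite[Proof of Proposition 3.1]{shi1994} are invoked; everything else is a routine transfer of the classical proof.
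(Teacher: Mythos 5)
Your argument is correct and is exactly the standard adaptation of the Diamond--Shurman/Shimura argument that the paper invites (the paper itself gives no proof of this lemma, only the citation and the remark preceding the statement that coset spaces of congruence subgroups are finite because they reduce to coset spaces inside $\GL_3(A/N)$). One small slip: the displayed containment $\delta^{-1}\G(N'N'')\delta\subset\delta^{-1}\G'\delta$ is trivially true and is not what you need; what is needed is $\G(M)\subset\delta^{-1}\G'\delta$, equivalently $\delta\,\G(M)\,\delta^{-1}\subset\G'$, for a suitable $M$ (e.g.\ $M$ divisible by $N'f^2$ where $f\delta, f\delta^{-1}\in M_3(A)$), which your own closing paragraph states correctly, so the intent and the overall proof are sound.
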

\begin{lemdefn} \label{def:doublecosetop}
We define the \emph{double coset operator} \[ T_{\G' \delta \G} : \Char(\G', V) \rightarrow \Char(\G, V)\] by the formula
\[ 
c \mapsto \sum_{\xi\in \G' \backslash \G' \delta \G} c | \xi \ \ \stackrel{\ref{lem:reps}}= \sum_{\epsilon \in (\delta^{-1} \G' \delta \cap \G) \backslash \G} c | (\delta \epsilon), \]
where the sums are over respective sets of representatives. For  $s \in \S_2(\B)$ we have
\[ (T_{\G' \delta \G} c)(s) = \sum_{\epsilon} \epsilon^{-1} c(\epsilon s). \]
The operator $T_{\G' \delta \G}$ has the following properties.
\begin{enumerate}
    \item It is independent of the chosen set of representatives. 
    \item It maps $\Char(\G', V)$ to $\Char(\G, V)$ as indicated.
    \item It is $F$-linear.
\end{enumerate}
\end{lemdefn}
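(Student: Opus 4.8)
The plan is to verify the three listed properties of the double coset operator $T_{\G'\delta\G}$ in order, reducing everything to Lemma~\ref{lem:reps} and to Lemma~\ref{lem:harmonicity-under-G} together with the definitions of the action in Definition~\ref{def:action}. None of the three points should be genuinely hard; the only real content is bookkeeping with representatives and with the $\G$-action, so I would keep the argument short and pointed.

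First, for independence of the choice of representatives (property~1): for $c\in\Char(\G',V)$ and $\xi\in\GL_3(F)$, the value $c|\xi$ depends only on the coset $\G'\xi$, because if $\xi'=\g'\xi$ with $\g'\in\G'$ then $c|\xi' = (c|\g')|\xi = c|\xi$ by the $\G'$-invariance of $c$. Hence the sum $\sum_{\xi\in\G'\backslash\G'\delta\G}c|\xi$ is a well-defined function of $c$, independent of how we pick the representatives $\xi$; by the bijection in Lemma~\ref{lem:reps} this is the same as $\sum_{\epsilon\in(\delta^{-1}\G'\delta\cap\G)\backslash\G}c|(\delta\epsilon)$, which gives at the same time the two displayed formulas (the pointwise formula follows by unwinding $(c|\delta\epsilon)(s)=(\delta\epsilon)^{-1}c(\delta\epsilon s)$ and absorbing $\delta$, or more cleanly by noting $(c|\xi)(s)=\xi^{-1}c(\xi s)$ and that $c$ is $\G'$-invariant so $\xi^{-1}c(\xi s)$ depends only on $\G'\xi$).

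Second, that $T_{\G'\delta\G}c$ lands in $\Char(\G,V)$ (property~2). Each summand $c|\xi$ lies in $\Char(V)$ by Lemma~\ref{lem:harmonicity-under-G}, and $\Char(V)$ is an $F$-subspace, so the finite sum is again a harmonic cocycle. For $\G$-invariance: right multiplication by $\g\in\G$ permutes the set of cosets $\G'\backslash\G'\delta\G$ (since $\G'\delta\G\g=\G'\delta\G$), hence
\[
(T_{\G'\delta\G}c)|\g = \Bigl(\sum_{\xi\in\G'\backslash\G'\delta\G} c|\xi\Bigr)\Big|\g = \sum_{\xi} c|(\xi\g) = \sum_{\xi} c|\xi = T_{\G'\delta\G}c,
\]
using the reindexing $\xi\mapsto\xi\g$ together with property~1. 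Thus $T_{\G'\delta\G}c\in\Char(V)^{\G}=\Char(\G,V)$.

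Third, $F$-linearity (property~3) is immediate: for fixed $\xi$, the map $c\mapsto c|\xi$ is $F$-linear because the $\GL_3(F)$-action on $\Maps(\S_2(\B),V)$ is $F$-linear (Definition~\ref{def:action}), and a finite sum of $F$-linear maps is $F$-linear. The only point requiring a moment's care — and the closest thing to an obstacle — is making sure the reindexing argument in property~2 is phrased with representatives consistently, i.e. that "right multiplication by $\g$ permutes the left cosets $\G'\xi$" is used correctly; this is exactly where property~1 is invoked to conclude that the reindexed sum equals the original, and I would state that dependency explicitly.
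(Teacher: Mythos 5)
Your proposal is correct and follows essentially the same route as the paper: independence via $\G'$-invariance ($c|(\alpha\xi)=c|\xi$ for $\alpha\in\G'$), preservation of harmonicity via Lemma~\ref{lem:harmonicity-under-G}, $\G$-invariance of the image via reindexing the right $\G$-set of cosets, and linearity as an observation. No meaningful difference from the paper's own argument.
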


\begin{proof}
The $F$-linearity is clear. The independence of the chosen set of representatives follows from the $\G'$-invariance of harmonic cocycles $c$ in the domain $\Char(\G',V)$; it implies that for $\alpha\in \G'$ and $\xi\in\GL_3(F)$ we have $c|(\alpha\xi)=(c|\alpha)|\xi=c|\xi$. 

That harmonicity is preserved follows from Lemma~\ref{lem:harmonicity-under-G}. It remains to see that the resulting cocycle is $\G$-invariant. For this let $\g \in \G$ and $c \in\Char(\G', V)$. Then
\[ (T_{\G' \delta \G} c)| \g = \left(\sum_{\epsilon} c | \epsilon \right)|\g = \sum_{\epsilon} c | (\epsilon \g) = T_{\G' \delta \G} c, \]
since $\G' \backslash \G' \delta \G$ is a right $\G$-set, so that $\{\epsilon \gamma\}$ is again a set of representatives of $\G' \backslash \G' \delta \G$.
\end{proof}

\begin{rmk} \label{rmk:doublecosetop}
Some special cases of Lemma-Definition \ref{def:doublecosetop} are the following:
\begin{description}
\item[$\G' \supseteq \G$ and $\delta = Id$.] In this case, $T_{\G' \delta \G}$ is the inclusion of the subspace $\Char(\G', V)$ in $\Char(\G, V)$, mapping $c$ to $c$.
\item[$\G' \subset \G$ and $\delta = Id$.] In this case, $T_{\G' \delta \G}$ is the so called \emph{trace map}, projecting $\Char(\G', V)$ onto its subspace $\Char(\G, V)$.
\item[$\G = \delta^{-1} \G' \delta$.] In this case, $T_{\G' \delta \G}$ maps $c$ to $c | \delta$ and is an isomorphism by Lemma \ref{lem:conjcycle}.
\item[$\G' = \G$.] In this case, $T_{\G \delta \G} =: T_{\delta}$ is a so called \emph{Hecke operator}. These are the main objects of interest in this section.
\end{description}
\end{rmk}

\subsection{Explicit formulas}\label{subsec:ExplFormulas}
We shall be interested in certain Hecke operators on the spaces of $\G$-invariant harmonic cocycles we investigated in the previous section. The matrices $\delta$ of particular interest to us are
\begin{equation}\label{eq:def-delta}
 \delta_1 = \left(\begin{smallmatrix}
	1 & 0 & 0 \\ 
	0 & 1 & 0 \\ 
	0 & 0 & t
\end{smallmatrix}\right), \delta_2 = \left(\begin{smallmatrix}
	1 & 0 & 0 \\ 
	0 & t & 0 \\ 
	0 & 0 & t
\end{smallmatrix}\right) \in \GL_3(F).
\end{equation}
We fix the notation
	\begin{align}
	\label{eq:T_i-def}T_i & = T_{\delta_i} : \Char(\GL_3(A),V) \rightarrow \Char(\GL_3(A),V) \text{ and} \\
	\label{eq:U_i-def}U_i^{\G} & = T_{\delta_i} : \Char(\G,V) \rightarrow \Char(\G,V) \text{ for } \G \in \{ \Gamma_1(t), \Gamma_0(t), \G^P_0, \G^P_2\}.
	\end{align}
    
Our aim is to derive explicit formulas for the above operators ($U_i^\G$ and $T_i$) on \[V\cong\Char(\G_1(t),V),\] under the identification from Theorem~\ref{thm:isoG1}, and on certain subspaces of $V$ given in Theorems~\ref{thm:IdentifyGL3}, \ref{thm:Isog-P0}, \ref{thm:Isog-P2}, \ref{thm:IsomG0}.

For this, we need to express the value of $(T_{\delta_i} c)(s_0) = \sum_{\epsilon \in \G \backslash \G \delta_i \G} \epsilon^{-1} c(\epsilon s_0) \in V$ in terms of $c(s_0) \in V$ for arbitrary $c \in \Char(\G,V)$. The steps for each $\delta_i$ and each $\Gamma$ include: 
\begin{enumerate}
\item Find a set of representatives $\{ \epsilon \}$ for $\G \backslash \G \delta_i \G$.
\item For each representative $\epsilon$ from 1, find the unique chamber $s_\epsilon$ in $\A$ and a $\g \in\G_1(t)$ such $\g \epsilon s_0=s_\epsilon$, as guaranteed by Corollary~\ref{cor:A-FD}. 
\item Express $c(s_\epsilon)$ in terms of~$c(s_0)$ using the $\G$-invariance and the formula in Theorem~\ref{thm:isoG1}.
\item Join all summands and reorder sums if necessary.
\end{enumerate}	
It will turn out that for all the operators we consider, the resulting matrix is the product of a diagonal matrix with powers of $t$ on the diagonal and a matrix with entries in $\Fq$.

We now give explicit formulas for the operators $T_i$ and $U_i^{\G}$ by choosing sets of representatives. A detailed explanation on how these can be calculated is given in the Appendix \ref{subsec:rep}. Here, we only give the result.
\begin{prop}\label{prop:repr-all-op}
We define the following sets
\[
\begin{array}{ccc}
Q_1 := \lb \left(\begin{smallmatrix}
1 & 0 & a \\ 
0 & 1 & b \\ 
0 & 0 & t
\end{smallmatrix}\right) \,\middle|\, a, b \in \Fq \rb, & R_1 := \lb \left(\begin{smallmatrix}
1 & a & 0 \\ 
0 & 0 & 1 \\ 
0 & t & 0
\end{smallmatrix}\right) \,\middle|\, a \in \Fq \rb, & S_1 := \lb \left(\begin{smallmatrix}
0 & 1 & 0 \\ 
0 & 0 & 1 \\ 
t & 0 & 0
\end{smallmatrix}\right) \rb,\\[0.5em]
Q_2 := \lb \left(\begin{smallmatrix}
1 & a & b \\ 
0 & t & 0 \\ 
0 & 0 & t
\end{smallmatrix}\right) \,\middle|\, a, b \in \Fq \rb, & R_2 := \lb \left(\begin{smallmatrix}
0 & 1 & a \\ 
t & 0 & 0 \\ 
0 & 0 & t
\end{smallmatrix}\right) \,\middle|\, a \in \Fq \rb, & S_2 := \lb  \left(\begin{smallmatrix}
0 & 0 & 1 \\ 
t & 0 & 0 \\ 
0 & t & 0 
\end{smallmatrix}\right) \rb. \\
\end{array}
\]
Then we can describe sets of representatives as follows:
\begin{align*}
\G_1(t) \backslash \G_1(t) \delta_1 \G_1(t) = \G_0(t) \backslash \G_0(t) \delta_1 \G_0(t) = \G^P_0 \backslash \G^P_0 \delta_1 \G^P_0 & = Q_1, \\
\G_1(t) \backslash \G_1(t) \delta_2 \G_1(t) = \G_0(t) \backslash \G_0(t) \delta_2 \G_0(t) = \G^P_2 \backslash \G^P_2 \delta_2 \G^P_2 & = Q_2, \\
\G^P_2 \backslash \G^P_2 \delta_1 \G^P_2 & = Q_1 \cup R_1, \\
\G^P_0 \backslash \G^P_0 \delta_2 \G^P_0 & = Q_2 \cup R_2, \\
\GL_3(A) \backslash \GL_3(A) \delta_1 \GL_3(A) & = Q_1 \cup R_1 \cup S_1, \\
\GL_3(A) \backslash \GL_3(A) \delta_2 \GL_3(A) & = Q_2 \cup R_2 \cup S_2. \\
\end{align*}
\end{prop}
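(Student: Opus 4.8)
The plan is to verify Proposition~\ref{prop:repr-all-op} by the general machinery of Lemma~\ref{lem:reps}: for each pair $(\Gamma,\delta_i)$ we must produce a complete set of representatives for the right cosets $(\delta_i^{-1}\Gamma\delta_i\cap\Gamma)\backslash\Gamma$, which Lemma~\ref{lem:reps} then identifies with $\Gamma\backslash\Gamma\delta_i\Gamma$ via $\epsilon\mapsto\Gamma\delta_i\epsilon$. Equivalently, since the double coset $\Gamma\delta_i\Gamma$ consists of matrices with entries in $A$ whose determinant is an associate of $\det\delta_i$ (namely $t$ for $i=1$, $t^2$ for $i=2$) and which lie in the single $\GL_3(A)$-double coset of $\delta_i$, one checks directly that (a) each listed matrix in $Q_i$, $R_i$, $S_i$ actually lies in $\Gamma\delta_i\Gamma$, and (b) the listed matrices are pairwise inequivalent modulo left multiplication by $\Gamma$, and (c) they exhaust the double coset. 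Parts (a) and (c) together amount to a counting argument once one knows the cardinality $|\Gamma\backslash\Gamma\delta_i\Gamma|$, which by Lemma~\ref{lem:reps} equals $[\Gamma : \delta_i^{-1}\Gamma\delta_i\cap\Gamma]$.

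First I would handle the ``smallest'' case $\Gamma=\Gamma_1(t)$, $\delta=\delta_1$: compute $\delta_1^{-1}\Gamma_1(t)\delta_1\cap\Gamma_1(t)$ explicitly by conjugating the defining congruence condition (upper unipotent mod $t$) and intersecting; conjugation by $\delta_1=\mathrm{diag}(1,1,t)$ scales the $(i,3)$ entries by $1/t$ and the $(3,j)$ entries by $t$, so the intersection is the subgroup of $\Gamma_1(t)$ whose $(1,3)$ and $(2,3)$ entries are additionally divisible by $t$ (i.e.\ in $t\{k\}$ in the notation of~\ref{schreibweise}); this has index $q^2$ in $\Gamma_1(t)$, and a transversal is given by $\left(\begin{smallmatrix}1&0&a\\0&1&b\\0&0&1\end{smallmatrix}\right)$, $a,b\in\Fq$. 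Multiplying $\delta_1$ on the right by this transversal gives exactly $Q_1$. The same computation, verbatim, works with $\Gamma_1(t)$ replaced by $\Gamma_0(t)$ or $\Gamma_0^P$, because in each case conjugation by the diagonal matrix $\delta_1$ only alters the conditions on the third column and third row, and these groups differ from $\Gamma_1(t)$ only in entries of the top-left $2\times 2$ block (for $\Gamma_0(t)$) or the $(2,1)$ entry as well (for $\Gamma_0^P$), so the index and the transversal are unchanged. The case $\delta_2$ against $\Gamma_1(t)$, $\Gamma_0(t)$, $\Gamma_2^P$ is symmetric: now $\delta_2=\mathrm{diag}(1,t,t)$ scales the first column/row, the intersection has index $q^2$, and the transversal $\left(\begin{smallmatrix}1&a&b\\0&1&0\\0&0&1\end{smallmatrix}\right)$ produces $Q_2$.

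For the larger groups $\Gamma_0^P$, $\Gamma_2^P$, $\GL_3(A)$ paired with the ``wrong'' $\delta_i$ (i.e.\ $\Gamma_2^P$ with $\delta_1$, $\Gamma_0^P$ with $\delta_2$, and $\GL_3(A)$ with either), the double coset $\Gamma\delta_i\Gamma$ is strictly larger and the intersection $\delta_i^{-1}\Gamma\delta_i\cap\Gamma$ has smaller index. Here I would compute the relevant index using Corollary~\ref{cor:mod-t-stabilizers}/Proposition~\ref{prop:stab_a} ideas, or more directly by the standard formula $|\Gamma\backslash\Gamma\delta\Gamma| = |\GL_3(A)\backslash\GL_3(A)\delta\GL_3(A)|\cdot[\text{something}]$ — but the cleanest route is: the $\GL_3(A)$-double coset of $\delta_1$ corresponds, via the Cartan/elementary-divisor decomposition at the prime $t$, to the coset space $\GL_3(\mathbb F_q\llbracket t\rrbracket)/\mathrm{diag}(1,1,t)$-type object, which has $\frac{q^3-1}{q-1}=q^2+q+1$ elements; for $\Gamma=\GL_3(A)$ this is exactly $|Q_1\cup R_1\cup S_1| = q^2 + q + 1$, matching. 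For $\Gamma_2^P$ with $\delta_1$ one finds index $q^2+q$, matching $|Q_1\cup R_1|$, and for $\Gamma_0^P$ with $\delta_2$ index $q^2+q$ matching $|Q_2\cup R_2|$. So the strategy is: (i) establish these cardinalities by the elementary-divisor count and a short index computation for the $\Gamma$-stabilizer of the relevant flag mod $t$; (ii) verify each listed matrix lies in the correct double coset by exhibiting explicit factorizations $\gamma\delta_i\gamma'$ — e.g.\ for $S_1=\left(\begin{smallmatrix}0&1&0\\0&0&1\\t&0&0\end{smallmatrix}\right)$ one writes it as (a permutation-type matrix in $\GL_3(A)$) times $\delta_1$ times (another element of $\GL_3(A)$); (iii) verify pairwise $\Gamma$-inequivalence of the representatives by a direct argument on reductions mod $t$ (the blocks $Q_i$, $R_i$, $S_i$ are distinguished by which columns acquire the factor $t$, hence are automatically in distinct $\GL_3(A)$-cosets, and within a block the $\Fq$-parameters are separated by the congruence subgroup condition). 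Once (i)--(iii) are in place, a pigeonhole/counting argument closes each case.

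The main obstacle I anticipate is bookkeeping the index computation $[\Gamma:\delta_i^{-1}\Gamma\delta_i\cap\Gamma]$ correctly in the ``mixed'' cases ($\Gamma_2^P$ with $\delta_1$, etc.), where the conjugate group and the original group interact nontrivially in the off-diagonal entries and one must track both the $\Fq$-parameters coming from entries that become $t$-divisible and the extra parameters coming from entries that lose a power of $t$; it is easy to be off by a factor of $q$. A safe cross-check is that the representative sets must be right $\Gamma$-sets under Lemma~\ref{lem:reps}, and that the three disjoint pieces $Q_i$, $R_i$, $S_i$ should correspond to the three $\GL_3(\Oi)$-orbits (equivalently, $\Gamma$-orbits after reduction) of lines/hyperplanes in the relevant flag variety over $\mathbb F_q$ — this orbit picture both predicts the cardinalities $1$, $q$, $q^2$ (summing to $q^2+q+1$) and explains why intermediate groups see only a sub-union of these orbits. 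All the remaining work — writing down the explicit factorizations and checking inequivalence — is routine matrix algebra over $A$ and is deferred to Appendix~\ref{subsec:rep} as the text indicates.
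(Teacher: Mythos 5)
Your proposal is correct in outline and rests on the same foundational tool (Lemma~\ref{lem:reps}), but the route you take to the actual representatives differs from the paper's in a notable way. The paper's argument (Appendix~\ref{subsec:rep}) is uniform across all six cases: after applying Lemma~\ref{lem:reps}, it reduces everything modulo $t$ (Lemma~\ref{lem:reduction}) and then identifies the resulting quotient $\widebar{\G_1}\backslash\widebar{\G_2}$ inside $\GL_3(\Fq)$ with an orbit of a fixed point in $\P^2(\Fq)$ under one of the two right actions of Proposition~\ref{prop:P2actions}, via the orbit-stabilizer theorem. Crucially, this produces the cardinality \emph{and} the explicit transversals in one step: one reads off lifts $\g_i\in\G_2$ of matrices hitting each point of the orbit $x\cdot\widebar{\G_2}$. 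Your plan instead leads with a direct conjugation computation of $\delta_i^{-1}\Gamma\delta_i\cap\Gamma$ for the ``matched'' pairs ($\Gamma_1(t)$, $\Gamma_0(t)$, $\Gamma_0^P$ with $\delta_1$, etc.), finding the index $q^2$ by tracking which entries acquire a $t$-divisibility condition, and then switches to a counting-plus-verification strategy for the ``mixed'' pairs ($\Gamma_2^P$ with $\delta_1$, $\GL_3(A)$ with either $\delta_i$), invoking the elementary-divisor count $q^2+q+1$ and verifying membership and inequivalence case by case. The observation you deferred to a ``cross-check'' — that the pieces $Q_i,R_i,S_i$ of sizes $q^2,q,1$ are the $U$-, $B$-, $P$-orbits on $\P^2(\Fq)$ — is precisely what the paper uses as its \emph{primary} mechanism (Proposition~\ref{prop:double-quot-fund} for the fundamental-domain version, Propositions~\ref{prop:repr-Ti} and~\ref{prop:repr-Ui} for the Hecke version), and elevating it from cross-check to main argument is what lets the paper avoid the brittle index bookkeeping you flagged as your main anticipated obstacle. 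Both routes are elementary and arrive at the same place; yours trades the uniform orbit-stabilizer template for more hands-on per-case computations, which is workable but somewhat more error-prone for the intermediate groups.
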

\begin{proof}
See Appendix \ref{app:rep-hecke}.
\end{proof}
In Appendix~\ref{appendix}, we execute the steps indicated above. To describe the main result, we shall need the linear operators $A_i, B_i, C_i : V \rightarrow V$, $i=1,2$, defined as follows.
\setlength{\arraycolsep}{2.2 pt} 
\small 
\begin{align*} 
A_1 (v) := & \sum_{a, b \in \Fq^{\times}} \Bigg[ \begin{pmatrix}
 1 & 0 & 0 \\
 0 & 1 & 0 \\
 a & b & 1 \\
\end{pmatrix} \hspace{-.65pt}v - \begin{pmatrix}
 0 & 0 & -a \\
 0 & 1 & 0 \\
 a^{-1} & a^{-1}b & 1 \end{pmatrix} \hspace{-.65pt}v - \begin{pmatrix}
 1 & 0 & 0 \\
 -ab & 0 & -a \\
 b & a^{-1} & 1 \\
\end{pmatrix} \hspace{-.65pt}v + \begin{pmatrix}
 0 			& 0 	 & -a \\
 -a^{-1}b   & 0 	 & -b \\
 a^{-1} 	& b^{-1} & 1 \\
\end{pmatrix} \hspace{-.65pt}v \Bigg ] \\
+ & \sum_{a \in \Fq^{\times}} \Bigg[ \begin{pmatrix}
 1 & 0 & 0 \\
 0 & 1 & 0 \\
 a & 0 & 1 \\
\end{pmatrix} \hspace{-.65pt}v + \begin{pmatrix}
 1 & 0 & 0 \\
 0 & 1 & 0 \\
 0 & a & 1 \\ \end{pmatrix} \hspace{-.65pt}v - 
\begin{pmatrix}
 0 & 0 & -a \\
 0 & 1 & 0 \\
 a^{-1} & 0 & 1 \\
\end{pmatrix} \hspace{-.65pt}v - \begin{pmatrix}
 1	& 0 	 & 0 \\
 0  & 0 	 & -a \\
 0 	& a^{-1} & 1 \\
\end{pmatrix} \hspace{-.65pt}v \Bigg] \ + \ v 
\\[1em]
B_1(v) := & \sum_{a \in \Fq^{\times}} \Bigg[ \begin{pmatrix}
    0&0&-a\\
    a^{-1}&0&1\\
    0&1&0
\end{pmatrix} \hspace{-.65pt}v - \begin{pmatrix}
    1&0&0\\
    a&0&1\\
    0&1&0
\end{pmatrix} \hspace{-.65pt}v \Bigg] - \begin{pmatrix}
    1&0&0\\
    0&0&1\\
    0&1&0
\end{pmatrix} \hspace{-.65pt}v,\\[1em]
C_1(v) := & - \begin{pmatrix}
    0&0&1\\
    1&0&0\\
    0&1&0
\end{pmatrix} \hspace{-.65pt}v
\intertext{and}
A_2 (v) := & \sum_{a, b \in \Fq^{\times}} \Bigg[ \begin{pmatrix}
 1 & 0 & 0 \\
 a & 1 & 0 \\
 b & 0 & 1 \\
\end{pmatrix} \hspace{-.65pt}v - \begin{pmatrix}
 0 & -a & 0 \\
 a^{-1} & 1 & 0 \\
 b & 0 & 1 \end{pmatrix} \hspace{-.65pt}v - \begin{pmatrix}
 0 & 0 & -a \\
 0 & 1 & b \\
 a^{-1} & 0 & 1 \\
\end{pmatrix} \hspace{-.65pt}v + \begin{pmatrix}
 0 		& -a  & 0 \\
 0  	& 1	 & -a^{-1}b \\
 b^{-1} & 0 	 & 1 \\
\end{pmatrix} \hspace{-.65pt}v \Bigg] \\
+ & \sum_{a \in \Fq^{\times}} \Bigg[ \begin{pmatrix}
 1 & 0 & 0 \\
 a & 1 & 0 \\
 0 & 0 & 1 \\
\end{pmatrix} \hspace{-.65pt}v + \begin{pmatrix}
 1 & 0 & 0 \\
 0 & 1 & 0 \\
 a & 0 & 1 \\ \end{pmatrix} \hspace{-.65pt}v - 
\begin{pmatrix}
 0 & -a & 0 \\
 a^{-1} & 1 & 0 \\
 0 & 0 & 1 \\
\end{pmatrix} \hspace{-.65pt}v - \begin{pmatrix}
 0	& 0  & -a \\
 0  & 1 & 0 \\
 a^{-1} 	& 0 & 1 \\
\end{pmatrix} \hspace{-.65pt}v \Bigg] \ + \ v 
\\[1em]
B_2(v) := & \sum_{a \in \Fq^{\times}} \Bigg[ \begin{pmatrix}
    0&1&0\\
    0&0&-a\\
    a^{-1}&0&1
\end{pmatrix} \hspace{-.65pt}v - \begin{pmatrix}
    0&1&0\\
    1&0&0\\
    a&0&1
\end{pmatrix} \hspace{-.65pt}v \Bigg]- \begin{pmatrix}
    0&1&0\\
    1&0&0\\
    0&0&1
\end{pmatrix} \hspace{-.65pt}v,\\[1em]
C_2(v) := & - \begin{pmatrix}
    0&1&0\\
    0&0&1\\
    1&0&0
\end{pmatrix} \hspace{-.65pt}v.
\end{align*}
\normalsize
\setlength{\arraycolsep}{5 pt} 

\begin{thm}
\label{thm:AiBiCi}
The following formulas hold for the Hecke operators introduced above: For $i =1, \G \in \{\G_1(t), \G_0(t), \G^P_0\}$ or $i=2, \G \in \{\G_1(t), \G_0(t), \G^P_2\}$ and $c \in \Char(\G, V)$ we have 
\begin{align*}
(U_i^{\G} c)(s_0) & = A_i (\delta_i^{-1} c(s_0)).
\intertext{For $i =1, \G = \G^P_2$ or $i=2, \G = \G^P_0$ and $c \in \Char(\G, V)$ we have}
(U_i^{\G} c)(s_0) & = (A_i + B_i) (\delta_i^{-1} c(s_0)).
\intertext{For $i \in \{1, 2\}$ and $c \in \Char(\GL_3(A), V)$ we have}
(T_i c)(s_0) & = (A_i + B_i + C_i) (\delta_i^{-1} c(s_0)).
\end{align*}
\end{thm}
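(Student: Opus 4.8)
The proof executes the four-step scheme described at the start of Subsection~\ref{subsec:ExplFormulas}, uniformly for all of the operators; the full computation is carried out in Appendix~\ref{appendix}, so here I only sketch its architecture.

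Fix $i\in\{1,2\}$ and one of the admissible groups $\G$, and let $c\in\Char(\G,V)$. By Lemma-Definition~\ref{def:doublecosetop}, $(T_{\delta_i}c)(s_0)=\sum_{\epsilon}\epsilon^{-1}c(\epsilon s_0)$, where $\epsilon$ runs through the explicit representatives of $\G\backslash\G\delta_i\G$ from Proposition~\ref{prop:repr-all-op}: this set is $Q_i$ when $\G\in\{\G_1(t),\G_0(t)\}$, or $\G=\G^P_0$ and $i=1$, or $\G=\G^P_2$ and $i=2$; it is $Q_i\cup R_i$ in the remaining parahoric case ($\G=\G^P_2$, $i=1$, or $\G=\G^P_0$, $i=2$); and $Q_i\cup R_i\cup S_i$ for $\G=\GL_3(A)$. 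Because $\G_1(t)\subset\G$ in every case, $c$ is in particular $\G_1(t)$-equivariant, so Theorem~\ref{thm:isoG1} and its explicit right inverse~\eqref{eq:FormulaFor-cw} apply to $c$ regardless of which of these groups $\G$ is; in particular the $Q_i$-part of the sum is computed by literally the same manipulation for all of $\G_1(t),\G_0(t),\G^P_0$ (resp.\ $\G^P_2$), and it is this part that will produce the operator $A_i$.

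The heart of the matter is a single representative $\epsilon$. One computes the chamber $\epsilon s_0\in\S_2(\B)$ as an explicit flag of lattice classes; by Corollary~\ref{cor:A-FD} it has a unique representative $s_\epsilon\in\S_2(\A)$, and one exhibits some $\gamma_\epsilon\in\G_1(t)$ with $\gamma_\epsilon\epsilon s_0=s_\epsilon$, whence $c(\epsilon s_0)=\gamma_\epsilon^{-1}c(s_\epsilon)$. Formula~\eqref{eq:FormulaFor-cw} then gives $c(s_\epsilon)=\sgn(s_\epsilon)\sum_{\eta\in\G_1(t)_{s_\epsilon}}\eta\,c(s_0)$, with the sign determined from the distance $d(s_\epsilon,s_0)$ via Lemma~\ref{lem:distance} and the stabilizer $\G_1(t)_{s_\epsilon}$ read off Corollary~\ref{cor:stab-a}. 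Substituting and multiplying out the matrices over $F$, one gets a sum of $\GL_3(F)$-translates of $c(s_0)$; the nontrivial output of the computation is that all powers of $t$ occurring — coming from $\epsilon$, from $\gamma_\epsilon$, and from the entries of $\G_1(t)_{s_\epsilon}$ — organize into a single trailing factor $\delta_i^{-1}$, so that each summand is a $\GL_3(\Fq)$-translate of $\delta_i^{-1}c(s_0)$. Finally, summing over all $\epsilon$ and reindexing — in particular splitting the $Q_i$-sum according to how many of the two $\Fq$-parameters in $\epsilon$ vanish, which yields exactly the three groups of summands visible in the definition of $A_i$ — gives $A_i$ as the contribution of $Q_i$, $B_i$ as that of $R_i$, and $C_i$ as that of $S_i$; the three asserted formulas are precisely the three instances $Q_i$, $Q_i\cup R_i$, $Q_i\cup R_i\cup S_i$ of the representative set.

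The main obstacle is the bookkeeping in the previous step: there are up to $q^2+q+1$ representatives $\epsilon$, and for each one must correctly locate $s_\epsilon$ inside $\A$ modulo $\G_1(t)$ (hence its distance to $s_0$, its sign, and its generally nontrivial stabilizer), pick a workable $\gamma_\epsilon$, and carry out the matrix arithmetic modulo $t$. One must also check that the case division (both $\Fq$-parameters nonzero / exactly one nonzero / both zero, and analogously for the single parameter of $R_i$) is exhaustive and disjoint, and that the $R_i$- and $S_i$-contributions do not overlap the $Q_i$-part. None of this is conceptually difficult, but it is long, which is why the detailed verification is deferred to Appendix~\ref{appendix}.
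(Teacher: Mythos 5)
Your proposal is correct and follows essentially the same route as the paper: choose the explicit representatives from Proposition~\ref{prop:repr-all-op}, reduce to $\G_1(t)$-equivariance, locate each $\epsilon s_0$ in $\A$ modulo $\G_1(t)$, and apply formula~\eqref{eq:FormulaFor-cw} to collapse everything to translates of $\delta_i^{-1}c(s_0)$, with $Q_i$, $R_i$, $S_i$ yielding $A_i$, $B_i$, $C_i$ respectively. The paper likewise gives only the architecture plus one worked representative family ($R_1\leadsto B_1$) in the main text, deferring the rest to the appendix, so the level of detail and the decomposition match.
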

\begin{proof}
The result follows from evaluating the expressions describing the operators when inserting the representatives described in Proposition \ref{prop:repr-all-op}.

For all matrices $\epsilon$ in the sets $Q_1, Q_2, R_1, R_2, S_1$, and $S_2$ we find a chamber $s_{\epsilon}$ in the standard apartment and a matrix $\g_{\epsilon} \in \G_1(t)$ such that $\epsilon s_0 = \g_{\epsilon} s_{\epsilon}$, allowing us to use the formula from Theorem \ref{thm:isoG1}. An example is detailed in the appendix \ref{subsec:finding-simplices}. 

Afterwards, we evaluate the formula. Since the calculation is long, but not difficult, we only describe how the representatives in the set $R_1$ lead to the operator $B_1$ as an example. The rest can be shown in an analogous manner.

Representatives in $R_1$ will actually only come into play for $\G^P_2$- or $\GL_3(A)$-equivariant cocycles, but for the calculation that follows $\G_1(t)$-equivariance is enough. So let $c \in \Char(\G_1(t), V)$. We evaluate
\setlength{\arraycolsep}{3.5 pt} 
\begin{align*}
\lefteqn{\sum_{\epsilon \in R_1} (c | \epsilon)(s_0) = \sum_{\epsilon \in R_1} \epsilon^{-1} c(\epsilon s_0) = \sum_{a \in \Fq} \begin{pmatrix}
        1 & 0 & -a \pi \\
        0 & 0 & \pi \\
        0 & 1 & 0
    \end{pmatrix} c \left( \begin{pmatrix}
        1 & a & 0 \\
        0 & 0 & 1 \\
        0 & t & 0
    \end{pmatrix} s_0 \right),} \\
\intertext{now we can apply the results of Appendix \ref{subsec:R1-repr-a} and get } 
 = & \sum_{a \in \Fq^{\times}} \begin{pmatrix}
        1 & 0 & -a \pi \\
        0 & 0 & \pi \\
        0 & 1 & 0
    \end{pmatrix} c \left( \begin{pmatrix}
        1 & 0 & 0 \\
        0 & 1 & 0 \\
        a^{-1} t & 0 & 1
    \end{pmatrix} s_0 \right) + \begin{pmatrix}
        1 & 0 & 0 \\
        0 & 0 & \pi \\
        0 & 1 & 0
    \end{pmatrix} c \left( s \right).
\intertext{ with a simplex $s = \{ [-1, -1], [0, -1], [-1, -2] \}$. We calculate the $\G_1(t)$-stabilizer of the simplex $s$ using Corollary \ref{cor:stab-a} and apply Theorem \ref{thm:isoG1}. Then we obtain}
= & \sum_{a \in \Fq^{\times}} \begin{pmatrix}
        1 & 0 & -a \pi \\
        0 & 0 & \pi \\
        0 & 1 & 0
    \end{pmatrix} \begin{pmatrix}
        1 & 0 & 0 \\
        0 & 1 & 0 \\
        a^{-1} t & 0 & 1
    \end{pmatrix} c(s_0) + \begin{pmatrix}
        1 & 0 & 0 \\
        0 & 0 & \pi \\
        0 & 1 & 0
    \end{pmatrix} \left( - \sum_{a \in \Fq} \begin{pmatrix}
        1 & 0 & 0 \\
        0 & 1 & 0 \\
        at & 0 & 1
    \end{pmatrix} c (s_0) \right) \\
= & \sum_{a \in \Fq^{\times}} \Bigg[ \begin{pmatrix}
    0&0&-a \pi\\
    a^{-1}&0&\pi\\
    0&1&0
\end{pmatrix} c(s_0) - \begin{pmatrix}
    1&0&0\\
    a&0&\pi\\
    0&1&0
\end{pmatrix} c(s_0) \Bigg] - \begin{pmatrix}
    1&0&0\\
    0&0&\pi\\
    0&1&0
\end{pmatrix} c(s_0) \\
= & B_1( \delta_1^{-1} c(s_0) ). \qedhere 
\end{align*}
\setlength{\arraycolsep}{5 pt}
\end{proof}
We have simplified the expressions for the Hecke-operators as the action of two diagonal matrices $\delta_i^{-1}$ concatenated with operators on $V$ that can be described by matrices with entries in the finite field $\Fq$. 

We can only transform the expressions further if we know explicitly what the action of $\GL_3(F)$ on $V$ looks like. This will be described in the following Section \ref{sec:magma}.

\section{Calculations in Magma}
\label{sec:magma}
We want to use the result in Theorem \ref{thm:AiBiCi} to calculate and investigate transformation matrices for our Hecke operators for the coefficients $V_{k,n}$ mentioned in the introduction. Instead of proceeding with calculations manually, we will employ the help of the computer algebra system \texttt{Magma} \cite{bos.can.pla1997}. This will allow us to systematically produce a number of examples.

Specifically, we will calculate the \emph{slopes} of our operators, i.\,e. the $t$-adic valuations of their eigenvalues. They can be determined by constructing the Newton-polygon from the coefficients of their characteristic polynomial \cite[][Section~II.6]{neukirch1999}, for which built-in \texttt{Magma}-functions exist.

The motivation for our coefficient vector space comes from the residue homomorphism from \cite{gr2021} that connects spaces $S_{k, n}(\G)$ of Drinfeld cusp forms of level $\G$ and rank $3$ with $\G$-invariant harmonic cocycles. The weight $k$ and type $n$ of the Drinfeld cusp forms determines the coefficient vector space $V$ we should consider. Assuming the conjecture from \cite{gr2021} that the map is an isomorphism, our results on the present $U_i$-operators can be applied to the $U_i$-operators acting on spaces of Drinfeld cusp forms of level $\G_1(t)$ and rank $3$ (and the corresponding subspaces for the other congruence subgroups we consider). 
\subsection{Coefficient vector space $V$}
The coefficients we are primarily interested in are the dual of a symmetric power representation $\Sym^k(F^3)$ with a determinant twist, 
interpreted as a space of homogeneous polynomials:

Let $k, n$ be non-negative integers. Let $P_{k}$ denote the $F$-vector space $F[X, Y, Z]_{deg=k}$ of homogeneous polynomials over $F$ in variables $X, Y, Z$ of degree $k$. A basis of $P_k$ is given by 
\[ \uB = \{ X^l Y^m Z^{k-(l+m)} \| 0 \leq l \leq k, 0 \leq m \leq k, \text{ s.\,t. } l+m \leq k \}. \]
We define an $F$-vector space $V_{k,n}$ as the dual of $P_{k}$ equipped with the following action of $\GL_3(F)$: Let $\g = \left(\begin{smallmatrix}
 a & b & c \\
 d & e & f \\
 g & h & i \\
\end{smallmatrix}\right) \in \GL_3(F)$ be a matrix and $v\in V_k$. Then we define for basis elements $X^l Y^m Z^{k-(l+m)} \in \uB$:
\[
(\g v) (X^l Y^m Z^{k\shortminus(l+m)})\hspace{-.5pt} := \hspace{-.5pt}\det(\g)^{1\shortminus n} v((aX + bY + cZ)^l (dX + eY + fZ)^m (gX + hY + iZ)^{k\shortminus(l+m)})\hspace{-.5pt}.
\]
Gräf then suggests in \cite[][Chapter 8]{gr2021} for every congruence subgroup $\Gamma(t)\subset\G \subset \GL_3(A)$ a Hecke-invariant isomorphism of $\C$-vector spaces
\[ \Phi_{k,n}: S_{k+3, n}(\G) \rightarrow \Char(\G, V_{k, n}) \otimes_F \C.\]
\textbf{Note: } Our definition for the action of $\GL_3(F)$ on the spaces $V_{k, n}$ differs from that of Gräf by the automorphism $\g \mapsto \g^{-T}$. This is due to the fact that Gräf works with a left action of $\GL_3(\F)$ on Drinfeld cusp forms that involves transposing the matrix compared to the right action defined in \cite{bas.bre.pin2024}, cf. \cite[][Remark 6.3]{gr2021}. Also, in \cite{gr2021} the weight $k$ is always divisible by 3, but his constructions can be extended to general~$k$.

\subsubsection{A basis for $V$} \label{subsec:basis}
For simplicity, we only consider the case $n=0$ and write $V_k := V_{k, 0}$ from here on, but the implementation could be augmented for general type $n$ without much trouble. 

For our computations, we first need to choose a basis of $V_k$ and describe the action of $\GL_3(F)$ in terms of it.
We fix the basis $\uB^* = \{ v_{\la,\mu} \| 0 \leq \la \leq k, 0 \leq \mu \leq k, \text{ such that } \la+\mu \leq k \}$ 
of $V_k$ as the dual basis of $\uB$, where
\[
v_{\la, \mu} (X^l Y^m Z^{k-(l+m)}) = \delta_{\la, l} \delta_{\mu, m} = \begin{cases}
1, & \text{ if }\lambda = l \text{ and } \mu = m, \\
0, & \text{ else.}
\end{cases}
\]
Observe that $\dim V_k = \dim P_k = \binom{k + 2}{2}.$

Now we want to analyze the action of $\GL_3(F)$ on $V_{k}$ so that we can teach it to \texttt{Magma}. 
As above, let $\g = \left(\begin{smallmatrix}
 a & b & c \\
 d & e & f \\
 g & h & i \\
\end{smallmatrix}\right) \in \GL_3(F)$ be a matrix,
$X^l Y^m Z^{k-(l+m)} \in \uB$ a basis element and $v_{\la, \mu} \in \uB^*$ an element of the dual basis. Then 
\begin{align}
\nonumber (\g v_{\la, \mu}) & (X^l Y^m Z^{k-(l+m)}) = \det(\g) v_{\la, \mu} \left((aX + bY + cZ)^l (dX + eY + fZ)^m \right. \\
\nonumber     &     \hspace*{6cm} \left. (gX + hY + iZ)^{k-(l+m)}\right) \\
\nonumber      = &\det(\g) v_{\la, \mu} \left( \left(\sum_{\substack{l_1 + l_2 + l_3\\= l}} \binom{l}{l_1, l_2, l_3} (aX)^{l_1}(bY)^{l_2}(cZ)^{l_3} \right) \right. \\
\nonumber     & \cdot \left( \sum_{\substack{m_1 + m_2 + m_3 \\= m}} \binom{m}{m_1, m_2, m_3} (dX)^{m_1}(eY)^{m_2}(fZ)^{m_3} \right) \\
\nonumber     & \left. \cdot \left( \sum_{\substack{n_1 + n_2 + n_3 \\= k-(l+m)}} \binom{k-(l+m)}{n_1, n_2, n_3} (gX)^{n_1}(hY)^{n_2}(iZ)^{n_3} \right) \right) \\
\label{eq:transformationmatrix}     = & \det(\g) \sum_{\substack{l_1 + l_2 + l_3 \\= l}} \sum_{\substack{m_1 + m_2 + m_3 \\= m}} \sum_{\substack{n_1 + n_2 + n_3 \\= k-(l+m)}} \binom{l}{l_1, l_2, l_3}\binom{m}{m_1, m_2, m_3}\binom{k-(l+m)}{n_1, n_2, n_3} \\
\nonumber     & \hspace*{1cm} a^{l_1} b^{l_2} c^{l_3} d^{m_1} e^{m_2} f^{m_3} g^{n_1} h^{n_2} i^{n_3} \underbrace{v_{\la, \mu}(X^{l_1+m_1+n_1} Y^{l_2+m_2+n_2} Z^{l_3+m_3+n_3})}_{\substack{= 1,\text{ if } l_1+m_1+n_1 = \lambda \text{ and } l_2+m_2+n_2 = \mu,\\ 0 \text{ else.}}} \\
\nonumber     =: & M(\g)_{\la, \mu; l, m}.
\end{align}
This allows us to calculate a transformation matrix $M^{\uB^*}_{\uB^*}(\g)$ for the endomorphism on $V_{k}$ given by $v \mapsto \g v$ with respect to the basis $\uB^*$, which has entries $M(\g)_{\la, \mu; l, m}$. However, it is computationally costly.
\subsubsection{Relevant subspaces}
The subspaces we want to investigate in more detail are given in Theorems~\ref{thm:IdentifyGL3}, \ref{thm:Isog-P0}, \ref{thm:Isog-P2}, \ref{thm:IsomG0}. We use the same notations $H_0, H_2, \sigma$ and $\tau$ that we introduced there.
\begin{align*}
\Char(\Gamma_0(t),V_k) & \cong V_k^{D}, \\
\Char(\G^P_0,V_k) & \cong \lb v \in V_k^{H_0} \,\middle|\,  \sigma v = - \sum_{a \in \Fq} \left(\begin{smallmatrix}
        1&a&0\\0&1&0\\0&0&1
    \end{smallmatrix}\right) v \rb, \\
\Char(\G^P_2,V_k) & \cong \lb v \in V_k^{H_2} \,\middle|\,  \tau v = - \sum_{a \in \Fq} \left(\begin{smallmatrix}
        1&0&0\\0&1&a\\0&0&1
    \end{smallmatrix}\right) v \rb, \text{ and}\\
\Char(\GL_3(A),V_k) & \cong \lb v \in V_k^{B^T} \,\middle|\,  \forall \rho \in W: \, \rho v = \sgn(\rho s_0) \sum_{\g \in \G_1(t)_{(\rho s_0)}} \g v \rb. 
\end{align*}
In the following, we explain how to construct them in \texttt{Magma}. 

The first space $\Char(\Gamma_0(t),V_k) \cong V_k^{D(\Fq)}$ is generated by a subset of the basis vectors in $\uB^*$, so this is the description that we use in the \texttt{Magma} program.
\begin{prop} \label{prop:basis-vd}
A basis for $V_k^{D}$ is given by 
\begin{align*}
C^* := \{ v_{\la,\mu} \in \uB^* \| & 0 \leq \la \leq k, 0 \leq \mu \leq k, \text{ such that } \la+\mu \leq k \text{ and } \\
    & q-1 \text{ divides }(\la+1), (\mu+1), (k - (\la + \mu) + 1)\}.
\end{align*}
This is nonzero only if $q-1$ divides $k+3$. In this case, the third condition of $q-1$ dividing $(k - (\la + \mu) + 1)$ can be dropped.
\end{prop}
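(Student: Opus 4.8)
The plan is to unwind the $D$-action on the dual basis $\uB^*$ and show it is diagonal, so that $V_k^D$ is spanned by those $v_{\la,\mu}$ fixed by every diagonal matrix. First I would compute, for a diagonal matrix $d = \left(\begin{smallmatrix} a & 0 & 0 \\ 0 & b & 0 \\ 0 & 0 & c \end{smallmatrix}\right) \in D$, the action on $v_{\la,\mu}$ using the formula \eqref{eq:transformationmatrix} (with $n=0$, so $\det(d)^{1-n} = abc$). Since $d$ is diagonal, the polynomial $(aX)^l(bY)^m(cZ)^{k-(l+m)} = a^l b^m c^{k-(l+m)} X^l Y^m Z^{k-(l+m)}$, so $(d\, v_{\la,\mu})(X^l Y^m Z^{k-(l+m)}) = abc \cdot a^l b^m c^{k-(l+m)} \, \delta_{\la,l}\delta_{\mu,m}$. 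Hence $d\, v_{\la,\mu} = a^{\la+1} b^{\mu+1} c^{k-(\la+\mu)+1}\, v_{\la,\mu}$; the action is diagonal with that eigenvalue.

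Next I would observe that $v_{\la,\mu} \in V_k^D$ if and only if $a^{\la+1} b^{\mu+1} c^{k-(\la+\mu)+1} = 1$ for all $a,b,c \in \Fq^\times$. Setting $b = c = 1$ and letting $a$ range over $\Fq^\times$ (a cyclic group of order $q-1$) forces $(q-1) \mid (\la+1)$; symmetrically $(q-1)\mid(\mu+1)$ and $(q-1)\mid(k-(\la+\mu)+1)$. Conversely, if all three divisibilities hold, each factor is $1$ and $v_{\la,\mu}$ is fixed. Since the $d$-action permutes $\uB^*$ up to scalars (indeed fixes each line), $V_k^D$ is the span of the fixed basis vectors, which is exactly $C^*$; this proves the first assertion.

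For the last two sentences: if $C^* \neq \emptyset$, pick $v_{\la,\mu} \in C^*$; then $(q-1)$ divides $(\la+1) + (\mu+1) + (k-(\la+\mu)+1) = k+3$, so the condition $(q-1)\mid k+3$ is necessary. Conversely, when $(q-1)\mid k+3$, the two divisibilities $(q-1)\mid(\la+1)$ and $(q-1)\mid(\mu+1)$ already imply $(q-1)\mid(\la+1)+(\mu+1) = (\la+\mu+2)$, hence $(q-1) \mid (k+3) - (\la+\mu+2) = k-(\la+\mu)+1$, so the third condition is redundant — and such $(\la,\mu)$ exist (e.g.\ $\la = \mu = q-2$ works once $k$ is large enough, and in general the set is visibly nonempty under the divisibility hypothesis). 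The only mild subtlety is bookkeeping the equivalence of the two descriptions of $C^*$; there is no real obstacle, as the argument is a direct character computation for the finite diagonal torus.
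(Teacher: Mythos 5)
Your proof is correct and follows essentially the same approach as the paper: a direct computation of the eigenvalue $a^{\la+1}b^{\mu+1}c^{k-(\la+\mu)+1}$ of the diagonal torus acting on $v_{\la,\mu}$, followed by the observation that the three divisibility conditions sum to $(q-1)\mid (k+3)$. One small caveat on a tangential parenthetical: it is not true that the set $C^*$ is automatically nonempty whenever $(q-1)\mid(k+3)$ (e.g.\ $q=3$, $k=1$), but since the proposition only asserts the necessity of this divisibility, not its sufficiency for nonvanishing, your proof of the stated claims is unaffected.
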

\begin{proof}
Let $v \in V_k$ and $\g = \left( \begin{smallmatrix}
a & 0 & 0 \\ 
0 & b & 0 \\ 
0 & 0 & c
\end{smallmatrix}\right) \in D$. Then $\g v = v$ if and only if for all basis elements $X^l Y^m Z^{k-(l+m)} \in \uB$
\begin{align*}
&& \left( \left( \begin{smallmatrix}
a & 0 & 0 \\ 
0 & b & 0 \\ 
0 & 0 & c
\end{smallmatrix}\right) v \right) (X^l Y^m Z^{k-(l+m)}) & = v (X^l Y^m Z^{k-(l+m)}) \\
\Leftrightarrow &&  abc \: v((ax)^l (bY)^m (cZ)^{k-(l+m)}) & = v (X^l Y^m Z^{k-(l+m)}) \\
\Leftrightarrow &&   a^{l+1} b^{m+1} c^{k-(l+m)+1} v (X^l Y^m Z^{k-(l+m)}) & = v (X^l Y^m Z^{k-(l+m)}) \\
\Leftrightarrow && a^{l+1} b^{m+1} c^{k-(l+m)+1} = 1 & \text{ or } v (X^l Y^m Z^{k-(l+m)}) = 0.
\end{align*}
Hence, $v$ is in $V_k^{D}$ if the indices of all $v_{\la, \mu}$ appearing in its basis decomposition with non-zero coefficient satisfy 
\[ a^{\la+1} b^{\mu+1} c^{k-(\la+\mu)+1} = 1 \]
for all $a, b, c \in \Fq^{\times}$. This is equivalent to $q-1$ dividing $(\la+1), (\mu+1)$ and $(k - (\la + \mu) + 1)$ as stated. In this case, $q-1$ also divides the sum of the three exponents, which is $k+3$.
\end{proof}

For the bigger groups $\G^P_0, \G^P_2$ and $\GL_3(A)$, we first want to describe the spaces $V_k^{H_0}$, $V_k^{H_0}$ and $V_k^{B^T}$ to \texttt{Magma}, before implementing the conditions for the permutation matrices.
\begin{lem} 
We define two matrices $\eta_0 := \left(\begin{smallmatrix}
1 & 0 & 0 \\ 
1 & 1 & 0 \\ 
0 & 0 & 1
\end{smallmatrix}\right)$ and $\eta_2 := \left(\begin{smallmatrix}
1 & 0 & 0 \\ 
0 & 1 & 0 \\ 
0 & 1 & 1
\end{smallmatrix}\right)$.
Then we can give the following descriptions:
\begin{enumerate}[(i)]
\item The group $H_0 = \left(\begin{smallmatrix}
    *&0&0\\ *&*&0\\0&0&*
\end{smallmatrix}\right)\! \subset \GL_3(\Fq)$ can be written as 
$ H_0 = D\langle \eta_0 \rangle  = \langle \eta_0 \rangle D$. 
\item The group $H_2 = \left(\begin{smallmatrix}
    *&0&0\\ 0&*&0\\0&*&*
\end{smallmatrix}\right) \!\subset \GL_3(\Fq)$ can be written as 
$ H_0 = D\langle \eta_2 \rangle  = \langle \eta_2 \rangle D$. 
\item We can decompose the group $B^T$ of lower triangular matrices with entries in $\Fq$ as
$ B^T = DU^T = U^T D $. 
\item The group $U^T$ of unipotent lower triangular matrices is generated by $\eta_0$ and $\eta_2$.
\end{enumerate}
It follows that 
\begin{align*}
    V_k^{H_0} & = V_k^{D} \cap V_k^{\eta_0}, \\
    V_k^{H_2} & = V_k^{D} \cap V_k^{\eta_2}, \\
    V_k^{B^T} & = V_k^{D} \cap V_k^{\eta_0} \cap V_k^{\eta_2}  = V_k^{H_0} \cap V_k^{H_2}. 
\end{align*}
\end{lem}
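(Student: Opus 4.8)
The plan is to treat the four claims (i)--(iv) as elementary computations in $\GL_3(\Fq)$ and then deduce the three displayed fixed-subspace identities formally. For (i), I would start from a general element $h=\left(\begin{smallmatrix} a&0&0\\ d&b&0\\ 0&0&c\end{smallmatrix}\right)\in H_0$ with $a,b,c\in\Fq^{\times}$, $d\in\Fq$, and factor it as $h=\mathrm{diag}(a,b,c)\cdot\left(\begin{smallmatrix} 1&0&0\\ d/b&1&0\\ 0&0&1\end{smallmatrix}\right)=\left(\begin{smallmatrix} 1&0&0\\ d/a&1&0\\ 0&0&1\end{smallmatrix}\right)\cdot\mathrm{diag}(a,b,c)$. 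Writing $N_0=\{\,\left(\begin{smallmatrix} 1&0&0\\ *&1&0\\ 0&0&1\end{smallmatrix}\right)\,\}$ for the unipotent subgroup attached to $\eta_0$, this exhibits $H_0=DN_0=N_0D$; and since conjugating $\eta_0$ by $\mathrm{diag}(a,b,c)$ yields $I+(b/a)E_{21}$ with $b/a$ running over all of $\Fq^{\times}$, the subgroup generated by $D$ and $\eta_0$ already contains $D$ together with $N_0$, hence equals $H_0$, which is the content of $H_0=D\langle\eta_0\rangle=\langle\eta_0\rangle D$. Claim (ii) is the identical argument after interchanging the second and third coordinate axes, and (iii) is the standard fact that any invertible lower-triangular matrix is (its diagonal part)$\times$(a lower unipotent matrix), in either order, so that $B^T=DU^T=U^TD$.

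For (iv), I would use the ordered-product description of $U^T$: every $\left(\begin{smallmatrix} 1&0&0\\ x&1&0\\ z&y&1\end{smallmatrix}\right)$ can be written as $(I+xE_{21})(I+yE_{32})(I+z'E_{31})$ for a suitable $z'\in\Fq$, so $U^T$ is generated by the three root subgroups $N_{21},N_{32},N_{31}$. Here $N_{21}$ and $N_{32}$ are generated by $\eta_0$ and $\eta_2$ respectively (again spreading the off-diagonal $1$ over $\Fq$ by conjugation, as in (i)), while $N_{31}$ already sits inside $\langle\eta_0,\eta_2\rangle$ thanks to the commutator identity $[\,I+aE_{21},\,I+bE_{32}\,]=I-abE_{31}$, which realises every element of $N_{31}$ as $a,b$ range over $\Fq^{\times}$. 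Thus $\langle\eta_0,\eta_2\rangle\supseteq N_{21}N_{32}N_{31}=U^T$, the reverse inclusion being obvious.

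The closing ``it follows that'' assertions are then purely formal: for any subset $S\subseteq\GL_3(F)$ one has $V_k^{S}=V_k^{\langle S\rangle}$, since the elements of $\GL_3(F)$ fixing a given $v\in V_k$ form a subgroup. Hence $V_k^{D}\cap V_k^{\eta_0}=V_k^{\langle D,\eta_0\rangle}=V_k^{H_0}$ by (i), $V_k^{D}\cap V_k^{\eta_2}=V_k^{H_2}$ by (ii), and $V_k^{D}\cap V_k^{\eta_0}\cap V_k^{\eta_2}=V_k^{\langle D,\eta_0,\eta_2\rangle}=V_k^{B^T}$ by (iii)--(iv); and $V_k^{B^T}=V_k^{H_0}\cap V_k^{H_2}$ because $\langle H_0,H_2\rangle=B^T$, once more by (iii)--(iv). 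I do not expect a genuine obstacle here. The only two points that want a moment's care are verifying that $D$ normalises $N_0$ and $N_2$ (so the products in (i) and (ii) are genuine subgroups and the factorisations honest) and the short commutator computation $[\,I+aE_{21},\,I+bE_{32}\,]=I-abE_{31}$ used in (iv); beyond that the lemma is just bookkeeping with one- and two-parameter unipotent subgroups of $\GL_3(\Fq)$.
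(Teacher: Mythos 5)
The paper's ``proof'' of this lemma is the single phrase \emph{Direct calculation}, so there is no detailed argument to compare against; your proposal fills in the elementary verification the authors clearly had in mind, and for (iii) and the concluding fixed-subspace identities the reasoning is sound. One genuine issue is worth flagging, however, and it affects the paper's own statement as much as your proof of it. Since $E_{21}^2=0$, one has $\eta_0^n=I+nE_{21}$, so the cyclic group $\langle\eta_0\rangle$ has order $p$ and equals $\{I+nE_{21}:n\in\mathbb{F}_p\}$; it is \emph{not} the full root subgroup $N_{21}=\{I+xE_{21}:x\in\Fq\}$ unless $q=p$. The same holds for $\langle\eta_2\rangle$, and every element of $\langle\eta_0,\eta_2\rangle$ has entries in the prime subfield, so $\langle\eta_0,\eta_2\rangle\subseteq U^T(\mathbb{F}_p)\subsetneq U^T(\Fq)$ whenever $q>p$. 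Thus (iv) as literally stated is false for $q>p$, and the step in your proof where you claim $N_{21}$ is ``generated by $\eta_0$ by spreading over $\Fq$ by conjugation'' does not go through: inside $\langle\eta_0,\eta_2\rangle$ there is no diagonal torus to conjugate by. For the same reason the literal set equalities $H_0=D\langle\eta_0\rangle=\langle\eta_0\rangle D$ in (i) (and (ii)) fail when $q>p$, although $DN_0=N_0D=H_0$ with $N_0$ the full one-parameter subgroup is fine. Fortunately none of this propagates to the ``it follows'' conclusions: what those need is $\langle D,\eta_0\rangle=H_0$, $\langle D,\eta_2\rangle=H_2$, $\langle D,\eta_0,\eta_2\rangle=B^T$ and $\langle H_0,H_2\rangle=B^T$, all of which are true and which your argument essentially establishes --- one obtains $N_{21}$ and $N_{32}$ from the $D$-conjugates of $\eta_0,\eta_2$, and then $N_{31}$ from the commutator $[I+aE_{21},\,I+bE_{32}]=I-abE_{31}$ with $a,b$ ranging over all of $\Fq$. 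You should therefore route the final deduction through these true generation statements rather than through (iv) as written, and it would be worth noting that the lemma's (i), (ii), (iv) require either the hypothesis $q=p$ or a reinterpretation of $\langle\eta_0\rangle$ as the corresponding root subgroup.
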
 
\begin{proof}
Direct calculation.
\end{proof}
Spaces such as $V_k^{\eta_0}$ can be calculated in \texttt{Magma} as the eigenspace with eigenvalue $1$ of the linear map $v \mapsto \eta_0 v$. Similarly, conditions like $\sigma v \stackrel{!}{=} - \sum_{a \in \Fq} \left(\begin{smallmatrix}
        1&a&0\\0&1&0\\0&0&1
    \end{smallmatrix}\right) v$ are translated to \texttt{Magma} by computing the kernel of the map $v \mapsto \sigma v + \sum_{a \in \Fq} \left(\begin{smallmatrix}
        1&a&0\\0&1&0\\0&0&1
    \end{smallmatrix}\right) v$. 
This uses the fact that we can compute transformation matrices for the linear map given by $v \mapsto \gamma v$ for any matrix $\g \in \GL_3(F)$ as described in \ref{subsec:basis}.
In our \texttt{Magma} program, we type out all the conditions for the permutation matrices in the descriptions of $\Char(\G^P_0,V_k), \Char(\G^P_2,V_k)$ and $\Char(\GL_3(A),V_k)$.

Now we are able to describe all relevant actions and subspaces to \texttt{Magma}.

\subsection{Possible entries in the resulting matrices}\label{subsec:entries}
\subsubsection{Entries in the matrices for $A_i, B_i$ and $C_i$}
In Theorem \ref{thm:AiBiCi}, we showed that the Hecke-operators can be written as the action of two diagonal matrices $\delta_i^{-1}$ concatenated with operators $A_i, B_i$ and $C_i$ on $V$ that can be described by matrices with entries in the finite field $\Fq$. From the description of the transformation matrices in \ref{subsec:basis} it becomes clear that also the transformation matrices for the operators $A_i, B_i$ and $C_i$ only contain entries in $\Fq$. But it turns out that all the entries even lie in $\mathbb{F}_p$! We check this computationally in our \texttt{Magma} program. 

However, this could also be shown theoretically by evaluating the formula from Equation (\ref{eq:transformationmatrix}) by hand for the occurring matrices and using the fact that $\sum_{a \in \Fq^{\times}} a^n \in \{-1, 0\}$ for all $n \in \N$. Then what remains after cancellation are sums of products of multinomial coefficients, which lie in $\mathbb{F}_p$.
\subsubsection{The action of $\delta_i^{-1}$}
Occurrences of $t$ in the representation matrix of the $T_i$- and $U_i$-operators can only arise from the action of $\delta_i^{-1}$. We will perform a short calculation by hand to understand it.

Let $X^l Y^m Z^{k-(l+m)} \in \uB$ be a basis element and $v_{\la, \mu} \in \uB^*$ an element of the dual basis. Then
\begin{align*}
\delta_1^{-1} v_{\lambda, \mu} (X^l Y^m Z^{k-(l+m)}) & = \left(\begin{smallmatrix}
 1 & 0 & 0 \\
 0 & 1 & 0 \\
 0 & 0 & \pi \\
\end{smallmatrix}\right) v_{\lambda, \mu} (X^l Y^m Z^{k-(l+m)}) \\
& = \pi v_{\lambda, \mu} (X^l Y^m (\pi Z)^{k-(l+m)}) \\
& = \pi^{k+1-(l+m)} v_{\lambda, \mu} (X^l Y^m Z^{k-(l+m)}) \\
& =  \begin{cases}
\pi^{k+1-(l+m)} = t^{-(k+1) + (l+m)} , & \text{ if }\lambda = l, \mu = m, \\
0, & \text{ else.}
\end{cases}
\end{align*}
Correspondingly, for $i=2$:
\begin{align*}
\delta_2^{-1} v_{\lambda, \mu} (X^l Y^m Z^{k-3-(l+m)}) & = \left(\begin{smallmatrix}
 1 & 0 & 0 \\
 0 & \pi & 0 \\
 0 & 0 & \pi \\
\end{smallmatrix}\right)  v_{\lambda, \mu} (X^l Y^m Z^{k-(l+m)}) \\
& = \pi^2 v_{\lambda, \mu} (X^l (\pi Y)^m (\pi Z)^{k-(l+m)}) \\
& = \pi^{k+2-l} v_{\lambda, \mu} (X^l Y^m Z^{k-(l+m)}) \\
& =  \begin{cases}
\pi^{k+2-l} = t^{-(k+2) + l}, & \text{ if }\lambda = l, \mu = m, \\
0, & \text{ else.}
\end{cases}
\end{align*}
In both cases, the transformation matrix $M_{\uB^*}^{\uB^*}(\delta_i^{-1})$ for the action of $g_i^{-1}$ with respect to the basis $\uB^*$ is given by a diagonal matrix with powers of $\pi = t^{-1}$ on the diagonal. 

\subsection{Renormalization} 
In the end,\\ 
\centerline{\textbf{we renormalize the matrix for $U_i$ by multiplication with a factor $t^{k+i}$}.}\\[1ex]
We give several heuristic motivations for this renormalization:
\begin{enumerate}[(1)]
    \item The factor $t^{k+i}$ is chosen in such a way that in the diagonal matrix $M_{\uB^*}^{\uB^*}(\delta_i^{-1})$ from above, the variable $t$ only appears with non-negative exponent, and the exponent $0$ occurs. This seems a reasonable choice since Hecke operators should be normalized in such a way that they preserve $A$-integrality.
    \item Under the residue homomorphism from \cite{gr2021} the $U_i$-operator acting on spaces of Drinfeld cusp forms of level $\G_1(t)$ and rank $3$ should pass to the present $U_i$-operator; assuming the conjecture from \cite{gr2021} that the map is an isomorphism. This would result in the coincidence with the present normalization, for a natural choice of $U_i$ on such cusp form spaces. We hope to shed some light on this in a planned more theoretical continuation of the present work.
    \item In Observation~\eqref{ItemSeven} of Section~\ref{sec:Ti+Ui-ops} we suggest for $q=2$ an (experimentally found) inclusion of rank $2$ cocyles into rank $3$ cocycles. If in both cases one takes for the weight the $k$ in the $\Sym^k$ of the coefficients, then it is an inclusion in the same weight. 
    This is an a posteriori motivation, since it was observed after choosing the normalization according to (1) and~(2). Another normalization would have given a shift of the slopes.
\end{enumerate}

\subsection{Pseudocode}
Now we can describe the functionality of the \texttt{Magma} program with pseudocode in Algorithm \ref{alg}. The actual code can be found on GitHub at \cite{code}. It relies on the built-in \texttt{Magma} function \texttt{Eigenspace(M, e)} that returns the eigenspace for the eigenvalue $e$ as a subspace of the vector space $V$ on which the Matrix $M$ acts by multiplication. One has synonymous functions \texttt{Eigenspace(M, 0)} = \texttt{Nullspace(M)} = \texttt{Kernel(M)}. 
\begin{breakablealgorithm}
\caption{Compute slopes for all our Hecke operators}\label{alg}
\footnotesize
\begin{algorithmic}[1]
\Require $q$ the size of $\Fq$, $k$ the degree for $V_k$ s.\,t. $(q-1)$ divides $k+3$.
\Ensure The slopes of $T_i$ and $U_i^{\G}$, for $i \in \{1, 2\}$ and $\G \in \{\G_0(t), \G_1(t), \G^P_0, \G^P_2\}$. 
\State $f := \Fq$, $A := \Fq[t], F := \Fq(t)$;
\State $s := [(l, m) : l \in [0..k], m \in [0..k] | l+m \leq k ]$ a list of tuples describing the indices of basis elements in $V_k$;
\State $V := $ vector space over $F$ of dimension $\#s$. This represents the underlying vector space of $V_k$;
\State $M(s, \g) := $ a function returning the transformation matrix for the action of $\g$ on $V_k$, using the list $s$ to convert indices in our formulas to indices in the \texttt{Magma} representation of $V$;
\State $C := $ those standard basis vectors $e_i \in V$ such that $s[i] = (l,m)$ and $q-1$ divides $l+1$ and $m+1$;
\State $V^D := $ the subspace of $V$ generated by the elements of $C$;
\State $V^{H_0} := V^D \cap Eigenspace(M(s, \eta_0), 1)$ ;
\State $V^{H_2} := V^D \cap Eigenspace(M(s, \eta_2), 1)$;
\State $V_{\G^P_0} := V^{H_0} \cap Nullspace(M(s, \sigma) - \sgn(\sigma s_0) \sum_{\g \in \G_1(t)_{(\sigma s_0)}} M(s,\g))$;
\State $V_{\G^P_2} := V^{H_2} \cap Nullspace(M(s, \tau) - \sgn(\tau s_0) \sum_{\g \in \G_1(t)_{(\tau s_0)}} M(s,\g))$;
\State $V_G := V_{\G^P_0} \cap V_{\G^P_2}$ (initialization);
\ForAll{permutation matrices $\rho \in W \setminus \{ id, \sigma, \tau \}$}
    \State $V_G := V_G \cap Nullspace(M(s, \rho) - \sgn(\rho s_0) \sum_{\g \in \G_1(t)_{(\rho s_0)}} M(s,\g))$, where the values of $\sgn(\rho s_0)$ and $\g \in \G_1(t)_{(\rho s_0)}$ were manually added to the code (also when computing $V_{\G^P_0}$, $V_{\G^P_2}$);
\EndFor
\State $i_{V_{\G_0(t)}}, i_{V_{\G^P_0}}, i_{V_{\G^P_2}}, i_{V_G} := $ base change matrices for a basis of $V^D, \dots, V_G$ extended to a basis of $V$ to the basis whose indices are described by the list $s$;
\For{$i \in \{1, 2\}$}
    \State $A_i, B_i, C_i$ := the transformation matrices for the operators $A_i, B_i, C_i$ from Theorem \ref{thm:AiBiCi} acting on all of $V$, calculated by typing out all matrices in their description and applying the function $M$ many times;
    \State Check that the entries of $A_i, B_i, C_i$ lie in $\mathbb{F}_p$
    \State $D := M(s, t^{k+i} \delta_i^{-1})$ the transformation matrix for the renormalized diagonal matrix;
    \For{$\G \in \{ \G_1(t), \G_0(t), \G^P_0, \G^P_2, \GL_3(A) \}$}
        \State $U_V :=$ the transformation matrix for $U_i^{\G}$, resp. $T_i$, calculated by combining $A_i \circ D$, $B_i \circ D$, and $C_i \circ D$ according to Theorem \ref{thm:AiBiCi}. First we get a function on all of $V$ from the function $M$;
        \State $U := $ a submatrix of $i_{V_{\G}}^{-1} \circ U_V \circ i_{V_{\G}}$ corresponding to the restriction to the correct subspace;
        \State $P := $ Newton polygon computed from the coefficients of $CharacteristicPolynomial(U)$;
        \State Save the current $i, \G$ and the slope and length of each segment in $P$;
        \State Factorize $CharacteristicPolynomial(U)$ and save the result;
    \EndFor
\EndFor
\end{algorithmic}
\end{breakablealgorithm}
\subsubsection*{Division into blocks}
In the current version of the code, the transformation matrices are always calculated for the action on all of $V_k$ and then cut down to the appropriate subspace.
The code could be significantly sped up (for $q > 2$) by including the following optimizations; however, this is \textbf{not} yet implemented. 

We divide the basis elements $v_{\lambda,\mu} \in B^*$ of the whole space $V_k$ into blocks of the form 
\[ B^*_{l, m} := \lb v_{\lambda,\mu} \in B^* \,\middle|\, \lambda \equiv l \text{ and } \mu \equiv m \mod q-1 \rb.\] 
Then one can show (e.g. by evaluating the formula from Equation (\ref{eq:transformationmatrix}) by hand for all the occurring matrices and carrying out cancellations) that the span of each block is closed under the action of $A_i$, and thus also under the action of $U_i^{\G_1(t)}$. One could also develop a theory mixed of nebentypes and types to show that the respective subspaces are preserved under all Hecke operators.

If we rearrange our basis as 
\[
B^* = \lb B^*_{0, 0}, \dots, B^*_{0, q-2}, B^*_{1, 0}, \dots, B^*_{q-2, 0}, \dots, B^*_{q-2, q-2} \rb,
\]
the representation matrix $M^{B^*}_{B^*}(U_i^{\G_1(t)})$ consists of up to $(q-1)^2$ blocks on the diagonal and zero entries everywhere else. We could analyze it by investigating each submatrix $M^{B^*_{l,m}}_{B^*_{l,m}}(U_i^{\G_1(t)})$ separately. 
The vector space $V_k^D = \Char(\G_0(t))$ corresponds to the block $B^*_{0, 0}$ (this is the content of Proposition \ref{prop:basis-vd}), and the matrix for $U_i^{\G_0(t)}$ appears in that for $U_i^{\G_1(t)}$.

For the action of the operators $B_i$ and $C_i$, respectively $U_i^{\G^P_j}$ and $T_i$, this does not apply, but we could still reduce the computation time by computing the transformation matrices for the action of each summand on $V_k^D$ instead of the whole $V_k$ and then cutting it down to the appropriate subspace. This is not implemented yet due to technical details.

\section{Observations and Questions on the Slopes of $T_i$- and $U_i$-Operators} 
\label{sec:Ti+Ui-ops}
In this section we present the results of the code that was explained in the previous section. It was executed for $i \in \{1, 2\}, q \in \{2, 3\}$ and $k \in \{
0 , \dots, 27 \}$. For bigger $k$, we encountered errors in \texttt{Magma}. We only show the results for $q=2$ and $k \leq 16$ here to save space. The full tables can be found on GitHub at \cite{code}. 

For bigger prime powers such as $q=4$ and upwards, it turns out in computation that many of the smaller subspaces such as $\Char(\GL_3(A), V_k)$ are zero for small $k$ (see also \cite[][Corollary 17.10]{bas.bre.pin2024} for the corresponding statement on cusp forms). At the same time, the dimension of $V_k$ itself grows quickly with $k$ and becomes much too high for Magma to compute eigenspaces in a reasonable time. Already for the numbers we considered here, the computation took several days.
\renewcommand{\arraystretch}{1.4}
\begin{table}
\centering
\begin{tabular}{|c|p{0.13\textwidth}|p{0.22\textwidth}|p{0.22\textwidth}|p{0.25\textwidth}|}
    \specialrule{1pt}{0pt}{0pt}
    \textbf{\;$\boldsymbol{k}$\;} & \textbf{$\boldsymbol{T_1}$-Slopes} & \textbf{$\boldsymbol{U_1^{\G^P_0}}$-Slopes} & \textbf{$\boldsymbol{U_1^{\G^P_2}}$-Slopes} & \textbf{$\boldsymbol{U_1^{\G_{0}(t)}}$-Slopes} \\
    \specialrule{1pt}{0pt}{0pt}
        0 &  &   &  & \textcolor{fullblue}{$0^{\mathbf{1}}$} \\
        \hline
        1 &  & $\infty^{\mathbf{1}}$ & $0^{\mathbf{1}}$ & $0^{\mathbf{1}}$, $\infty^{\mathbf{2}}$ \\
        \hline
        2 &  & $1^{\mathbf{1}}$, $\infty^{\mathbf{1}}$ & $0^{\mathbf{1}}$, $1^{\mathbf{1}}$ & $0^{\mathbf{1}}$, $1^{\mathbf{2}}$, $\infty^{\mathbf{3}}$ \\
        \hline
        3 &  & $\frac{3}{2}^{\mathbf{2}}$, $\infty^{\mathbf{1}}$ & $0^{\mathbf{1}}$, $\frac{3}{2}^{\mathbf{2}}$ & $0^{\mathbf{1}}$, $\frac{3}{2}^{\mathbf{4}}$, \textcolor{fullblue}{$2^{\mathbf{1}}$}, $\infty^{\mathbf{4}}$ \\
        \hline
        4 & $1^{\mathbf{1}}$ & $1^{\mathbf{1}}$, $2^{\mathbf{1}}$, $\infty^{\mathbf{3}}$ & $0^{\mathbf{1}}$, $1^{\mathbf{2}}$, $2^{\mathbf{1}}$, $\infty^{\mathbf{1}}$ & $0^{\mathbf{1}}$, $1^{\mathbf{2}}$, $2^{\mathbf{2}}$, \textcolor{fullblue}{$\frac{8}{3}^{\mathbf{3}}$}, $\infty^{\mathbf{7}}$ \\
        \hline
        5 & $2^{\mathbf{1}}$ & $2^{\mathbf{1}}$, $\frac{5}{2}^{\mathbf{2}}$, $\infty^{\mathbf{4}}$ & $0^{\mathbf{1}}$, $2^{\mathbf{3}}$, $\frac{5}{2}^{\mathbf{2}}$, $\infty^{\mathbf{1}}$ & $0^{\mathbf{1}}$, $2^{\mathbf{3}}$, $\frac{5}{2}^{\mathbf{4}}$, \textcolor{fullblue}{$\frac{10}{3}^{\mathbf{3}}$}, $\infty^{\mathbf{10}}$ \\
        \hline
        6 & $1^{\mathbf{1}}$ & $1^{\mathbf{1}}$, $3^{\mathbf{3}}$, $\infty^{\mathbf{5}}$ & $0^{\mathbf{1}}$, $1^{\mathbf{2}}$, $3^{\mathbf{5}}$, $\infty^{\mathbf{1}}$ & $0^{\mathbf{1}}$, $1^{\mathbf{2}}$, $3^{\mathbf{8}}$, \textcolor{fullblue}{$4^{\mathbf{4}}$}, $\infty^{\mathbf{13}}$ \\
        \hline
        7 & $\frac{3}{2}^{\mathbf{2}}$ & $\frac{3}{2}^{\mathbf{2}}$, $\frac{7}{2}^{\mathbf{2}}$, $4^{\mathbf{1}}$, $\infty^{\mathbf{7}}$ & $0^{\mathbf{1}}$, $\frac{3}{2}^{\mathbf{4}}$, $2^{\mathbf{1}}$, $\frac{7}{2}^{\mathbf{2}}$, $4^{\mathbf{2}}$, $\infty^{\mathbf{2}}$ & $0^{\mathbf{1}}$, $\frac{3}{2}^{\mathbf{4}}$, $2^{\mathbf{1}}$, $\frac{7}{2}^{\mathbf{4}}$, $4^{\mathbf{3}}$, \textcolor{fullblue}{$\frac{14}{3}^{\mathbf{6}}$}, $\infty^{\mathbf{17}}$ \\
        \hline
        8 & $1^{\mathbf{1}}$, $2^{\mathbf{1}}$ & $1^{\mathbf{1}}$, $2^{\mathbf{1}}$, $4^{\mathbf{3}}$, $5^{\mathbf{1}}$, $\infty^{\mathbf{9}}$ &  $0^{\mathbf{1}}$, $1^{\mathbf{2}}$, $2^{\mathbf{2}}$, $\frac{8}{3}^{\mathbf{3}}$, $4^{\mathbf{3}}$, $5^{\mathbf{1}}$, $8^{\mathbf{1}}$, $\infty^{\mathbf{2}}$ & $0^{\mathbf{1}}$, $1^{\mathbf{2}}$, $2^{\mathbf{2}}$, $\frac{8}{3}^{\mathbf{3}}$, $4^{\mathbf{6}}$, $5^{\mathbf{2}}$, \textcolor{fullblue}{$\frac{16}{3}^{\mathbf{6}}$}, $8^{\mathbf{1}}$, $\infty^{\mathbf{22}}$ \\
        \hline
        9 & $2^{\mathbf{1}}$, $4^{\mathbf{1}}$ & $2^{\mathbf{1}}$, $4^{\mathbf{1}}$, $\frac{9}{2}^{\mathbf{4}}$, $5^{\mathbf{2}}$, $\infty^{\mathbf{10}}$ & $0^{\mathbf{1}}$, $2^{\mathbf{3}}$, $\frac{10}{3}^{\mathbf{3}}$, $4^{\mathbf{2}}$, $\frac{9}{2}^{\mathbf{4}}$, $5^{\mathbf{2}}$, $8^{\mathbf{1}}$, $\infty^{\mathbf{2}}$ & $0^{\mathbf{1}}$, $2^{\mathbf{3}}$, $\frac{10}{3}^{\mathbf{3}}$, $4^{\mathbf{2}}$, $\frac{9}{2}^{\mathbf{8}}$, $5^{\mathbf{4}}$, \textcolor{fullblue}{$6^{\mathbf{7}}$}, $8^{\mathbf{1}}$, $\infty^{\mathbf{26}}$ \\
        \hline
        10 & $1^{\mathbf{1}}$, $3^{\mathbf{1}}$, $4^{\mathbf{1}}$ & $1^{\mathbf{1}}$, $3^{\mathbf{1}}$, $4^{\mathbf{1}}$, $5^{\mathbf{3}}$, $\frac{23}{4}^{\mathbf{4}}$, $\infty^{\mathbf{12}}$ & $0^{\mathbf{1}}$, $1^{\mathbf{2}}$, $3^{\mathbf{4}}$, $4^{\mathbf{3}}$, $5^{\mathbf{5}}$, $\frac{23}{4}^{\mathbf{4}}$, $\infty^{\mathbf{3}}$ & $0^{\mathbf{1}}$, $1^{\mathbf{2}}$, $3^{\mathbf{4}}$, $4^{\mathbf{3}}$, $5^{\mathbf{8}}$, $\frac{23}{4}^{\mathbf{8}}$, \textcolor{fullblue}{$\frac{20}{3}^{\mathbf{9}}$}, $\infty^{\mathbf{31}}$ \\
        \hline
        11 & $\frac{3}{2}^{\mathbf{2}}$, $4^{\mathbf{2}}$ & $\frac{3}{2}^{\mathbf{2}}$, $4^{\mathbf{2}}$, $\frac{11}{2}^{\mathbf{4}}$, $6^{\mathbf{1}}$, $\frac{13}{2}^{\mathbf{2}}$, $\infty^{\mathbf{15}}$ & $0^{\mathbf{1}}$, $\frac{3}{2}^{\mathbf{4}}$, $2^{\mathbf{1}}$, $4^{\mathbf{5}}$, $\frac{11}{2}^{\mathbf{8}}$, $6^{\mathbf{1}}$, $\frac{13}{2}^{\mathbf{2}}$, $\infty^{\mathbf{4}}$ & $0^{\mathbf{1}}$, $\frac{3}{2}^{\mathbf{4}}$, $2^{\mathbf{1}}$, $4^{\mathbf{5}}$, $\frac{11}{2}^{\mathbf{12}}$, $6^{\mathbf{2}}$, $\frac{13}{2}^{\mathbf{4}}$, \textcolor{fullblue}{$\frac{22}{3}^{\mathbf{12}}$}, $\infty^{\mathbf{37}}$ \\
        \hline
        12 & $1^{\mathbf{1}}$, $2^{\mathbf{1}}$, $5^{\mathbf{2}}$ & $1^{\mathbf{1}}$, $2^{\mathbf{1}}$, $5^{\mathbf{2}}$, $6^{\mathbf{5}}$, $\frac{20}{3}^{\mathbf{3}}$, $9^{\mathbf{1}}$, $\infty^{\mathbf{17}}$ & $0^{\mathbf{1}}$, $1^{\mathbf{2}}$, $2^{\mathbf{2}}$, $\frac{8}{3}^{\mathbf{3}}$, $5^{\mathbf{6}}$, $6^{\mathbf{7}}$, $\frac{20}{3}^{\mathbf{3}}$, $9^{\mathbf{1}}$, $16^{\mathbf{1}}$, $\infty^{\mathbf{4}}$ & $0^{\mathbf{1}}$, $1^{\mathbf{2}}$, $2^{\mathbf{2}}$, $\frac{8}{3}^{\mathbf{3}}$, $5^{\mathbf{6}}$, $6^{\mathbf{12}}$, $\frac{20}{3}^{\mathbf{6}}$, \textcolor{fullblue}{$8^{\mathbf{13}}$}, $9^{\mathbf{2}}$, $16^{\mathbf{1}}$, $\infty^{\mathbf{43}}$ \\
        \hline
        13 & $2^{\mathbf{1}}$, $\frac{5}{2}^{\mathbf{2}}$, $6^{\mathbf{2}}$ & $2^{\mathbf{1}}$, $\frac{5}{2}^{\mathbf{2}}$, $6^{\mathbf{2}}$, $\frac{13}{2}^{\mathbf{4}}$, $7^{\mathbf{2}}$, $\frac{22}{3}^{\mathbf{3}}$, $10^{\mathbf{1}}$, $\infty^{\mathbf{20}}$ & $0^{\mathbf{1}}$, $2^{\mathbf{3}}$, $\frac{5}{2}^{\mathbf{4}}$, $\frac{10}{3}^{\mathbf{3}}$, $6^{\mathbf{7}}$, $\frac{13}{2}^{\mathbf{4}}$, $7^{\mathbf{2}}$, $\frac{22}{3}^{\mathbf{3}}$, $8^{\mathbf{1}}$, $10^{\mathbf{1}}$, $16^{\mathbf{1}}$, $\infty^{\mathbf{5}}$ & $0^{\mathbf{1}}$, $2^{\mathbf{3}}$, $\frac{5}{2}^{\mathbf{4}}$, $\frac{10}{3}^{\mathbf{3}}$, $6^{\mathbf{7}}$, $\frac{13}{2}^{\mathbf{8}}$, $7^{\mathbf{4}}$, $\frac{22}{3}^{\mathbf{6}}$, $8^{\mathbf{1}}$, \textcolor{fullblue}{$\frac{26}{3}^{\mathbf{15}}$}, $10^{\mathbf{2}}$, $16^{\mathbf{1}}$, $\infty^{\mathbf{50}}$ \\
        \hline
        14 & $1^{\mathbf{1}}$, $3^{\mathbf{3}}$, $7^{\mathbf{2}}$ & $1^{\mathbf{1}}$, $3^{\mathbf{3}}$, $7^{\mathbf{7}}$, $\frac{31}{4}^{\mathbf{4}}$, $8^{\mathbf{1}}$, $9^{\mathbf{1}}$, $\infty^{\mathbf{23}}$ & $0^{\mathbf{1}}$, $1^{\mathbf{2}}$, $3^{\mathbf{8}}$, $4^{\mathbf{4}}$, $7^{\mathbf{11}}$, $\frac{31}{4}^{\mathbf{4}}$, $8^{\mathbf{2}}$, $9^{\mathbf{1}}$, $16^{\mathbf{1}}$, $\infty^{\mathbf{6}}$ & $0^{\mathbf{1}}$, $1^{\mathbf{2}}$, $3^{\mathbf{8}}$, $4^{\mathbf{4}}$, $7^{\mathbf{16}}$, $\frac{31}{4}^{\mathbf{8}}$, $8^{\mathbf{3}}$, $9^{\mathbf{2}}$, \textcolor{fullblue}{$\frac{28}{3}^{\mathbf{18}}$}, $16^{\mathbf{1}}$, $\infty^{\mathbf{57}}$ \\
        \hline
        15 & $\frac{3}{2}^{\mathbf{2}}$, $\frac{7}{2}^{\mathbf{2}}$, $4^{\mathbf{1}}$, $8^{\mathbf{1}}$ & $\frac{3}{2}^{\mathbf{2}}$, $\frac{7}{2}^{\mathbf{2}}$, $4^{\mathbf{1}}$, $\frac{15}{2}^{\mathbf{6}}$, $8^{\mathbf{4}}$, $\frac{17}{2}^{\mathbf{2}}$, $\frac{19}{2}^{\mathbf{2}}$, $\infty^{\mathbf{26}}$ & $0^{\mathbf{1}}$, $\frac{3}{2}^{\mathbf{4}}$, $2^{\mathbf{1}}$, $\frac{7}{2}^{\mathbf{4}}$, $4^{\mathbf{3}}$, $\frac{14}{3}^{\mathbf{6}}$, $\frac{15}{2}^{\mathbf{6}}$, $8^{\mathbf{8}}$, $\frac{17}{2}^{\mathbf{2}}$, $\frac{19}{2}^{\mathbf{2}}$, $16^{\mathbf{2}}$, $\infty^{\mathbf{6}}$ & $0^{\mathbf{1}}$, $\frac{3}{2}^{\mathbf{4}}$, $2^{\mathbf{1}}$, $\frac{7}{2}^{\mathbf{4}}$, $4^{\mathbf{3}}$, $\frac{14}{3}^{\mathbf{6}}$, $\frac{15}{2}^{\mathbf{12}}$, $8^{\mathbf{11}}$, $\frac{17}{2}^{\mathbf{4}}$, $\frac{19}{2}^{\mathbf{4}}$, \textcolor{fullblue}{$10^{\mathbf{19}}$}, $16^{\mathbf{2}}$, $\infty^{\mathbf{65}}$ \\
        \hline
        16 & $1^{\mathbf{1}}$, $2^{\mathbf{1}}$, $4^{\mathbf{3}}$, $5^{\mathbf{1}}$, $9^{\mathbf{1}}$ & $1^{\mathbf{1}}$, $2^{\mathbf{1}}$, $4^{\mathbf{3}}$, $5^{\mathbf{1}}$, $8^{\mathbf{5}}$, $\frac{26}{3}^{\mathbf{6}}$, $9^{\mathbf{2}}$, $10^{\mathbf{3}}$, $\infty^{\mathbf{29}}$ & $0^{\mathbf{1}}$, $1^{\mathbf{2}}$, $2^{\mathbf{2}}$, $\frac{8}{3}^{\mathbf{3}}$, $4^{\mathbf{6}}$, $5^{\mathbf{2}}$, $\frac{16}{3}^{\mathbf{6}}$, $8^{\mathbf{6}}$, $\frac{26}{3}^{\mathbf{6}}$, $9^{\mathbf{5}}$, $10^{\mathbf{3}}$, $16^{\mathbf{2}}$, $\infty^{\mathbf{7}}$ & $0^{\mathbf{1}}$, $1^{\mathbf{2}}$, $2^{\mathbf{2}}$, $\frac{8}{3}^{\mathbf{3}}$, $4^{\mathbf{6}}$, $5^{\mathbf{2}}$, $\frac{16}{3}^{\mathbf{6}}$, $8^{\mathbf{11}}$, $\frac{26}{3}^{\mathbf{12}}$, $9^{\mathbf{6}}$, $10^{\mathbf{6}}$, \textcolor{fullblue}{$\frac{32}{3}^{\mathbf{21}}$}, $16^{\mathbf{2}}$, $\infty^{\mathbf{73}}$ \\
    \specialrule{1pt}{0pt}{0pt}
\end{tabular}
  \caption{Slopes for $q=2$, $i=1$. Bold exponents denote multiplicities. Slopes of the form $\frac{2k}{3}$ are marked in blue. } \label{tab:slopes-i1}
\end{table}
\renewcommand{\arraystretch}{1}

\renewcommand{\arraystretch}{1.4}
\begin{table}
\centering
\begin{tabular}{|c|p{0.13\textwidth}|p{0.22\textwidth}|p{0.22\textwidth}|p{0.25\textwidth}|}
    \specialrule{1pt}{0pt}{0pt}
    \textbf{\;$\boldsymbol{k}$\;} & \textbf{$\boldsymbol{T_2}$-Slopes} & \textbf{$\boldsymbol{U_2^{\G^P_0}}$-Slopes} & \textbf{$\boldsymbol{U_2^{\G^P_2}}$-Slopes} & \textbf{$\boldsymbol{U_2^{\G_{0}(t)}}$-Slopes} \\
    \specialrule{1pt}{0pt}{0pt}
        0 &  &   &  & \textcolor{fullblue}{$0^{\mathbf{1}}$} \\
        \hline
        1 &  & $0^{\mathbf{1}}$ & $0^{\mathbf{1}}$ & $0^{\mathbf{2}}$, $\infty^{\mathbf{1}}$ \\
        \hline
        2 &  & $0^{\mathbf{2}}$ & $0^{\mathbf{1}}$, $\infty^{\mathbf{1}}$ & $0^{\mathbf{3}}$, $\infty^{\mathbf{3}}$ \\
        \hline
        3 &  & $0^{\mathbf{3}}$ & $0^{\mathbf{1}}$, $\infty^{\mathbf{2}}$ & $0^{\mathbf{4}}$, \textcolor{fullblue}{$1^{\mathbf{1}}$}, $\infty^{\mathbf{5}}$ \\
        \hline
        4 & $0^{\mathbf{1}}$ & $0^{\mathbf{4}}$, $\infty^{\mathbf{1}}$ & $0^{\mathbf{2}}$, $\infty^{\mathbf{3}}$ & $0^{\mathbf{5}}$, \textcolor{fullblue}{$\frac{4}{3}^{\mathbf{3}}$}, $\infty^{\mathbf{7}}$ \\
        \hline
        5 & $0^{\mathbf{1}}$ & $0^{\mathbf{5}}$, $1^{\mathbf{1}}$, $\infty^{\mathbf{1}}$ & $0^{\mathbf{2}}$, $1^{\mathbf{1}}$, $\infty^{\mathbf{4}}$ & $0^{\mathbf{6}}$, $1^{\mathbf{2}}$, \textcolor{fullblue}{$\frac{5}{3}^{\mathbf{3}}$}, $\infty^{\mathbf{10}}$ \\
        \hline
        6 & $0^{\mathbf{1}}$ & $0^{\mathbf{6}}$, $\frac{3}{2}^{\mathbf{2}}$, $\infty^{\mathbf{1}}$ & $0^{\mathbf{2}}$, $\frac{3}{2}^{\mathbf{2}}$, $\infty^{\mathbf{5}}$ & $0^{\mathbf{7}}$, $\frac{3}{2}^{\mathbf{4}}$, \textcolor{fullblue}{$2^{\mathbf{4}}$}, $\infty^{\mathbf{13}}$ \\
        \hline
        7 & $0^{\mathbf{2}}$ & $0^{\mathbf{7}}$, $1^{\mathbf{2}}$, $2^{\mathbf{1}}$, $\infty^{\mathbf{2}}$ & $0^{\mathbf{3}}$, $1^{\mathbf{1}}$, $2^{\mathbf{1}}$, $\infty^{\mathbf{7}}$ & $0^{\mathbf{8}}$, $1^{\mathbf{3}}$, $2^{\mathbf{2}}$, \textcolor{fullblue}{$\frac{7}{3}^{\mathbf{6}}$}, $\infty^{\mathbf{17}}$ \\
        \hline
        8 & $0^{\mathbf{2}}$ & $0^{\mathbf{8}}$, $\frac{4}{3}^{\mathbf{3}}$, $2^{\mathbf{1}}$, $4^{\mathbf{1}}$, $\infty^{\mathbf{2}}$ & $0^{\mathbf{3}}$, $\frac{4}{3}^{\mathbf{3}}$, $4^{\mathbf{1}}$, $\infty^{\mathbf{8}}$ & $0^{\mathbf{9}}$, $\frac{4}{3}^{\mathbf{6}}$, $2^{\mathbf{1}}$, \textcolor{fullblue}{$\frac{8}{3}^{\mathbf{6}}$}, $4^{\mathbf{2}}$, $\infty^{\mathbf{21}}$ \\
        \hline
        9 & $0^{\mathbf{2}}$ & $0^{\mathbf{9}}$, $1^{\mathbf{3}}$, $\frac{5}{3}^{\mathbf{3}}$, $4^{\mathbf{1}}$, $\infty^{\mathbf{2}}$ & $0^{\mathbf{3}}$, $1^{\mathbf{1}}$, $\frac{5}{3}^{\mathbf{3}}$, $4^{\mathbf{1}}$, $\infty^{\mathbf{10}}$ & $0^{\mathbf{10}}$, $1^{\mathbf{4}}$, $\frac{5}{3}^{\mathbf{6}}$, \textcolor{fullblue}{$3^{\mathbf{7}}$}, $4^{\mathbf{2}}$, $\infty^{\mathbf{26}}$ \\
        \hline
        10 & $0^{\mathbf{3}}$ & $0^{\mathbf{10}}$, $\frac{3}{2}^{\mathbf{6}}$, $2^{\mathbf{1}}$, $\frac{5}{2}^{\mathbf{2}}$, $\infty^{\mathbf{3}}$ & $0^{\mathbf{4}}$, $\frac{3}{2}^{\mathbf{2}}$, $2^{\mathbf{1}}$, $\frac{5}{2}^{\mathbf{2}}$, $\infty^{\mathbf{13}}$ & $0^{\mathbf{11}}$, $\frac{3}{2}^{\mathbf{8}}$, $2^{\mathbf{2}}$, $\frac{5}{2}^{\mathbf{4}}$, \textcolor{fullblue}{$\frac{10}{3}^{\mathbf{9}}$}, $\infty^{\mathbf{32}}$ \\
        \hline
        11 & $0^{\mathbf{3}}$, $1^{\mathbf{1}}$ & $0^{\mathbf{11}}$, $1^{\mathbf{4}}$, $2^{\mathbf{3}}$, $\frac{11}{4}^{\mathbf{4}}$, $\infty^{\mathbf{4}}$ & $0^{\mathbf{4}}$, $1^{\mathbf{2}}$, $2^{\mathbf{1}}$, $\frac{11}{4}^{\mathbf{4}}$, $\infty^{\mathbf{15}}$ & $0^{\mathbf{12}}$, $1^{\mathbf{5}}$, $2^{\mathbf{4}}$, $\frac{11}{4}^{\mathbf{8}}$, \textcolor{fullblue}{$\frac{11}{3}^{\mathbf{12}}$}, $\infty^{\mathbf{37}}$ \\
        \hline
        12 & $0^{\mathbf{3}}$, $2^{\mathbf{1}}$ & $0^{\mathbf{12}}$, $\frac{4}{3}^{\mathbf{6}}$, $2^{\mathbf{2}}$, $\frac{5}{2}^{\mathbf{2}}$, $3^{\mathbf{2}}$, $6^{\mathbf{1}}$, $8^{\mathbf{1}}$, $\infty^{\mathbf{4}}$ & $0^{\mathbf{4}}$, $\frac{4}{3}^{\mathbf{3}}$, $2^{\mathbf{1}}$, $\frac{5}{2}^{\mathbf{2}}$, $3^{\mathbf{2}}$, $8^{\mathbf{1}}$, $\infty^{\mathbf{17}}$ & $0^{\mathbf{13}}$, $\frac{4}{3}^{\mathbf{9}}$, $2^{\mathbf{2}}$, $\frac{5}{2}^{\mathbf{4}}$, $3^{\mathbf{4}}$, \textcolor{fullblue}{$4^{\mathbf{13}}$}, $6^{\mathbf{1}}$, $8^{\mathbf{2}}$, $\infty^{\mathbf{43}}$ \\
        \hline
        13 & $0^{\mathbf{4}}$, $1^{\mathbf{1}}$ & $0^{\mathbf{13}}$, $1^{\mathbf{5}}$, $\frac{5}{3}^{\mathbf{6}}$, $3^{\mathbf{3}}$, $4^{\mathbf{1}}$, $7^{\mathbf{1}}$, $8^{\mathbf{1}}$, $\infty^{\mathbf{5}}$ & $0^{\mathbf{5}}$, $1^{\mathbf{2}}$, $\frac{5}{3}^{\mathbf{3}}$, $3^{\mathbf{3}}$, $4^{\mathbf{1}}$, $8^{\mathbf{1}}$, $\infty^{\mathbf{20}}$ & $0^{\mathbf{14}}$, $1^{\mathbf{6}}$, $\frac{5}{3}^{\mathbf{9}}$, $3^{\mathbf{6}}$, $4^{\mathbf{2}}$, \textcolor{fullblue}{$\frac{13}{3}^{\mathbf{15}}$}, $7^{\mathbf{1}}$, $8^{\mathbf{2}}$, $\infty^{\mathbf{50}}$ \\
        \hline
        14 & $0^{\mathbf{4}}$, $\frac{3}{2}^{\mathbf{2}}$ & $0^{\mathbf{14}}$, $\frac{3}{2}^{\mathbf{10}}$, $2^{\mathbf{5}}$, $\frac{7}{2}^{\mathbf{2}}$, $4^{\mathbf{2}}$, $8^{\mathbf{1}}$, $\infty^{\mathbf{6}}$ & $0^{\mathbf{5}}$, $\frac{3}{2}^{\mathbf{4}}$, $2^{\mathbf{4}}$, $\frac{7}{2}^{\mathbf{2}}$, $4^{\mathbf{1}}$, $8^{\mathbf{1}}$, $\infty^{\mathbf{23}}$ & $0^{\mathbf{15}}$, $\frac{3}{2}^{\mathbf{12}}$, $2^{\mathbf{9}}$, $\frac{7}{2}^{\mathbf{4}}$, $4^{\mathbf{3}}$, \textcolor{fullblue}{$\frac{14}{3}^{\mathbf{18}}$} $8^{\mathbf{2}}$, $\infty^{\mathbf{57}}$ \\
        \hline
        15 & $0^{\mathbf{4}}$, $1^{\mathbf{1}}$, $2^{\mathbf{1}}$ & $0^{\mathbf{15}}$, $1^{\mathbf{6}}$, $2^{\mathbf{5}}$, $\frac{7}{3}^{\mathbf{6}}$, $4^{\mathbf{5}}$, $8^{\mathbf{2}}$, $\infty^{\mathbf{6}}$ & $0^{\mathbf{5}}$, $1^{\mathbf{2}}$, $2^{\mathbf{2}}$, $\frac{7}{3}^{\mathbf{6}}$, $4^{\mathbf{3}}$, $8^{\mathbf{2}}$, $\infty^{\mathbf{25}}$ & $0^{\mathbf{16}}$, $1^{\mathbf{7}}$, $2^{\mathbf{6}}$, $\frac{7}{3}^{\mathbf{12}}$, $4^{\mathbf{8}}$, \textcolor{fullblue}{$5^{\mathbf{19}}$}, $8^{\mathbf{4}}$, $\infty^{\mathbf{64}}$ \\
        \hline
        16 & $0^{\mathbf{5}}$, $2^{\mathbf{1}}$, $4^{\mathbf{1}}$ & $0^{\mathbf{16}}$, $\frac{4}{3}^{\mathbf{9}}$, $2^{\mathbf{3}}$, $\frac{8}{3}^{\mathbf{6}}$, $4^{\mathbf{6}}$, $\frac{9}{2}^{\mathbf{2}}$, $8^{\mathbf{2}}$, $\infty^{\mathbf{7}}$ & $0^{\mathbf{6}}$, $\frac{4}{3}^{\mathbf{3}}$, $2^{\mathbf{1}}$, $\frac{8}{3}^{\mathbf{6}}$, $4^{\mathbf{2}}$, $\frac{9}{2}^{\mathbf{2}}$, $8^{\mathbf{2}}$, $\infty^{\mathbf{29}}$ & $0^{\mathbf{17}}$, $\frac{4}{3}^{\mathbf{12}}$, $2^{\mathbf{3}}$, $\frac{8}{3}^{\mathbf{12}}$, $4^{\mathbf{7}}$, $\frac{9}{2}^{\mathbf{4}}$, \textcolor{fullblue}{$\frac{16}{3}^{\mathbf{21}}$}, $8^{\mathbf{4}}$, $\infty^{\mathbf{73}}$ \\
    \specialrule{1pt}{0pt}{0pt}
\end{tabular}
  \caption{Slopes for $q=2$, $i=2$. Bold exponents denote multiplicities. Slopes of the form $\frac{k}{3}$ are marked in blue. } \label{tab:slopes-i2}
\end{table}

\renewcommand{\arraystretch}{1}

\subsubsection*{Observations and questions}
Conjectures on slopes of classical modular forms go back to the seminal and inspiring paper \cite{GouveaMazurSlopes} by Gouvea and Mazur.
In the case of Drinfeld modular forms of rank $2$, slopes were explored in \cite[][Section 5]{ban.val2018} and \cite[][Section 3]{hat2021}.
Our tables are the first exploration of slopes in the $\GL_3$-case, over number or function fields. Some of the patterns and regularities we observe are parallel to observations in the rank $2$ case. Having two $U$-operators (in level $\G_0(t)$) is unique to the rank 3 case. 

Let us mention again, that the link to $\GL_3$-Drinfeld cusp forms is via the residue homomorphism of \cite{gr2021}, which however is only known for $r=2$ to be an isomorphism; see \cite{tei1991}. It is conjectured in \cite{gr2021} that the map is an isomorphism for $r=3$ as well. This is supported, for instance, by our computations and by using \cite[Theorem 17.11]{bas.bre.pin2024}, which confirm that the space of harmonic cocycles for $\GL_3(A)$ and the corresponding space of Drinfeld cusp forms have the same dimension for all computed weights. For $\G_1(t)$ a formula for the dimension of the corresponding space of Drinfeld cusp forms is given in \cite[][Theorem 3.5.6]{pin2020}, which shows that the dimensions of $S_{k+3,n}(\G_1(t))$ and $\Char(\G_1(t), V_{k,n}) \cong V_{k,n}$ agree for all weights $k$. This suggests, that our tables indeed represent slopes of Drinfeld cusp forms.
\begin{enumerate}[(1)]
    \item 
    \textbf{Zero is not an eigenvalue in level $1$?} Within the range of our computations, no $T_i$-eigenvalue on $\Char(\GL_3(A),V_k)$ was zero (i.\,e. no slope was infinite). 
    \item \textbf{The only $\G_0(t)$-new slopes seem to be $k/3$ and $2k/3$, respectively?} There is obviously an inclusion of the space of cocyles on the left to the two spaces in the middle columns and from the middle columns to the right. But on different spaces, we use different Hecke operators that do not necessarily preserve the subspaces. Therefore, it is remarkable that the slopes of the eigenvalues on the left are a subset of those in the middle columns, and in turn the middle columns are included in the right column. In future work we shall give a theoretical explanation for these containments, and also offer an explanation for many of the infinite slopes (zero eigenvalues) in the middle and right column. 
    
    What seems, from this perspective,  most remarkable to us, is that in our tables only the slope $\frac{2k}{3}$ for $U_1$ (Table \ref{tab:slopes-i1}), respectively $\frac{k}{3}$ for $U_2$ (Table \ref{tab:slopes-i2}), occur as \enquote{new} on the right (marked in blue); all other slopes on the right already occur as slopes in the middle two columns. This also will be explained in future work.
    \item \textbf{The maximal $U_2$-slope in $V_k$ is less than $k-4$?} 
    It is suggested in \cite{nic.ros2021} that the largest non-critical $U$-eigenvalue for their $U$-operator might be less then or equal to $k'-r+1$ where their $k'$ is the weight of higher rank Drinfeld modular forms. Through the residue homomorphism of Gr\"af, it should be expected that $k'=k+r$, and so the highest non-critical slope would be at most $k+1$, independently of $r$. We observe in the tables (for $q=2$) the maximal slope $k-4$ for $U_2$ (and for $U_1$ the maximal slope $2(k-4)$). For $q=3$ our tables seem too small to make any guesses; but even for $q=2$ our tables might be too small.

    It is perhaps also remarkable that the newform slopes are not the largest ones as seems to be the case in rank $2$. In rank $2$ all slopes of $T_\pi$ seem to be bounded by $(k-1)/2$, where $k/2$ is the newform slope.\footnote{If $k'$ denotes the weight in \cite{hat2021}, i.e., the weight of Drinfeld cusp forms, then for the comparison with our tables one should use $k=k'-2$.\label{foot-hattori}}
    Here however the $U_2$ slopes seem to go up to $k-4$ which can be much larger than the newform slope $k/3$ for $U_2$.
    \item \textbf{The largest $U_1$-slope is twice the largest $U_2$-slope?} The double of most $U_2$-slopes are a $U_1$-slope. But for many $k$ there is a small portion of exceptions; for $p=2$ and $k=12,13,20,\ldots,29$ we have 1--3 exceptions. What always is true in our tables is that the largest $U_1$ is twice the largest $U_2$-slope.
    \item \textbf{The $n$-th smallest slope is bounded independently of $k$?} 
    For $q=2$ in our tables up to $k=31$, the $n$-th smallest slopes for $n=1,\ldots,6$ are
    \[ \setstretch{1.3}
    \begin{array}{r|cccccc}
    n&1&2&3&4&5&6\\
    \hline\mathrm{largest \ }U_1\mathrm{-slope}& 0&2&\frac{10}3&4&7&8?\\
    \hline
    \mathrm{largest \ }U_2\mathrm{-slope}&0&\frac32&\frac53&3&4&5?
\end{array} \]
    This suggests that the $n$-th slope is only slightly larger than $n$. But more data needs to be collected, since for larger $n$ the growth may be stronger. The periodicities in the following item suggest that the number of values needed to guess the $n$-th largest slope grows exponentially with $n$. For that reason, our table might well be incorrect for $n=6$. 
    Hattori in \cite{hat2021} made for $r=2$ the rather cautious guess that the $n$-slope is at bounded by $q^{n-1}$.
    
\item \textbf{Periodicity of slopes?} 
In the rank 2 cases, Hattori conjectures in \cite{hat2021} that the $n$-th smallest finite slopes of $S_k(\G_1(t))$ are periodic of $p$-power period with respect to $k$ including multiplicities. This is also in analogy to the classical case, where Emerton’s theorem \cite{eme1998} states that the minimal slopes of $S_k(\G_0(2))$ are periodic of period 8. This is also in line with conjectures from \cite{GouveaMazurSlopes}, that are known to need some adjustments.

The observed slopes for the $U_1$-operator display a periodicity that seems qualitatively similar to the conjectures above. For instance, for $q=2$ one can make the following observations:
    \[ \setstretch{1.3}\begin{array}{r|ccccccccccc}
    U_1\mathrm{-slope}&0^1&1^{\mathbf{2}} &\frac{3^4}2&2^1&2^2&\frac{5^4}2&\frac{8^3}3&3^8&3^4&\frac{10^3}3&\frac{7^4}2\\
    \hline
    \mathrm{observed \ period}&1&2&4&4&4&8&4&8&8&4&8
\end{array} \]
The length of the period is a $2$-power. But its exponent seems not a simple function of the slope. The new form slopes seem to have shorter periods at comparable slopes than other slopes. 
For $q\ge3$ our data is to small to make good predictions.

For $U_2$ a new phenomenon occurs! The slope values display a periodicity, but moreover the multiplicities of the finite slopes grow in each repetition by a fixed increment. For $q=2$ one has the following data for slopes up to $\frac{5}{2}$:
    \[ \addtolength{\arraycolsep}{-.6pt}\setstretch{1.3}\begin{array}{r|llllllllllll}
\mathrm{first \ }U_2\mathrm{-slope\ occurence}&0^{\mathbf{1}}&1^{\mathbf{1}} &\frac{4}{3}^{\mathbf{3}}&\frac{3}{2}^{\mathbf{4}}&\frac{5}{3}^{\mathbf{3}}&2^{\mathbf{4}}&2^{\mathbf{2}}&2^{\mathbf{1}}&2^{\mathbf{2}}&2^{\mathbf{4}}&\frac{7}{3}^{\mathbf{6}}&\frac{5}{2}^{\mathbf{4}}\\
\hline
\mathrm{second \ }U_2\mathrm{-slope\ occurence}
    &0^{\mathbf{2}}&1^{\mathbf{2}} &\frac{4}{3}^{\mathbf{6}}&\frac{3}{2}^{\mathbf{8}}&\frac{5}{3}^{\mathbf{6}}&2^{\mathbf{9}}&2^{\mathbf{6}}&2^{\mathbf{3}}&2^{\mathbf{4}}&2^{\mathbf{8}}&\frac{7}{3}^{\mathbf{12}}&\frac{5}{2}^{\mathbf{8}}\\
    \hline
\mathrm{third \ }U_2\mathrm{-slope\ occurence}
    &0^{\mathbf{3}}&1^{\mathbf{3}} &\frac{4}{3}^{\mathbf{9}}&\frac{3}{2}^{\mathbf{12}}&\frac{5}{3}^{\mathbf{9}}&2^{\mathbf{14}}&2^{\mathbf{10}}&2^{\mathbf{5}}&2^{\mathbf{6}}&2^{\mathbf{12}}&\frac{7}{3}^{\mathbf{18}}&\frac{5}{2}^{\mathbf{12}}\\
    \hline
     \mathrm{observed \ period}&1&2&4&4&4&8&8&8&8&8&8&8   
\end{array} \]
This can be observed even better in the longer tables on GitHub. 

The observations on $U_1$ and $U_2$ explain that with growing $k$ many slopes of $U_2$ will have much higher multiplicities than slopes of $U_1$. Therefore, and this can easily be observed in our tables, the number of $U_2$-slopes is quite a bit smaller than the number of $U_1$-slopes.
\item 
\textbf{Slopes of rank $2$ forms occur in the same weight for $U_1^{\G^P_2}$ and doubled weight for $U_2^{\G^P_0}$ 
in rank $3$?}\label{ItemSeven}
For $k \geq 4$, the slopes from the table in \cite[Section~3]{hat2021} appear to be embedded in the $U_1^{\G^P_2}$-column at the same weight.\footnote{See again footnote~\ref{foot-hattori} for the weight shift for the comparison with \cite{hat2021}.} The multiplicity in rank $2$ can be smaller than in rank $3$. In fact, we observed that apart from the slope $0$ in rank $2$, all finite slopes from rank $2$ oldforms occur as slopes in rank $3$ for level $\GL_3(A)$. The rank $2$ newform slope occurs in all but the left column. The slope $0$ only occurs in the $U^{\G^P_2}_1$-column and in the $\G_0(t)$-column. 

For $U_2$, all finite slopes from rank $2$ forms except for the slope $1$ appear in the $U^{\G^P_0}_2$-column and in the $U_2^{\G_0(t)}$-column at the doubled weight! There seem to also be inclusions for not-1 slopes of rank 2 oldforms in the $T_2$ column, but for $k=10 = 2*5$, a slope $2$ is missing. For the $U^{\G^P_2}_2$, we seem to get inclusions of the not-1 slopes of rank 2 forms as well, but there is also a slope 2 missing at $k=8=2*4$.

We verified independently that not only the slopes but also the corresponding eigenvalues agree. 
However, we did not find obvious inclusions for $q\geq3$.
\item \textbf{Slopes can only have denominators that are $3$ or a power of $2$?} In our tables, when $p$ is odd, only $3$ can occur as denominator. When $p=2$, additionally powers of $2$ can occur. For example the slope $\frac{23}4$ occurs for $i=1, k=10$ and $i=2, k = 23$, and the slope $\frac{119}8$ for $i=1, k=27$.
\end{enumerate}

\appendix
\section{Appendix}
\label{appendix}
This appendix gives some further technical results used in Section~\ref{sec:building} and~\ref{sec:heckeop}. Subsections~\ref{subsec:quotients-fund} and~\ref{subsec:rep} explain how we computed systems of representatives for various (double) quotients of congruence subgroups of $\GL_3(A)$. In Subsection \ref{subsec:finding-simplices}, we give a sample calculation to explain how we found $\G_1(t)$-representatives in the standard apartment for certain simplices. 

To describe subgroups of $\GL_3(\Fq)$, we will often use Notation \ref{schreibweise}.
\subsection{Double quotients in study of fundamental domains}
\label{subsec:quotients-fund}
\begin{prop} \label{prop:double-quot-fund}
The following results hold:
\begin{enumerate}[(1)]
    \item The left action by $ \GL_3(\Fq) $ on $ \P^2(\Fq)$ induced from multiplying matrices on column vectors is transitive. The stabilizer of  $[1: 0:0]^T$ is $P_2=\left(\begin{smallmatrix}
 * & * & * \\
 0 & * & * \\
 0 & * & * \\
    \end{smallmatrix}\right)$. The left action of $U=\left(\begin{smallmatrix}
 1 & * & * \\
 0 & 1 & * \\
 0 & 0 & 1 \\
    \end{smallmatrix}\right)$ on $\P^2(\Fq)$  has three orbits, and $\{[1:0:0]^T, [0:1:0]^T,[0:0:1]^T\}$ is a set of orbit representatives.
\item
    The left action by $ \GL_3(\Fq) $ on $   \P^2(\Fq)$ induced from the action $(\g,v)\mapsto \g^{-T}v$ of matrices on column vectors is transitive. The stabilizer of  $[0:0:1]^T$ is $P_0=\left(\begin{smallmatrix}
 * & * & * \\
 * & * & * \\
 0 & 0 & * \\
    \end{smallmatrix}\right)$. The left action of $U=\left(\begin{smallmatrix}
 1 & * & * \\
 0 & 1 & * \\
 0 & 0 & 1 \\
    \end{smallmatrix}\right)$ on $\P^2(\Fq)$ has three orbits with representatives ${[1:0:0]^t}$, $[0:1:0]^t$ and $[0:0:1]^t$.
\item 
    Both double quotients $U\backslash\GL_3(\Fq)/P_2$ and $U\backslash\GL_3(\Fq)/P_0$ have a set of representatives given by the matrices
    \[
\left(\begin{smallmatrix}
 1 & 0 & 0 \\
 0 & 1 & 0 \\
 0 & 0 & 1 \\
\end{smallmatrix}\right),
\left(\begin{smallmatrix}
 0 & 0 & 1 \\
 1 & 0 & 0 \\
 0 & 1 & 0 \\
\end{smallmatrix}\right),
\left(\begin{smallmatrix}
 0 & 1 & 0 \\
 0 & 0 & 1 \\
 1 & 0 & 0 \\
\end{smallmatrix}\right).
\]
\end{enumerate}
\end{prop}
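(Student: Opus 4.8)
The plan is to prove the three parts in order, since part (3) will follow by combining parts (1) and (2) with the Bruhat decomposition.

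\textbf{Part (1).} First I would verify transitivity of the column-vector action on $\P^2(\Fq)$: any nonzero $v\in\Fq^3$ can be completed to a basis, and the matrix with $v$ as first column sends $[1:0:0]^T$ to $[v]$. The stabilizer of $[1:0:0]^T$ consists of those $\g$ with $\g e_1\in\Fq^\times e_1$, which is exactly $P_2=\left(\begin{smallmatrix}*&*&*\\0&*&*\\0&*&*\end{smallmatrix}\right)$ by inspection. For the $U$-orbits: acting by $\left(\begin{smallmatrix}1&a&b\\0&1&c\\0&0&1\end{smallmatrix}\right)$ on a column $(x,y,z)^T$ gives $(x+ay+bz,\;y+cz,\;z)^T$. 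I distinguish cases by the position of the last nonzero coordinate: if $z\neq0$ we can clear $x$ and $y$ and land on $[0:0:1]^T$; if $z=0,y\neq0$ we clear $x$ and land on $[0:1:0]^T$; if $z=y=0$ we are already at $[1:0:0]^T$. These three classes are genuinely distinct because $U$ fixes the flag (the value of $z$ up to scaling, then $y$), so there are exactly three orbits with the stated representatives.

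\textbf{Part (2).} This is formally identical after replacing $\g$ by $\g^{-T}$; equivalently, it is the action on the dual projective space. The map $\g\mapsto\g^{-T}$ is an automorphism of $\GL_3(\Fq)$ that sends $P_2$ to $P_0$ and sends $U$ to $U^T$, but one must be careful: transporting part (1) along $\g\mapsto\g^{-T}$ gives orbits for $U^T$, not $U$, so instead I would just redo the direct computation. The action is $(\g,v)\mapsto\g^{-T}v$; transitivity and the stabilizer computation for $[0:0:1]^T$ go through as before, noting $\g^{-T}e_3\in\Fq^\times e_3 \iff \g^T e_3\in\Fq^\times e_3\iff \g$ has its third \emph{row} of the form $(0,0,*)$, i.e.\ $\g\in P_0$. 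For the $U$-orbits, since $v\mapsto\g^{-T}v$ for $\g\in U$ runs over $v\mapsto uv$ with $u\in U^{-T}=U^T$, the $U$-orbits on column vectors under this action coincide with the $U^T$-orbits under the standard action; the same three-case argument (now tracking the \emph{first} nonzero coordinate from the top) yields three orbits with representatives $[1:0:0]^T,[0:1:0]^T,[0:0:1]^T$.

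\textbf{Part (3).} There is a standard bijection $U\backslash\GL_3(\Fq)/P_2 \cong U\backslash(\GL_3(\Fq)/P_2) \cong U\backslash\P^2(\Fq)$, using transitivity of $\GL_3(\Fq)$ on $\P^2(\Fq)$ from part (1) to identify $\GL_3(\Fq)/P_2$ with $\P^2(\Fq)$. By part (1), $U\backslash\P^2(\Fq)$ has the three representatives $[1:0:0]^T,[0:1:0]^T,[0:0:1]^T$; lifting each to a matrix whose first column is the given vector (and extending arbitrarily to an invertible matrix) gives, up to right $P_2$-multiplication, the three permutation matrices listed. The identical argument with part (2) handles $U\backslash\GL_3(\Fq)/P_0$, using the $\g^{-T}$-action to identify $\GL_3(\Fq)/P_0$ with $\P^2(\Fq)$. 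One checks the three displayed permutation matrices indeed have first columns (resp.\ third rows, in the $P_0$ case) hitting the three orbit representatives, so they serve as a common set of representatives for both double quotients. The main obstacle, and the only place care is needed, is keeping straight which group ($U$ versus $U^T$) acts in the $\g^{-T}$ setting and matching the chosen matrix representatives consistently to the projective-space orbit representatives; the underlying computations are all routine.
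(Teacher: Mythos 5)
Your proof is correct and follows essentially the same route the paper sketches (parts (1) and (2) as direct verifications, part (3) via the orbit-stabilizer theorem applied to the two actions on $\P^2(\Fq)$), but you fill in the routine computations that the paper leaves as an exercise. You also correctly identify and handle the one genuine pitfall: $\g\mapsto\g^{-T}$ sends $U$ to $U^T$, so one cannot naively transport the $U$-orbit analysis from part~(1) to part~(2), and your direct three-case computation with $U^T$ acting on column vectors settles this cleanly.

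One side remark is inaccurate, though it does not affect your argument since you abandon that route immediately: the automorphism $\theta(\g)=\g^{-T}$ does \emph{not} send $P_2$ to $P_0$. Since $P_2$ is a group, $\theta(P_2)=(P_2)^T=\left(\begin{smallmatrix}*&0&0\\ *&*&*\\ *&*&*\end{smallmatrix}\right)$, which is the opposite parabolic $w_0 P_0 w_0^{-1}$ (conjugate to $P_0$ by the long Weyl element), not $P_0$ itself. The correct statement is that $P_0$ arises as the stabilizer of $[0:0:1]^T$ under the twisted action, which is exactly what your direct computation $\g^{-T}e_3\in\Fq^\times e_3\iff\g$ has third row $(0,0,*)$ establishes. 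Similarly, in part~(3) the orbit map $gP_0\mapsto g^{-T}e_3$ reads off the third \emph{column} of $g^{-T}$ (which for a permutation matrix $g$ equals the third column of $g$, since $g^{-T}=g$), not the third row of $g$ as you wrote; your three displayed matrices still hit the three orbit representatives, so the conclusion stands.
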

\begin{proof}
    Parts (1) and (2) are straightforward exercises. Note that $\g\mapsto\g^{-T}$ is an automorphism of $\GL_r$ of order $2$. Part (3) is deduced from (1) and (2) using the orbit stabilizer theorem. 
\end{proof}

\subsection{Finding sets of representatives}\label{subsec:rep} 
Here, we describe the method we used for finding sets of representatives for Hecke operators. Recall that in Lemma~\ref{lem:reps} we already stated how such sets can be found by computing a quotient of the form $\G_1\backslash \G_2$ for congruence subgroups of $\GL_3(A)$. Moreover all groups relevant to us contain $\G(t)$, and we shall therefore proceed as follows:
\begin{enumerate}
\item Reduce both groups modulo $t$, so that it suffices to compute a quotient $\widebar{\G_1} \backslash \widebar{\G_2}$ of subgroups of $\GL_3(\Fq)$; cf.~Lemma \ref{lem:reduction}.
\item Choose a suitable right action of $\GL_3(\Fq)$ on $\P^2(\Fq)$, cf.~Proposition \ref{prop:P2actions}, and a point $x \in \P^2(\Fq)$, such that $\Stab_{\widebar{\G_2}}(x) = \widebar{\G_1}$.
\item Describe the $\widebar{\G_2}$-orbit of $x$.
\item Use the Orbit-Stabilizer-Theorem to identify \[\widebar{\G_1} \backslash \widebar{\G_2} = \Stab_{\widebar{\G_2}}(x) \backslash \widebar{\G_2} \cong x \cdot \widebar{\G_2}, \G_1 \g \mapsto x \cdot \g.\]
\item Choose representatives $\g_i \in \G_2$ such that $\bigcup_i x \cdot \g_i = x \cdot \widebar{\G_2}$. Then these form a system of representatives for $\G_1 \backslash \G_2$.
\end{enumerate} 

We begin with a simple lemma whose proof is left as an exercise.
\begin{lem} \label{lem:reduction}
Denote by $\G(t) \subset \G_1 \subset \G_2 \subset \GL_3(A)$ congruence subgroups, and let $\widebar{\G_i}=\pr(\G_i)\subset\GL_3(\Fq)$. Then following the map is a bijection
\begin{align*}
f: \G_1 \backslash \G_2 & \rightarrow \widebar{\G_1} \backslash \widebar{\G_2},  \\
\G_1 \, \g  & \mapsto \widebar{\G_1} \, \widebar{\g}:=\pr(\g).
\end{align*}
\end{lem}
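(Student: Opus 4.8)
The plan is to verify directly that the map $f$ is well-defined, and then construct its inverse, using the fact that $\G(t)$ is the kernel of $\pr$ and is contained in both $\G_1$ and $\G_2$.

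First I would check that $f$ is well-defined. If $\G_1 \g = \G_1 \g'$ for $\g, \g' \in \G_2$, then $\g (\g')^{-1} \in \G_1$, hence $\pr(\g)\pr(\g')^{-1} = \pr(\g(\g')^{-1}) \in \pr(\G_1) = \widebar{\G_1}$, so $\widebar{\G_1}\,\widebar\g = \widebar{\G_1}\,\widebar{\g'}$. This shows $f$ does not depend on the choice of coset representative. Surjectivity is immediate, since every element of $\widebar{\G_2}$ is of the form $\pr(\g)$ for some $\g \in \G_2$ by definition of $\widebar{\G_2} = \pr(\G_2)$.

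For injectivity, suppose $\G_1 \g, \G_1 \g'$ are two cosets with $f(\G_1\g) = f(\G_1\g')$, i.e.\ $\pr(\g(\g')^{-1}) \in \widebar{\G_1} = \pr(\G_1)$. Pick $\upsilon \in \G_1$ with $\pr(\upsilon) = \pr(\g(\g')^{-1})$. Then $\pr(\upsilon^{-1}\g(\g')^{-1})$ is the identity, so $\upsilon^{-1}\g(\g')^{-1} \in \ker(\pr) = \G(t) \subset \G_1$. Hence $\g(\g')^{-1} \in \upsilon\,\G(t) \subset \G_1$, which gives $\G_1\g = \G_1\g'$. This establishes that $f$ is a bijection. (Equivalently, one may invoke the elementary double-coset lemma stated earlier in the excerpt with $G = \G_2$, $N = \G(t)$, $K = \G_1$ and $H$ trivial, but the direct argument above is just as short.)

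I do not anticipate any genuine obstacle here: the only point requiring a little care is injectivity, where one must use the containment $\G(t) \subset \G_1$ (not merely normality of $\G(t)$ in $\GL_3(A)$) to absorb the kernel element back into $\G_1$; everything else is formal.
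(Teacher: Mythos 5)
Your proof is correct. The paper leaves this lemma as an exercise, so there is no paper argument to compare against; your direct verification (well-definedness, surjectivity via $\widebar{\G_2}=\pr(\G_2)$, injectivity by absorbing a kernel element of $\pr$ into $\G_1$ using $\G(t)\subset\G_1$) is exactly the standard argument one expects, and you correctly identify the one point requiring care. Your parenthetical remark is also accurate: the unnamed double-coset lemma preceding Proposition~\ref{prop:G1-Orbits-InW} specializes (with $H$ trivial, $G=\G_2$, $N=\G(t)$, $K=\G_1$, noting $\G(t)\trianglelefteq\GL_3(A)$ restricts to $\G(t)\trianglelefteq\G_2$) to give the same statement.
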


Next we describe the two right actions of $\GL_3(\Fq)$ on $\P^2(\Fq)$ relevant to us.
\begin{prop} \label{prop:P2actions}
The following rules define right group actions of $\GL_3(\Fq)$ on $\P^2(\Fq)$:
Denote elements of $\P^2(\Fq)$ as row vectors $[u: v: w]$, and let $\g = \left(\begin{smallmatrix}a&b&c\\d&e&f\\g&h&i\end{smallmatrix}\right)$ be in $\GL_3(\Fq)$. 
\begin{enumerate}[(i)]
\item The first action is right multiplication by $\g$ (written with \enquote{$\cdot$}), i.e., 
\[ [u: v: w] \cdot \g := [au + dv + gw : bu + ev + hw : cu + fw + iw] .\]
\item The second action is right multiplication by $\gamma^{-T}$  (written with \enquote{$*$}), i.e., 
\[ [u: v: w] * \g := [u: v: w] \cdot \g^{-T} .\]
\end{enumerate}
\end{prop}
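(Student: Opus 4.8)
The plan is to verify the two right-action axioms (compatibility with the identity and with products) together with well-definedness on projective space, observing that essentially all the content reduces to associativity of matrix multiplication.

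First I would observe that for a nonzero row vector $x = (u,v,w)$ the right-hand side of (i) is precisely the matrix product $x\g$ of the row vector $x$ with $\g\in\GL_3(\Fq)$. Since $\g$ is invertible, $x\g\neq 0$, so $x\g$ again represents a point of $\P^2(\Fq)$; moreover $(\lambda x)\g=\lambda(x\g)$ for $\lambda\in\Fq^\times$, so the rule descends to a well-defined map $\P^2(\Fq)\times\GL_3(\Fq)\to\P^2(\Fq)$. The axiom $x\cdot I = x$ is clear, and $(x\cdot\g_1)\cdot\g_2 = x\cdot(\g_1\g_2)$ is exactly the associativity identity $(x\g_1)\g_2 = x(\g_1\g_2)$. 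This disposes of (i).

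For (ii), I would use that $\g\mapsto\g^{-T}$ is a group automorphism of $\GL_3(\Fq)$ (as already noted in the proof of Proposition~\ref{prop:double-quot-fund}: it is the composite of the two anti-automorphisms $\g\mapsto\g^{-1}$ and $\g\mapsto\g^{T}$). In general, precomposing a right action with a group homomorphism again yields a right action: if $x\mapsto x\cdot\g$ is a right action and $\phi$ a homomorphism, then $x*\g:=x\cdot\phi(\g)$ satisfies $x*(\g_1\g_2) = x\cdot(\phi(\g_1)\phi(\g_2)) = (x\cdot\phi(\g_1))\cdot\phi(\g_2) = (x*\g_1)*\g_2$ and $x*I = x\cdot I = x$. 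Applying this with $\phi(\g)=\g^{-T}$ and the action from (i) establishes (ii) at once. Alternatively one may check directly that $(\g_1\g_2)^{-T} = \g_1^{-T}\g_2^{-T}$, using $(\g_1\g_2)^{-1} = \g_2^{-1}\g_1^{-1}$ and $(AB)^{T} = B^{T}A^{T}$, and repeat the computation above by hand.

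There is no genuine difficulty here; the only point requiring care is the bookkeeping of left versus right and the order-reversal caused by transposition and by inversion — which is precisely why it is convenient to factor (ii) through the honest automorphism $\g\mapsto\g^{-T}$ rather than through an anti-automorphism. Everything else is formal.
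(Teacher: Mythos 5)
Your proof is correct. The paper in fact states Proposition~\ref{prop:P2actions} without supplying a proof, treating it as elementary, so there is no competing argument to compare against; your verification is exactly the natural one. You correctly identify rule~(i) as ordinary row-vector-times-matrix multiplication (silently repairing the typo $cu+fw+iw$ in the paper's displayed formula, which should read $cu+fv+iw$), note that invertibility of $\g$ and homogeneity give a well-defined map on $\P^2(\Fq)$, and reduce the action axioms to associativity. For~(ii) you factor the rule through the automorphism $\g\mapsto\g^{-T}$ and invoke the general fact that precomposing a right action with a group homomorphism yields a right action; this is clean and is the reason the paper composes the two anti-automorphisms $\g\mapsto\g^{-1}$ and $\g\mapsto\g^T$ rather than using either alone. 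No gaps.
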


Now we are ready to follow the steps above to find various systems of representatives.

\subsubsection*{Representatives for Hecke-operators} \label{app:rep-hecke} 

Recall
$\delta_1 = \left(\begin{smallmatrix}
	1 & 0 & 0 \\ 
	0 & 1 & 0 \\ 
	0 & 0 & t
\end{smallmatrix}\right)$ and $\delta_2 = \left(\begin{smallmatrix}
	1 & 0 & 0 \\ 
	0 & t & 0 \\ 
	0 & 0 & t
\end{smallmatrix}\right)$ from formula \eqref{eq:def-delta}. 

We shall compute representatives of certain double cosets that involve these. As we apply Lemma \ref{lem:reps}, we have to keep in mind that the representatives we actually compute in this subsection will have to be multiplied with $\delta_i$ in order to get the representatives described for Proposition \ref{prop:repr-all-op}.

We only give a detailed description for finding the representatives for the Hecke operators for the groups $\GL_3(A)$ and $\G_0(t)$. The cases for the other groups $\G_1(t)$, $\G^P_0$ and $\G^P_2$ work analogously.
\begin{prop} \label{prop:repr-Ti}
The following holds:
\begin{enumerate}[(1)]
    \item For $i=1,2$, the intersection $\delta_i^{-1} \GL_3(A) \delta_i \cap \GL_3(A)$ contains $\G(t)$, and one has
    \[ \pr(\delta_i^{-1} \GL_3(A) \delta_i \cap \GL_3(A))=\left\{
    \begin{array}{cc}
      P_0^T =\left(\begin{smallmatrix}
* & * & 0\\
* & * & 0\\
* & * & *
\end{smallmatrix}\right) ,  & \hbox{if }\ i=1, \\
         P_2^T = \left(\begin{smallmatrix}
* & 0 & 0\\
* & * & *\\
* & * & *
    \end{smallmatrix}\right),&\hbox{if }\ i=2, 
    \end{array}
    \right.
\]
inside $\pr(\GL_3(A))=\GL_3(\Fq)$.
\item The right action of $\GL_3(\Fq)$ on $\P^2(\Fq)$ via both \enquote{$\cdot$} or \enquote{$*$} is transitive. The stabilizer for the \enquote{$*$}-action of $[0:0:1]$ is $P_0^T$, the stabilizer for the \enquote{$\cdot$} action of $[1:0:0]$ is $P_2^T$.
\item
A set of matrix representatives for $(\delta_i^{-1} \GL_3(A) \delta_i \cap \GL_3(A)) \backslash  \GL_3(A)$ is given by
\[ \lb \left(\begin{smallmatrix}
1 & 0 & a \\ 
0 & 1 & b \\ 
0 & 0 & 1
\end{smallmatrix}\right), \left(\begin{smallmatrix}
1 & a & 0 \\ 
0 & 0 & 1 \\ 
0 & 1 & 0
\end{smallmatrix}\right), \left(\begin{smallmatrix}
0 & 1 & 0 \\ 
0 & 0 & 1 \\ 
1 & 0 & 0
\end{smallmatrix}\right) \,\middle|\, a, b \in \Fq \rb, \quad\hbox{for }i=1,\]
\[ \lb \left(\begin{smallmatrix}
1 & a & b \\ 
0 & 1 & 0 \\ 
0 & 0 & 1
\end{smallmatrix}\right), \left(\begin{smallmatrix}
0 & 1 & a \\ 
1 & 0 & 0 \\ 
0 & 0 & 1
\end{smallmatrix}\right), \left(\begin{smallmatrix}
0 & 0 & 1 \\ 
1 & 0 & 0 \\ 
0 & 1 & 0 
\end{smallmatrix}\right) \,\middle|\, a, b \in \Fq \rb, \quad\hbox{for }i=2.\]
\end{enumerate}
\end{prop}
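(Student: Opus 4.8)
The plan is to prove the three parts in order, each essentially by direct matrix computation, using Lemma~\ref{lem:reps}, Lemma~\ref{lem:reduction}, and Proposition~\ref{prop:P2actions}.

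\textbf{Part (1).} I would take a general $\gamma=(a_{ij})\in\GL_3(A)$ and compute the conjugate $\delta_i\gamma\delta_i^{-1}$, whose $(i,j)$-entry is $a_{ij}$ scaled by $(\delta_i)_{ii}(\delta_i)_{jj}^{-1}$. Since $\gamma\in\delta_i^{-1}\GL_3(A)\delta_i$ is equivalent to $\delta_i\gamma\delta_i^{-1}\in\GL_3(A)$, and determinants are unchanged under conjugation, the only constraint is that all entries of $\delta_i\gamma\delta_i^{-1}$ lie in $A$. For $i=1$ (so $\delta_1=\mathrm{diag}(1,1,t)$) the only entries picking up a negative power of $t$ are $a_{13}$ and $a_{23}$, each acquiring a factor $t^{-1}$; hence the intersection consists exactly of those $\gamma\in\GL_3(A)$ with $t\mid a_{13}$ and $t\mid a_{23}$. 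In particular $\G(t)$ lies inside it, and reducing modulo $t$ (the reduction is clearly surjective onto the displayed parabolic, using constant lifts) gives $\pr$ of the intersection equal to $P_0^T$. The case $i=2$ is identical with $a_{12},a_{13}$ in place of $a_{13},a_{23}$, yielding $P_2^T$.

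\textbf{Part (2).} Transitivity of the ``$\cdot$''-action is the standard transitivity of $\GL_3(\Fq)$ on the lines in $\Fq^3$, and transitivity of ``$*$'' follows since $\gamma\mapsto\gamma^{-T}$ is an automorphism of $\GL_3(\Fq)$ (cf.\ Proposition~\ref{prop:double-quot-fund}). For stabilizers, $[u:v:w]\cdot\gamma$ is the projective class of the row vector $(u,v,w)\gamma$, so $[1:0:0]\cdot\gamma$ is the class of the first row of $\gamma$; it equals $[1:0:0]$ iff the $(1,2)$- and $(1,3)$-entries vanish, i.e.\ $\gamma\in P_2^T$. Likewise $[0:0:1]\cdot\gamma$ is the class of the third row of $\gamma$, with stabilizer $P_0$; and $[0:0:1]*\gamma=[0:0:1]$ iff $\gamma^{-T}\in P_0$ iff $\gamma^T\in P_0$ iff $\gamma\in P_0^T$, so $\Stab_*([0:0:1])=P_0^T$.

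\textbf{Part (3).} Combining Lemma~\ref{lem:reps}, Lemma~\ref{lem:reduction}, and Part~(1), the coset space $(\delta_i^{-1}\GL_3(A)\delta_i\cap\GL_3(A))\backslash\GL_3(A)$ is in bijection with $P_0^T\backslash\GL_3(\Fq)$ for $i=1$ and $P_2^T\backslash\GL_3(\Fq)$ for $i=2$. By Part~(2) and the orbit--stabilizer theorem these are in bijection with the orbits $[0:0:1]*\GL_3(\Fq)=\P^2(\Fq)$ and $[1:0:0]\cdot\GL_3(\Fq)=\P^2(\Fq)$, respectively. It then suffices to check that the listed matrices lie in $\GL_3(A)$ (immediate, since they have $\Fq$-entries and determinant $\pm1$) and that under the relevant action they send the chosen base point to $q^2+q+1$ pairwise distinct points of $\P^2(\Fq)$, hence to all of them, which forces the corresponding cosets to be distinct and exhaustive. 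For $i=1$ one computes $[0:0:1]*\gamma$ as the class of the third column of $\gamma^{-1}$: the three families produce $[-a:-b:1]$, $[-a:1:0]$, and $[1:0:0]$, exhausting $\P^2(\Fq)$; for $i=2$, $[1:0:0]\cdot\gamma$ is the first row of $\gamma$, giving $[1:a:b]$, $[0:1:a]$, and $[0:0:1]$.

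The proof is essentially bookkeeping, so there is no single genuinely hard step; the point demanding the most care is keeping the two actions ``$\cdot$'' and ``$*$'' and their transpose/inverse conventions consistent with the conjugation direction used in Part~(1), so that the parabolic appearing as $\pr$ of the intersection is matched with the action for which it stabilizes the chosen base point. Getting this alignment right is exactly what guarantees that the orbit identification yields representatives that actually lie in $\GL_3(A)$ rather than merely in $\GL_3(F)$.
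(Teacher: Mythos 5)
Your proposal is correct and follows essentially the same strategy the paper outlines (reduce modulo $t$ to identify the image of the conjugated intersection with the parabolic $P_0^T$ resp.\ $P_2^T$, then use the orbit--stabilizer identification with $\P^2(\Fq)$ from Proposition~\ref{prop:P2actions} and check the given matrices exhaust the orbit); you simply fill in Parts (1) and (2), which the paper leaves as an exercise. One tiny note: Lemma~\ref{lem:reps} is not needed to prove Part (3) as stated (it only motivates \emph{why} one cares about that coset space); Lemma~\ref{lem:reduction} and Parts (1)--(2) suffice.
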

\begin{proof}
The first two parts are left as an exercise. For the third part note that the number of matrices in each set in (3) is exactly the cardinality of $\P^2(\Fq)$, which by (1) and (2) is the cardinality of the right coset in question. To complete the proof one simply verifies that acting with the matrices given on $[0:0:1]$ via $*$ or on $[1:0:0]$ via $\cdot$, respectively, exhausts all of $\P^2(\Fq)$ and thus completes the proof.
\end{proof}

\begin{prop}\label{prop:repr-Ui}
The following holds:
\begin{enumerate}[(1)]
    \item For $i=1,2$, the intersection $\delta_i^{-1} \G_0(t)) \delta_i \cap \G_0(t)$ contains $\G(t)$, and one has
    \[ \pr(\delta_i^{-1} \G_0(t) \delta_i \cap \G_0(t))=\left\{
    \begin{array}{cc}
      \widebar\G_1 =\left(\begin{smallmatrix}
* & * & 0\\
0 & * & 0\\
0 & 0 & *
\end{smallmatrix}\right) ,  & \hbox{if }\ i=1, \\
         \widebar\G_2= \left(\begin{smallmatrix}
* & 0 & 0\\
0 & * & *\\
0 & 0 & *
    \end{smallmatrix}\right),&\hbox{if }\ i=2, 
    \end{array}
    \right.
\]
inside $B=\left(\begin{smallmatrix}
* & * & *\\
0 & * & *\\
0 & 0 & *
    \end{smallmatrix}\right)=\pr(\G_0(t))$.
\item For the orbits under $B$ one finds $[0:0:1]*B=\{[a:b:1]\mid a,b\in\Fq\}\cong\Fq^2$ and $[1:0:0]\cdot B=\{[1:a:b]\mid a,b\in\Fq\}\cong\Fq^2$. The \enquote{$*$}-stabilizer of $[0:0:1]$ inside $B$ is $\widebar{\G_1}$, the \enquote{$\cdot$}-stabilizer of $[1:0:0]$ inside $B$ is $\widebar{\G_2}$.
\item
A set of matrix representatives for $(\delta_i^{-1} \G_0(t) \delta_i \cap \G_0(t)) \backslash  \G_0(t)$ is given by
\[ \lb \left(\begin{smallmatrix}
1 & 0 & a \\ 
0 & 1 & b \\ 
0 & 0 & 1
\end{smallmatrix}\right) \,\middle|\, a, b \in \Fq \rb,\quad\hbox{for }i=1,\]
\[ \lb \left(\begin{smallmatrix}
1 & a & b \\ 
0 & 1 & 0 \\ 
0 & 0 & 1
\end{smallmatrix}\right) \,\middle|\, a, b \in \Fq \rb,\quad\hbox{for }i=2.\]
\end{enumerate}
\end{prop}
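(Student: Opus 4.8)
\textbf{Proof proposal for Proposition~\ref{prop:repr-Ui}.}

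The plan is to follow exactly the recipe laid out at the start of Subsection~\ref{subsec:rep}, specialized to $\G_2 = \G_0(t)$. Part (1) is a direct computation: conjugating an arbitrary $\gamma \in \G_0(t)$ by $\delta_i$ rescales certain off-diagonal entries by $t$ or $t^{-1}$, and one imposes that both $\gamma$ and $\delta_i^{-1}\gamma\delta_i$ lie in $\GL_3(A)$ with the Borel reduction mod $t$. Since $\G(t)$ is normal in $\GL_3(A)$ and fixed by conjugation by any element, the containment $\G(t) \subset \delta_i^{-1}\G_0(t)\delta_i \cap \G_0(t)$ is automatic, and then by Lemma~\ref{lem:reduction} it suffices to identify the image under $\pr$. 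A short matrix computation — tracking which positions $(r,s)$ get multiplied by $t^{\pm 1}$ under conjugation by $\delta_1 = \mathrm{diag}(1,1,t)$ resp.\ $\delta_2 = \mathrm{diag}(1,t,t)$, and demanding those entries vanish mod $t$ — yields the groups $\widebar{\G_1}$ and $\widebar{\G_2}$ as claimed. I would present this as a one-line "straightforward calculation" in the style of the analogous Proposition~\ref{prop:repr-Ti}.

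For part (2), I would invoke Proposition~\ref{prop:P2actions} for the two actions "$*$" and "$\cdot$" of $\GL_3(\Fq)$ on $\P^2(\Fq)$, restrict to the subgroup $B = \pr(\G_0(t))$, and compute the two relevant orbits directly. For the "$*$"-action: $[0:0:1] * \gamma = [0:0:1] \cdot \gamma^{-T}$, and since $\gamma^{-T}$ is lower triangular with unit diagonal up to scaling when $\gamma \in B$, the last coordinate of $[0:0:1]\cdot\gamma^{-T}$ is a unit, so the orbit is $\{[a:b:1] \mid a,b\in\Fq\}$, which has cardinality $q^2$. The stabilizer of $[0:0:1]$ for "$*$" inside $B$ is exactly the set of $\gamma \in B$ whose inverse transpose fixes $[0:0:1]$, i.e.\ $\gamma^{-T}$ has no contribution of the first two basis vectors into the third coordinate; unwinding, this is $\widebar{\G_1}$. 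Symmetrically, $[1:0:0]\cdot\gamma$ for $\gamma \in B$ has first coordinate a unit, giving orbit $\{[1:a:b]\mid a,b\in\Fq\}$ of size $q^2$, with "$\cdot$"-stabilizer $\widebar{\G_2}$ inside $B$. This matches the claimed group identifications in part (1), consistent with the orbit–stabilizer count $|B| = |\widebar{\G_i}|\cdot q^2$.

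Part (3) then follows by the orbit–stabilizer dictionary: $(\delta_i^{-1}\G_0(t)\delta_i \cap \G_0(t))\backslash \G_0(t) \cong x\cdot\widebar{\G_0(t)}$ where $x$ is the appropriate base point, via Lemma~\ref{lem:reduction} and step (4) of the recipe. One checks that the listed $q^2$ unipotent matrices $\left(\begin{smallmatrix}1&0&a\\0&1&b\\0&0&1\end{smallmatrix}\right)$ (for $i=1$) resp.\ $\left(\begin{smallmatrix}1&a&b\\0&1&0\\0&0&1\end{smallmatrix}\right)$ (for $i=2$), acting on $[0:0:1]$ via "$*$" resp.\ on $[1:0:0]$ via "$\cdot$", hit every point of the corresponding orbit exactly once; since their number equals the cardinality of the coset space, they form a complete, irredundant system of representatives. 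I anticipate no genuine obstacle here — the whole argument is bookkeeping with $3\times 3$ matrices over $\Fq$ and a transitivity/counting argument — so the proof can be compressed to: "The argument is entirely analogous to that of Proposition~\ref{prop:repr-Ti}; we indicate only the orbit computations in (2), the rest being a routine verification." The one point to be slightly careful about is getting the transpose–inverse right in the "$*$"-action so that the stabilizer comes out as $\widebar{\G_1}$ and not its transpose, but this is exactly parallel to the $P_0$ versus $P_0^T$ distinction already handled in Proposition~\ref{prop:repr-Ti}.
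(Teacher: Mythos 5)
Your proposal is correct and follows essentially the same route as the paper: part (1) by direct conjugation computation and Lemma~\ref{lem:reduction}, part (2) by restricting the two $\P^2(\Fq)$-actions to $B$ and computing orbits and stabilizers, and part (3) by the orbit--stabilizer count plus a direct check that the listed unipotent matrices exhaust the relevant orbit. The only cosmetic divergence is that the paper recommends obtaining the stabilizers in (2) by intersecting those of Proposition~\ref{prop:repr-Ti} with $B$, whereas you recompute them directly; both routes are immediate.
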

\begin{proof}
The first two parts are left as an exercise. For the computation of the stabilizers note that one only has to intersect the stabilizers inside $\GL_3(\Fq)$ from Proposition~\ref{prop:repr-Ti} with the subgroup $\G_0(t)$.

For the third part note that the number of matrices in each set in (3) is $q^2$, which by (1) and (2) is also the cardinality of the right coset in question. To complete the proof one simply verifies that acting with the given matrices on $[0:0:1]$ via $*$ or on $[1:0:0]$ via $\cdot$, respectively, exhausts all of 
$[0:0:1]*B$ or  $[1:0:0]\cdot B$, respectively, and this completes the proof.
\end{proof}

\subsection{Finding simplices in the standard apartment}
\label{subsec:finding-simplices}
For all matrices $\epsilon$ in the sets $Q_1, Q_2, R_1, R_2, S_1$, and $S_2$ from Proposition \ref{prop:repr-all-op}, we can find a chamber $s_{\epsilon}$ in the standard apartment and a matrix $\g_{\epsilon} \in \G_1(t)$ such that $\epsilon s_0 = \g_{\epsilon} s_{\epsilon}$. We view the simplices as equivalence classes of matrices in $\GL_3(\F)/\langle R \rangle I \F^{\times}$ (see Theorem \ref{thm:matsimpl}). 
We only give the calculation for the elements of $R_1$ here as a sample. The calculations for the other sets work similarly, by cleverly guessing matrices in $\langle R \rangle$, $I$ and $\G_0(t)$ to land at a simplex in the standard apartment.
Remember that 
\[ R = \left(\begin{smallmatrix}
 0 & 1 & 0 \\
 0 & 0 & 1 \\
 \pi & 0 & 0 \\
 \end{smallmatrix}\right) \text{ and } I = \lb M \in \GL_3(\Oi)
\, \Big| \, M \equiv \left(\begin{smallmatrix}
 * & * & * \\
 0 & * & * \\
 0 & 0 & * \\
\end{smallmatrix}\right) \mod \pi, * \in \Fq \rb \subset \GL_3(\Oi). \]
A representative for the stable simplex is given by
\[
s_0 = \lbk \left(\begin{smallmatrix}
 0 & 1 & 0 \\
 1 & 0 & 0 \\
 0 & 0 & t \\
\end{smallmatrix}\right) \rbk_2 \in \GL_3(\F) / \langle R \rangle I \F^{\times}.
\]

\subsubsection*{Representatives $\epsilon \in R_1$} \label{subsec:R1-repr-a}
In this case, we have $a \in \Fq$ and
\[
\epsilon s_0 = 
\begin{pmatrix}
 1 & a & 0 \\
 0 & 0 & 1 \\
 0 & t & 0 \\
\end{pmatrix}
\lbk
\begin{pmatrix}
 0 & 1 & 0 \\
 1 & 0 & 0 \\
 0 & 0 & t \\
\end{pmatrix} \rbk_2 = 
\lbk \begin{pmatrix}
 a & 1 & 0 \\
 0 & 0 & t \\
 t & 0 & 0 \\
 \end{pmatrix} \rbk_2. 
 \]
 We have to distinguish two cases.
 \paragraph*{$a=0$.}
 We investigate 
 \[ \lbk \begin{pmatrix}
 0 & 1 & 0 \\
 0 & 0 & t \\
 t & 0 & 0 \\
 \end{pmatrix} \rbk_2. \]
This simplex lies in the standard apartment and, using the third line in Theorem~\ref{thm:matsimpl}, it has the vertices
\[
\lbk \begin{pmatrix}
 0 & 1 & 0 \\
 0 & 0 & t \\
 t & 0 & 0 \\
\end{pmatrix} \rbk_0 = [-1, -1], \lbk \begin{pmatrix}
 0 & 1 & 0 \\
 0 & 0 & 1 \\
 t & 0 & 0 \\
\end{pmatrix} \rbk_0 = [0, -1], \lbk \begin{pmatrix}
 0 & \pi & 0 \\
 0 & 0 & 1 \\
 t & 0 & 0 \\
\end{pmatrix} \rbk_0 = [-1, -2].
\]
\paragraph*{$a \neq 0$.}
We investigate 
\begin{align*}
\lbk \begin{pmatrix}
 a & 1 & 0 \\
 0 & 0 & t \\
 t & 0 & 0 \\
\end{pmatrix} \right. 
\underbrace{\left. \begin{pmatrix}
 a^{-1} & 1 	& 0 \\
 0 		& - a 	& 0 \\
 0		& 0 	& 1 \\
\end{pmatrix} \rbk_2 \hspace*{-3mm}}_{\textstyle \in I}\hspace*{3mm} & = 
\lbk \begin{pmatrix}
 1 & 0 & 0 \\
 0 & 0 & t \\
 a^{-1} t & t & 0 \\
\end{pmatrix} \rbk_2 \\
& = 
\underbrace{\begin{pmatrix}
 1 		  & 0 & 0 \\
 0 		  & 1 & 0 \\
 a^{-1} t  & 0 & 1 \\
\end{pmatrix}}_{\textstyle \in \G_1(t)}
\underbrace{\lbk \begin{pmatrix}
 1 & 0 & 0 \\
 0 & 0 & t \\
 0 & t & 0 \\
\end{pmatrix} \rbk_2}_{\textstyle =: s_{\epsilon} \in \A}
\end{align*}
The simplex $s_{\epsilon}$ lies in the standard apartment and has the vertices 
\[
\lbk \begin{pmatrix}
 1 & 0 & 0 \\
 0 & 0 & t \\
 0 & t & 0 \\
\end{pmatrix} \rbk_0 = [-1, -1], \lbk \begin{pmatrix}
 1 & 0 & 0 \\
 0 & 0 & 1 \\
 0 & t & 0 \\
\end{pmatrix} \rbk_0 = [0, -1], \lbk \begin{pmatrix}
 1 & 0 & 0 \\
 0 & 0 & 1 \\
 0 & 1 & 0 \\
\end{pmatrix} \rbk_0 = [0, 0].
\]
In this case, $s_{\epsilon}$ is exactly the stable simplex $s_0$!

\renewcommand\listtablename{List of Figures and Tables}
\listoftablesandfigures 
\addcontentsline{toc}{section}{List of Figures and Tables}

\phantomsection
\addcontentsline{toc}{section}{Bibliography}
\printbibliography

\end{document}